\newtheorem{thm}{Theorem}[section]
\newtheorem*{thm*}{Theorem}
\newtheorem{theorem}{Theorem}
\newtheorem*{question*}{Question}
\newtheorem*{obs*}{Observation}
\newtheorem{proposition}[thm]{Proposition}
\newtheorem{fact}[thm]{Fact}
\newtheorem{lem}[thm]{Lemma}
\newtheorem{cor}[thm]{Corollary}
\theoremstyle{definition}
\newtheorem{defn}[thm]{Definition}
\newtheorem*{defn*}{Definition}
\theoremstyle{remark}
\newtheorem{rem}[thm]{\textsc{Remark}}
\newtheorem{example}[thm]{\textsc{Example}}
\newtheorem{warning}[thm]{Warning}
\newcommand{\IN}{\mathbb{N}}
\newcommand{\IZ}{\mathbb{Z}}
\newcommand{\IQ}{\mathbb{Q}}
\newcommand{\IR}{\mathbb{R}}
\newcommand{\IS}{\mathbb{S}}
\newcommand{\IT}{\mathbb{T}}
\newcommand{\CP}{\mathbb{CP}}
\newcommand{\Top}{{\mit{T\!op}}}
\newcommand{\Diff}{{\op{Diff}}}
\newcommand{\Emb}{\mi{Emb}}
\newcommand{\TC}{\mi{TC}}
\newcommand{\Sets}{{\mathit{Set}}}
\newcommand{\op}[1]{\operatorname{#1}}
\newcommand{\id}{\op{id}}
\newcommand{\pr}{\op{pr}}
\newcommand{\Map}{\op{Map}}
\newcommand{\Hom}{\op{Hom}}
\newcommand{\colim}{\op{colim}}
\newcommand{\GL}{\op{GL}}
\newcommand{\sign}{\op{sign}}
\newcommand{\Bun}{{\op{Bun}}}
\newcommand{\Fin}{\mi{Fin}}
\newcommand{\eps}{\varepsilon}
\newcommand{\ga}{\alpha}
\newcommand{\gb}{\beta}
\newcommand{\gd}{\delta}
\newcommand{\gD}{\Delta}
\newcommand{\gc}{\gamma}
\newcommand{\gC}{\Gamma}
\newcommand{\gi}{\iota}
\newcommand{\gk}{\kappa}
\newcommand{\gl}{\lambda}
\newcommand{\gL}{\Lambda}
\newcommand{\gs}{\sigma}
\newcommand{\gS}{\Sigma}
\newcommand{\gt}{\theta}
\newcommand{\gO}{\Omega}
\newcommand{\gp}{\varphi}
\newcommand{\mi}[1]{\mathit{#1}}
\newcommand{\mrm}[1]{\mathrm{#1}}
\newcommand{\mc}[1]{\mathcal{#1}}
\newcommand{\mcA}{\mc{A}}
\newcommand{\mcB}{\mc{B}}
\newcommand{\mcC}{\mc{C}}
\newcommand{\mcD}{\mc{D}}
\newcommand{\mcE}{\mc{E}}
\newcommand{\mcF}{\mc{F}}
\newcommand{\mcO}{\mc{O}}
\newcommand{\mcZ}{\mc{Z}}
\newcommand{\cl}[1]{\overline{#1}}
\newcommand{\ul}[1]{\underline{#1}}
\newcommand{\wt}[1]{\widetilde{#1}}
\newcommand{\wh}[1]{\widehat{#1}}
\newcommand{\doublebs}{\!\mathbin{/\mkern-6mu/}}
\newcommand{\inj}{\hookrightarrow}
\newcommand{\surj}{\twoheadrightarrow}
\newcommand{\ot}{\otimes}
\newcommand{\cd}{\bullet}
\newcommand{\qand}{\quad \text{and} \quad}
\newcommand{\blank}{\underline{\ \ }}
\newcommand{\gle}[1]{\langle #1 \rangle}
\newcommand{\Conf}{\mrm{Conf}}
\newcommand{\Cob}{\mathcal{C}}
\newcommand{\Cobcl}{\Cob^{\mathrm{cl}}}
\newcommand{\Cobred}{\Cob^{\mathrm{red}}}
\newcommand{\Ccl}{C^{\mathrm{cl}}}
\newcommand{\Cut}{\mrm{Cut}}
\newcommand{\Mor}{\mrm{Mor}}
\newcommand{\Obj}{\mrm{Obj}}
\renewcommand{\Emb}{\mrm{Emb}}
\newcommand{\cube}{(-1,1)^\infty}
\newcommand{\dlgk}{\cl{\gk}}
\newcommand{\oh}{\tfrac{1}{2}}
\title{
The classifying space of the one-dimensional bordism category and 
a cobordism model for TC of spaces
}
\author{Jan Steinebrunner}
\begin{document}

\maketitle

\begin{abstract}
    The homotopy category of the bordism category
    $h\Cob_d$ has as objects closed oriented $(d-1)$-manifolds and as morphisms
    diffeomorphism classes of $d$-dimensional bordisms.
    Using a new fiber sequence for bordism categories,
    we compute the classifying space of $h\Cob_d$ for $d=1$,
    exhibiting it as a circle bundle over $\gO^{\infty-2}\CP^\infty_{-1}$.
    
    As part of our proof we construct a quotient $\Cobred_1$
    of the cobordism category where circles are deleted.
    We show that this category has classifying space 
    $\gO^{\infty-2}\CP^\infty_{-1}$ 
    and moreover that, if one equips these bordisms with a map to a 
    simply connected space $X$, the resulting 
    $\Cobred_1(X)$ can be thought of as a cobordism model 
    for the topological cyclic homology $TC(\IS[\gO X])$.
    
    In the second part of the paper we construct
    an infinite loop space map $B(h\Cobred_1) \to Q(\gS^2 \CP^\infty_+)$
    in this model 
    and use it to derive combinatorial formulas for 
    rational cocycles on $h\Cobred_1$ representing 
    the Miller-Morita-Mumford classes $\gk_i \in H(B(h\Cob_1);\IQ)$.
\end{abstract}


\section{Introduction}

Bordism categories play an important role in organising 
geometry, mathematical physics, and algebra.
Their most prominent use is probably
as the main ingredient to Atiyah and Segal's definition
of topological and conformal field theory.
More recently, cobordism categories have been a key tool 
in the study moduli spaces of manifolds, 
see for instance \cite{GMTW06} and \cite{GRW14}.
This approach uses a more sophisticated version of the cobordism category: 
namely, a topological category $\Cob_d$ where for a diffeomorphism class 
$[W]$ there is not just one, but ``$B \Diff^\partial(W)$-many" morphisms.
In other words, one needs to consider the $(\infty,1)$-category
of cobordisms $\mrm{Bord}_{\gle{d-1,d}}^{\mrm{or}}$,
for which $\Cob_d$ is a specific model, 
rather than just its homotopy category $\mrm{Cob}_d^{\mrm{or}} \simeq h\Cob_d$.
From the perspective of topological field theories
$\Cob_d$ has also proven to be the more natural object of study.
For example, Lurie's proof-sketch of the Baez-Dolan cobordism hypothesis
\cite{Lur09}, crucially relies on the presence of higher structure.

In each of the above settings the classifying space $B\Cob_d$ 
plays an important role.
When studying moduli spaces, $B\Cob_d$ controls universal characteristic
classes for manifolds (\cite{MW07}, \cite{GRW17}), 
and when studying field theories $B\Cob_d$ classifies the invertible ones
(e.g.\ \cite{FH20}).
Generally, the classifying space construction also allows one to build
categorical models for interesting homotopy types.
The homotopy type of $B\Cob_d$ was completely determined 
by Galatius, Madsen, Tillmann, and Weiss, who in \cite{GMTW06} showed
that it is the infinite loop space of a certain Thom-spectrum $MTSO_d$.

This 
raises the question
for a similar computation of $B(h\Cob_d)$. 
One might expect this to be a simpler problem 
since the homotopy category $h\Cob_d$ 
is only an ordinary, rather than a topological category.
However, very little is known about $B(h\Cob_d)$.
Except for the simple observation $B(h\Cob_0) \simeq S^1 \times S^1$,
the only result in this direction is Tillmann's splitting \cite{Til96}
of the surface case: 
$B(h\Cob_2) \simeq S^1 \times X$ 
for some mysterious simply connected infinite loop space $X$.

Our main theorem is a complete computation of $B(h\Cob_1)$,
which is already far more complex than $B(\Cob_1) \simeq QS^0$.
We also prove a similar result for the unoriented case 
(Theorem \ref{thm:BhCob1-unor}).

\begin{theorem}
\label{theorem:1}
    There is an equivalence of infinite loop spaces
    \[
        B(h\Cob_1) \simeq \gO^{\infty-2}\left(\mrm{hofib}(MTSO_2 \to H\IZ) \right)
    \]
    where $MTSO_2 \to H\IZ$ is the map of spectra classifying
    the generator of $H^0(MTSO_2) \cong \IZ$.
    Consequently, the rational cohomology of the 
    identity component $B(h\Cob_1)_0 \subset B(h\Cob_1)$ is given by
    \[
        H^*(B(h\Cob_1)_0; \IQ) \cong 
        \IQ[\dlgk_1, \dlgk_2, \dlgk_3, \dots]
    \]
    where the generators are of degree $|\dlgk_i| = 2i+2$
    and we have $\pi_0 B(h\Cob_1) = \IZ$.
\end{theorem}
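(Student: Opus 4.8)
The plan is to factor the computation through a quotient category $\Cobred_1$ in which the circle components of bordisms are deleted, arranged so that the forgetful functor $q\colon h\Cob_1 \to h\Cobred_1$ induces a principal $S^1$-bundle on classifying spaces. First I would set up $\Cobred_1$ and record that its mapping spaces are homotopy discrete: a reduced $1$-bordism is a disjoint union of arcs, the diffeomorphism group of a disjoint union of intervals rel boundary is contractible, so $B\Cobred_1 \simeq B(h\Cobred_1)$ and one may run Galatius--Madsen--Tillmann--Weiss scanning on it directly. The theorem then reduces to two inputs: (I) the new fibre sequence for bordism categories, which should say that $Bq$ is a principal $S^1$-bundle whose Euler class is twice a generator of $H^2$; and (II) the identification $B(h\Cobred_1) \simeq \gO^{\infty-2}\CP^\infty_{-1} = \gO^{\infty-2}MTSO_2$. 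Granting these, the total space of a principal $S^1$-bundle classified by a map $B \to BS^1 = K(\IZ,2)$ is the homotopy fibre of that map; combined with $\gO^{\infty-2}H\IZ \simeq K(\IZ,2)$ and the fact that $\gO^\infty$ takes fibre sequences of spectra to fibre sequences of spaces, this exhibits $B(h\Cob_1)$ as $\gO^{\infty-2}$ of the homotopy fibre of the map $MTSO_2 \to H\IZ$ classifying twice the generator of $H^0(MTSO_2)$, which is the assertion (the infinite loop structure being inherited since all maps in sight are infinite loop maps).

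For (I): the circle $c = S^1\colon \emptyset \to \emptyset$ is part of a natural transformation $\eta\colon \id \Rightarrow \id$, namely disjoint union with a circle, and $h\Cobred_1$ is precisely the quotient of $h\Cob_1$ in which one sets $\eta = \id$. Realising this quotient should give a fibration $S^1 \to B(h\Cob_1) \to B(h\Cobred_1)$ with the fibre $S^1$ the loop traced by $\eta$. The real content of the new fibre sequence is to compute the homotopy fibre of $Bq$ and to show it is an honest circle, rather than the a priori larger classifying space of the monoid of circle-endomorphisms of $\emptyset$; I would do this by a Quillen Theorem B type argument for $q$, using that an arbitrary bordism can be cut so as to display each of its circles as a composite $\cup \circ \cap$. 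Pinning down the Euler class is the subtle point: one compares the generator of $\pi_2 B(h\Cobred_1) = \pi_0 MTSO_2 \cong \IZ$ with its image under the connecting map into $\pi_1 S^1 \cong \IZ$, and the snake (duality) identities relating $\cap$ and $\cup$ in $\Cob_1$ should force this to be multiplication by $2$, equivalently $\pi_1 B(h\Cob_1) \cong \IZ/2$.

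For (II): I would run the Galatius--Madsen--Tillmann--Weiss scanning argument in the reduced setting. A point of $B(h\Cobred_1)$ is represented by a properly embedded $1$-submanifold of $\IR^\infty \times \IR$ with no closed components; Pontryagin--Thom collapse and scanning then produce a map not to $\gO^{\infty-1}MTSO_1$ but to $\gO^{\infty-2}MTSO_2 = \gO^{\infty-2}\CP^\infty_{-1}$, with the dimension shift forced by the boundary --- the endpoints of the arcs --- and a group-completion plus connectivity argument as in that work upgrades it to an equivalence. Alternatively, this is the content, specialised to the point, of the cobordism model identifying $\Cobred_1(X)$ with $\TC(\IS[\gO X])$ --- for $X = \ast$ the spectrum $\TC(\IS)$ and its reduced summand $\gS\CP^\infty_{-1}$. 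I expect the main obstacles of the argument to be exactly (I) and (II): forcing the homotopy fibre of $Bq$ to be an honest circle, and nailing down the factor of $2$ in the Euler class.

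The rational cohomology statement then follows formally from the homotopy type. Rationally $MTSO_2 \simeq \bigvee_{j \ge -1}\gS^{2j}H\IQ$ and the classifying map to $H\IQ$ is twice the projection onto the $j=0$ summand, so $\mrm{hofib}(MTSO_2 \to H\IZ)\otimes\IQ \simeq \gS^{-2}H\IQ \vee \bigvee_{j \ge 1}\gS^{2j}H\IQ$, whence each path component of $B(h\Cob_1)$ is rationally $\prod_{i \ge 1}K(\IQ, 2i+2)$. This produces the polynomial algebra on classes $\gk_i$ of degree $2i+2$, while the invertible degree-$0$ class $\ga$ records the path component --- the charge $\#(+) - \#(-)$, i.e.\ $\pi_0 \cong \IZ$. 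Finally I would identify the $\gk_i$ with the Miller--Morita--Mumford classes by naturality along the forgetful map from $B(h\Cob_1)$ to the classifying space of surface bundles, or via the infinite loop map to $Q(\gS^2\CP^\infty)$ built in the second half of the paper.
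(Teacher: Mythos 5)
Your overall architecture matches the paper's---factor through the reduced bordism category $\Cobred_1$, use that its mapping spaces are homotopy discrete so that $\Cobred_1\simeq h\Cobred_1$, and realise $B(h\Cob_1)\to B(h\Cobred_1)$ as an $S^1$-bundle with Euler class twice a generator. The genuine gap is in your input (II). You propose to compute $B(h\Cobred_1)$ by running GMTW scanning directly on $\Cobred_1$, on the premise that a point of $B\Cobred_1$ is a properly embedded $1$-manifold in $\IR\times\IR^\infty$ without closed components. No such description is available: the ``reduced'' condition is not intrinsic to the embedded manifold but is relative to the chosen regular slices $t_0\le\dots\le t_n$, and the face operators of the poset model of $N\Cobred_1$ do not merely restrict the manifold---they \emph{delete} connected components that become ``closed'' when a slice is forgotten---so $\|N\Cobred_1\|$ is not a subspace of the moduli space of $1$-submanifolds and the scanning map has no starting point. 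The ``dimension shift from the arc endpoints'' is a heuristic, not a known mechanism. The paper's route to (II) is entirely different: it establishes the reduction fibre sequence at the \emph{topological} level, $B\Cobcl_1\to B\Cob_1\to B\Cobred_1$, applies GMTW to get $B\Cob_1\simeq Q(S^0)$, identifies $B\Cobcl_1\simeq Q(\gS(BSO_2)_+)$, recognises the first map as the one appearing in the Genauer sequence $\gO^{\infty-1}MTSO_2\to Q(\gS(BSO_2)_+)\to Q(S^0)$, and then rotates once to the right to obtain $B\Cobred_1\simeq\gO^{\infty-2}MTSO_2$. So the topological fibre sequence is indispensable even though the theorem concerns $h\Cob_1$, and (II) is a \emph{consequence} of it rather than an independent geometric input. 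Your TC fallback is circular: the cobordism model for $\TC(\IS[\gO X])$ is itself a corollary of exactly this computation.

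Two further remarks on (I) and the Euler class. The fibre sequence is not proved by ordinary Quillen Theorem B but by a \emph{local} additivity theorem for bordism categories, developed by the author in earlier work precisely because the functor $R\colon\Cob_1\to\Cobred_1$ is only \emph{locally} (co)Cartesian---the composite of two reduced bordisms need not be reduced---and a naive Theorem-B argument would meet the same obstruction. The paper also warns that the homotopy-category fibre sequence is not a formal consequence of the topological one (this already fails for the Genauer sequence), so the hypotheses must be reverified separately for $hR$. As for the factor of two: the paper does not argue via snake identities. It compares the continuation maps $f\colon B\Cobred_1\to Q(\gS^2(BSO_2)_+)$ and $g\colon B(h\Cobred_1)\to K(\IZ,2)$ of the two compatible fibre sequences, identifies $g$ with the class $\gk_0$ defined as the pullback of the basepoint class along $MTSO_2\to\gS^\infty(BSO_2)_+$, and this class is twice a generator of $H^0(MTSO_2)\cong\IZ$. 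Your $\cup\circ\cap$ heuristic and the resulting $\pi_1 B(h\Cob_1)\cong\IZ/2$ agree with this but are not yet a proof; one would still need to compute the connecting homomorphism $\pi_0 MTSO_2\to\pi_1 B\IN$ directly.
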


\subsection*{The reduction fiber sequence}

In order to prove \autoref{theorem:1} we construct a fiber sequence,
which we believe to be of independent interest.
Just like Genauer's sequence \cite{Gen12} for bordisms with horizontal boundary,
this fiber sequence relates the classifying spaces 
of three bordism categories, which we now describe.

Every $d$-dimensional cobordism $W:M \to N$ decomposes canonically 
as a disjoint union $W = c(W) \amalg r(W)$ where $c(W)$ is a closed
manifold and $r(W)$ is \emph{reduced}, i.e.\ 
it has no closed components.
We define two new topologically enriched bordism categories:
the closed bordism category is the full subcategory 
$\Cobcl_d \subset \Cob_d$ on the object $\emptyset$, 
and the reduced bordism category is the quotient category $\Cob_d \to \Cobred_d$
where two bordisms $W$ and $V$ are identified if $r(W) = r(V)$.
Every morphism in $\Cobred_d$ can be uniquely represented by 
a reduced bordism.
We refer the reader to definition \ref{defn:Cobcl/red} for a complete definition,
and to Figure \ref{fig:cobred-composition} for an illustration of the case $d=1$.

\begin{theorem}\label{theorem:2}
    For any $d \ge 0$ there are two 
    compatible homotopy fiber sequences of infinite loop spaces: 
    \[
        \begin{tikzcd}
            B \Cobcl_{d} \ar[r] \ar[d] 
            & B \Cob_{d} \ar[r, "R"] \ar[d] 
            & B \Cobred_{d} \ar[d] \\
            B (h\Cobcl_{d}) \ar[r] 
            & B (h\Cob_{d}) \ar[r, "R"] 
            & B (h\Cobred_{d}).
        \end{tikzcd}
    \]
    Here $R$ is induced by the functor $\Cob_d \to \Cobred_d$
    that deletes closed components.
\end{theorem}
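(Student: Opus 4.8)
The plan is to build both horizontal sequences as group-completions of a single sequence of symmetric monoidal categories, and then identify the fiber. First I would observe that $\Cob_d$, $\Cobcl_d$, and $\Cobred_d$ are all symmetric monoidal under disjoint union, that the inclusion $\Cobcl_d \hookrightarrow \Cob_d$ and the quotient $R\colon \Cob_d \to \Cobred_d$ are symmetric monoidal functors, and that the same holds after passing to homotopy categories. Applying the classifying space functor $B(-)$ then produces infinite loop spaces and infinite loop maps, so the content of the theorem is entirely about the homotopy fiber of $R$. The vertical maps are induced by the localisation functors $\Cob_d \to h\Cob_d$ etc., which are compatible with everything in sight, so the two rows will be automatically compatible once each is established; I would treat the topologically enriched row and remark that the homotopy-category row is the identical argument with $\pi_0$-truncated mapping spaces.

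The key step is to show that the sequence $B\Cobcl_d \to B\Cob_d \xrightarrow{R} B\Cobred_d$ is a fiber sequence. The natural tool here is the group-completion / Quillen-style argument: since $R$ is essentially surjective and the point is to compare $B$ of the source with $B$ of a quotient. I would try to realise $\Cobred_d$ as a quotient of $\Cob_d$ by the action of the symmetric monoidal subcategory (or rather sub-$E_\infty$-space) generated by closed $d$-manifolds, i.e.\ by $c(W)$'s. Concretely, let $\mcC\ell_d$ denote the $E_\infty$-monoid of closed $d$-manifolds (morphisms $\emptyset \to \emptyset$); it acts on $\Cob_d$ by disjoint union, and $\Cobred_d$ is the quotient. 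The cleanest route is to invoke a general principle: if $\mcM$ is an $E_\infty$-monoid acting on a symmetric monoidal category $\mcC$, then $B(\mcC/\mcM) \simeq B\mcC \,/\!\!/\, B\mcM$ (homotopy quotient / bar construction), and after group completion the homotopy orbit becomes a fiber sequence $B\mcM \to B\mcC \to B(\mcC/\mcM)$ provided $\pi_0 B\mcM$ acts nicely — which it does, since after group-completion everything is an infinite loop space and the action is through an $E_\infty$-map. Then $B\Cobcl_d$ is precisely the classifying space of the full subcategory on $\emptyset$, whose only morphisms are the closed manifolds, so $B\Cobcl_d \simeq B\mcM$; this identifies the fiber.

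The main obstacle, and the step I would expect to require real care, is justifying that the homotopy quotient $B\Cob_d \to B\Cobred_d$ actually has the expected fiber rather than merely surjecting onto it — i.e.\ a group-completion statement. The subtlety is that $\Cob_d \to \Cobred_d$ is a quotient of categories (identifying morphisms), not a quotient of spaces, so one must check that applying nerve and then $B$ commutes with this quotient up to the relevant homotopy. I would handle this by modelling the quotient as a (homotopy) colimit: $\Cobred_d = \colim\big(\cdots \rightrightarrows \mcM \times \Cob_d \rightrightarrows \Cob_d\big)$, the bar construction of the $\mcM$-action, at the level of Segal spaces / complete Segal spaces, and then use that $B(-)$ (group-completion of the underlying $E_\infty$-space of the nerve) preserves geometric realisations and turns the bar construction $B(\ast, B\mcM, B\Cob_d)$ into the homotopy quotient $B\Cob_d \,/\!\!/\, B\mcM$; a standard group-completion argument (as in the proof of the group-completion theorem, or the "additivity" arguments used for cobordism categories) then upgrades the homotopy-orbit sequence $B\mcM \to B\Cob_d \to B\Cob_d/\!\!/B\mcM$ into a fiber sequence of infinite loop spaces because $\pi_0 B\Cobred_d$ is a group acting trivially on the relevant homology. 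Once this is in place, the identification of the fiber with $B\Cobcl_d$ and the compatibility of the two rows are formal.
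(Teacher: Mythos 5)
Your approach is genuinely different from the paper's, and the comparison is instructive, but as written it has a real gap precisely at the step you flag as needing care.

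The paper does not model $\Cobred_d$ as a homotopy quotient. Instead, it observes that $\Cobcl_d \to \Cob_d \to \Cobred_d$ arises from a strict pullback of topological categories along the cylinder functor $\mathcal{R}_+ \to \Cobred_d$ (where $\mathcal{R}_+$ is the semigroup $(\IR_{>0},+)$, whose classifying space is contractible), and then applies the local additivity theorem for bordism categories from \cite{Stb19} — a Quillen-Theorem-B-style statement for functors that are locally (co)Cartesian fibrations. The hypotheses of that theorem are verified using Lemma \ref{lem:c-r-decomposition}, which shows that each $\hom$-space of $\Cob_d$ splits (up to weak equivalence) as a product of a closed and a reduced factor. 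That splitting is the "freeness" input and it does all the homotopical work.

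The gap in your proposal is the identification $B\Cobred_d \simeq B\Cob_d \doubleslash B\Cobcl_d$. You note that this requires care, but the sketch you give does not actually carry the load. Two specific issues: (i) $\Cobred_d$ is the \emph{strict} coequalizer of $\Cobcl_d \times \Cob_d \rightrightarrows \Cob_d$, and for the classifying space functor to turn this into the corresponding homotopy quotient you need the $\Cobcl_d$-action on the mapping spaces of $\Cob_d$ to be free/cofibrant in a controlled sense. This is exactly the content of Lemma \ref{lem:c-r-decomposition}, which your proposal never isolates; without it the claim that the strict and homotopy quotients agree is unsupported. (ii) The simplicial bookkeeping is subtler than "$B$ preserves geometric realisations": $N_q\Cobred_d$ is the quotient of $N_q\Cob_d$ by $\mathcal{M}^q$ (one closed factor per arrow), not by a single diagonal $\mathcal{M}$, and the face map $d_1$ composes-then-retracts, reflecting the fact that the composite of two reduced bordisms need not be reduced. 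This last point is precisely why the paper needs the \emph{local} version of the additivity theorem — the quotient functor $R$ is locally (co)Cartesian but not (co)Cartesian (Remark \ref{rem:Stm-not-sufficient}) — and any bar-construction argument would have to confront the same phenomenon. Finally, the appeal to the group-completion theorem at the end is a red herring: $\Cobcl_d$ has a single object, so $B\Cobcl_d$ is connected and hence already grouplike; what is needed is not group completion but the homotopy-quotient identification itself. Similarly, the paper explicitly warns (Warning \ref{war:hCart}) that passing from the topological row to the $h$-row is not formal, so "identical argument with $\pi_0$-truncated mapping spaces" is too quick, though it does go through here because the $\hom$-space splitting survives $\pi_0$.
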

In theorem \ref{thm:reduction-fiber-sequence} we also prove
\autoref{theorem:2} in the presence of tangential structures.
Most of the technical work towards a proof of \autoref{theorem:2} 
has been completed in our previous paper \cite{Stb19}, 
where we generalise Steimle's
\emph{additivity theorem for bordism categories} \cite{Stm18}
to a broader class of functors. 

The crucial observation needed to now deduce \autoref{theorem:1}
from \autoref{theorem:2} is that in dimension $d=1$ we have
that $\Cobred_1 \simeq h\Cobred_1$ because the diffeomorphism
group of an interval is contractible.

\subsection*{Topological cyclic homology of simply connected spaces}
One of the more mysterious aspects of the cobordism category 
is its relation to Waldhausen $K$-theory.
Recall that the Waldhausen $K$-theory $A(X)$ of a space $X$ 
is the infinite loop space of the algebraic $K$-theory      
of the ring spectrum $\IS[\gO X] := \gS^\infty (\gO X)_+$.
An effective way of computing algebraic $K$-theory
is via the cyclotomic trace to topological cyclic homology (TC).
In the relevant case we have a map
$\mi{trc}_p:A(X) \to \gO^\infty\TC(\IS[\gO X]; p)$
for any prime $p$.

For $X=*$ there is, up to $p$ completion,
an equivalence $\TC(\IS;p) \simeq \IS \vee \gS\CP^\infty_{-1}$
by \cite{BHM93}.
Here $\CP^\infty_{-1}$ is just another name for 
the spectrum $MTSO_2$, whose infinite loop space is $\gO B\Cob_2$.
In light of this apparent coincidence Ib Madsen concludes his 
2006 ICM plenary address \cite[407]{Mad07} with:
\begin{displayquote}
\emph{%
    Finally, and maybe most important, there are reasons to believe 
    that the moduli space of Riemann surfaces is related to 
    $\TC(\IS; p)$, possibly via field theories. 
    The spectrum $\CP^\infty_{-1}$ occurs in both theories. 
    It is a challenge to understand why.
}
\end{displayquote}
This curious coincidence has been known to experts for a while and,
even though no concrete explanations have been suggested,
the idea itself has inspired some intriguing research 
such as B\"okstedt and Madsen's map from $\gO B\Cob_d$ to $A$-theory,
see \cite{BM14} and \cite{RS14}.

In order to understand Madsen's question, it makes sense to think
of $\TC(\IS;p)$ and $B\Cob_2$ as the values of the functors 
$\TC(\IS[\gO X];p)$ and $B\Cob_2(X)$ for $X=*$.
Here $\Cob_2(X)$ is a variant of the surface category
where every object and bordism is equipped with a map to $X$.
If there is a fundamental relation between surfaces and $\TC$,
one could hope for a relation between these functors.
However, the analogy fails in this case:
the main theorem of \cite{GMTW06} implies
\( 
    B\Cob_2(X) \simeq \gO^\infty( MTSO_2 \wedge X_+)
\)
and hence the functor $B\Cob_2(X)$ is excisive in $X$, 
whereas $\TC(\IS[\gO X])$ is not.

We will try to argue that instead, topological cyclic homology 
is more naturally related to the reduced $1$-dimensional bordism category.
As a consequence of \autoref{theorem:2} we have that
$ B\Cobred_1 $ is equivalent to $\gO^{\infty-2} MTSO_2$,
a delooping of $B\Cob_2$.
For a space $X$ we define $\Cobred_1(X)$ to be the category
of reduced bordisms with maps to $X$. 
A generalisation of \autoref{theorem:2} together with the main theorem of \cite{BCCGHM96} implies:
\begin{theorem}\label{corollary:TC}
    For any simply connected space $X$ of finite type 
    there is an equivalence of infinite loop spaces
    \[
        \gO^\infty \mi{TC}(\IS[\gO X]; p) \simeq \left( Q(X_+) \times \gO B\Cobred_{1}(X) \right)_p^\wedge.
    \]
\end{theorem}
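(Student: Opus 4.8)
The plan is to run \autoref{theorem:2} with the tangential structure ``a map to $X$'' and to match the three resulting spectra, after $p$-completion, with the pieces of $\TC(\IS[\gO X];p)$ as computed in \cite{BCCGHM96}. For the first step, identify the terms of \autoref{theorem:2}. By the Galatius--Madsen--Tillmann--Weiss theorem \cite{GMTW06} for the tangential structure $BSO(1)\times X\to BSO(1)$, together with $BSO(1)=*$ and $MTSO_1=\gS^{-1}\IS$, the middle term is $B\Cob_1(X)\simeq Q(X_+)$, the infinite loop space of $\gS^\infty X_+$. For the left term: every connected closed $1$-manifold is a circle with $B\Diff^+(S^1)\simeq BS^1$, composition in $\Cobcl_1(X)$ is disjoint union, and a circle with a map to $X$ is classified by the free loop space $LX:=\Map(S^1,X)$ modulo the rotation action; hence the endomorphism $E_\infty$-monoid of $\emptyset$ in $\Cobcl_1(X)$ is the free $E_\infty$-space on $(LX)_{hS^1}$. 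By Barratt--Priddy--Quillen and the usual delooping shift this gives $B\Cobcl_1(X)\simeq\gO^\infty\bigl(\gS\,\THH(\IS[\gO X])_{hS^1}\bigr)$, where $\THH(\IS[\gO X])\simeq\gS^\infty LX_+$ carries its rotation $S^1$-action (the very same $S^1$ as above). Reading \autoref{theorem:2} as a cofibre sequence of spectra and applying $\gS^{-1}$ turns it into
\[
\THH(\IS[\gO X])_{hS^1}\xrightarrow{\ \mrm{trf}\ }\gS^{-1}\gS^\infty X_+\longrightarrow\mathbf{F}(X),
\]
where $\mathbf{F}(X)$ is the spectrum with $\gO^\infty\mathbf{F}(X)\simeq\gO B\Cobred_1(X)$ and $\mrm{trf}$ is induced by the inclusion $\Cobcl_1(X)\subset\Cob_1(X)$ --- geometrically, the $S^1$-transfer of a circle-with-map followed by evaluation at the basepoint.

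The second step is the $\TC$ side. For $X$ simply connected of finite type, $\THH(\IS[\gO X])\simeq\gS^\infty LX_+$ is a cyclotomic spectrum whose cyclotomic Frobenius is governed by the $p$-th power map on $LX$; after $p$-completion the Segal conjecture makes this structure essentially free, and this is exactly the input of the main theorem of \cite{BCCGHM96}. Unwinding that theorem one obtains, after $p$-completion: first, the linearisation map $\gS^\infty X_+\to\TC(\IS[\gO X];p)$ induced by the constant loops $X\hookrightarrow LX$ admits a retraction, so that $\TC(\IS[\gO X];p)\simeq\gS^\infty X_+\vee\wt{\TC}(X)$; and second, the reduced summand $\wt{\TC}(X)$ is precisely the cofibre of the $S^1$-transfer $\THH(\IS[\gO X])_{hS^1}\to\gS^{-1}\gS^\infty X_+$. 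Hence $\wt{\TC}(X)_p^\wedge\simeq\mathbf{F}(X)_p^\wedge$.

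Assembling, combining the two steps and applying $\gO^\infty(-)$ yields, after $p$-completion,
\[
\gO^\infty\TC(\IS[\gO X];p)\simeq\bigl(\gO^\infty\gS^\infty X_+\bigr)\times\bigl(\gO^\infty\mathbf{F}(X)\bigr)\simeq\bigl(Q(X_+)\times\gO B\Cobred_1(X)\bigr)_p^\wedge,
\]
which is the claim. As a sanity check, for $X=*$ one has $(LX)_{hS^1}=\CP^\infty$ and the classical $S^1$-transfer cofibre sequence $\gS^\infty\CP^\infty_+\xrightarrow{\mrm{trf}}\gS^{-1}\IS\to\gS\CP^\infty_{-1}$ (with fibre $\CP^\infty_{-1}=MTSO_2$) identifies $\mathbf{F}(*)$ with $\gS\CP^\infty_{-1}=\gS\,MTSO_2$; this recovers both $B\Cobred_1\simeq\gO^{\infty-2}MTSO_2$ and the Bökstedt--Hsiang--Madsen splitting $\TC(\IS;p)\simeq(\IS\vee\gS\CP^\infty_{-1})_p^\wedge$ of \cite{BHM93}.

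The main obstacle is the comparison in the second step: one must verify that the transfer map produced by \cite{BCCGHM96} (arising from the fundamental cofibre sequence / Becker--Gottlieb transfer in the $\TC$ formalism) agrees, after $p$-completion, with the map $\mrm{trf}$ induced by $\Cobcl_1(X)\subset\Cob_1(X)$ --- concretely, that a circle with a map $\gamma$ to $X$, regarded as an element of $B\Cob_1(X)\simeq Q(X_+)$ via Pontryagin--Thom, represents the transfer of the class of $\gamma$ in $(LX)_{hS^1}$. Along the way one has to track the various suspension and delooping shifts carefully, and to keep in mind that \autoref{theorem:2} is an integral statement whereas the identification with $\TC$ holds only after $p$-completion --- which is precisely where simple connectivity and finiteness of type enter, through the hypotheses of \cite{BCCGHM96}.
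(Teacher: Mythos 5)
Your proposal is correct and is essentially the paper's argument: run \autoref{theorem:2} with tangential structure $\gt = X\times\gt^{\mrm{or}}$, identify $B\Cobcl_{1}(X)\simeq Q(\gS_+(LX)_{h\IT})$ and $B\Cob_{1}(X)\simeq Q(X_+)$, recognize the first map as $Q(\op{ev})\circ\mrm{trf}_\IT$, and match the resulting fiber sequence against the splitting of $\TC(\IS[\gO X];p)$ from \cite{BCCGHM96}. The one point you flag as ``the main obstacle'' --- that the inclusion $\Cobcl_1(X)\subset\Cob_1(X)$ induces precisely the circle transfer $\mrm{trf}_\IT$ followed by evaluation, under the GMTW identifications --- is not something the paper reproves; it is exactly the content of Giansiracusa's theorem \cite[Theorem 1.1]{Gia17}, which the paper imports as Theorem \ref{thm:incl=trf} and uses to close that gap. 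With that citation in hand your argument goes through verbatim; the finite-type and simple-connectivity hypotheses enter, as you say, only through the applicability of \cite{BCCGHM96}.
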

One way of interpreting this theorem is to say that $\Cobred_1(X)$
is a \emph{bordism model} for the topological cyclic homology 
of simply connected spaces.
This is similar in spirit to Raptis and Steimle's cobordism model
for Waldhausen $K$-theory.
In \cite{RS19} they construct, for every space $X$,
$\Cob^{\mrm{sym}}(X)$ such that $\gO B \Cob^{\mrm{sym}}(X) \simeq A(X)$.
One could therefore try to understand the cyclotomic trace 
in terms of these models.
Already for $X=*$ this is related to the difficult question 
of whether the cyclotomic traces
$\mi{trc}_p: A(*) \to (\gO^{\infty-1}\CP^\infty_{-1})_p^\wedge$
admit a common integral lift for all $p$.

\subsection*{Reduced bordisms as a combinatorial model for $\CP^\infty_{-1}=MTSO_2$}

As the previous section illustrates $\CP^\infty_{-1}$ 
is an interesting stable homotopy type arising in many important situations,
and it is helpful to have different models at hand.
\autoref{theorem:2} implies that $\gO^{\infty-2}\CP^\infty_{-1}$ is,
as an infinite loop space, equivalent to the classifying space of 
the symmetric monoidal category $h\Cobred_1 \simeq \Cobred_1$.
By the Madsen-Weiss theorem $\Cob_2$ also is a cobordism model
for this infinite loop space, though there is a dimension shift:
\[
    \gO^2 B(h\Cobred_1) \simeq \gO^\infty MTSO_2
    \simeq \gO B\Cob_2. 
\]
Even though $h\Cobred_1$ and $\Cob_2$ both are cobordism models 
for $MTSO_2$, they are of very different flavours.
The surface category $\Cob_2$ is of geometric nature 
as it is built from the diffeomorphism groups of surfaces.
The reduced one-dimensional bordism category $h\Cobred_1$,
on the other hand, admits a completely combinatorial description.
See figure \ref{fig:cobred-composition} for an example
of how morphisms are composed in $h\Cobred_1$.
\begin{figure}
    \centering
    \def\svgwidth{.8\linewidth}
    \import{figures/}{cobred-composition.pdf_tex}
    \caption{Two morphisms in $h\Cobred_1$ and their composite.
    }
    \label{fig:cobred-composition}
\end{figure}

While \autoref{theorem:2} implies an equivalence 
$B(h\Cobred_1) \simeq \gO^{\infty-2}MTSO_2$,
it does not explicitly construct a map.
The second part of the paper aims to resolve this issue.
 
For this we construct an auxiliary simplicial space $\Cut_d$,
which we believe may be of independent interest.
$\Cut_d$ is a quotient of the nerve $N(\Cobred_d)$ 
and we show that the geometric realisation $\|\Cut_d\|$ 
is the free infinite loop space on $\gS^2 B\Diff^+(W)_+$ for all 
closed connected $d$-manifolds $W$.
We prove:
\begin{theorem}\label{theorem:continue-fib-seq}
    The quotient map $N(\Cobred_d) \to \Cut_d$ 
    continues the reduction fiber sequence of \autoref{theorem:2}
    in the sense that 
    \[
        B(\Cob_d) \longrightarrow B(\Cobred_d) 
        \longrightarrow \|\Cut_d\|
    \]
    is a homotopy fiber sequence of infinite loop space maps.
\end{theorem}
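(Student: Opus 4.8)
The plan is to extend the homotopy fiber sequence $B\Cobcl_d \to B\Cob_d \to B\Cobred_d$ of \autoref{theorem:2} one step to the right, identifying the ``next'' term with $\|\Cut_d\|$. By the standard principle that a fiber sequence of infinite loop spaces $F \to E \to B$ deloops to a fiber sequence $E \to B \to BF$ (equivalently: there is a cofiber/fiber sequence of connective spectra, and $B$ sits between $E$ and $BF$), it suffices to produce a natural equivalence $BF \simeq \|\Cut_d\|$ compatible with the connecting maps, where $F = B\Cobcl_d$. So the first step is to understand $B\Cobcl_d$ and its delooping. The closed bordism category $\Cobcl_d$ is the full subcategory on $\emptyset$, so it is (the classifying space of) a symmetric monoidal category whose endomorphisms of $\emptyset$ are closed $d$-manifolds with their diffeomorphism groups; its classifying space is the free $E_\infty$-space (group completion) on $\coprod_{[W]} B\Diff^+(W)$ over closed connected $d$-manifolds $W$, i.e.\ $\gO^\infty\gS^\infty\left(\coprod_{[W]} B\Diff^+(W)_+\right)$ after group completion. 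Hence $B(B\Cobcl_d) \simeq \gO^\infty \gS^\infty \gS\left(\coprod_{[W]} B\Diff^+(W)_+\right)$, a one-fold suspension.

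The second step is to show that $\|\Cut_d\|$ realizes exactly this delooping. By hypothesis (the paragraph preceding the theorem) $\|\Cut_d\|$ is the free infinite loop space on $\gS^2 B\Diff^+(W)_+$ for all closed connected $W$, i.e.\ $\gO^\infty\gS^\infty\gS^2\left(\coprod_{[W]} B\Diff^+(W)_+\right)$ — which is precisely $B\bigl(B(B\Cobcl_d)\bigr)$, one further suspension. This is the shift that makes the degrees line up: $B\Cobcl_d$ is a zero-fold suspension (free $E_\infty$ on $\gS^0$-level data), $B(B\Cobcl_d)$ is $\gS^1$ of it, and $\|\Cut_d\|$ is $\gS^2$ of it; so $\|\Cut_d\|$ is the correct term \emph{two} steps out, i.e.\ it plays the role of $B(BF)$ where $F = B\Cobcl_d$... but in the fiber sequence $E \to B \to BF$ we only need $BF$. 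I should double-check the indexing: in $B\Cob_d \to B\Cobred_d \to ?$, the third term is $B(B\Cobcl_d) = \gS^1$-suspension, which is the free infinite loop space on $\gS^1 B\Diff^+(W)_+$, not $\gS^2$. So in fact $\|\Cut_d\|$ must be taken to be the free infinite loop space on $\gS B\Diff^+(W)_+$ after one more reindexing, or else the map in the theorem is the one obtained by looping; I will pin this down by directly analyzing the simplicial structure. Concretely, the key computation is: the homotopy cofiber of $N(\Cobcl_d) \hookrightarrow N(\Cob_d) \to N(\Cobred_d)$, after geometric realization and group completion, agrees with $\|\Cut_d\|$; since $\Cut_d$ is defined as an explicit simplicial-space quotient of $N(\Cobred_d)$, this reduces to showing that the fibers of $N(\Cobred_d) \to \Cut_d$ are, levelwise, built from the closed pieces $c(W)$ that were collapsed in passing from $\Cob_d$ to $\Cobred_d$.

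The third step is to make the comparison genuinely map-level and compatible with \autoref{theorem:2}. I would construct a commuting diagram of simplicial spaces
\[
    N(\Cob_d) \longrightarrow N(\Cobred_d) \longrightarrow \Cut_d
\]
and argue that applying $B = \gO^\infty\gS^\infty$-style group completion to the associated sequence of $E_\infty$-spaces yields a fiber sequence. The input is: (i) the sequence is, levelwise in the simplicial direction, a ``split'' cofiber sequence of $E_\infty$-spaces coming from the canonical decomposition $W = c(W) \amalg r(W)$, so that group completion takes it to a fiber sequence of spectra (group completion is exact on such split sequences); and (ii) geometric realization commutes with group completion here because all the spaces in sight are grouplike after realization (or one invokes the group-completion theorem / the fact that these are $\gG$-spaces and $\|\cdot\|$ is exact). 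Then the identification of the total space $B(\Cob_d)$ and base $B(\Cobred_d)$ is exactly \autoref{theorem:2}, and the fiber is $\|\Cut_d\|$ by Step 2.

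The main obstacle I anticipate is Step 3, specifically the interchange of geometric realization (in the simplicial/nerve direction) with group completion, together with checking that the levelwise cofiber sequences are compatible enough to assemble — this is precisely the kind of subtlety that \autoref{theorem:2} itself required the additivity theorem of \cite{Stm18}, \cite{Stb19} to handle. I expect the cleanest route is to package everything as a sequence of symmetric monoidal (or $\gG$-)simplicial spaces and quote the additivity machinery already developed, verifying only that the new quotient $N(\Cobred_d) \to \Cut_d$ fits the hypotheses — i.e.\ that $\Cut_d$ is the cofiber of an additive functor in the appropriate sense — rather than re-proving exactness by hand. A secondary technical point is confirming that $\|\Cut_d\|$ is \emph{grouplike}, so that the fiber sequence of infinite loop spaces does not lose a connected component; this follows once one exhibits $\|\Cut_d\|$ as the infinite loop space of a connective spectrum, which the ``free infinite loop space'' description already provides.
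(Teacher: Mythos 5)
Your high-level plan — continue the reduction fiber sequence by delooping $B\Cobcl_d$ and identifying the delooping with $\|\Cut_d\|$ — is the right conceptual target, and your (eventually corrected) indexing $\|\Cut_d\| \simeq Q(\gS^2(\coprod_{[W]} B\Diff^+(W))_+)$ matches the paper's conclusion. But there are two substantive gaps. First, you treat that identification of $\|\Cut_d\|$ as a hypothesis; in the paper it is a \emph{consequence} of the theorem, and proving it is most of the work. Second, and more seriously, the levelwise claim in Step~3 is false. The retraction $S_n\colon N_n(\Cobred_d) \to (\Cut_d)_n$ is \emph{not} the quotient by the closed-component decomposition $W = c(W)\amalg r(W)$: in addition to closed components it deletes every component of the composite $W_1\cup\cdots\cup W_n$ that intersects the outer objects $M_0$ and $M_n$, and those are not closed pieces of any $W_i$, nor is the retraction even compatible with the object sequence $(M_0,\dots,M_n)$. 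So $N_n(\Cob_d)\to N_n(\Cobred_d)\to(\Cut_d)_n$ is not a split cofiber sequence of $E_\infty$-spaces in any form that group completion would turn into a fiber sequence, and the preimages of $S_n$ have no clean description in terms of $\Cobcl_d$.

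The paper resolves exactly this by inserting an auxiliary simplicial space $D_\cd$, whose $n$-simplices are (in the poset model) bordisms $((W,l),t_0\le\dots\le t_n)$ none of whose components touch the outer cuts $t_0,t_n$, giving a ladder with rows $\Ccl_\cd\to C_\cd\to\Cred_\cd$ and $\Ccl_\cd\to D_\cd\to\Cut'_\cd$. On the subspaces of connected manifolds the bottom row \emph{does} split levelwise as a wedge, $D^{con}_n\simeq(\Ccl_n)^{con}\vee(\Cut'_n)^{con}$. A lemma that each of $\Ccl$, $D$, $\Cut'$ is freely generated (as a $\gC$-space) by its connected part reduces the theorem to showing $\|D^{con}\|$ is contractible, and this is proved by identifying $\|D^{con}\|$ with the suspended mapping cone of the augmentation $BF_{d,\gt}\to\Psi^{con}_{d,\gt}$ of a topological poset of ``factorizations'' and proving that augmentation is a weak equivalence. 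The auxiliary object $D$, the free-generation-on-connected-pieces lemma, and the poset $F_{d,\gt}$ with its classifying-space computation are the essential ingredients, and all are absent from your sketch; without something playing the role of $D$ there is no levelwise exactness to feed into group completion, so the argument does not close.
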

In dimension $1$ we have
\(
    \|\Cut_1\| 
    \simeq Q(\gS^2 (\CP^\infty)_+)
\)
and the quotient map $B\Cobred_1 \to \|\Cut_1\|$ 
is a rational equivalence on the basepoint component, 
see corollary \ref{cor:rational-equiv}.
In the presence of a background space $X$ this map
has an interpretation in terms of $\TC$, see corollary \ref{cor:map-to-THH}.

\subsection*{Application: cocycles for Miller-Morita-Mumford classes}
The Miller-Morita-Mumford classes are characteristic classes 
for surface bundles, which, under the equivalence 
$B\Cob_2 \simeq \gO^{\infty-1}MTSO_2$, give rise to polynomial generators:
\[
    \gk_i \in 
    H^*(\gO_0^\infty MTSO_2; \IQ) \cong \IQ[\gk_1,\gk_2,\dots].
\]
From the equivalence $B(\Cobred_1) \simeq \gO^{\infty-2}MTSO_2$
we can compute the rational cohomology of $\Cobred_1$ as:
\[
    H^*(B(\Cobred_1)_0; \IQ) \cong \IQ[\dlgk_0, \dlgk_1, \dots]. 
\]
Here $\dlgk_i$ is a double de-looping of $\gk_i$ and hence of degree $2i+2$.
Also note $\pi_0B(\Cobred_1) \cong \pi_0 B(\Cob_1) \cong \IZ$.

Generalising the notion of group cocycles, one can represent
every cohomology class on the classifying space $B\mcD$ 
of some discrete category $\mcD$ by a cocycle 
$\ga: N_i\mcD \to \IZ$ on the nerve.
While this does not work for the topologically enriched category 
$\Cob_2$, $h\Cobred_1$ is discrete and it is hence possible 
to describe the deloopings $\dlgk_i$ of the $\gk$-classes
in terms of cocycles on $h\Cobred_1$. 

To actually compute cocycles we crucially use that
every $\dlgk_i$ can be obtained as a pullback along
the map $B\Cobred_1 \to \|\Cut_1\|$ of \autoref{theorem:continue-fib-seq}.
We identify $\|\Cut_1\|$ with $Q(\gS^2 (B\gL)_+)$,
where $\gL$ is Connes' category of cyclic sets.
Then we use Igusa's description \cite{Igu04} of cocycles on $\gL$ representing
the powers of the first Chern class $(c_1)^i \in H^{2i}(B\gL; \IQ)$.
Explicitly, we have:
\begin{defn*}
For each $i\ge 0$ we construct a $(2i+2)$-cocycle $\gc_i$ 
on $h\Cobred_1$ and $h\Cob_1$ in three steps.
\medskip
\\
\textbf{(1)}
    For a $(2i+1)$-tuple of disjoint points $a_0, \dots, a_{2i} \in S^1$
    the sign $\sign(a_0,\dots,a_{2i}) \in \{\pm1\}$ is defined 
    to be equal to the sign of any permutation 
    $\gs$ of $\{0,\dots,2i\}$ such that the sequence
    $(a_{\gs(0)}, \dots, a_{\gs(2i)})$ is in cyclic order.
\medskip
\\
\textbf{(2)}
    For a $(2i+1)$-tuple of disjoint finite subsets 
    $A_0, \dots, A_{2i} \subset S^1$
    the reduced sign 
    $\cl{\sign}_{2i}(A_0,\dots,A_{2i})\in \IQ$
    is defined by an averaging procedure.
    Say that $(a_0,\dots,a_{2i}) \in \prod_i A_i$
    \emph{contains no neighbours} if for all $k<l$ there is a $j$ 
    such that the positively oriented arc $[a_k,a_l] \subset S^1$ 
    from $a_k$ to $a_l$ intersects $A_j$ in more than one point.
    Now define:
    \[
        \cl{\op{sign}}_{2i}(A_0, \dots, A_{2i}) := 
        \frac{1}{\prod_{j=0}^{2i} |A_j|} 
        \sum_{{(a_0, \dots, a_{2i}) \in \prod_j A_j}
        \atop {\text{contains no neighbours}}} 
        \op{sign}(a_0, \dots, a_{2i})
        \in \IQ.
    \]
\noindent
\textbf{(3)}
    For all $i \ge 1$ define a $(2i+2)$-cochain on the 
    simplicial set $N(h\Cobred_1)$ by the formula
    \[
        \gc_i(M_0 \xrightarrow{W_1}
        M_1 \xrightarrow{W_2} \dots
        \xrightarrow{W_{2i+2}} M_{2i+2})
        := 
        \frac{(-1)^i i!}{(2i)!}
        \sum_{[\gi:S^1 \inj W]}
        \cl{\op{sign}}_{2i}(\gi^{-1}(M_1^+), \dots, 
        \gi^{-1}(M_{2i+1}^+)).
    \]
    Here we write $W$ for the composition 
    $W_1 \cup_{M_1} \dots \cup_{M_{2i+1}} W_{2i+2}$
    and $M_i^+ \subset M_i$ for the set of positively oriented points.
    The sum $ \sum_{[\gi:S^1 \inj W]} $
    runs over isotopy classes of oriented
    embeddings $\gi:S^1 \inj W$ such that $\gi(S^1)$
    intersects $M_j$ for all $1 \le j \le 2i+1$.
\end{defn*}

\begin{theorem}
    For $i \ge 1$ the cochain $\gc_{i}$ on $h\Cobred_1$ defined above
    is a $(2i+2)$ cocycle and the cohomology class $[-\gc_i]$ corresponds to,
    possibly up to a sign $(-1)^i$, the generator $\dlgk_i$ under the isomorphism
    \[
        H^*(B(h\Cobred_1)_0; \IQ) \cong \IQ[\dlgk_0, \dlgk_1, \dlgk_2, \dots].
    \]
    The same formula describes a cocycle for $\pm\dlgk_i$ on $h\Cob_1$.
\end{theorem}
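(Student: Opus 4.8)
The plan is to realise $\gc_i$ as the pullback, along the quotient map $\rho\colon N(h\Cobred_1)\to\Cut_1$ of \autoref{theorem:continue-fib-seq}, of a double suspension of Igusa's explicit cocycle for the powers of the first Chern class on Connes' category $\gL$; both the cocycle property and the identification of the cohomology class then follow formally. Recall that $\|\Cut_1\|\simeq Q(\gS^2(\CP^\infty)_+)$ with $\CP^\infty\simeq B\gL$, that $\rho$ is a rational equivalence on the basepoint component (Corollary~\ref{cor:rational-equiv}), and that under these identifications $\gk_i$ is the restriction to $B\Cobred_1$ of the canonical generator $y_{2i+2}$ of $H^{2i+2}(Q(\gS^2(\CP^\infty)_+);\IQ)$ which, for $i\ge 1$, corresponds to the double-suspended generator of $\wt H^{2i}(\CP^\infty;\IQ)$. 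Thus it suffices to represent $\pm y_{2i+2}$ by an explicit rational cochain on $\Cut_1$ and to identify its $\rho$-pullback with $-\gc_i$, up to the allowed sign $(-1)^i$.

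First I unwind $\rho$ simplicially. A $p$-simplex of $N(h\Cobred_1)$ is a composable string $M_0\xrightarrow{W_1}\cdots\xrightarrow{W_p}M_p$; write $W=W_1\cup_{M_1}\cdots\cup_{M_{p-1}}W_p$ for the glued $1$-manifold, which, although each $W_j$ is reduced, in general has embedded circles---and these are precisely what survives in the quotient $\Cut_1$. An oriented embedding $\gi\colon S^1\inj W$ meeting every intermediate level records, through the cyclically ordered finite subsets $\gi^{-1}(M_1^+),\dots,\gi^{-1}(M_{p-1}^+)\subset S^1$, a simplex of the cyclic nerve of $\gL$; collecting these over all isotopy classes $[\gi]$, together with the two extra ``cutting'' coordinates, is exactly the combinatorial description of $\Cut_1$ underlying $\|\Cut_1\|\simeq Q(\gS^2(B\gL)_+)$. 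Under this description $y_{2i+2}$ is, before the cutting, pulled back from $(c_1)^i\in H^{2i}(B\gL;\IQ)$; this also explains the shift from a $2i$-cocycle on $\gL$ to a $(2i+2)$-cochain on $N(h\Cobred_1)$, the ``extra two'' being the outer boundary objects $M_0,M_{2i+2}$, which an interior circle does not meet.

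Now substitute Igusa's formula \cite{Igu04}: $(c_1)^i$ is represented by a rational $2i$-cocycle on the nerve of $\gL$ whose value on a simplex is, up to the normalising constant $\tfrac{(-1)^i\,i!}{(2i)!}$, a signed count of cyclic orderings of chosen representatives---this is step (1) of the definition of $\gc_i$. Pulling it back along $\rho$ to a $(2i+2)$-simplex $M_0\xrightarrow{W_1}\cdots\xrightarrow{W_{2i+2}}M_{2i+2}$ of $N(h\Cobred_1)$ forces one to evaluate Igusa's cochain on the cyclic-nerve data of each embedded circle $\gi$, in which a level $M_j^+$ meets $\gi$ not in a single point but in a finite subset $A_j\subset S^1$; carrying the formula through this multiplicity is exactly what produces the averaging of step (2)---the ``contains no neighbours'' condition isolating the tuples of representatives not annihilated by the antisymmetrisation, and the factor $1/\prod_j|A_j|$ averaging over the choice of representative in each $A_j$. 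Summing over $[\gi]$ and keeping the constant $\tfrac{(-1)^i\,i!}{(2i)!}$ reproduces the formula of step (3). Since $\gc_i$ is thereby $\pm\rho^\ast$ of a cocycle it is a cocycle, and $[\gc_i]=\pm\rho^\ast y_{2i+2}=\pm\gk_i$, the leading $-$ and the residual $(-1)^i$ absorbing the sign conventions in the double suspension and in the identification of $\CP^\infty_{-1}=MTSO_2$ with powers of $c_1$ on $\CP^\infty=B\gL$. Finally, the same formula is defined verbatim on $N(h\Cob_1)$, and since the reduction functor induces $R\colon B(h\Cob_1)\to B(h\Cobred_1)$ along which both $\gk_i$ (\autoref{theorem:2}) and the cochain $\gc_i$ are pulled back---the glued composite $W$ and its embedded circles being unchanged by deleting closed components---the statement for $h\Cob_1$ follows.

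The main obstacle is the combinatorial core of the third step: checking that the $\rho$-pullback of Igusa's signed-cyclic-order cocycle is, term by term, the averaged formula---i.e.\ that a circle meeting a level in several points contributes exactly the ``no neighbours'' sum with the stated coefficients and no stray degenerate corrections---and pinning down the overall sign. A secondary difficulty is making the equivalence $\|\Cut_1\|\simeq Q(\gS^2(\CP^\infty)_+)$ explicit enough, ideally as a map of simplicial sets (which is available in dimension $1$, since $\Cut_1$ is itself a simplicial-set quotient of $N(\Cobred_1)$), that Igusa's cochain transports along it without first passing to cohomology; granting this, the cocycle identity and the class computation are formal.
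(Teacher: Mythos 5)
The high-level plan --- realise $\gc_i$ as the pullback of a double-suspended cocycle for $(c_1)^i$ on Connes' $\gL$ through the quotient to $\Cut_1$, then use the rational equivalence to identify the class --- is indeed the paper's strategy. But two of your reductions ``to formal consequences'' conceal the actual mathematical content, and as written the argument has gaps there.

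First, you assert that ``carrying \cite{Igu04}'s formula through the multiplicity'' automatically produces the averaging with the \emph{contains no neighbours} condition. That is not so. Igusa's cocycle $c^k_{\mcZ}$ is defined and proved to be a cocycle only on the subcategory $\gL^{\mathrm{inj}}\subset\gL$ of injective cyclic maps. If you naively average over representatives with no further constraint, you get $\sign_{2k}$, not $\cl{\sign}_{2k}$, and the paper explicitly does not know whether those two are cohomologous. The \emph{no neighbours} condition is not forced by the multiplicity bookkeeping; it is chosen precisely so that the resulting cochain on the auxiliary simplicial set $\mc{U}$ satisfies the coboundary identity. Proving this (lemma~\ref{lem:sign-cocyc}) involves a nontrivial cancellation argument pairing terms in which $a_i$ and $a_j$ are neighbours, and is the main combinatorial step of the whole theorem. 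You would also still need the separate claim (proposition~\ref{prop:sign=Chern}) that on $\gL^{\mathrm{inj}}$ the new cocycle $\cl{\sign}_{2k}$ agrees with Igusa's $c^k_{\mcZ}$, and that the inclusion $\gL^{\mathrm{inj}}\subset\gL$ is an equivalence on classifying spaces (the paper argues this via the paracyclic category and~\cite{NS18}).

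Second, you treat $y_{2i+2}\in H^{2i+2}(Q(\gS^2(\CP^\infty)_+);\IQ)$ as though it were directly represented by a simplicial cochain on $\Cut_1$ and pulled back from $\gS^2 B\gL$. But $\|\Cut_1\|$ is the \emph{free} infinite loop space on $\gS^2 B\gL_+$, and there is no simplicial map $\Cut_1\to\gS^2\mc{U}$: on a simplex with several embedded circles, each circle contributes independently. The cochain $\gc_i$ is defined on all of $\Cut_1$ by summing over $[\gi]$, and identifying its class requires showing it is primitive for the Hopf algebra structure (it is additive under $\amalg$, hence the hypotheses of lemma~\ref{lem:primitive-cocycle} hold), and then using that $\|\Cut_1\|\simeq Q(\gS^2\|F_1\|_+)$ is free on the connected piece $\|\Cut^{\mathrm{con}}_1\|\simeq\gS^2 B\gL$, so that restriction to $\Cut_1^{\mathrm{con}}$ is an isomorphism on primitives. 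Only on $\Cut_1^{\mathrm{con}}$ does the paper actually construct a simplicial map $q$ to the double suspension $\gS^2\mc{U}$ and compare cochains on the nose. Without this primitivity reduction, ``$[\gc_i]=\pm\rho^*y_{2i+2}$'' is not supported at the cochain level. The sign normalisation, which you gesture at, is also pinned down concretely in the paper by matching $\gc_0$ against the explicitly computed $\gk_0=-[\ga]$ of lemma~\ref{lem:ga-is-kappa0}, rather than by tracking signs through all the equivalences.

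Your final reduction of the $h\Cob_1$ statement to the $h\Cobred_1$ statement by pulling back along $R$ is fine.
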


\subsection*{Outline}
This paper is divided into two parts, the first one of which is centered 
around establishing the reduction fiber sequence.
In section \ref{sec:recall} we recall various standard facts and constructions 
for bordism categories and topological categories in general.
We also prove a generalisation of the base-change theorem \cite[Theorem 5.2]{ERW19},
which allows one to change the space of objects of a topological category
without changing the homotopy type of its classifying space.
Section \ref{sec:closed-reduced} introduces the closed and reduced bordisms
categories and studies some of their basic properties.
In section \ref{sec:reduction-fibseq} we apply the main theorem 
of \cite{Stb19} to prove the reduction fiber sequence \autoref{theorem:2}.
Specialising to $d=1$, in section \ref{sec:Computations-d=1} 
we observe that $\Cobred_1 \simeq h\Cobred_1$ and use this
to deduce \autoref{theorem:1} and \autoref{corollary:TC}.
Here we also consider the unoriented case $B(h\Cob_{1,\mrm{unor}})$.

The second part is focused on gaining a more concrete understanding 
of $\Cobred_1$.
We begin in section \ref{sec:2-cocycle} where we, by elementary means,
compute the $2$-cocycle on $h\Cobred_d$ that classifies 
the central extension $h\Cob_d \to h\Cobred_d$.
Section \ref{sec:continued-and-cuts} introduces the simplicial 
space of cuts $\Cut_d$ and proves \autoref{theorem:continue-fib-seq},
identifying the connecting homomorphism of the reduction fiber sequence.
This is then used in section \ref{sec:identifying-cocycles} 
to construct the cocycles for $\dlgk$-classes on $h\Cobred_1$.

\subsection*{Acknowledgements}
I would like to thank my PhD advisor Ulrike Tillmann 
for her support throughout all stages of this project.
I also thank Oscar Randal-Williams and 
George Raptis for several useful conversations, 
Thibault D\'ecoppet for comments on an earlier version,
and the referee for a very detailed report 
that helped improving the exposition of the paper.
I am very grateful for the support by
St. John's College, Oxford through the
``Ioan and Rosemary James Scholarship'',
and the EPSRC grant no.\ 1941474.
This paper was completed while the author was in residence at 
the Mathematical Sciences Research Institute in Berkeley, California,
in spring of 2020.

\setcounter{tocdepth}{1}
\tableofcontents

\part{The reduction fiber sequence}

\section{Recollections on moduli spaces and cobordism categories}
\label{sec:recall}

While our main object of interest is the homotopy category of the bordism category $h\Cob_1$,
the proof of \autoref{theorem:1} crucially relies on comparing it to the topological 
category $\Cob_1$ and some of its variants.
It is hence essential for us to have a good understanding 
of the embedded models for the cobordism category.
In this section we recall various definitions and facts.

Although we introduce all the necessary tools,
we can only do so concisely. The interested reader is referred 
to \cite{GRW18} for a discussion of moduli spaces for manifolds,
and to \cite{ERW19} for an introduction to the world of non-unital 
topological categories and semisimplicial spaces.

\begin{rem}
    The specific model for the cobordism category $\Cob_d$ 
    that we are using is essentially that of \cite{GMTW06}.
    This is a concrete model for the $(\infty,1)$-category of bordisms,
    which is often denoted $\mrm{Bord}_{\gle{d-1,d}}^{\mrm{or}}$.
    From it we will also derive the homotopy category $h\Cob_d$.
    This is the ordinary $1$-category of bordisms,
    which is often denoted by $\mrm{Cob}_d^{\mrm{or}}$. 
    (E.g.\ \cite{Lur09}).
\end{rem}

\subsection{Tangential structures}
Most of the manifolds we consider will be oriented.
Recall that one way of defining an orientation on a manifold $M$ is by
giving an equivariant continuous map $l:\mrm{Fr}(TM) \to \{-1, +1\}$ 
from the total space of the frame bundle of $M$ to the two-element set.
Here equivariance is with respect to the group action of $\GL_d$ on 
the left by base-change and on the right by multiplication with 
the sign of the determinant.
Tangential structures generalise this notion of orientation.
\begin{defn}
    A $d$-dimensional \emph{tangential structure} $\gt$ is a space $\gt$ 
    with $\GL_d$-action.
    Given such a $\gt$ a $\gt$-structure on a $d$-dimensional manifold $W$ 
    is a $\GL_d$-equivariant
    map $l: \mrm{Fr}(TW) \to \gt$ from the frame bundle of $W$ to $\gt$.
    The space of $\gt$-structures on $M$ will be denoted by
    \[
        \mrm{Bun}^\gt(M) := \Map_{\GL_d}(\mrm{Fr}(TW), \gt).
    \]
\end{defn}

In the case of the tangential structure for orientation 
$\gt^{\mrm{or}} := \{\pm 1\}$,
we are often interested in the group of \emph{orientation preserving} 
diffeomorphisms $\Diff^+(W)$. For more general tangential structures $\gt$
the group of diffeomorphism that fix a specific $\gt$-structure `on the nose'
is generally not well-behaved.
Instead, we define a moduli space that acts as the classifying space 
of this hypothetical group.
\begin{defn}\label{defn:BDiff-gt}
    For $\gt$ and $W$ as above we define $B\Diff^\gt(W)$ as the homotopy orbit space
    \[
        B\Diff^\gt(W) := \Bun^\gt(W) \doublebs \Diff(W)
        \stackrel{\text{def}}{=} (\Bun^\gt(W) \times E \Diff(W))\big/\Diff(W).
    \]
\end{defn}

\subsection{Spaces of manifolds and cobordisms}
We recall the space of submanifolds of $\IR^N$ 
and how to use it to define a topological space of (embedded) cobordisms.

\begin{defn}
    For $U \subset \IR^N$ open let $\Psi_{d,\gt}(U)$ denote the set of
    pairs $(M,l)$ where $M \subset U$ is a
    $d$-dimensional submanifold of $\IR^N$ that is closed as a subset of $\IR^N$
    and $l:\mrm{Fr}(TM) \to \gt$ is a $\gt$-structure on $M$.
    We let $\Psi_{d,\gt}$ denote the colimit of 
    $\Psi_{d,\gt}( (-1,1)^N ) $ as $N \to \infty$.
    For finite $N$ we topologise this according to 
    \cite[Definition 2.1]{GRW10} and for $N=\infty$
    as the colimit over all finite $N$.
\end{defn}
    
One can think of $\Psi_{d,\gt}$ as a concrete topological model for the 
``moduli space of $\gt$-structured closed $d$-dimensional manifolds''.
In \cite[Section 2.2]{GRW18} this space is denoted $\mathcal{M}^\gt$.
The moduli space decomposes as a disjoint union over diffeomorphism types:
\begin{fact}\label{fact:Psi=BDiffs}
    There is a weak equivalence
    \[
        \Psi_{d,\gt} \simeq \coprod_{[W]} B\Diff^\gt(W)
    \]
    where $W$ runs over a set of representatives of diffeomorphism classes 
    of closed $d$-dimensional manifolds.
\end{fact}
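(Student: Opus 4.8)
The plan is to recall the standard identification of the space of manifolds with a disjoint union of classifying spaces; the statement is essentially \cite[Section 2.2]{GRW18}, and the work is to assemble the pieces while keeping track of the $\gt$-structure. First I would observe that a subset $M \subseteq \cube$ that is closed in $\IR^N$ is bounded, hence compact, so that $\Psi_{d,\gt}$ consists of pairs $(M,l)$ with $M$ a \emph{closed} (compact, boundaryless) $d$-manifold smoothly embedded in $\cube$ and $l \in \Bun^\gt(M)$. For a fixed closed $d$-manifold $W$, write $\Psi_{d,\gt}^W \subseteq \Psi_{d,\gt}$ for the subset of those $(M,l)$ with $M$ diffeomorphic to $W$. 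Using the description of the topology on $\Psi_{d,\gt}((-1,1)^N)$ from \cite[Definition 2.1]{GRW10} — a basic neighbourhood of $M$ consists of graphs of small sections of a tubular neighbourhood of $M$, equipped with a nearby $\gt$-structure — together with the compactness of $M$, one sees that every point of $\Psi_{d,\gt}$ has a neighbourhood consisting of submanifolds diffeomorphic to it. Hence each $\Psi_{d,\gt}^W$ is open and closed, and $\Psi_{d,\gt} = \coprod_{[W]} \Psi_{d,\gt}^W$, the union running over diffeomorphism classes of closed $d$-manifolds.

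Next I would identify $\Psi_{d,\gt}^W$ with $B\Diff^\gt(W)$. Consider the map
\[
    q\colon \Emb(W,\cube) \times \Bun^\gt(W) \longrightarrow \Psi_{d,\gt}^W, \qquad (e,l) \longmapsto \left(e(W),\, e_*l\right),
\]
where $e_*l$ denotes the transport of $l$ along the isomorphism $\mrm{Fr}(TW) \cong \mrm{Fr}(T e(W))$ induced by the derivative of $e$. This map is invariant under the free right $\Diff(W)$-action $(e,l)\cdot g = (e \circ g,\, g^* l)$ and descends to a continuous bijection on orbits. I would then argue, exactly as in \cite{GRW10}, that tubular neighbourhoods provide local sections of $q$, so that $q$ is a principal $\Diff(W)$-bundle and the induced map $\left(\Emb(W,\cube) \times \Bun^\gt(W)\right)/\Diff(W) \to \Psi_{d,\gt}^W$ is a homeomorphism. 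Since $\Emb(W,\cube)$ is weakly contractible (a general-position argument using that the ambient space is infinite-dimensional), it is a model for $E\Diff(W)$, whence
\[
    \Psi_{d,\gt}^W \;\simeq\; \Bun^\gt(W) \doublebs \Diff(W) \;=\; B\Diff^\gt(W),
\]
the point being that a Borel construction is insensitive to replacing one free contractible $\Diff(W)$-space by another. Assembling over $[W]$ and passing to the colimit in $N$ yields the asserted weak equivalence.

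The main obstacle is really the two technical inputs from \cite{GRW10}: (i) the weak contractibility of $\Emb(W,\cube)$, and (ii) the local triviality of $q$ via tubular neighbourhoods, which is what guarantees that the honest quotient computes the homotopy quotient. Both are standard, so the substance of the argument is to combine them and, in particular, to check that the $\gt$-structure factor $\Bun^\gt(W)$ rides along passively, carrying only the evident $\Diff(W)$-action through the derivative.
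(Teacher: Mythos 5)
Your proposal is correct and is essentially the standard argument the paper has in mind; the paper states this as a ``Fact'' without proof, citing \cite[Section 2.2]{GRW18}, and later (in the proof of Proposition \ref{prop:BF}) explicitly uses the same identification $\Psi_{d,\gt}^W \cong \bigl(\Emb(W,\cube) \times \Bun^\gt(W)\bigr)/\Diff(W)$ and the fact that $\Emb(W,\cube)\times\Bun^\gt(W)$ is a principal $\Diff(W)$-bundle over the corresponding component. Your proof supplies exactly the three standard ingredients — the decomposition into clopen pieces by diffeomorphism type, local triviality via tubular neighbourhoods as in \cite{GRW10}, and weak contractibility of $\Emb(W,\cube)$ — so it is a correct unpacking of what the paper treats as folklore, and your remark that the $\gt$-structure factor ``rides along passively'' is the right way to account for it.
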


\begin{defn}
    We say that $(W,l) \in \Psi_{d,\gt}(\IR \times U)$ is \emph{cylindrical}
    over some interval $(a,b)$ if 
    $W_{|(a,b)} := W \cap \left( (a,b) \times U\right)$ 
    is equal to the product $(a,b) \times M$ for some $M \in \Psi_{d}(U)$ 
    and the $\gt$-structure $l_{|(a,b)}:\mi{Fr}(TW_{|(a,b)}) \to \gt$
    can be factored as
    \[
        \mrm{Fr}(TW_{|(a,b)}) \cong \mrm{Fr}(T(a,b) \times TM) 
        \xrightarrow{ \pr } \mrm{Fr}(\IR \oplus TM) 
        \cong (\GL_d \times \mrm{Fr}(TM))/_{\GL_{d-1}} 
        \to \gt
    \]
    where the last map is induced by some $\GL_{d-1}$-equivariant map
    $l':\mrm{Fr}(TM) \to \gt$.
\end{defn}
    
\begin{defn}
    For $d$ and $\gt$ as above and $\eps>0$ we define 
    $\Phi_{d,\gt}^\eps \subset \Psi_{d,\gt}(\IR \times \cube) \times \IR_{>0}$
    as the space of those $((W,l),t)$ that are cylindrical over
    $(-\infty,\eps)$ and $(t-\eps,\infty)$.
    We let the \emph{space of $\gt$-structured $d$-dimensional cobordisms}
    $\Phi_{d,\gt}$ be the colimit as $\eps \to 0$.
\end{defn}

\begin{figure}[ht]
    \centering
    \def\svgwidth{.3\linewidth}
\begingroup%
  \makeatletter%
  \providecommand\color[2][]{%
    \errmessage{(Inkscape) Color is used for the text in Inkscape, but the package 'color.sty' is not loaded}%
    \renewcommand\color[2][]{}%
  }%
  \providecommand\transparent[1]{%
    \errmessage{(Inkscape) Transparency is used (non-zero) for the text in Inkscape, but the package 'transparent.sty' is not loaded}%
    \renewcommand\transparent[1]{}%
  }%
  \providecommand\rotatebox[2]{#2}%
  \newcommand*\fsize{\dimexpr\f@size pt\relax}%
  \newcommand*\lineheight[1]{\fontsize{\fsize}{#1\fsize}\selectfont}%
  \ifx\svgwidth\undefined%
    \setlength{\unitlength}{217.31979971bp}%
    \ifx\svgscale\undefined%
      \relax%
    \else%
      \setlength{\unitlength}{\unitlength * \real{\svgscale}}%
    \fi%
  \else%
    \setlength{\unitlength}{\svgwidth}%
  \fi%
  \global\let\svgwidth\undefined%
  \global\let\svgscale\undefined%
  \makeatother%
  \begin{picture}(1,0.62349855)%
    \lineheight{1}%
    \setlength\tabcolsep{0pt}%
    \put(0,0){\includegraphics[width=\unitlength,page=1]{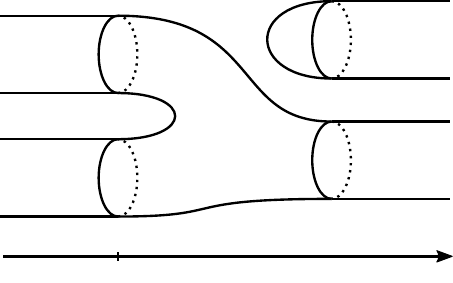}}%
    \put(0.243339,-0.02471458){\color[rgb]{0,0,0}\makebox(0,0)[lt]{\lineheight{1.25}\smash{\begin{tabular}[t]{l}$0$\end{tabular}}}}%
    \put(0.72033523,-0.02471458){\color[rgb]{0,0,0}\makebox(0,0)[lt]{\lineheight{1.25}\smash{\begin{tabular}[t]{l}$t$\end{tabular}}}}%
    \put(0,0){\includegraphics[width=\unitlength,page=2]{infinite-cobordism.pdf}}%
  \end{picture}%
\endgroup%

    \caption{A point in the space of two-dimensional cobordisms $\Phi_{2,\gt}$.}
    \label{fig:infinite-cobordism}
\end{figure}
We think of $t>0$ as the length of the cobordism $(W,l)$.
Indeed, the cylindricality condition implies that the part of the bordisms
that does not lie over $[0,t]$ is superfluous.
We still keep track of the cylinders over $(-\infty,0)$ and $(t,\infty)$
because it simplifies the definition of the topology.
For example, we can use them to define the map that sends a cobordism to its boundary:

\begin{defn}\label{defn:partial}
    We define a map
    \[
        (\partial_0, \partial_1): \Phi_{d,\gt} \to \Psi_{d-1,\gt} \times \Psi_{d-1,\gt}
    \]
    by sending $((W,l),t)$ to the unique tuple $((M_0,l_0), (M_1,l_1))$
    such that $(-\infty,0)\times M_0 \cup (t,\infty) \times M_1$ 
    is a codimension $0$ submanifold of $W$ and $l_0$ and $l_1$
    are the induced $\gt$-structures.
\end{defn}

\subsection{The cobordism category}

All cobordism categories we consider will be weakly unital%
\footnote{
    We will be using the definition of weakly unital given in \cite{Stb19}
    as it is the one relevant for the local additivity theorem \ref{thm:local-add}.
    Since the cobordism category is also fibrant this will imply that it is
    weakly left and right unital in the sense of \cite{ERW19}.
    (See \cite[Remark 3.12]{ERW19}.)
}
topological categories. We refer the reader to \cite{ERW19} for an excellent 
introduction to semisimplicial spaces, non-unital topological categories, 
and fibrancy conditions. 
By convention all our categories $\mcD$ will be non-unital 
and $B\mcD$ will denote the ``fat geometric realisation'' 
of the semisimplicial space $N\mcD$, even if $\mcD$ happens to have units.

\begin{defn}
    A \emph{non-unital topological category} $\mcD$ consists of the following data:
    a space of objects $O$, a space of morphisms $M$, source and target maps 
    $s,t:M \to O$, and a composition map
    \[
        c: M\ {{}^t\times_O^s} M = \{ (f,g) \in M^2 \;|\; t(f) = s(g) \}
        \longrightarrow M.
    \]
    This data is subject to the axioms 
    \begin{align*}
        s(c(f,g)) &= s(f) &
        t(c(f,g)) &= t(g) &
        c(c(f,g),h)) &= c(f, c(g,h))
    \end{align*}
    for any three morphisms $f,g,h \in M$ with $t(f)=s(g)$ and $t(g) = s(h)$.
\end{defn}

\begin{defn}
    For any two objects $x,y \in O$ the space of morphism 
    from $x$ to $y$ is defined as the pullback
    \[
        \hom_\mcD(x,y) := \{x\} \times_{O}^s M \ {{}^t\!\times}_{O}  \{y\}
        = \{ f \in M \;|\; s(f) = x \text{ and } t(f) = y\}.
    \]
    We usually write $f:x \to y$ to say $f \in \hom_\mcD(x, y)$,
    and we also write $g \circ f$ for the composite $c(f,g)$.
\end{defn}

For us the main example of a topological category is the cobordism category,
which we now define using the space of cobordisms $\Psi_{d,\gt}$ 
constructed in the previous section as a space of morphisms.

\begin{defn}\label{defn:Cob}
    Fix a dimension $d$ and a tangential structure $\gt$.
    The non-unital topological category $\Cob_{d,\gt}$ has 
    $\Psi_{d-1,\gt}$ as space of objects and $\Phi_{d,\gt}$ as space of morphisms.
    The source and target maps are the maps 
    $\partial_0,\partial_1:\Phi_{d,\gt} \to \Psi_{d-1,\gt}$ 
    from definition \ref{defn:partial}. 
    Composition is defined by
    \[
        ((W',l'),t') \circ ((W,l),t) := ((W'',l''), t+t')
    \]
    where
    \[
        W'' = (W \cap (-\infty,t]\times \cube) 
        \cup ( (W' + t\cdot e_0) \cap ([t,\infty) \times \cube))
    \]
    and $l''$ is induced by $l$ and $l'$. 
    Here $(\blank + t \cdot e_0)$ denotes translation by $t$ 
    in the direction of $e_0 = (1, \ul{0}) \in \IR \times \cube$.
\end{defn}

\begin{rem}
    Note that $\Cob_{d,\gt}$ is well-behaved: it is fibrant in the sense of
    \cite[Definition 3.5]{ERW19} by \cite[Proposition 3.2.4(ii)]{ERW19b}.
    Moreover, the cylinder bordisms $(M \times \IR, 1): M \to M$ 
    define weak units in the sense of \cite{Stb19}.
\end{rem}

In the case of the bordism category there is a geometric interpretation
of the homotopy type of this hom space.

\begin{fact}\label{fact:hom-in-Cob}
    For any two objects $(M,l)$ and $(N,l')$ in $\Cob_{d,\gt}$
    there is a weak equivalence
    \[
        \hom_{\Cob_{d,\gt}}((M,l), (N,l'))
        \simeq 
        \coprod_{[W,\gp]} B\Diff^\gt (W \text{ rel } M \amalg N).
    \]
    Here the coproduct runs over a set of representatives
    for compact $d$-manifolds $W$ with boundary $M \amalg N$,
    under the equivalence relation defined by diffeomorphisms that fix the boundary.
    The space $B\Diff^\gt(W \text{ rel } M \amalg N)$ is defined
    as in definition \ref{defn:BDiff-gt} with the modification that diffeomorphism
    are trivial near the boundary that the tangential structure 
    agrees with $l$ and $l'$ near the boundary.
\end{fact}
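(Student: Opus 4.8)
The plan is to prove the relative analogue of Fact \ref{fact:Psi=BDiffs} by the same method, carried out for cobordisms with fixed boundary. Throughout, write $\hom := \hom_{\Cob_{d,\gt}}((M,l),(N,l'))$; unwinding Definitions \ref{defn:Cob} and \ref{defn:partial}, a point of $\hom$ is a pair $((W,l''),t)$ with $W\subset\IR\times\cube$ closed as a subset, cylindrical over $(-\infty,\eps)$ where it equals $(-\infty,\eps)\times M$ and over $(t-\eps,\infty)$ where it equals $(t-\eps,\infty)\times N$, with $l''$ restricting to the cylindrical extensions of $l$ and $l'$ there. To such a point one associates its \emph{core} $W_{[0,t]} := W\cap([0,t]\times\cube)$, a compact $d$-manifold equipped with a collar identifying $\partial W_{[0,t]}$ with $M\amalg N$.

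First I would decompose $\hom$ into pieces. The diffeomorphism type of the core relative to its collar is a locally constant function on $\hom$: a smooth path in $\hom$ yields, by the relative Ehresmann/isotopy-extension theorem applied to the smooth family of compact cores inside the fixed ambient space, a diffeomorphism rel boundary between the cores at its endpoints. Hence $\hom = \coprod_{[W,\gp]}\hom_{[W,\gp]}$, the coproduct over diffeomorphism classes of compact $d$-manifolds $W$ together with an identification $\gp\colon\partial W\cong M\amalg N$, where $\hom_{[W,\gp]}$ is the open-and-closed subspace of those points whose core has type $[W,\gp]$.

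Next, for a fixed $[W,\gp]$ I would produce an explicit model for $\hom_{[W,\gp]}$. Let $\Emb_\gp(W)$ be the space of embeddings $e\colon W\hookrightarrow\IR\times\cube$ that on a fixed collar of $\partial W$ are the cylindrical embeddings prescribed by $M\subset\cube$ and $N\subset\cube$ via $\gp$, and whose cylindrical extension to $\IR\times\cube$ is closed as a subset; this carries a free right action of the group $\Diff^\partial(W)$ of diffeomorphisms fixing that collar. Let $\Bun^\gt(W;l,l')$ be the space of $\gt$-structures on $W$ agreeing near $\partial W$ with the cylindrical extensions of $l$ and $l'$. There is a $\Diff^\partial(W)$-invariant map
\[
\Emb_\gp(W)\times\Bun^\gt(W;l,l')\times\IR_{>0}\longrightarrow\hom_{[W,\gp]},\qquad (e,m,t)\longmapsto\big((e_t(W),\,(e_t)_*m),\,t\big),
\]
where $e_t$ rescales the $\IR$-coordinate so the core has length $t$ and then extends cylindrically. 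Its image is all of $\hom_{[W,\gp]}$ by the previous paragraph, and the preimage of a given point is the $\Diff^\partial(W)$-torsor of collar-preserving identifications of $W$ with the core of that point; so the map descends to a continuous bijection
\[
\big(\Emb_\gp(W)\times\Bun^\gt(W;l,l')\times\IR_{>0}\big)\big/\Diff^\partial(W)\longrightarrow\hom_{[W,\gp]},
\]
which one checks is a homeomorphism for the colimit topologies (over $\eps$ and over the ambient dimension), exactly as in the absolute case.

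Finally, the space $\Emb_\gp(W)$ is weakly contractible: given a map from a finite complex, one uses the unused coordinates of $\cube$ together with a partition-of-unity rescaling --- Whitney's trick, as in the proof of Fact \ref{fact:Psi=BDiffs} and \cite[Section 2]{GRW18} --- to deform it first to a family of embeddings disjoint from a fixed one and then to the constant family, keeping the prescribed collar behaviour fixed throughout. Thus $\Emb_\gp(W)$ is a model for $E\Diff^\partial(W)$, and since $\IR_{>0}$ is contractible we conclude
\[
\hom_{[W,\gp]}\simeq\Bun^\gt(W;l,l')\doublebs\Diff^\partial(W)=B\Diff^\gt(W\text{ rel }M\amalg N);
\]
taking the coproduct over $[W,\gp]$ gives the claim. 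The main obstacle is this last step: the weak contractibility of the \emph{relative} embedding space, with its colimit topology and fixed cylindrical ends, is the expected analogue of the argument behind Fact \ref{fact:Psi=BDiffs} but requires care to perform the Whitney deformation without disturbing the collar; a secondary point needing attention is the verification in the previous paragraph that the tautological map is a homeomorphism and not merely a continuous bijection.
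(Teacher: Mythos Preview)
The paper does not actually give a proof of this statement: it is stated as a \texttt{fact} without justification, in the spirit of the surrounding recollections (the reader is pointed to \cite{GRW18} for background on moduli spaces of manifolds). So there is no argument in the paper to compare against.

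Your sketch is the standard and correct approach --- the relative version of the argument behind Fact~\ref{fact:Psi=BDiffs} --- and the three steps (decompose by diffeomorphism type of the core, identify each piece as $(\Emb_\gp(W)\times\Bun^\gt)/\Diff^\partial$, show $\Emb_\gp(W)$ is weakly contractible) are exactly right. The two technical worries you flag at the end are real but well-known: weak contractibility of the relative embedding space into $\IR\times\cube$ with prescribed cylindrical ends is handled by the usual Whitney-type isotopy using the infinitely many spare coordinates (the collar stays fixed because the deformation only moves the interior into the new directions), and the continuous bijection is a homeomorphism by the local-section argument used for $\Psi_{d,\gt}$ (see e.g.\ \cite[Section~2]{GRW10} or \cite[Section~2]{GRW18}). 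One small point worth tightening: when you invoke ``relative Ehresmann/isotopy-extension'' to show the diffeomorphism type of the core is locally constant, you should say that a path in $\hom$ need not give a smooth family on the nose, but after perturbing within the colimit topology (or using the graph-topology description from \cite{GRW10}) one can arrange this; alternatively, simply observe that $\pi_0\hom$ is computed componentwise from the quotient description in your second step, which already forces the decomposition.
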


\subsection{Homotopy categories}

The homotopy category $h\mcD$ of a topological category $\mcD$ is an
ordinary category obtained from $\mcD$ in two steps: 
first we need to pass to an enriched category $\gd\mcD$ 
and then we apply $\pi_0$ to the $\hom$-spaces.

\begin{defn}\label{defn:discrete-cat}
    For any non-unital topological category $\mcD$ we define 
    its \emph{discretefication} $\gd(\mcD)$ as the non-unital topological 
    category with object space $\mrm{Obj}(\gd(\mcD))$ 
    the set $\mrm{Obj}(\mcD)$, equipped with the discrete topology, 
    and morphism space the disjoint union
    \[
        \mrm{Mor}(\gd(\mcD)) := \coprod_{x,y \in \mcD} \hom_\mcD(x,y).
    \]
    The homotopy category $h(\mcD)$ is the ordinary category with object
    set $\mrm{Obj}(\gd(\mcD))$ and hom-sets 
    \[
        \hom_{h(\mcD)}(x, y) := \pi_0 \hom_{\mcD}(x, y).
    \]
    There are canonical functors
    $
        \mcD \xleftarrow{\gi} \gd(\mcD) \xrightarrow{\pi} h(\mcD)
    $
    and they are natural in $\mcD$.
\end{defn}

\begin{rem}
    Recall that a (non-unital) \emph{topologically enriched category}
    is the same datum as a (non-unital) topological category
    $\mcD = (O, M, s, t, c)$
    where the space of objects $O$ is discrete.
    
    In this sense $\gd(\mcD)$ is always a topologically enriched category.
    The canonical functor $\gi: \gd(\mcD) \to \mcD$ is a continuous bijection
    on the object the morphism space, but it is not a homeomorphism
    unless $\mrm{Obj}(\mcD)$ was already discrete.
    From the perspective of classifying spaces, however, 
    $\gd(\mcD)$ is equivalent to $\mcD$ as long as $\mcD$ is sufficiently well-behaved.
\end{rem}

\begin{lem}\label{lem:gd-doesnt-change-B}
    If $\mcD$ is a fibrant non-unital topological category with weak 
    left (or right) units, then $\gi$ induces a weak equivalence
    $B(\gd(\mcD)) \to B(\mcD)$.
\end{lem}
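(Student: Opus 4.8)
The plan is to reduce the statement to a known base-change-type result for semisimplicial spaces. The category $\gd(\mcC)$ has the same morphism space, source/target maps, and composition as $\mcC$; the only difference is that the object space has been retopologised with the discrete topology. Concretely, on the level of nerves, $N_\bullet(\gd\mcC)$ and $N_\bullet(\mcC)$ agree as sets in each simplicial degree, and the continuous bijection $\gi$ induces a levelwise continuous bijection of semisimplicial spaces $N_\bullet(\gd\mcC) \to N_\bullet(\mcC)$. The claim is that, after fat geometric realisation, this becomes a weak equivalence. So the first step is to set up this comparison carefully and record that the only topology being changed is that of the objects $O$.

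The second and main step is to compare the two semisimplicial spaces degreewise and invoke a gluing/realisation lemma. Here I would appeal to the base-change theorem for topological categories discussed in the excerpt (the generalisation of \cite[Theorem 5.2]{ERW19} promised in Section~\ref{sec:recall}): given a map of non-unital topological categories which is the identity on morphisms and changes only the object space along a map $O' \to O$ that is "nice enough" (here a continuous bijection), one concludes an equivalence on classifying spaces provided the categories are fibrant with weak units, so that one may replace $N_k$ by the homotopy-correct models $M\ {}^t\!\times_O^s \cdots {}^t\!\times_O^s M$ whose homotopy type is computed by iterated \emph{homotopy} pullbacks. The point is that $s,t:M\to O$ are fibrations (by fibrancy of $\mcC$, cf.\ the Remark after Definition~\ref{defn:Cob} and \cite[Proposition 3.2.4(ii)]{ERW19b}), so the strict iterated pullback computing $N_k\mcC$ agrees with the homotopy pullback; and the same iterated homotopy pullback of $M \to O \leftarrow M \to \cdots$ computes $N_k(\gd\mcC)$ since discretising $O$ does not change the fibres of $s$ and $t$ over any given point. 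Hence $\gi$ is a levelwise weak equivalence $N_\bullet(\gd\mcC) \xrightarrow{\sim} N_\bullet(\mcC)$ of semisimplicial spaces.

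The final step is to pass from a levelwise weak equivalence of semisimplicial spaces to a weak equivalence of fat geometric realisations. This is the standard fact that $\|-\|$ takes levelwise weak equivalences of semisimplicial spaces to weak equivalences (no Reedy cofibrancy needed for the \emph{fat} realisation), which applies verbatim here and yields $B(\gd\mcC) = \|N_\bullet(\gd\mcC)\| \xrightarrow{\sim} \|N_\bullet(\mcC)\| = B(\mcC)$.

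I expect the main obstacle to be the second step, specifically justifying that the strict iterated fibre product $M\ {}^t\!\times_O^s M\ {}^t\!\times_O^s \cdots$ has the correct homotopy type in \emph{both} categories and that the comparison map between them is an equivalence in each degree. Changing the topology on $O$ can in principle alter the homotopy type of such pullbacks, so one genuinely needs the hypothesis that $s,t$ are fibrations (which is exactly where fibrancy of $\mcC$ enters) together with the observation that the point-set fibres, and hence the homotopy fibres, are unchanged by discretising $O$; the weak-units hypothesis is what guarantees that $\|N_\bullet\mcC\|$ is the homotopically meaningful classifying space to begin with, so that the conclusion is the desired one. This is precisely the content of the base-change theorem invoked above, so once that is in hand the rest is formal.
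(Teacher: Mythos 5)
The key step in your proposal is wrong, and the failure is at a basic point. You claim that because $s,t\colon M\to O$ are fibrations, the strict pullback $N_k\mcC$ computes a homotopy pullback, that the same homotopy pullback computes $N_k(\gd\mcC)$ (``since discretising $O$ does not change the fibres of $s$ and $t$''), and hence that $\gi$ is a \emph{levelwise} weak equivalence $N_\bullet(\gd\mcC)\to N_\bullet(\mcC)$. This is false already at level $0$: $N_0(\gd\mcC)=\Obj(\mcC)$ with the discrete topology, $N_0(\mcC)=\Obj(\mcC)$ with its given topology, and the continuous bijection between them is a weak equivalence only if $\Obj(\mcC)$ was already weakly discrete. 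The same problem persists in every level: $N_k(\gd\mcC)$ is the strict (equivalently, because any map to a discrete space is a fibration, homotopy) pullback over $O^{\gd}$, and this is \emph{not} the homotopy pullback over $O$. Unchanged point-set fibres do not force a weak equivalence of total spaces; the topology on the base matters. For instance with $O=M=\IR$ and $s=t=\id$ one has $N_k\mcC\simeq *$ for all $k$, while $N_k(\gd\mcC)$ is an uncountable discrete set.

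Because of this, the strategy of ``levelwise equivalence, then realise'' cannot work, and it also misrepresents what the base-change theorem says. Base-change (Lemma~\ref{lem:base-change}, generalising \cite[Theorem 5.2]{ERW19}) asserts a weak equivalence directly on classifying spaces $B(\mcC^f)\to B(\mcC)$ under fibrancy-type hypotheses, \emph{precisely because} the levelwise comparison generically fails; its proof in this paper goes through the additivity theorem for bordism categories applied to a pullback against a trivial groupoid, not through a levelwise argument and realisation. The correct proof of the lemma in question is therefore simply the direct application of the base-change theorem with $Y=\Obj(\mcC)^{\gd}$, $Z=\Obj(\mcC)$, and $g$ the identity-on-underlying-sets map, checking that (i) $g$ is surjective on $\pi_0$ (it is a bijection) and (ii) $N_n\mcC\to\Obj(\mcC)^{n+1}$ is a fibration for all $n$, which is exactly fibrancy of $\mcC$. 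You identified the right tool, but the levelwise ``derivation'' interposed between the hypotheses and the conclusion is a genuine gap, not a harmless gloss.
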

\begin{proof}
    This is the basechange theorem \cite[Theorem 5.2]{ERW19} in the 
    case where $X$ is the set $\mrm{Obj}(\mcD)$ equipped with the 
    discrete topology.
    We will give an independent proof of (a generalisation of)
    the basechange theorem in lemma \ref{lem:base-change}.
\end{proof}

We cannot generally expect the homotopy category $h\mcD$ to have a
classifying space equivalent to that of $\mcD$. Nevertheless,
the canonical map is always $2$-connected:
\begin{lem}\label{lem:gd-h-2con}
    The map $B(\gd\mcD) \to B(h\mcD)$ is $2$-connected for any 
    non-unital topological category $\mcD$.
\end{lem}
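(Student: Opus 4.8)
The plan is to reduce the lemma to a general ``degree‑shift'' principle for fat realisations of semisimplicial spaces. Note first that $h\mcC = h(\gd\mcC)$, since both homotopy categories are obtained by applying $\pi_0$ to the same mapping spaces $\hom_\mcC(x,y)$; so, writing $\mcD := \gd\mcC$, it suffices to show that $B\mcD \to B(h\mcD)$ is $2$-connected for a topologically enriched category $\mcD$, i.e.\ one whose space of objects is discrete. In that case the face maps of $N\mcD$ respect the decomposition into path components: one has $N_k\mcD = \coprod_{x_0,\dots,x_k}\hom_\mcD(x_0,x_1)\times\cdots\times\hom_\mcD(x_{k-1},x_k)$ as a \emph{topological} coproduct, and applying $\pi_0$ degreewise yields exactly the discrete semisimplicial set $N_\bullet(h\mcD)$. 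Thus the functor $\mcD\to h\mcD$ induces on nerves the degreewise quotient map $q_\bullet\colon N_\bullet\mcD\to N_\bullet(h\mcD)$ onto path components. In degree $0$ this is the identity of the discrete set of objects, and in every degree $k$ it is $1$-connected: it is a bijection on $\pi_0$, and, its target being discrete, it is trivially surjective on $\pi_1$. It need not be $2$-connected, because the mapping spaces $\hom_\mcD(x,y)$ may have nontrivial fundamental groups.

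The lemma then follows from the following principle, applied with $c=1$: if $f_\bullet\colon X_\bullet\to Y_\bullet$ is a map of semisimplicial spaces which is a weak equivalence in degree $0$ and $c$-connected in every degree, then $Bf\colon BX_\bullet\to BY_\bullet$ is $(c+1)$-connected. To prove it, replace $f_\bullet$ by its degreewise mapping cylinder: since the mapping cylinder is functorial in maps of arrows, the $Z_k:=\op{Cyl}(f_k)$ assemble into a semisimplicial space $Z_\bullet$ with a degreewise cofibration $X_\bullet\inj Z_\bullet$ and a degreewise weak equivalence $Z_\bullet\to Y_\bullet$, which realises to an equivalence $BZ_\bullet\simeq BY_\bullet$; the composite $BX_\bullet\to BZ_\bullet\simeq BY_\bullet$ is $Bf$. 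Replacing the spaces by CW complexes (or, to avoid CW hypotheses on the mapping spaces, working with relative CW approximations throughout), the pair $(Z_k,X_k)$ is $c$-connected, since it has the homotopy type of $(Y_k,X_k)$ and $f_k$ is $c$-connected; hence $Z_k$ is built from $X_k$ by attaching cells of dimension $\ge c+1$, and as $f_0$ is an equivalence we may take $Z_0=X_0$. In the skeletal filtration of the fat realisation the $n$-th stage attaches $Z_n\times\mathring{\Delta}^n$ along $Z_n\times\partial\Delta^n$, so a $j$-cell of $Z_n$ contributes a $(j+n)$-cell of $BZ_\bullet$; therefore $BZ_\bullet$ is built from $BX_\bullet$ by attaching cells of dimension $\ge c+1+n$, which is $\ge c+2$ for $n\ge 1$, with no new cells for $n=0$. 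Hence the pair $(BZ_\bullet,BX_\bullet)$ is $(c+1)$-connected and $Bf$ is $(c+1)$-connected.

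The step requiring the most care is this final cell‑counting argument, in particular checking that the inclusions $\op{sk}_n X_\bullet\inj\op{sk}_n Z_\bullet$ are subcomplex inclusions compatible with all face maps — which is precisely what functoriality of the mapping cylinder provides. I also want to stress where the gain of one in connectivity comes from: the crude estimate ``degreewise $c$-connected $\Rightarrow Bf$ is $c$-connected'' would only give $1$-connectedness here and is not enough; the improvement genuinely uses that $q_0$ is an equivalence, so that the possibly nontrivial $\pi_1$'s of the mapping spaces enter the realisation only in simplicial degree $\ge 1$ and are thereby suspended into topological degree $\ge 2$. As a sanity check, for a path‑connected topological monoid $M$ the map $BM\to B\pi_0M$ is exactly $2$-connected — e.g.\ $BS^1\to\ast$ — so $2$ is optimal in general. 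An alternative, more hands‑on route works directly with the $2$-skeleton of the fat realisation: one verifies by van Kampen that the $1$- and $2$-skeleta already compute $\pi_0$ and $\pi_1$, that collapsing each component of $N_1\mcD$ and $N_2\mcD$ changes neither, and that $\pi_2$ of the $2$-skeleton surjects onto $\pi_2$ of the realisation; but the mapping‑cylinder argument is cleaner and makes the degree shift transparent.
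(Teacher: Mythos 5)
Your proof is correct and follows the same route as the paper's. The paper reduces the lemma to a realisation connectivity estimate for semisimplicial spaces, citing \cite[Lemma 2.4]{ERW19} (if $f_p$ is $(n-p)$-connected for all $p$, then $\|f_\bullet\|$ is $n$-connected) together with the single observation that the map on morphism spaces is $1$-connected; you make the same degreewise connectivity observation and then supply a self-contained proof of the realisation estimate via mapping cylinders and cell counting, which is essentially the argument underlying the cited lemma. The one step that needs a word of care, as you flag yourself, is producing CW approximations of the mapping-cylinder semisimplicial space compatibly with all face maps. A way to bypass CW structures entirely: the $n$-th stage of the skeletal filtration of $\|Z_\bullet\|$ relative to $\|X_\bullet\|$ is the cobase change of the inclusion $Z_n\times\partial\Delta^n \cup X_n\times\Delta^n \inj Z_n\times\Delta^n$, which is $(c+n)$-connected by the standard product connectivity estimate for pairs (since $(Z_n,X_n)$ is $c$-connected and $(\Delta^n,\partial\Delta^n)$ is $(n-1)$-connected), and cobase change along a cofibration preserves connectivity; for $n\ge 1$ this gives $\ge c+1$, while for $n=0$ the step is trivial since $Z_0=X_0$.
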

\begin{proof}
    This is well-known and for example follows from 
    \cite[Lemma 2.4]{ERW19} together with the observation that 
    $\mrm{Mor}(\gd(\mcD)) \to \pi_0 \mrm{Mor}(\gd(\mcD)) = \mrm{Mor}(h(\mcD))$
    is $1$-connected.
\end{proof}

Of course, our main category of interest is the cobordism category.
The homotopy category of the bordism category also admits 
a more conceptual description that does not rely on the embeddings.
For simplicity, we only spell this out in the oriented case.
\begin{fact}
    The homotopy category $h(\Cob_{d,\gt^{\mrm{or}}})$ is equivalent 
    to the following category:
    \begin{itemize}
        \item objects are closed oriented $(d-1)$-dimensional manifolds, 
        \item morphisms $W:M \to N$ are equivalence classes of compact oriented
    $d$-dimensional manifolds with $\partial W = M^{-} \amalg N$,
    where two such manifolds are equivalent if there is an orientation preserving
    diffeomorphism between them that fixes the boundary, and
        \item composition is defined by gluing cobordisms.
    \end{itemize}
\end{fact}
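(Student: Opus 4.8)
The plan is to prove the equivalence by producing the \emph{forgetful} functor $G\colon h(\Cob_{d,\gt^{\mrm{or}}})\to\mcC$, where $\mcC$ denotes the combinatorial category in the statement, and then checking that $G$ is fully faithful and essentially surjective. On objects, $G$ sends an embedded oriented $(d-1)$-manifold $(M,l)\subset\cube$ to its underlying abstract oriented manifold. On morphisms, it sends the path-component of an embedded cobordism $((W,l),t)$ to the class, up to orientation-preserving diffeomorphism rel boundary, of the compact oriented bordism $W\cap([0,t]\times\cube)$, with its boundary identified with $M_0^{-}\amalg M_1$ through the cylindrical ends, where $M_0,M_1$ are the source and target and the incoming end carries the reversed orientation, per the usual boundary convention. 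By Fact \ref{fact:hom-in-Cob} the path-component of $((W,l),t)$ determines, and is determined by, precisely this oriented diffeomorphism-rel-boundary class, so $G$ is well defined on $\pi_0$ of hom-spaces. It preserves composition because, by Definition \ref{defn:Cob}, the composite embedded cobordism restricts over $[0,t+t']$ to the union --- already smoothly collared, by cylindricity --- of $W\cap([0,t]\times\cube)$ with a translate of $W'\cap([0,t']\times\cube)$ glued along their common boundary $M_1$, which represents the abstract glued bordism up to diffeomorphism rel boundary; and $G$ carries cylinders to identities. Essential surjectivity is the Whitney embedding theorem: any closed oriented $(d-1)$-manifold is $G$ of a chosen embedding into $\cube$ carrying the given orientation.

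For full faithfulness I would fix embedded objects $(M,l),(N,l')$. By Fact \ref{fact:hom-in-Cob},
\[
    \hom_{h(\Cob_{d,\gt^{\mrm{or}}})}\bigl((M,l),(N,l')\bigr)\;=\;\coprod_{[W,\gp]}\pi_0\,B\Diff^{\gt^{\mrm{or}}}\!\left(W\text{ rel }M\amalg N\right),
\]
where $[W,\gp]$ runs over diffeomorphism classes of compact (unoriented) $d$-manifolds $W$ together with an identification $\gp\colon\partial W\cong M\amalg N$. Unwinding Definition \ref{defn:BDiff-gt}, a $\gt^{\mrm{or}}$-structure is an orientation, so the displayed moduli space is the homotopy quotient of the \emph{discrete} set $\mrm{Or}^\partial(W)$ of orientations of $W$ restricting near $\partial W$ to those determined by $l,l'$ through $\gp$, by the group $\Diff^\partial(W)$ of diffeomorphisms that are the identity near $\partial W$. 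Since such an action on a discrete set factors through $\pi_0$, one gets
\[
    \pi_0\,B\Diff^{\gt^{\mrm{or}}}\!\left(W\text{ rel }M\amalg N\right)\;\cong\;\mrm{Or}^\partial(W)\big/\pi_0\Diff^\partial(W).
\]
Summing over $[W,\gp]$ --- the classes with $\mrm{Or}^\partial(W)=\emptyset$ contributing nothing, as they should --- the right-hand side is by definition the set of compact oriented bordisms $W\colon M\to N$ up to orientation-preserving diffeomorphism rel boundary, namely $\hom_\mcC(M,N)$: an unoriented $W$ with boundary parametrisation, plus an orientation extending the boundary orientations, all modulo the boundary-fixing mapping class group. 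Tracing the identifications, the bijection thus obtained is the map induced by $G$. Hence $G$ is fully faithful, and being fully faithful and essentially surjective it is an equivalence of categories.

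The one genuinely delicate point is reconciling the two notions of ``diffeomorphic rel boundary'': Fact \ref{fact:hom-in-Cob} is stated for honestly compact $W$ with diffeomorphisms trivial near $\partial W$, whereas $\Cob_{d,\gt}$ is built from noncompact cobordisms with cylindrical ends. Checking that the passage between these is compatible, and that the orientation conventions --- in particular the reversal on the incoming boundary --- remain consistent throughout, is where the bookkeeping lies; the $\pi_0$-computation, essential surjectivity, and compositionality are formal.
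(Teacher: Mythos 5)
The paper states this as a \textsc{Fact} in a ``Recollections'' section and gives no proof; it is treated as standard. Your argument is correct and is the natural one, built precisely on the ingredients the paper supplies: Fact \ref{fact:hom-in-Cob} identifies $\hom_{\Cob_{d,\gt}}$ with a coproduct of moduli spaces $B\Diff^\gt(W\text{ rel }M\amalg N)$, for $\gt=\gt^{\mrm{or}}$ the bundle space $\Bun^{\gt^{\mrm{or}}}(W)$ is a discrete set of orientations so $\pi_0$ of the homotopy quotient collapses to the quotient set by $\pi_0\Diff^\partial(W)$, and essential surjectivity is Whitney. Your handling of the orientation convention is right: under the cylindricity definition the induced $\gt^{\mrm{or}}$-structure on a collar puts the $\IR$-factor first, so the outward-normal-first boundary orientation on $W\cap([0,t]\times\cube)$ is $M_0^-\amalg M_1$, exactly matching the combinatorial category's convention $\partial W=M^-\amalg N$. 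The ``delicate point'' you flag --- translating between the embedded, non-compact, cylindrical-ends picture and abstract compact bordisms with collars --- is genuine bookkeeping but is absorbed in the paper's own statement of Fact \ref{fact:hom-in-Cob}, so relying on it there is legitimate. Nothing is missing.
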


\subsection{Infinite loop space structures}
\label{subsec:infinite-loop}

All homotopy categories of cobordism categories have symmetric monoidal
structures defined by the disjoint union operation for manifolds.
This induces an infinite loop space structure on their classifying spaces.
Many of our computations will rely on these infinite loops space structures
and their compatibility  with certain constructions.
We will be keeping track of the infinite loop space structures using 
Segal's $\gC$-spaces \cite{Seg74}, which we recall here.

\begin{defn}
    Segal's category $\gC^{op}$ has as objects natural numbers $n \ge 0$
    and as morphisms from $n$ to $m$ maps of sets
    $
        \gl:\{*,1,\dots,n\} \to \{*,1,\dots,m\}
    $
    satisfying $\gl(*) = *$. We let $\rho_n^i:n \to 1$ denote the morphism
    with $\rho_n^i(j) = *$ for $i \neq j$ and $\rho_n^i(i) = 1$.
    
    A $\gC$-space is a functor $X:\gC^{op} \to \Top$.
    We call $X$ \emph{special} if the Segal map 
    \[
        X(n) \xrightarrow{(\rho_n^1,\dots,\rho_n^n)} 
        (X(1))^n
    \]
    is a weak equivalence for all $n\ge 0$.
    A special $\gC$-space is \emph{very special} if the map 
    \[
        \pi_0 X(2) \xrightarrow{(\rho_2^1, \mu)} \pi_0 X(1) \times \pi_0 X(1)
    \]
    is a bijection. Here $\mu:2 \to 1$ is the morphism
    with $\mu(1) = \mu(2) =1$.
\end{defn}

\begin{rem}
    For any special $\gC$-space $X$ the set $\pi_0 X(1)$ has an abelian
    monoid structure with multiplication defined by
    \[
        m: \pi_0 X(1) \times \pi_0 X(1) \xleftarrow{\cong} \pi_0 X(2)
        \xrightarrow{X(\mu)} \pi_0 X(1).
    \]
    This uses that the Segal map $X(2) \to X(1) \times X(1)$ is a bijection
    on $\pi_0$.
    The special $\gC$-space is \emph{very special} if and only if
    this abelian monoid happens to be an abelian group.
\end{rem}

Following \cite{Ngu17} we use the tangential structure 
to define a $\gC$ structure on the cobordism category.
\begin{defn}
    For any dimension $d$ and tangential structure $\gt$ we define a 
    $\gC$-object in non-unital categories by setting
    $
        \Cob_{d,\gt}(n) := \Cob_{d, n\gt}
    $
    where $n\gt$ denotes $\{1,\dots,n\} \times \gt$.
    To a morphism $\gl: n \to m$ in $\gC^{op}$ we associate the functor 
    \[
        \gl_*: \Cob_{d,n\gt} \to \Cob_{d,m\gt} 
        \quad \text{with} \quad
        ((W,l),t) \mapsto ((W',(\gl \times \id_\gt) \circ l),t)
    \]
    where $W' \subset W$ is the preimage of $\{1,\dots,m\}$ under the
    map 
    $(\gl \circ \pr_{\{1,\dots,n\}} \circ l):W \to 
    \{*,1,\dots,m\}$.
\end{defn}

\begin{lem}[\cite{Ngu17}]\label{lem:BCob-vspecial}
    The $\gC$-space $B\Cob_{d,\gt}$ is very special.
\end{lem}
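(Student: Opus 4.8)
The plan is to verify the two conditions in the definition of a very special $\gC$-space for the functor $n \mapsto B\Cob_{d,n\gt}$, using the explicit description of $n\gt$ as $\{1,\dots,n\}\times\gt$. First I would check that the Segal map is an equivalence. The key observation is that a manifold with a $\{1,\dots,n\}\times\gt$-structure is exactly a $\gt$-structured manifold together with a labelling of its components by $\{1,\dots,n\}$; equivalently, it is canonically an ordered $n$-tuple of $\gt$-structured manifolds (the preimages of $1,\dots,n$). Concretely, I would produce a homeomorphism of $\gC^{op}$-relevant data: the functor $\rho := (\rho_n^1,\dots,\rho_n^n): \Cob_{d,n\gt} \to (\Cob_{d,\gt})^{\times n}$ sending $((W,l),t)$ to the tuple of its restrictions over each label is an isomorphism of non-unital topological categories, because the embedded model lets disjoint-union be realised on the nose (morphisms sit in $\Psi_{d}(\IR\times\cube)$ and the labels partition $W$ as a genuine disjoint union of submanifolds). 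Applying $B(-)$, which commutes with finite products up to weak equivalence by the usual argument for (semi)simplicial spaces (e.g.\ $N$ sends products of categories to products of semisimplicial spaces, and fat realisation commutes with finite products of good semisimplicial spaces --- here everything is sufficiently cofibrant by the fibrancy remarks already recorded for $\Cob_{d,\gt}$), we get that $B\Cob_{d,n\gt} \to (B\Cob_{d,\gt})^{\times n}$ is a weak equivalence. This handles specialness.

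For the ``very'' part I must show that the monoid $\pi_0 B\Cob_{d,\gt}$ is a group. The monoid operation is disjoint union of cobordisms, and the unit is the empty cobordism. The cleanest route is to exhibit explicit inverses at the level of $\pi_0$: given a class $[x] \in \pi_0 B\Cob_{d,\gt}$ represented by a $\gt$-structured bordism $W: M \to N$, I would argue that its ``reverse/reflected'' bordism, or more robustly an appropriate bordism realising an Eilenberg--swindle-type cancellation, lies in the same path component after disjoint union with $x$. In fact the standard and easiest argument is: in $B\Cob_{d,\gt}$ any object $M$ is connected by an edge (a bordism) to $\emptyset$ composed with one back --- more precisely, the existence of the closed-up cobordism $M \amalg M^{op}$ together with cylinders shows $[\text{cyl}_M]$ is invertible, and a swindle using countably-infinitely many copies (available since $\pi_0 B\Cob_{d,\gt}$ receives the infinite disjoint union as a $\gC$-structure is about finite $n$, but the monoid-group statement can be checked by the group-completion-free swindle: $x \amalg (\text{swindle}) \sim (\text{swindle})$) forces every element to be invertible. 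Alternatively, and most efficiently, I would simply cite that $B\Cob_{d,\gt}$ is by \cite{GMTW06}/\cite{Ngu17} known to be an infinite loop space, equivalently a grouplike $E_\infty$-space, so $\pi_0$ is automatically a group; but to keep the proof self-contained within this section I would prefer the swindle.

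The main obstacle I anticipate is the grouplikeness: specialness is essentially a bookkeeping identity about labelled components and is routine once the disjoint-union-on-the-nose picture is set up, whereas showing $\pi_0$ is a group requires either importing the infinite-loop-space input or running an Eilenberg swindle, and one must be slightly careful that the swindle argument --- which a priori uses countably infinite disjoint unions --- is legitimate here. The point is that it suffices to show, for each fixed morphism class $[W]$, that there exists $[V]$ with $[W]\amalg[V] \amalg [Z] = [Z]$ for some $[Z]$ (a "stable" inverse), and that this already implies invertibility in $\pi_0$ because $[Z]$ itself will be seen to be invertible by the same mechanism; equivalently one uses that a commutative monoid in which $x + y + (\text{swindle of }x+y) = \text{swindle of }x+y$ for all $x,y$ is a group. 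I would spell this out in a sentence, taking $V = W^{op}$ (orientation/$\gt$-structure reversed) and $Z$ an infinite disjoint union of copies of $W \amalg W^{op}$, using that $W \amalg W^{op}$ is null-bordant so that cylinders provide the telescoping path in $B\Cob_{d,\gt}$.
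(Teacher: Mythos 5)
The paper itself offers no proof here — the lemma is attributed to \cite{Ngu17} and used as a black box — so the comparison is really just an assessment of your argument on its own terms. Your overall strategy (check the Segal maps by decomposing along labels, then check grouplikeness of $\pi_0$) is the right one, but two of your steps are not correct as stated.

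First, the functor $\rho=(\rho_n^1,\dots,\rho_n^n)\colon \Cob_{d,n\gt}\to(\Cob_{d,\gt})^{\times n}$ is \emph{not} an isomorphism of non-unital topological categories. An $n\gt$-structured submanifold of $\cube$ (resp.\ of $\IR\times\cube$ for morphisms) corresponds to an $n$-tuple of $\gt$-structured submanifolds that are \emph{pairwise disjoint}, not to an arbitrary $n$-tuple. So $\rho$ is a continuous bijection onto an open subspace of the product, and one still has to argue that the inclusion of the space of pairwise disjoint tuples into the full product is a weak equivalence. This requires a separation argument exploiting the infinite ambient dimension of $\cube$, of exactly the type used in the proof of Lemma \ref{lem:c-r-decomposition} (where the analogous map $(c,r)$ is shown to be a weak equivalence despite only being an open embedding). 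Without that step, specialness is not established.

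Second, the grouplikeness argument is both confused and unnecessarily complicated. The phrase ``$[\mathrm{cyl}_M]$ is invertible'' conflates morphism classes with objects: what must be inverted are classes $[M]\in\pi_0 B\Cob_{d,\gt}$ of objects under disjoint union. The proposed Eilenberg swindle does not deliver this. Cancelling $[Z]$ from $[W]+[V]+[Z]=[Z]$ requires $[Z]$ to already be cancellable, and claiming ``$[Z]$ will be seen to be invertible by the same mechanism'' is an infinite regress; the swindle shows that a group admitting a swindle is trivial, not that a monoid is a group. None of this is needed: for any object $(M,l)$, bend the cylinder $M\times[0,1]$ (with $\gt$-structure extending $l$) into a bordism $\emptyset\to(M,l)\amalg(M,\bar l)$, where $\bar l$ is the induced structure on the far end. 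This single edge in $N_1\Cob_{d,\gt}$ shows $[M,l]+[M,\bar l]=[\emptyset]=0$ in $\pi_0 B\Cob_{d,\gt}$, exhibiting an honest inverse and finishing the proof that the monoid is a group.
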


We also have infinite loop space structures on the classifying space 
of the homotopy categories.
\begin{lem}\label{lem:BhCob-vspecial}
    The $\gC$-space $Bh\Cob_{d,\gt}$ is very special.
\end{lem}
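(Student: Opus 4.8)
The plan is to deduce this from the already-established Lemma \ref{lem:BCob-vspecial}, which says that $B\Cob_{d,\gt}$ is a very special $\gC$-space, together with the comparison maps constructed in Definition \ref{defn:discrete-cat} and the connectivity statements in Lemmas \ref{lem:gd-doesnt-change-B} and \ref{lem:gd-h-2con}. First I would observe that the $\gC$-structure on $\Cob_{d,\gt}$ from the definition preceding Lemma \ref{lem:BCob-vspecial} is visibly natural in the category: the functor $\gl_*$ is induced by a map of tangential structures $\gl \times \id_\gt : n\gt \to m\gt$, and the constructions $\gd(-)$ and $h(-)$ are functorial, so we obtain $\gC$-objects $\gd\Cob_{d,\gt}$ and $h\Cob_{d,\gt}$ in non-unital (enriched, resp.\ ordinary) categories, and the canonical functors
\[
    \Cob_{d,\gt} \xleftarrow{\gi} \gd\Cob_{d,\gt} \xrightarrow{\pi} h\Cob_{d,\gt}
\]
are maps of $\gC$-objects. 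Applying $B(-)$ levelwise yields maps of $\gC$-spaces $B\Cob_{d,\gt} \xleftarrow{B\gi} B\gd\Cob_{d,\gt} \xrightarrow{B\pi} Bh\Cob_{d,\gt}$.

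Next I would check the two conditions for $Bh\Cob_{d,\gt}$ to be very special. For the Segal condition, at each level $n$ the map $B\gi$ is a weak equivalence by Lemma \ref{lem:gd-doesnt-change-B} (using the Remark noting that $\Cob_{d,n\gt}$ is fibrant with weak units), and the map $B\pi$ is $2$-connected by Lemma \ref{lem:gd-h-2con}; since $\Cob_{d,\gt}$ is special (Lemma \ref{lem:BCob-vspecial}), the Segal map for $B\gd\Cob_{d,\gt}$ is a weak equivalence at every level, and then the Segal map for $Bh\Cob_{d,\gt}$ sits in a square with the Segal maps for $B\gd\Cob_{d,\gt}$ and with vertical maps $B\pi$ on the left and $(B\pi)^n$ on the right. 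A $2$-connected map stays $2$-connected under finite products, so a diagram chase shows the Segal map $Bh\Cob_{d,\gt}(n) \to (Bh\Cob_{d,\gt}(1))^n$ is $2$-connected; in particular it is an isomorphism on $\pi_0$ and $\pi_1$ and surjective on $\pi_2$. For the very-special condition we only need $\pi_0$: the monoid $\pi_0 Bh\Cob_{d,\gt}(1)$ is identified via $B\pi$ (an iso on $\pi_0$ by $2$-connectivity) and $B\gi$ (a weak equivalence) with the monoid $\pi_0 B\Cob_{d,\gt}(1)$, compatibly with the respective multiplications because $B\gi$ and $B\pi$ are $\gC$-maps; since the latter monoid is a group by Lemma \ref{lem:BCob-vspecial}, so is the former.

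The main subtlety — rather than a genuine obstacle — is making sure the Segal condition really holds on the nose for $Bh\Cob_{d,\gt}$ and not merely up to $2$-connectivity. One clean way to finish is to note that the comparison $\gd\Cob_{d,\gt} \to h\Cob_{d,\gt}$ is the identity on object sets and, at level $n$, a levelwise $1$-connected map of nerves $N\gd\Cob_{d,n\gt} \to Nh\Cob_{d,n\gt}$ inducing a $2$-connected map on $B$; combining the commuting square of Segal maps at each $n$ with the fact that $B\gi$ is an equivalence, one concludes the Segal map for $Bh\Cob_{d,\gt}$ is a weak equivalence directly, because it factors (up to the $2$-connected maps) through an actual equivalence. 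Either formulation gives that $Bh\Cob_{d,\gt}$ is special; combined with the $\pi_0$-group statement above, it is very special.
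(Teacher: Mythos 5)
Your argument for the \emph{very special} part is sound and matches the paper: since $B\gd\Cob_{d,\gt} \to B(h\Cob_{d,\gt})$ is $2$-connected (Lemma~\ref{lem:gd-h-2con}) and $B\gd\Cob_{d,\gt} \to B\Cob_{d,\gt}$ is a weak equivalence (Lemma~\ref{lem:gd-doesnt-change-B}), the monoid $\pi_0 B(h\Cob_{d,\gt})$ is identified with $\pi_0 B\Cob_{d,\gt}$, which is a group by Lemma~\ref{lem:BCob-vspecial}.

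However, your argument for the \emph{special} condition has a genuine gap. The comparison square you set up at each level $n$,
\[
\begin{tikzcd}
B\gd\Cob_{d,n\gt} \ar[r, "\simeq"] \ar[d, "2\text{-conn.}"'] & (B\gd\Cob_{d,\gt})^n \ar[d, "2\text{-conn.}"] \\
B(h\Cob_{d,n\gt}) \ar[r] & (B(h\Cob_{d,\gt}))^n,
\end{tikzcd}
\]
has a weak equivalence along the top and $2$-connected vertical maps. From this you can only conclude that the bottom Segal map is $2$-connected --- not that it is a weak equivalence. The phrase that it ``factors (up to the $2$-connected maps) through an actual equivalence'' does not establish anything beyond that; a map sitting under a weak equivalence in a square with $2$-connected verticals need not be an equivalence (for example, any map $S^3 \to S^3$ fits in such a square over the identity of a point). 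So the special condition cannot be deduced from Lemma~\ref{lem:BCob-vspecial} through $2$-connectivity alone; you would need a comparison that is an equivalence, which is precisely what is not available since $\gd\mcC \to h\mcC$ changes the homotopy type.

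The paper closes this gap by a different and more direct route: $h\Cob_{d,\gt}$ is a (discrete) symmetric monoidal category, and the $\gC$-structure inherited from the tangential-structure construction has Segal maps $h\Cob_{d,n\gt} \to (h\Cob_{d,\gt})^n$ that are \emph{equivalences of ordinary categories} --- fully faithful because Fact~\ref{fact:hom-in-Cob} shows the hom-sets decompose as products under the partition of the tangential structure, and essentially surjective because any $n$ objects can be realised as disjoint submanifolds of $\cube$. This is Segal's argument from \cite[\S 2]{Seg74}, and it immediately gives that each Segal map on classifying spaces is a weak equivalence. I'd recommend replacing the last paragraph of your proof with this observation; the $\pi_0$-group argument can then stay as you wrote it.
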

\begin{proof}
    Since $h\Cob_{d,\gt}$ is a symmetric monoidal category,
    it follows from Segal's original paper \cite[\S 2]{Seg74} 
    that $Bh\Cob_{d,\gt}$ is special.
    Being very special is a condition on the monoid 
    $\pi_0 Bh\Cob_{d,\gt}$ and follows by the considerations 
    in lemma \ref{lem:BCob-vspecial}, seeing as 
    $\pi_0 Bh\Cob_{d,\gt} \cong \pi_0 B\Cob_{d,\gt}$.
\end{proof}

\subsection{The Galatius-Madsen-Tillmann-Weiss theorem}

In \cite{GMTW06} the authors determined the infinite loop space
$B(\Cob_{d,\gt})$ in terms of modified Thom spectra. 
As our strategy is to compare $B(h\Cob_1)$ with $B(\Cob_1)$, 
we briefly recall their result.

\begin{defn}
    Let $E(d,n)$ denote the space of linear embeddings of $\IR^d$
    into $\IR^n$, with the $\GL_d$-action by precomposition,
    and let $\mi{Gr}(d,n) := E(d,n)/\GL_d$ be the Grassmannian.
    The canonical bundle $\gc_d$ on $\mi{Gr}(d,n)$ is defined by
    \[
        \gc_{d,n} := (E(d,n) \times \IR^d)/\GL_d \longrightarrow \mi{Gr}(d,n).
    \]
    This is a subbundle of the trivial bundle $\mi{Gr}(d,n) \times \IR^n$
    via the map $(e, v) \mapsto ([e], e(v))$.
    We let $\gc_{d,n}^\bot$ denote the orthogonal complement 
    of $\gc_{d,n}$ in $\mi{Gr}(d,n) \times \IR^n$.
\end{defn}

\begin{defn}
    For any $d$ and $\gt$ the $\gt$-structured Madsen-Tillmann spectrum
    $MT\gt$ is the sequential spectrum with 
    \[
        (MT\gt)_n := \mrm{Th}(a^* \gc_{d,n}^{\bot})
    \]
    for $a: (\gt \times E(d,n))/\GL_d \to \mi{Gr}(d,n)$ the projection.
    The structure maps $\gS(MT\gt)_n \to (MT\gt)_{n+1}$ are defined
    using the canonical bundle map 
    $\gc_{d,n}^\bot \oplus \ul{\IR} \to \gc_{d,n+1}^\bot$.
\end{defn}

The theorem by Galatius, Madsen, Tillmann, and Weiss
uses Segal's `scanning' method to construct an equivalence
from $B\Cob_{d,\gt}$ to a loop space of $MT\gt$.
For our purposes it will be sufficient to treat this  as a black box 
and we refer the interested reader to the original paper.

\begin{thm}[Main theorem of \cite{GMTW06}]\label{thm:GMTW}
    For any $d$ and $\gt$ the scanning map defines a weak equivalence of 
    infinite loop spaces
    \[
        B\Cob_{d,\gt} \simeq \gO^{\infty-1} MT\gt 
        = \colim_{n\to \infty} \gO^{n-1} (MT\gt)_n.
    \]
\end{thm}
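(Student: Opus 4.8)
The plan is to follow \cite{GMTW06}: reinterpret $B\Cob_{d,\gt}$ as a space of $\gt$-structured $d$-manifolds sitting in $\IR\times\IR^{\infty-1}$, with the first coordinate distinguished as a ``time'' direction, and then identify that space of manifolds with a loop space of $MT\gt$ via a parametrised Pontryagin--Thom construction, the ``scanning map''. The tangential structure plays no special role here: it is purely local data on the manifolds, carried along throughout, and only enters at the very end when the local model is identified with $\mrm{Th}(a^*\gc_{d,n}^{\bot})=(MT\gt)_n$; I therefore suppress it below and write $\Cob_d$, etc.

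First I would relate $B\Cob_d$ to the space $\psi_d$ of all (tame) $d$-manifolds $W\subset\IR\times\IR^{\infty-1}$ that are closed as subsets. Consider the semisimplicial space whose $p$-simplices are pairs of such a $W$ together with real numbers $a_0<\dots<a_p$ at each of which $W$ is cylindrical, with face maps forgetting one $a_i$. Cutting $W$ along the $a_i$ exhibits it as a composable string of $p$ bordisms, giving a map from this semisimplicial space to $N_\bullet\Cob_d$ whose realisation is an equivalence, because the level-forgetting maps have filtered, contractible fibres (spaces of admissible cut systems) and because finite-length cylindrical-ended cobordisms, the morphisms of $\Cob_d$, suffice; on the other hand the same realisation recovers $\psi_d$, since by Sard's theorem a generic $W$ has a large set of cylindrical levels. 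The step needing the most care, and which I expect to be the main obstacle, is justifying the restriction to \emph{tame} manifolds: one must show that the ``wild'' ones -- with uncontrolled ends over the time direction, or accumulating components -- are homotopically negligible. In \cite{GMTW06} this is done by an $h$-principle- and Morse-theoretic argument isotoping such manifolds to tame ones in families, exploiting the high connectivity of spaces of (generalised) Morse functions and of cut systems.

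Next I would apply the scanning map to $\psi_d$. The assignment $U\mapsto\{\,d\text{-submanifolds of }\IR\times U\,\}$, for open $U\subseteq\IR^{\infty-1}$, is a homotopy sheaf; this rests on microflexibility of the sheaf of submanifolds, a consequence of the parametrised form of the isotopy extension theorem. For a homotopy sheaf $\mcF$ on $\IR^m$ the scanning map from $\mcF(\IR^m)$ to the $m$-fold loop space of its compactly supported sections is a weak equivalence. Carrying this out over the $m=n-1$ bounded coordinates (at the finite stage $\IR\times\IR^{n-1}$), while leaving the time coordinate unscanned, and identifying the compactly supported local sections with $\mrm{Th}(a^*\gc_{d,n}^{\bot})=(MT\gt)_n$ through the local Pontryagin--Thom collapse onto a tubular neighbourhood -- a germ of a $\gt$-structured $d$-plane in the ambient $\IR^n$ is precisely a point of that Thom space -- yields $\psi_d\simeq\colim_n\gO^{n-1}(MT\gt)_n=\gO^{\infty-1}MT\gt$. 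The single loop responsible for the ``$\infty-1$'' is exactly the effect of the time direction contributing to the ambient dimension of the local model without itself being scanned.

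Finally, cutting, realising, and scanning are all manifestly additive under disjoint union of manifolds, hence are maps of $\gC$-spaces; since $B\Cob_{d,\gt}$ is very special by \autoref{lem:BCob-vspecial}, the resulting levelwise equivalence $B\Cob_{d,\gt}\simeq\gO^{\infty-1}MT\gt$ is one of infinite loop spaces, as claimed. As this is a deep theorem occupying the whole of \cite{GMTW06}, in the body of the paper we simply invoke it as a black box.
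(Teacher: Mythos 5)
The paper does not prove this theorem: it explicitly treats the Galatius--Madsen--Tillmann--Weiss result as a black box, remarking only that the equivalence arises from Segal's scanning method and referring the reader to \cite{GMTW06}, and it obtains the infinite loop space refinement by appealing to the $\gC$-space structure from \cite{Ngu17}. Your sketch therefore goes beyond what the paper itself does, but it is a reasonable outline of the GMTW/Galatius--Randal-Williams argument: a poset or semisimplicial model of manifolds in $\IR\times\cube$ equipped with regular ``time'' levels (essentially the poset $P\Cob_{d,\gt}$ of section~\ref{subsec:poset-models}) whose realisation computes both $B\Cob_{d,\gt}$ and the space of all such manifolds, followed by scanning in the transverse directions and Pontryagin--Thom collapse onto the local model $(MT\gt)_n$, with the residual single loop accounting for the unscanned time coordinate. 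The one point I would flag as slightly off is the appeal to generalised Morse functions and their high connectivity: that step is characteristic of the Madsen--Weiss proof of the Mumford conjecture, not of \cite{GMTW06}, whose argument for general $d$ and $\gt$ proceeds via a microflexibility/sheaf argument together with the connectivity of the level-forgetting maps rather than any Morse-theoretic connectivity. Since you yourself conclude by invoking the theorem as a black box, your treatment ultimately agrees with the paper's.
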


This theorem has been used to great effect in the study 
of diffeomorphism groups. The essential ingredient is a natural map 
from $B\Diff^\gt(W)$ to $\gO B\Cob_{d,\gt}$ for any 
closed $d$-manifold $W$.
This map is defined by interpreting $W$ 
as a morphism $W:\emptyset \to \emptyset$ in $\Cob_d$,
which then yields a based loop in the classifying space.

\begin{defn}\label{defn:MW-map}
    The inclusion $\Psi_{d,\gt} \inj \Phi_{d,\gt} = N_1 \Cob_{d,\gt}$
    induces a map $\Psi_{d,\gt} \times |\gD^1| \to B\Cob_{d,\gt}$
    such that $\Psi_{d,\gt} \times |\partial \gD^1|$ is sent to the base point.
    Hence, it defines a map
    \(
        \ga' : \gS( \Psi_{d,\gt} )_+ \longrightarrow B\Cob_{d,\gt}
    \)
    and by adjunction a map
    \[ \ga: \Psi_{d,\gt} \to \gO B\Cob_{d,\gt}.\]
    Together with the equivalence from fact \ref{fact:Psi=BDiffs} 
    this yields, for every closed $d$-manifold $W$, a map 
    \[
        \ga_W: B\Diff^\gt(W) \longrightarrow \gO B\Cob_{d,\gt}.
    \]
\end{defn}

We conclude this section with the so-called `Genauer fiber sequence'.
Recall that $Q(X)$ denotes the free infinite loop space on a based space $X$
defined as the colimit $Q(X) := \colim_n \gO^n(\gS^n X)$. 
It has the property that any map of spaces $X \to Y$ into an infinite
loop space $Y$ induces a map of infinite loop spaces $Q(X) \to Y$.
\begin{lem}[{\cite{Gal06}}]\label{lem:ga=fiber}
    There is a homotopy fiber sequence of infinite loop spaces 
    \[
        \gO^{\infty-1} MTSO_d \longrightarrow Q(\gS (BSO_d)_+) 
        \longrightarrow \gO^{\infty-1} MTSO_{d-1}.
    \]
    Moreover, the right-hand map can be described as the composite:
    \[
            Q(\gS (BSO_d)_+) \xrightarrow{Q(\gS Bi)} 
            Q( \gS B\Diff^+(S^{d-1}) ) \xrightarrow{\ga_{S^{d-1}}}
            B\Cob_{d-1}^{\mrm{or}} \xrightarrow{\simeq} \gO^{\infty-1} MTSO_{d-1} 
    \]
    where $i: SO_d \to \Diff^+(S^{d-1})$ is the 
    canonical inclusion and $\ga_{S^{d-1}}$ is the map from
    definition \ref{defn:MW-map}.
\end{lem}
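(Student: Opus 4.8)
The statement to prove is Lemma~\ref{lem:ga=fiber}, the Genauer fiber sequence. Let me sketch a proof plan.

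\medskip

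The plan is to realize the claimed fiber sequence as an instance of the scanning equivalence applied to a cofiber sequence of Thom spectra, together with a geometric identification of the connecting map. First I would recall that there is a cofiber sequence of spectra
\[
    \gS^{-1} MTSO_{d-1} \longrightarrow \gS^\infty (BSO_d)_+ \longrightarrow MTSO_d,
\]
obtained by noting that over $BSO_d$ the tautological $d$-plane bundle $\gc_d$ sits in a short exact sequence $0 \to \gc_{d-1} \to \gc_d \to \ul{\IR} \to 0$ (after pulling back along $BSO_{d-1} \to BSO_d$), and that the cofiber of the inclusion of Thom spaces $\mrm{Th}(\gc_{d,n}^\bot) \to \mrm{Th}(\gc_{d-1,n}^\bot \oplus \ul{\IR})$ is (a shift of) $\mrm{Th}((\gc_{d-1})^\bot)$; this is the classical argument of Galatius--Madsen--Tillmann--Weiss, carried out in Section~3 of \cite{GMTW06}. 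Applying $\gO^{\infty-1}$ and rotating the cofiber sequence once gives a homotopy fiber sequence of infinite loop spaces
\[
    \gO^{\infty-1} MTSO_d \longrightarrow \gO^{\infty-1}\gS^\infty (BSO_d)_+ = Q(\gS (BSO_d)_+)
    \longrightarrow \gO^{\infty-1} MTSO_{d-1},
\]
where the identification of the middle term uses $\gO^{\infty-1}\gS^\infty(\gS Y) = Q(\gS Y)$.

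\medskip

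Next I would identify the right-hand map geometrically. Under Theorem~\ref{thm:GMTW} the target is $B\Cob_{d-1}^{\mrm{or}}$, and the map in question is, by construction of the cofiber sequence, induced by the inclusion of the fiberwise-sphere data: a point of $BSO_d$ together with a point of its tautological sphere is exactly the data parametrizing round $(d-1)$-spheres sitting as codimension-one submanifolds, which under scanning is the Madsen--Weiss map $\ga_{S^{d-1}}$ of Definition~\ref{defn:MW-map} precomposed with the map $Q(\gS(BSO_d)_+) \to Q(\gS B\Diff^+(S^{d-1}))$ coming from $i:SO_d \to \Diff^+(S^{d-1})$. Concretely: a class in $BSO_d$ can be represented by the normal data of a $(d-1)$-sphere in $\IR^N$ as a morphism $\emptyset \to \emptyset$ in $\Cob_{d-1}$, and chasing through the scanning construction in \cite{GMTW06} (or the streamlined treatment in \cite[Appendix A]{Ebe13}) shows that the connecting map of the cofiber sequence factors through $\ga_{S^{d-1}}$ as claimed. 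Here one uses that $B\Diff^+(S^{d-1})$ receives a map from $BSO_d$ via the rotation action, and that the normal bundle of an embedded round sphere is the relevant tautological bundle.

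\medskip

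The main obstacle, and the only point requiring genuine care, is the geometric identification of the connecting map with the composite through $\ga_{S^{d-1}}$: it is easy to produce \emph{some} fiber sequence with the right terms, but matching the connecting map on the nose requires unwinding the definition of the scanning map and checking that the Thom-spectrum-level cofiber boundary corresponds, under the Pontryagin--Thom-type identifications, to "take a morphism in $\Cob_d$ supported near a chosen $(d-1)$-sphere and remember its boundary data". I would handle this by working at the level of the spaces of manifolds $\Psi_{d,\gt}$ and $\Phi_{d,\gt}$ directly: the inclusion of spheres into the cobordism category is a map of $\gC$-spaces, its cofiber can be computed pointwise, and the GMTW scanning equivalence is natural enough that the cofiber sequence of spaces maps to the cofiber sequence of spectra. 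Everything else---the bundle-theoretic cofiber sequence of Thom spectra, the infinite-loop-space structure, and the identification of $\gO^{\infty-1}\gS^\infty$ of a suspension with $Q$---is standard and can be cited from \cite{GMTW06} and \cite{Ebe13}.
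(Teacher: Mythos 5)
This lemma is not proved in the paper at all --- it is cited directly from \cite[Section 3]{GMTW06} and \cite[Appendix A]{Ebe13} and used as a black box, so there is no in-paper proof to compare your attempt against. With that said, your structural plan (a cofiber sequence of Thom spectra, transported across the GMTW scanning equivalence of Theorem~\ref{thm:GMTW}, with a geometric identification of the boundary map at the level of $\Psi_{d,\gt}$ and $\Phi_{d,\gt}$) is the right shape of argument and is what both cited references carry out.

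However, the cofiber sequence you start from is written down incorrectly, and this is not merely a typo because it propagates through the rest of the sketch. You wrote $\gS^{-1} MTSO_{d-1} \to \gS^\infty (BSO_d)_+ \to MTSO_d$, but the middle and right terms need to be transposed: Thomifying the sphere-bundle cofiber sequence $S(\gc_d)_+ \to (BSO_d)_+ \to \mrm{Th}(\gc_d)$ by $-\gc_d$, and using $S(\gc_d) \simeq BSO_{d-1}$ together with $\gc_d\big|_{BSO_{d-1}} \cong \gc_{d-1} \oplus \ul{\IR}$, gives
\[
    \gS^{-1} MTSO_{d-1} \longrightarrow MTSO_d \longrightarrow \gS^\infty (BSO_d)_+ ,
\]
whose rotation $MTSO_d \to \gS^\infty_+ BSO_d \to MTSO_{d-1}$ yields, after $\gO^{\infty-1}$, precisely the stated fiber sequence. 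Starting from your version and applying the same moves produces $Q(\gS(BSO_d)_+) \to \gO^{\infty-1}MTSO_d \to \gO^{\infty-1}MTSO_{d-1}$, with $\gO^{\infty-1}MTSO_d$ sitting in the middle rather than as the fiber, so the conclusion would not come out right. The bundle-theoretic justification is likewise off: there is no natural ``inclusion $\mrm{Th}(\gc_{d,n}^\bot) \to \mrm{Th}(\gc_{d-1,n}^\bot \oplus \ul{\IR})$'' --- those Thom spaces live over different Grassmannians and have different fiber ranks ($n-d$ versus $n-d+2$). The comparison map goes the other way and one dimension down: restriction along the sphere-bundle inclusion identifies $\gc_{d,n}^\bot$ with $\gc_{d-1,n-1}^\bot$, which is what produces the map $\gS^{-1}MTSO_{d-1} \to MTSO_d$ as the \emph{first} map of the cofiber sequence. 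Finally, your identification of the boundary map with $Q(\gS Bi)$ followed by $\ga_{S^{d-1}}$ is the genuinely delicate part; you correctly flag it as the main obstacle, and since this is a cited lemma, deferring that check to \cite[Appendix A]{Ebe13} is a reasonable thing to do.
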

\begin{proof}[References]
    This homotopy fiber sequence is well-known through the paper \cite{GMTW06},
    but already appeared in Galatius' paper \cite{Gal06}.
    We refer the reader to \cite[Proposition A.4]{Ebe13} where the fiber sequence
    is explained in notation very similar to ours.
    The second part of the lemma, the identification of the infinite loop space map
    $Q(\gS(BSO_d)_+) \to \gO^{\infty-1} MTSO_{d-1}$,
    is harder to extract from the literature.
    It will suffice to understand the map of spaces $BSO_d \to \gO^\infty MTSO_{d-1}$.
    In \cite[Proposition A.4]{Ebe13} Ebert describes it as the Madsen-Tillmann-Weiss 
    map for the sphere bundle $BSO_{d-1} \to BSO_d$, 
    and indeed Galatius in \cite[Lemma 2.1]{Gal06} 
    constructs the relevant map as a parametrised Pontrjagin-Thom construction.
    This map factors as:
    \[
        BSO_d \to B\Diff^+(S^{d-1}) \to \gO^{\infty}MTSO_{d-1}
    \]
    where the left-hand map is the classifying map for the sphere bundle
    $BSO_{d-1} \to BSO_d$ and the right-hand map is the Pontrjagin-Thom 
    construction for the universal oriented $S^{d-1}$-bundle.
    The sphere bundle is classified by $B(i): BSO_d \to B\Diff^+(S^{d-1})$
    and so the claim follows.
\end{proof}

\subsection{A generalised change of base theorem}
In this section we will prove a small generalisation of 
\cite[Theorem 5.2]{ERW19} that will come in handy later.
First, recall the notion of base-change from 
\cite[section 5]{ERW19}.

\begin{defn}
    Let $\mcD$ be a 
    topological category and let $X$ be a space 
    with a map $f:X \to \Obj(\mcD)$. 
    The base-change of $\mcD$ along $f$ is the topological
    category, denoted by $\mcD^f$, with object space 
    $X$ and morphism space 
    $X^2 \times_{\Obj(\mcD)^2} \Mor(\mcD)$.
    Composition is induced by $\mcD$.
\end{defn}

\begin{lem}\label{lem:base-change}
    Let $\mcD$ be a weakly unital topological category
    and consider a zig-zag of maps
    $g: Y \to Z \leftarrow \Obj(\mcD):p$. 
    Let $f: Y \times_Z \Obj(\mcD) \to \Obj(\mcD)$ be the projection.
    Then the canonical map
    \[
        B(\mcD^f) \longrightarrow B\mcD
    \]
    is a weak equivalence, if the following conditions hold:
    \begin{itemize}
        \item the map $g:Y \to Z$ is surjective 
        on connected components,
        \item for all $n \ge 0$ the composite
        $p:N_n\mcD \to 
        \Obj(\mcD)^{n+1} \xrightarrow{p^{n+1}} Z^{n+1}$
        is a fibration.
    \end{itemize}
\end{lem}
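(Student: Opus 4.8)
The plan is to reduce the statement to the already-known base-change theorem \cite[Theorem 5.2]{ERW19}, applied to a cleverly chosen auxiliary space, and then to perform a Quillen-type cofinality/contractibility argument on the extra fiber directions. Concretely, observe that $Y \times_Z \Obj(\mcC)$ comes with the projection $f$ to $\Obj(\mcC)$, and the first hypothesis (surjectivity of $g$ on $\pi_0$) guarantees that $f$ is itself surjective on connected components; this is exactly the surjectivity hypothesis needed in \cite[Theorem 5.2]{ERW19}. The second hypothesis is designed to ensure the relevant homotopy-pullback squares computing $N_n(\mcC^f)$ are actually strict pullbacks up to weak equivalence: since $p: N_n\mcC \to Z^{n+1}$ is a fibration, the square
\[
    \begin{tikzcd}
        N_n(\mcC^f) \ar[r] \ar[d] & N_n\mcC \ar[d, "p"] \\
        Y^{n+1} \ar[r, "g^{n+1}"] & Z^{n+1}
    \end{tikzcd}
\]
is a homotopy pullback, because $g^{n+1}$ (being a product of maps which need not be fibrations) can be replaced by a fibration without changing the homotopy type of the pullback, the point being that one of the two legs is already fibrant.

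First I would set up the semisimplicial space $N_\bullet(\mcC^f)$ explicitly and identify $N_n(\mcC^f) \simeq Y^{n+1} \times_{Z^{n+1}} N_n\mcC$, checking that the face maps are compatible; here the fibration hypothesis enters to see that this levelwise pullback computes the correct homotopy type. Next I would compare this with the base-change category $\mcC^{f'}$ where $f': \Obj(\mcC) \to \Obj(\mcC)$ is adjusted so that \cite[Theorem 5.2]{ERW19} applies directly — but since that theorem already handles an arbitrary map $f$ surjective on components into $\Obj(\mcC)$, the cleanest route is simply to verify that our $f$ satisfies its hypotheses and invoke it. The subtlety is that \cite[Theorem 5.2]{ERW19} may additionally require some fibrancy/Reedy-cofibrancy condition on the map $f$ relative to $\Obj(\mcC)$, which in our situation needs to be deduced from the fibration hypothesis on $p$: if $p: N_n\mcC \to Z^{n+1}$ is a fibration for all $n$, then pulling back along $g^{n+1}$ and projecting to $Y^{n+1}$ yields a map which, composed appropriately, is as well-behaved as required.

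The main obstacle I expect is the bookkeeping around the interaction of the two maps $g$ and $p$ at the level of the full semisimplicial object: one must verify that the levelwise weak equivalences $N_n(\mcC^f) \to N_n\mcC'$ (for an intermediate fibrant replacement $\mcC'$) assemble into a map of semisimplicial spaces inducing an equivalence on fat geometric realizations, which requires knowing that fat realization preserves levelwise weak equivalences (true with no cofibrancy hypotheses, e.g. \cite[Lemma 2.4]{ERW19} or the standard fact about $\|-\|$) and that the face maps genuinely commute with the comparison. A secondary technical point is handling weak units: since $\mcC$ is only weakly unital, one cannot use the Quillen Theorem A machinery naively, so I would either appeal directly to the non-unital version in \cite{ERW19} or first pass to a unital model using fibrancy. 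Once these compatibilities are in place, the conclusion that $B(\mcC^f) \to B\mcC$ is a weak equivalence follows formally from the realization lemma applied to the map of semisimplicial spaces, together with the observation that on $\pi_0$ the surjectivity of $g$ on components makes the map $\pi_0 B(\mcC^f) \to \pi_0 B\mcC$ surjective, while higher connectivity comes from the homotopy-pullback squares above being arranged so each $N_n(\mcC^f) \to N_n\mcC$ has the correct homotopy fibers.
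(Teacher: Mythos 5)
Your proposal contains several genuine gaps, and the approach as outlined cannot be made to work.

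First, the reduction to \cite[Theorem 5.2]{ERW19} is not available here. That theorem requires $\mcC$ to be \emph{fibrant} (i.e.\ each $N_n\mcC \to \Obj(\mcC)^{n+1}$ is a fibration). The present lemma replaces this by the strictly weaker hypothesis that the composite $N_n\mcC \to \Obj(\mcC)^{n+1} \to Z^{n+1}$ be a fibration; indeed, as noted after the lemma statement, one only recovers \cite[Theorem 5.2]{ERW19} upon specializing to $Z = \Obj(\mcC)$. Since $\mcC$ itself is not assumed fibrant, there is no way to ``simply verify that $f$ satisfies its hypotheses and invoke it.''

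Second, your claim that surjectivity of $g$ on $\pi_0$ forces $f$ to be surjective on $\pi_0$ is false in general: take $Z = [0,1]$, $Y = \{0\}$, $\Obj(\mcC) = \{1\}$; then $g$ is $\pi_0$-surjective but the pullback is empty. This can only be salvaged \emph{after} replacing $g$ by a fibration, at which point a fibration surjective on $\pi_0$ is genuinely surjective and so is its pullback. This replacement step must come first, and you never supply it.

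Third, and most seriously, the proposed ``realization lemma'' argument fails at its core. The levelwise maps $N_n(\mcC^f) = Y^{n+1} \times_{Z^{n+1}} N_n\mcC \to N_n\mcC$ are \emph{not} weak equivalences unless $g$ is one; they are merely pullbacks along a fibration, with potentially large homotopy fibers (determined by the homotopy fibers of $g^{n+1}$). So the standard statement that a levelwise weak equivalence of semisimplicial spaces induces an equivalence on fat realizations simply does not apply, and your closing sentence about ``higher connectivity coming from the homotopy-pullback squares'' has no theorem behind it. The entire difficulty of this lemma is precisely that the levelwise maps are far from being equivalences, and the interaction with the realization must be controlled by something stronger.

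The paper's actual proof is structurally different. It introduces the trivial groupoid $T(X)$ (one morphism between any two objects), checks $B(T(X))$ is contractible via an extra degeneracy, writes $\mcC^f$ as the strict pullback $T(Y\times_Z\Obj(\mcC)) \times_{T(\Obj(\mcC))} \mcC$, replaces $g$ by a fibration (handling the $\pi_0$-surjectivity point above), and then invokes the \emph{local additivity theorem} \ref{thm:local-add} from \cite{Stb19}. The hypotheses of that theorem are easily verified because $T(f)$ is trivially locally (co)Cartesian and a fibration, $T(X)$ is unital and fibrant. The resulting homotopy-pullback square of classifying spaces has a contractible-to-contractible right leg, which yields the equivalence on the left. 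This is the key ingredient your proposal is missing: a theorem that upgrades a strict pullback of topological categories to a homotopy pullback of their classifying spaces.
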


Note that in the case of $Z=\Obj(\mcD)$ and $p= \id_{\Obj(\mcD)}$ 
the second condition is equivalent to asking that $\mcD$ be fibrant;
hence we recover \cite[Theorem 5.2]{ERW19} as a special case.

\begin{proof}
    For any space $X$ we let $T(X)$ denote the trivial 
    groupoid on $X$. Its space of object is $X$ and 
    its space of morphisms is $X\times X$.
    In other words, for every pair of objects $(x,y)$
    there is precisely one morphism $x \to y$.
    Composition is defined by $(y,z) \circ (x,y) =(x,z)$.
    
    This is a fibrant topological category.
    For $X$ non-empty we can pick a point $y$
    and define an extra degeneracy on the simplicial space
    $N(X)$ by $s_{-1}(x_0,\dots,x_n) = (y, x_0, \dots, x_n)$.
    This shows that $B(T(X))$ is contractible 
    for $X$ non-empty.
    
    Note that the basechange $\mcD^f$ of $\mcD$ along
    a map $f:X \to \Obj(\mcD)$ can be written as 
    a pullback
    $\mcD^f \cong T(X) \times_{T(\Obj(\mcD))} \mcD$
    where we use the canonical functor 
    $\mcD \to T(\Obj(\mcD))$, which is the identity on objects.
    
    To prove the lemma, we first observe that we can replace
    the map $g:Y \to Z$ by a fibration. To do this, factor
    $g$ as a composite 
    $Y \xrightarrow{i} Y_0 \xrightarrow{g'} Z$ 
    where $i$ is a level-wise weak equivalence and $g'$
    is a level-wise fibration.
    Then the simplicial map
    \[
        i: N(\mcD^f) 
        \cong N(T(Y)) \times_{N(T(Z))} N(\mcD) 
        \longrightarrow
        N(T(Y_0)) \times_{N(T(Z))} N(\mcD) 
    \] 
    is the pullback of a level-wise weak equivalence 
    along a level-wise fibration, and hence a level-wise 
    weak equivalence. In particular, it induces a 
    weak equivalence on geometric realisations.
    It now suffices to prove the lemma for $g'$.
    
    Assume from now on that $g$ was already a fibration.
    This implies that the pullback 
    $f:Y \times_Z \Obj(\mcD) \to \Obj(\mcD)$ 
    is a fibration, too.
    Since $g$ is also surjective on connected components,
    this implies that $g$ and $f$ are surjective.
    We now claim that the additivity theorem for bordism
    categories \ref{thm:local-add} 
    (see \cite{Stm18} and \cite{Stb19}) applies to the 
    pullback square:
    \[
    \begin{tikzcd}
        \mcD^f \ar[r] \ar[d] 
        & T(Y\times_Z \Obj(\mcD)) \ar[d, "T(f)"] \\
        \mcD \ar[r] & T(\Obj(\mcD)).
    \end{tikzcd}
    \]
    Since $f$ is a fibration, so is $T(f)$.
    The trivial groupoids $T(X)$ are always fibrant
    and unital, and the functors in the square are 
    always unital. 
    The only non-trivial condition to check is that
    $T(f)$ is indeed Cartesian and coCartesian. 
    But this is easy to see since $T(f)$ is surjective
    on objects and there is only one morphism between
    every two objects in $T(X)$.
    
    We may hence apply theorem \ref{thm:local-add}
    and obtain a homotopy pullback square 
    \[
    \begin{tikzcd}
        B\mcD^f \ar[r] \ar[d] \ar[rd, phantom, "\lrcorner^h" near start]
        & B(T(Y\times_Z \Obj(\mcD))) \ar[d, "B(T(f))"] \\
        B\mcD \ar[r] & B(T(\Obj(\mcD))).
    \end{tikzcd}
    \]
    As discussed in the beginning of the proof $B(T(X))$
    is contractible for all $X$ and so the right vertical map
    is an equivalence. Because the square is a homotopy pullback
    this implies that $B\mcD^f \to B\mcD$ is an equivalence.
\end{proof}

\section{Closed and reduced bordism categories}\label{sec:closed-reduced}

We now introduce the key idea of this paper: the decomposition of a 
cobordism into its closed and its reduced part.

\begin{defn}
    A bordism $W:M \to N$ is called \emph{closed} if both $M$ and $N$ are empty
    and it is called \emph{reduced} if the inclusion $M \amalg N \to W$ 
    is surjective on connected components.
    We define subspaces 
    \[
        \Phi_{d,\gt}^{\mrm{cl}} \subset \Phi_{d,\gt} 
        \qand
        \Phi_{d,\gt}^{\mrm{red}} \subset \Phi_{d,\gt} 
    \]
    of closed and reduced bordisms.
    Define retractions 
    $c: \Phi_{d,\gt} \to \Phi_{d,\gt}^{\mrm{cl}}$ and 
    $r: \Phi_{d,\gt} \to \Phi_{d,\gt}^{\mrm{red}}$
    by deleting those connected components of $W$ 
    that violate the respective condition.
    This is illustrated in figure \ref{fig:closed-and-reduced}.
\end{defn}

\begin{figure}[ht]
    \centering
    \def\svgwidth{.4\linewidth}
\begingroup%
  \makeatletter%
  \providecommand\color[2][]{%
    \errmessage{(Inkscape) Color is used for the text in Inkscape, but the package 'color.sty' is not loaded}%
    \renewcommand\color[2][]{}%
  }%
  \providecommand\transparent[1]{%
    \errmessage{(Inkscape) Transparency is used (non-zero) for the text in Inkscape, but the package 'transparent.sty' is not loaded}%
    \renewcommand\transparent[1]{}%
  }%
  \providecommand\rotatebox[2]{#2}%
  \newcommand*\fsize{\dimexpr\f@size pt\relax}%
  \newcommand*\lineheight[1]{\fontsize{\fsize}{#1\fsize}\selectfont}%
  \ifx\svgwidth\undefined%
    \setlength{\unitlength}{168.59276359bp}%
    \ifx\svgscale\undefined%
      \relax%
    \else%
      \setlength{\unitlength}{\unitlength * \real{\svgscale}}%
    \fi%
  \else%
    \setlength{\unitlength}{\svgwidth}%
  \fi%
  \global\let\svgwidth\undefined%
  \global\let\svgscale\undefined%
  \makeatother%
  \begin{picture}(1,0.58873158)%
    \lineheight{1}%
    \setlength\tabcolsep{0pt}%
    \put(0,0){\includegraphics[width=\unitlength,page=1]{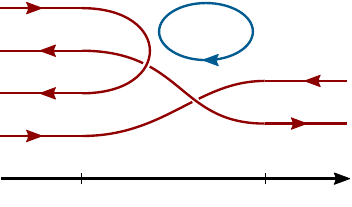}}%
    \put(0.21526239,0.00697379){\color[rgb]{0,0,0}\makebox(0,0)[lt]{\lineheight{1.25}\smash{\begin{tabular}[t]{l}$0$\end{tabular}}}}%
    \put(0.73475019,0.00697354){\color[rgb]{0,0,0}\makebox(0,0)[lt]{\lineheight{1.25}\smash{\begin{tabular}[t]{l}$t$\end{tabular}}}}%
    \put(0,0){\includegraphics[width=\unitlength,page=2]{closed-and-reduced.pdf}}%
    \put(0.75170076,0.52564805){\color[rgb]{0,0,0}\makebox(0,0)[lt]{\lineheight{1.25}\smash{\begin{tabular}[t]{l}$\color{blue}c(W)$\end{tabular}}}}%
    \put(0.50067317,0.18406001){\color[rgb]{0,0,0}\makebox(0,0)[lt]{\lineheight{1.25}\smash{\begin{tabular}[t]{l}$\color{red}r(W)$\end{tabular}}}}%
  \end{picture}%
\endgroup%

    \caption{An oriented $1$-dimensional cobordism decomposed into its closed and reduced components.}
    \label{fig:closed-and-reduced}
\end{figure}

Every bordism $W \in \Phi_{d,\gt}$ has a canonical decomposition as
$W = c(W) \amalg r(W)$.
A homotopically meaningful version of this statement 
is proved in lemma \ref{lem:c-r-decomposition}.
Our goal is to obtain a similar decomposition on the level of bordism categories.
The first step is to define categories of closed and of reduced bordisms.

\begin{defn}\label{defn:Cobcl/red}
    The \emph{closed} bordism category $\Cobcl_{d,\gt}$ is the full subcategory 
    of $\Cob_{d,\gt}$ on the single object $\emptyset \in \Cob_{d,\gt}$.
    The \emph{reduced} bordism category $\Cobred_{d,\gt}$ has the same objects
    as $\Cob_{d,\gt}$, but has as morphisms only reduced bordisms.
    Composition is defined by composing in $\Cob_{d,\gt}$ and then 
    applying the retraction $r: \Phi_{d,\gt} \to \Phi_{d,\gt}^{\mrm{red}}$.
\end{defn}

\begin{defn}
    The inclusion $\Phi_{d,\gt}^{\mrm{cl}} \inj \Phi_{d,\gt}$ defines a functor
    denoted by
    \[
        I: \Cobcl_{d,\gt} \longrightarrow \Cob_{d,\gt}
    \]
    and the retraction $r:\Phi_{d,\gt} \to \Phi_{d,\gt}^{\mrm{red}}$ 
    defines a functor denoted by
    \[ 
        R: \Cob_{d,\gt} \to \Cobred_{d,\gt}.
    \]
\end{defn}

\begin{rem}
    The functors $I$ and $R$ allow us to think of $\Cobcl_{d,\gt}$
    as a subcategory and of $\Cobred_{d,\gt}$ as a quotient category 
    of $\Cob_{d,\gt}$.
    It is important to note that the opposite is not the case: the inclusion 
    $\Phi_{d,\gt}^{\mrm{red}} \inj \Phi_{d,\gt}$ does not induce a functor
    $\Cobred_{d,\gt} \to \Cob_{d,\gt}$ 
    The reason for this is that the composite of two reduced bordisms 
    $W:M \to N$ and $V:N \to L$ is not necessarily reduced. 
    We could for example set $M = L = \emptyset$, $N= S^{d-1}$,
    and let $W$ and $V$ both be the $d$-dimensional disc; 
    once as a bordism $\emptyset \to S^{d-1}$ and once as a bordism
    $S^{d-1} \to \emptyset$.
    Then both $W$ and $V$ are reduced, but 
    $W \cup_N V = D^d \cup_{S^{d-1}} D^d = S^d: \emptyset \to \emptyset$ 
    is not.
    See figure \ref{fig:not-reduced} for another example showing
    that the composite of two reduced bordisms need not be reduced.
\end{rem}

\begin{figure}[ht]
    \centering
    \def\svgwidth{.6\linewidth}
\begingroup%
  \makeatletter%
  \providecommand\color[2][]{%
    \errmessage{(Inkscape) Color is used for the text in Inkscape, but the package 'color.sty' is not loaded}%
    \renewcommand\color[2][]{}%
  }%
  \providecommand\transparent[1]{%
    \errmessage{(Inkscape) Transparency is used (non-zero) for the text in Inkscape, but the package 'transparent.sty' is not loaded}%
    \renewcommand\transparent[1]{}%
  }%
  \providecommand\rotatebox[2]{#2}%
  \newcommand*\fsize{\dimexpr\f@size pt\relax}%
  \newcommand*\lineheight[1]{\fontsize{\fsize}{#1\fsize}\selectfont}%
  \ifx\svgwidth\undefined%
    \setlength{\unitlength}{326.83926079bp}%
    \ifx\svgscale\undefined%
      \relax%
    \else%
      \setlength{\unitlength}{\unitlength * \real{\svgscale}}%
    \fi%
  \else%
    \setlength{\unitlength}{\svgwidth}%
  \fi%
  \global\let\svgwidth\undefined%
  \global\let\svgscale\undefined%
  \makeatother%
  \begin{picture}(1,0.40720418)%
    \lineheight{1}%
    \setlength\tabcolsep{0pt}%
    \put(0,0){\includegraphics[width=\unitlength,page=1]{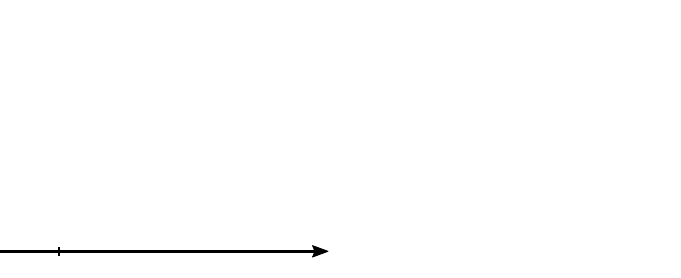}}%
    \put(0.07752801,-0.00659855){\color[rgb]{0,0,0}\makebox(0,0)[lt]{\lineheight{1.25}\smash{\begin{tabular}[t]{l}$0$\end{tabular}}}}%
    \put(0.39223061,-0.00659855){\color[rgb]{0,0,0}\makebox(0,0)[lt]{\lineheight{1.25}\smash{\begin{tabular}[t]{l}$t$\end{tabular}}}}%
    \put(0,0){\includegraphics[width=\unitlength,page=2]{not-reduced.pdf}}%
    \put(0.64522226,-0.00659855){\color[rgb]{0,0,0}\makebox(0,0)[lt]{\lineheight{1.25}\smash{\begin{tabular}[t]{l}$0$\end{tabular}}}}%
    \put(0.90419625,-0.00659855){\color[rgb]{0,0,0}\makebox(0,0)[lt]{\lineheight{1.25}\smash{\begin{tabular}[t]{l}$s$\end{tabular}}}}%
    \put(0,0){\includegraphics[width=\unitlength,page=3]{not-reduced.pdf}}%
  \end{picture}%
\endgroup%

    \caption{Two reduced bordisms whose composite is not reduced.}
    \label{fig:not-reduced}
\end{figure}

Locally the bordism category $\Cob_{d,\gt}$ decomposes as a product
of $\Cobcl_{d,\gt}$ and $\Cobred_{d,\gt}$:
\begin{lem}\label{lem:c-r-decomposition}
    For any two objects $M, N \in \Cob_{d,\gt}$ the map 
    \[
        (c,r): \hom_{\Cob_{d,\gt}}(M, N) \longrightarrow 
        \hom_{\Cobcl_{d,\gt}}(\emptyset, \emptyset) \times \hom_{\Cobred_{d,\gt}}(M, N) 
    \]
    is a weak equivalence.
\end{lem}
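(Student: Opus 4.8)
The plan is to show that $(c,r)$ is a weak equivalence by checking it is one on each path component, after rewriting all three mapping spaces as disjoint unions of moduli spaces $B\Diff^\gt$.

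First I would record the relevant identifications. Applying fact \ref{fact:hom-in-Cob} to the pairs $(M,N)$ and $(\emptyset,\emptyset)$, and using that the reduced bordisms constitute a union of path components of $\hom_{\Cob_{d,\gt}}(M,N)$ (the diffeomorphism type of a bordism, hence its reducedness, being locally constant on the space of bordisms), one gets
\[
    \hom_{\Cob_{d,\gt}}(M,N) \simeq \coprod_{[W]} B\Diff^\gt(W\text{ rel }\partial),
    \qquad
    \hom_{\Cobcl_{d,\gt}}(\emptyset,\emptyset) \simeq \coprod_{[C]} B\Diff^\gt(C),
\]
\[
    \hom_{\Cobred_{d,\gt}}(M,N) \simeq \coprod_{[V]} B\Diff^\gt(V\text{ rel }\partial),
\]
where $W$ runs over diffeomorphism classes of bordisms $M\to N$, $C$ over closed $d$-manifolds, and $V$ over reduced bordisms $M\to N$ (each with the prescribed $\gt$-structure near the boundary). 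Every bordism decomposes canonically as a disjoint union $W = c(W)\amalg r(W)$ of its closed and its non-closed components, and conversely $(C,V)\mapsto C\amalg V$ inverts this on diffeomorphism classes; hence $[W]\mapsto([c(W)],[r(W)])$ is a bijection between the indexing sets of source and target, and the continuous retraction $(c,r)$ carries the $[W]$-component into the $([c(W)],[r(W)])$-component.

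It therefore remains to see that on the $[W]$-component the map $(c,r)$ becomes a homotopy equivalence
\[
    B\Diff^\gt(W\text{ rel }\partial)\longrightarrow B\Diff^\gt(c(W))\times B\Diff^\gt(r(W)\text{ rel }\partial).
\]
For this I would use the homotopy-orbit model $B\Diff^\gt(X\text{ rel }\partial)=\Bun^\gt(X\text{ rel }\partial)\doublebs\Diff(X\text{ rel }\partial)$ from definition \ref{defn:BDiff-gt}. A diffeomorphism of $W$ that is the identity near $\partial W$ permutes the components of $W$ and must send closed components to closed components, hence preserves the partition $W=c(W)\amalg r(W)$; this yields a natural isomorphism $\Diff(W\text{ rel }\partial)\cong\Diff(c(W))\times\Diff(r(W)\text{ rel }\partial)$. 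Likewise $\mrm{Fr}(TW)=\mrm{Fr}(Tc(W))\amalg\mrm{Fr}(Tr(W))$ gives $\Bun^\gt(W)\cong\Bun^\gt(c(W))\times\Bun^\gt(r(W))$, and since $\partial W\subseteq r(W)$ the boundary condition only constrains the second factor, so $\Bun^\gt(W\text{ rel }\partial)\cong\Bun^\gt(c(W))\times\Bun^\gt(r(W)\text{ rel }\partial)$. Since forming homotopy orbits commutes with finite products (using $E(G\times H)\simeq EG\times EH$), these two splittings together exhibit $B\Diff^\gt(W\text{ rel }\partial)$ as the product on the right-hand side, and one verifies that the resulting product equivalence is the map induced by $(c,r)$.

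That last verification is the main obstacle: one must check that, under the equivalences of fact \ref{fact:hom-in-Cob}, the \emph{geometrically} defined retraction $(c,r)$ on the space of embedded bordisms $\Phi_{d,\gt}$ genuinely corresponds to the algebraic product splitting above, rather than merely being abstractly equivalent to it on matching components. I would resolve this by recalling that the equivalence of fact \ref{fact:hom-in-Cob} is constructed from the submanifold model (fact \ref{fact:Psi=BDiffs}) in a way that is natural for the operation of deleting a chosen union of connected components, so that this operation intertwines the two pictures. Alternatively, and perhaps more cleanly, one can avoid fact \ref{fact:hom-in-Cob} entirely: the disjoint-union operation on $\Phi_{d,\gt}$ (using the infinite-dimensionality of the ambient cube to make room) defines a map $m\colon(C,V)\mapsto C\amalg V$, and then $c(C\amalg V)$ and $r(C\amalg V)$ are canonically isotopic to $C$ and $V$ respectively (since $C$ is closed and $V$ is reduced), giving $(c,r)\circ m\simeq\id$, while $c(W)\amalg r(W)$ is canonically isotopic to $W$ (its two pieces already sitting disjointly), giving $m\circ(c,r)\simeq\id$; the care needed here is that these isotopies be carried out continuously in the input bordism, i.e.\ compatibly with the topology of $\Phi_{d,\gt}$. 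Either route shows $(c,r)$ is a weak equivalence on each component, and hence a weak equivalence.
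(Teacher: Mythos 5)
Your proposal is correct but proceeds differently from the paper. The paper works directly with the embedded model: it first reduces the fiberwise statement to the ``total-space'' statement that $(c,r)\colon\Phi_{d,\gt}\to\Phi^{\mathrm{cl}}_{d,\gt}\times\Phi^{\mathrm{red}}_{d,\gt}$ is a weak equivalence (using that both sides are Serre-fibered over $\Psi_{d-1,\gt}\times\Psi_{d-1,\gt}$ via $(\partial_0,\partial_1)$, so a weak equivalence of total spaces restricts to the fibers $\hom(M,N)$), and then proves the total-space statement by exhibiting auxiliary subspaces $A,B\subset\Phi_{d,\gt}$ and a sliding homotopy that moves $c(W)$ and $r(W)$ apart in a spare ambient coordinate, fitting into a homotopy-commutative square with the inclusion $i\colon A\hookrightarrow\Phi_{d,\gt}$ and $k\colon B\hookrightarrow\Phi^{\mathrm{cl}}_{d,\gt}\times\Phi^{\mathrm{red}}_{d,\gt}$ both equivalences. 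Your primary route instead invokes fact~\ref{fact:hom-in-Cob} and argues component-by-component via the splittings $\Diff(W\text{ rel }\partial)\cong\Diff(c(W))\times\Diff(r(W)\text{ rel }\partial)$ and $\Bun^\gt(W\text{ rel }\partial)\cong\Bun^\gt(c(W))\times\Bun^\gt(r(W)\text{ rel }\partial)$. That algebra is correct, but the crux --- as you flag yourself --- is that the geometric $(c,r)$ is not onto the full product of components: its image consists of pairs of \emph{disjoint} embedded submanifolds, so identifying it with the abstract product splitting requires exactly the kind of push-apart isotopy you describe in your alternative. In other words, route 1 does not bypass route 2; it reduces to it. Your alternative route --- building a homotopy inverse $m\colon(C,V)\mapsto C\amalg V$ by using the infinite ambient dimensions to make room, then contracting back --- is essentially the paper's argument, phrased as an explicit homotopy inverse rather than as a comparison through the intermediate subspaces $A,B$. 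The one thing to make explicit if you run it directly on hom spaces rather than on $\Phi_{d,\gt}$ is that the sliding isotopies must move things only in directions other than the time coordinate $x_0$, so that cylindricity near the boundary (and hence the source/target) is preserved; the paper sidesteps this by working on $\Phi_{d,\gt}$ first and then restricting to fibers.
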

\begin{proof}
    We will show that the map that decomposes a bordism into 
    its closed and its reduced part
    \begin{equation}\label{eqn:c-r-decomp}
        (c,r): \Phi_{d,\gt} \longrightarrow 
        \Phi_{d,\gt}^{\mrm{cl}} \times \Phi_{d,\gt}^{\mrm{red}}
    \end{equation}
    is a weak equivalence.
    
    Once this is established the lemma follows by observing that both
    sides of (\ref{eqn:c-r-decomp}) are fibered over $\Phi_{d-1,\gt} \times \Phi_{d-1,\gt}$ 
    via the boundary map $\partial = (\partial_0, \partial_1)$,
    which is a Serre fibration because $\Cob_{d,\gt}$ is fibrant, 
    see \cite[Proposition 3.2.4(ii)]{ERW19b}.
    This implies the claim because a weak equivalence between
    Serre fibrations induces weak equivalences on the fibers.
    
    We begin by observing that $(c,r)$ is an open embedding.
    As a map of sets $(c,r)$ is an injection with image those tuples 
    $(W, V) \in \Phi_{d,\gt}^{\mrm{cl}} \times \Phi_{d,\gt}^{\mrm{red}}$
    such that $W$ and $V$ are disjoint.
    That it is open and continuous follows by a close examination of 
    the topology defined in \cite[Definition 2.1]{GRW10}. 
    We leave the details to the reader.
    
    To show that $(c,r)$ is a weak equivalence we construct two subspaces 
    $A, B \subset \Phi_{d,\gt}$ defined by: 
    \begin{itemize}
     \item $(W,l) \in A$ if for all $x=(x_0,x_1,\dots) \in W$ we have $x_1 = 0$.
     \item $(W,l) \in B$ if for all $x \in c(W)$ we have $x_1 = \oh$ 
     and for all $x \in r(W)$ we have $x_1 = -\oh$.
    \end{itemize}
    There is a map $s:A \to B$ that takes $(W,l)$ and moves up $c(W)$ by
    $\oh$ and moves down $r(W)$ by $\oh$ in the first coordinate.
    This map and the inclusions fit into a homotopy commutative diagram
    \[
        \begin{tikzcd}
            A \ar[r, "s"] \ar[d,hook, "i"] & B \ar[d, "k"] \ar[dl, hook, "j"']\\
            \Phi_{d,\gt} \ar[r, "{(c,r)}"] 
            & \Phi_{d,\gt}^{\mrm{cl}} \times \Phi_{d,\gt}^{\mrm{red}}.
        \end{tikzcd}
    \]
    The map $k: B \to \Phi_{d,\gt}^{\mrm{cl}} \times \Phi_{d,\gt}^{\mrm{red}}$
    is defined by restricting $(c,r)$, so the bottom triangle commutes by 
    definition. To see that the top triangle commutes up to homotopy 
    consider the map $s_\gl:A \to \Phi_{d,\gt}$ defined for $\gl \in [0,\oh]$
    by moving up $c(W)$ by $\gl$ and moving down $r(W)$ by $\gl$.
    This defines a homotopy between $i$ and $j \circ s$.
    
    Next, we observe that $i$ and $k$ are homotopy equivalences.
    In both cases this follows because the inclusion
    $\IR \times \{0\} \times (-1,1)^{\infty-1} \to \IR \times \cube$ 
    admits a deformation retraction via smooth embeddings preserving the 
    $x_0$ coordinate.
    Given a homotopy commutative diagram of the shape as above where 
    $i$ and $k$ are weak equivalences, 
    it follows from the $2$-out-of-$6$ property that all 
    other maps are also weak equivalences.
    Concretely, we can apply the functor $\pi_*$ and observe that because
    $\pi_*(i)$ and $\pi_*(k)$ are isomorphisms, $\pi_*((c,r))$ has
    to be an isomorphism, too.
\end{proof}

\begin{lem}\label{lem:allBCob-vspecial}
    The spaces 
        $B(\Cobcl_{d,\gt})$,
        $B(\Cobred_{d,\gt})$,
        $B(h\Cobcl_{d,\gt})$, and 
        $B(h\Cobred_{d,\gt})$
    are very special $\gC$-spaces.
\end{lem}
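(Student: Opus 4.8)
The statement to prove is \autoref{lem:allBCob-vspecial}: that $B(\Cobcl_{d,\gt})$, $B(\Cobred_{d,\gt})$, $B(h\Cobcl_{d,\gt})$ and $B(h\Cobred_{d,\gt})$ are very special $\gC$-spaces. The plan is to mirror the structure already established for $B\Cob_{d,\gt}$ and $Bh\Cob_{d,\gt}$ in Lemmas \ref{lem:BCob-vspecial} and \ref{lem:BhCob-vspecial}, namely: (1) observe that all four categories carry a $\gC$-structure coming from the tangential structure exactly as in the definition preceding \ref{lem:BCob-vspecial} (splitting a bordism according to which of the $n$ copies of $\gt$ its components map to); this is compatible with the functors $I$ and $R$, so the $\gC$-structures on the subcategory/quotient are inherited. (2) Show each $\gC$-space is \emph{special}, i.e.\ the Segal maps are weak equivalences. (3) Show each is \emph{very special}, i.e.\ the monoid $\pi_0$ of the level-$1$ space is a group.

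\textbf{Speciality.} For $B(h\Cobcl_{d,\gt})$ and $B(h\Cobred_{d,\gt})$ this is immediate: both $h\Cobcl_{d,\gt}$ and $h\Cobred_{d,\gt}$ are symmetric monoidal categories under disjoint union (for $\Cobred$ one uses that disjoint union of reduced bordisms is reduced, so $R$ is symmetric monoidal and the monoidal structure descends), and hence by Segal's argument \cite[\S2]{Seg74}, exactly as in the proof of \ref{lem:BhCob-vspecial}, their classifying spaces are special $\gC$-spaces. For $B(\Cobcl_{d,\gt})$ and $B(\Cobred_{d,\gt})$ one argues at the level of spaces: the Segal map at level $n$ is induced by the collection of maps $\rho_n^i$, and unwinding the definition it is the map on nerves given levelwise by $N_k(\Cob_{d,n\gt}^{?}) \to \prod_{i=1}^n N_k(\Cob_{d,\gt}^{?})$ that decomposes a bordism according to the label in $\{1,\dots,n\}$. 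For the closed and reduced variants this decomposition map is a weak equivalence by the same kind of argument as in \autoref{lem:c-r-decomposition}: deleting/grouping components along the label map is an open embedding onto the "pairwise disjoint" locus, and that inclusion is a weak equivalence because one can isotope the pieces apart into disjoint slabs of $\cube$ using the deformation-retraction trick (pushing the $i$-th labelled part into the region where the first coordinate equals $i$), just as in the proof of \ref{lem:c-r-decomposition}. Taking fat realisations then gives the Segal equivalence.

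\textbf{Very speciality.} Very speciality is a condition purely on the abelian monoid $\pi_0$ of the degree-$1$ space, so it suffices to identify these monoids. By \autoref{lem:gd-h-2con} the map $B(\gd\mcC)\to B(h\mcC)$ is $2$-connected, hence an isomorphism on $\pi_0$; and by \autoref{lem:gd-doesnt-change-B} applied to the fibrant categories $\Cobcl_{d,\gt}$ and $\Cobred_{d,\gt}$ (which inherit fibrancy and weak units from $\Cob_{d,\gt}$, since a full subcategory on a single object is fibrant and the quotient $\Cobred$ is fibrant with weak units by the same references cited for $\Cob_{d,\gt}$), we have $\pi_0 B(\Cobcl_{d,\gt}) \cong \pi_0 B(h\Cobcl_{d,\gt})$ and similarly for the reduced versions. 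So it is enough to compute $\pi_0$ for the two homotopy categories. For $h\Cobcl_{d,\gt}$ the classifying space of a monoid (one-object category) has $\pi_0$ equal to the group completion of the monoid of diffeomorphism classes of closed $\gt$-manifolds under disjoint union, which is already a group (it is the free abelian group / Grothendieck group, but actually disjoint-union of closed manifolds is a group in $\pi_0 B$ because $B$ of a monoid group-completes $\pi_0$ — one checks the standard argument that $\pi_0 BM$ is the group completion of $M$). For $h\Cobred_{d,\gt}$, the monoid $\pi_0$ is generated by objects (closed $(d-1)$-manifolds), and in $B$ of the category every object $M$ becomes invertible since, e.g., there are reduced bordisms exhibiting the disjoint-union inverse — more cleanly, one invokes that $\pi_0 B h\Cobred_{d,\gt}$ is a quotient of $\pi_0 B h\Cob_{d,\gt}$ via the surjection induced by $R$, and the latter is a group by \autoref{lem:BhCob-vspecial}, so the quotient is a group as well. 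Finally, for the closed cases one uses that $I$ is compatible with the $\gC$-structure, so $\pi_0 B\Cobcl \cong \pi_0 Bh\Cobcl$ is a quotient of a sub-monoid situation; in any event each of the four $\pi_0$ monoids is either the group completion computed directly or a quotient of the known group $\pi_0 Bh\Cob_{d,\gt}$, hence a group.

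\textbf{Main obstacle.} The routine part is the $\gC$-bookkeeping. The one step requiring genuine care is establishing speciality for the \emph{reduced} category $B(\Cobred_{d,\gt})$ at the space level: one must check that the label-decomposition map on $N_k\Cobred_{d,\gt}$ is a weak equivalence despite composition in $\Cobred$ involving the non-obvious retraction $r$, i.e.\ that the "delete closed components, then regroup by label" operations are compatible enough that the argument of \autoref{lem:c-r-decomposition} goes through levelwise. I expect this to follow by the same open-embedding-plus-isotopy argument, applied componentwise to each bordism in a $k$-simplex, but it is the place where one must actually verify compatibility rather than cite an existing lemma. Alternatively, and more cleanly, one can deduce speciality of $B\Cobred_{d,\gt}$ and $B\Cobcl_{d,\gt}$ from \autoref{lem:c-r-decomposition} together with \autoref{lem:BCob-vspecial}: the decomposition $(c,r)$ is an equivalence of $\gC$-objects in categories $\Cob_{d,\gt} \simeq \Cobcl_{d,\gt}\times\Cobred_{d,\gt}$ (after replacing hom-spaces up to equivalence), and a product of $\gC$-spaces is special iff each factor is, so specialness of $B\Cob_{d,\gt}$ forces specialness of both $B\Cobcl_{d,\gt}$ and $B\Cobred_{d,\gt}$; this is the route I would actually write up.
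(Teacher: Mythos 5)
Your overall strategy — establish speciality by mirroring Lemmas \ref{lem:BCob-vspecial} and \ref{lem:BhCob-vspecial}, and then check very speciality by computing $\pi_0$ of the degree-one space — is exactly the paper's, and your discussion of speciality (including the alternative route through Lemma \ref{lem:c-r-decomposition} and the flag about the retraction $r$ in the reduced case) is sensible. But the very-speciality argument for the \emph{closed} categories contains a genuine error. You assert that ``$\pi_0 BM$ is the group completion of $M$'' for a monoid $M$, concluding that $\pi_0 B(h\Cobcl_{d,\gt})$ is the Grothendieck group of diffeomorphism classes of closed $\gt$-manifolds under disjoint union. This is false: the classifying space of a monoid, i.e.\ of a one-object category, is always connected. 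It is $\pi_1 BM$, equivalently $\pi_0 \gO BM$, that is the group completion of $\pi_0 M$ — not $\pi_0 BM$. Since $\Cobcl_{d,\gt}$ and $h\Cobcl_{d,\gt}$ have the single object $\emptyset$, one has immediately $\pi_0 B(\Cobcl_{d,\gt}) = \pi_0 B(h\Cobcl_{d,\gt}) = *$, which is a group for trivial reasons; this is precisely what the paper observes. Your final conclusion happens to be correct, but only by accident — the abelian monoid you must show is a group is the trivial one, not the Grothendieck group you describe, and a reader following your reasoning would be led to think the underlying space has many components when it has one.

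For the reduced categories your argument is fine: $\pi_0 B(h\Cobred_{d,\gt})$ is a quotient of the group $\pi_0 B(h\Cob_{d,\gt})$ under the surjection induced by $R$, hence a group. The paper instead identifies it directly with $\pi_0 B(\Cob_{d,\gt})$, the $\gt$-structured bordism group, using that every morphism of $\Cob$ has a reduced representative between the same objects so the connectivity relation on objects is unchanged; either route works. If you rewrite this proof, replace the closed-case discussion with the one-line observation that the category has a single object, and the rest stands.
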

\begin{proof}
    This is proved just like lemma \ref{lem:BCob-vspecial} and 
    \ref{lem:BhCob-vspecial}. 
    To check that they are \emph{very} special, note that 
    $\pi_0 B(\Cobcl_{d,\gt}) = \pi_0 B(h\Cobcl_{d,\gt}) = *$ is trivial
    and that $\pi_0 B(\Cobred_{d,\gt}) = \pi_0 B(h\Cobred_{d,\gt}) 
    = \pi_0 B(\Cob_{d,\gt})$ is the $\gt$-structured bordism group.
\end{proof}

We conclude this section by a lemma indicating that the functor $R$ 
is homotopically well-behaved.
\begin{lem}\label{lem:r-Serre}
    The retraction
    $
        r:\Phi_{d,\gt} \to \Phi_{d,\gt}^{\mrm{red}}
    $
    is a Serre fibration
    and hence the functor $R:\Cob_{d,\gt} \to \Cobred_{d,\gt}$ 
    induces a level-wise fibration on the nerves.
\end{lem}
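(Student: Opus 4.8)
The plan is to verify the fibration property directly at the level of the spaces of cobordisms, and then transport it to the nerves using the simplicial structure. The retraction $r:\Phi_{d,\gt}\to\Phi_{d,\gt}^{\mathrm{red}}$ deletes the closed components of a bordism. The crucial observation is that $\Phi_{d,\gt}^{\mathrm{cl}}$, the space of closed components, is the fiber of $r$ over the empty bordism in a very strong sense: by the open embedding $(c,r):\Phi_{d,\gt}\inj\Phi_{d,\gt}^{\mathrm{cl}}\times\Phi_{d,\gt}^{\mathrm{red}}$ established in the proof of Lemma \ref{lem:c-r-decomposition}, a point of $\Phi_{d,\gt}$ is precisely a pair $(W,V)$ of a closed bordism $W$ and a reduced bordism $V$ that are disjoint as subsets of $\IR\times\cube$. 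So $r$ is identified with the second projection $\Phi_{d,\gt}^{\mathrm{cl}}\times\Phi_{d,\gt}^{\mathrm{red}}\to\Phi_{d,\gt}^{\mathrm{red}}$ restricted to the open subset of disjoint pairs. The task is to show this restricted projection has the homotopy lifting property with respect to a disk $D^n$.

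So concretely, suppose we are given a map $H:D^n\times[0,1]\to\Phi_{d,\gt}^{\mathrm{red}}$ and a lift $\tilde H_0:D^n\times\{0\}\to\Phi_{d,\gt}$ of $H_0$, i.e.\ a continuous family of pairs $(W_x, H(x,0))$ with $W_x$ disjoint from $H(x,0)$. First I would note that, because $D^n$ is compact and the closed bordism $W_x$ has no boundary on the cylindrical ends while $H(x,0)$ does, one can find an ambient isotopy of $\IR\times\cube$ — in fact just a rescaling/translation in one of the extra coordinates $x_k$ with $k\ge 2$, exactly as in the construction of the subspaces $A$ and $B$ in the proof of Lemma \ref{lem:c-r-decomposition} — that pushes all the closed components $W_x$ uniformly into a fixed slab, say $\{x_1 = \tfrac12\}$, away from any reduced bordism. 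Using this, I can continuously deform the given lift so that, after the deformation, the closed part lives in a fixed slab disjoint from the slab $\{x_1=0\}$ containing (a deformation retract of) the reduced part; this uses the deformation retraction of $\IR\times\{0\}\times(-1,1)^{\infty-1}$ into $\IR\times\cube$ through smooth embeddings preserving the $x_0$-coordinate, again as in Lemma \ref{lem:c-r-decomposition}. Then the lift $\tilde H$ of the whole homotopy $H$ is simply $\tilde H(x,s) := (\,\text{(deformed) }W_x,\, H(x,s)\,)$: the closed part is kept constant in its slab, the reduced part follows $H$, and disjointness is automatic because they occupy disjoint slabs. Continuity of the resulting map into $\Phi_{d,\gt}$ follows from the fact that $(c,r)$ is an open embedding, so it suffices to check continuity into the product, which is clear.

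The second half of the statement is then essentially formal. The functor $R:\Cob_{d,\gt}\to\Cobred_{d,\gt}$ is the identity on objects $\Psi_{d-1,\gt}$ and on morphisms it is the map $r$ on $\Phi_{d,\gt}$. On the level of $n$-simplices of the nerve, $N_n\Cob_{d,\gt}$ is the iterated fiber product of copies of $\Phi_{d,\gt}$ along the boundary maps $\partial_0,\partial_1:\Phi_{d,\gt}\to\Psi_{d-1,\gt}$, and likewise for $\Cobred_{d,\gt}$; the map $N_n R$ is the corresponding iterated fiber product of copies of $r$. Since $r$ commutes with the boundary maps (deleting closed components does not affect the boundary, which is anyway empty on the closed part), and since a pullback of a Serre fibration along any map is a Serre fibration while a finite iterated fiber product of Serre fibrations over a common base is again one, it follows that $N_n R$ is a Serre fibration for every $n$. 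Hence $R$ induces a level-wise fibration on nerves, as claimed.

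I expect the main obstacle to be the first step: making precise, with genuine continuity in the parameter $x\in D^n$, the ambient isotopy that separates the closed part from the reduced part uniformly over a compact family. The geometry is transparent — the closed components have empty boundary and can be slid off in an extra coordinate direction — but writing a clean, parametrised argument (rather than waving at the analogous construction in Lemma \ref{lem:c-r-decomposition}) requires some care with the colimit topology on $\Phi_{d,\gt}$; in practice one fixes a large finite $N$ using compactness of $D^n\times[0,1]$ and works in $\Psi_{d,\gt}((-1,1)^N)$, where the relevant embeddings and their continuity are controlled by \cite[Definition 2.1]{GRW10}.
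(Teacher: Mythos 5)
The second half of your argument (passing from a fibration $r$ to a level-wise fibration on nerves) is fine, and the observation that $\partial\circ r=\partial$ is exactly what makes the fiber-product argument go through. But there is a genuine gap in the first half, and it sits precisely in the sentence ``disjointness is automatic because they occupy disjoint slabs.'' The reduced part of your proposed lift $\tilde H(x,s)=(\text{(deformed) }W_x,\,H(x,s))$ is the \emph{given} homotopy $H(x,s)$, which you are not allowed to alter (otherwise $r\circ\tilde H\neq H$). For $s>0$ the bordisms $H(x,s)$ occupy no particular hyperplane; they are arbitrary reduced bordisms in $\IR\times(-1,1)^\infty$ and can perfectly well pass through the slice $\{x_1=\tfrac12\}$ where you parked the closed part. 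So disjointness is not automatic. There is a second, related problem: the HLP requires the lift to restrict to the \emph{given} $\tilde H_0$ at $s=0$, so you cannot first ``deform the given lift'' into the slab and then extend, at least not without a time-reparametrisation trick that also has to contend with the fact that $H$ is not constant near $s=0$.

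The paper sidesteps both issues in one stroke. Using compactness of the parameter cube it chooses $N$ large enough that the entire homotopy $W$ (the target data) and the initial lift both live in $\IR^{N-1}\subset\IR^\infty$, i.e.\ have $N$-th coordinate identically $0$. It then defines the extended lift by translating the closed part of the initial lift in the \emph{fresh} $N$-th coordinate by an amount $s_k/2$ \emph{proportional to the homotopy parameter}:
\[
V'(s_1,\dots,s_k):=W(s_1,\dots,s_k)\cup T_{s_k/2}\bigl(c(V(s_1,\dots,s_{k-1}))\bigr).
\]
At $s_k=0$ the translation is trivial, so $V'$ literally equals the given initial lift (disjointness there is for free, being the $c$--$r$ decomposition of a single submanifold). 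For $s_k>0$ the two pieces have distinct $N$-th coordinates ($0$ versus $s_k/2$), hence are disjoint, \emph{regardless} of what $W(s_1,\dots,s_k)$ does in the first $N-1$ coordinates. This explicit formula avoids any ambient isotopy and makes the continuity check straightforward via the open embedding $(c,r)$. If you replace your fixed-slab-in-an-old-coordinate idea by a translation-in-a-fresh-coordinate whose magnitude grows from $0$ with the homotopy parameter, you recover exactly the paper's proof.
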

\begin{proof}
    To simplify notation we will ignore the tangential structures in
    this proof, though they can easily be added.
    
    For every map $W:[0,1]^k \to \Phi_{d}^{\mrm{red}}$ and
    any lift $V:[0,1]^{k-1} \to \Phi_{d}$ with 
    $r(V(s_1,\dots,s_{k-1})) = W(s_1,\dots,s_{k-1},0)$ 
    we need to find an extension $V':[0,1]^k \to \Phi_{d}$ 
    such that $r(V'(s_1,\dots,s_k)) = W(s_1,\dots,s_k)$.
    
    Recall that we defined $\Phi_{d}$ as a colimit over
    submanifolds of $\IR^N$ for $N \to \infty$. Since $[0,1]^k$ is compact,
    the image of $W$ and $V$ is contained in a finite stage of the colimit. 
    We can therefore find $N$ such that 
    $W(s_1,\dots,s_{k-1}), V(s_1,\dots,s_k) \subset \IR^{N-1}$ for all $s_i$.
    Let now $T_\gl: \IR^\infty \to \IR^\infty$ be the diffeomorphism 
    defined by applying $x_N \mapsto \gl + x_N$ to the $N$th coordinate.
    We use it to define
    \[
        V'(s_1, \dots, s_k) := W(s_1, \dots, s_k) \cup T_{s_k/2}( c(V(s_1,\dots,s_{k-1}))).
    \]
    The union of $W(\dots)$ and $T_{s_k/2}(\dots)$ is in fact a disjoint union:
    the first part has $N$th coordinate $0$ and the second part
    has $N$th coordinate $s_k/2$.
    For $s_k=0$ they are disjoint by assumption.
    Since the union is disjoint $V(s_1,\dots,s_k)$ is indeed a manifold
    and a well-defined point in $\Phi$.
    
    It remains to check that $V$ is actually continuous.
    As remarked in the proof of lemma \ref{lem:c-r-decomposition} 
    the topology on $\Phi$ 
    is equal to the subspace topology induced by the injection 
    $(c,r):\Phi \inj \Phi^{\mrm{cl}} \times \Phi^{\mrm{red}}$.
    It hence suffices to check that both $c(V)$ and $r(V)$ depend 
    continuously on $(s_1,\dots,s_k)$, but this is obvious by construction.
\end{proof}

\section{The reduction fiber sequence}
\label{sec:reduction-fibseq}

Our main technical theorem is what we call the ``reduction fiber sequence''.
It establishes a decomposition of the classifying space of the bordism category
into its closed and reduced components. 
\begin{thm}\label{thm:reduction-fiber-sequence}
    For any dimension $d$ and tangential structure $\gt$ the rows 
    in the commutative diagram of infinite loop spaces 
    \[
        \begin{tikzcd}
            B \Cobcl_{d,\gt} \ar[r] \ar[d] & B \Cob_{d,\gt} \ar[r, "R"] \ar[d] & B \Cobred_{d,\gt} \ar[d] \\
            B (h\Cobcl_{d,\gt}) \ar[r] & B (h\Cob_{d,\gt}) \ar[r, "R"] & B (h\Cobred_{d,\gt})
        \end{tikzcd}
    \]
    are homotopy fiber sequences.
\end{thm}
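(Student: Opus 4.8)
The plan is to deduce both rows from the local additivity theorem \ref{thm:local-add} (see \cite{Stm18}, \cite{Stb19}), applied to a pullback square built from the reduction functor $R$, in the same way that \ref{thm:local-add} is used in the proof of lemma \ref{lem:base-change}.

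The first step is to recognise $\Cobcl_{d,\gt}$ as a strict fibre of $R$. Let $\mathbf{1}\subseteq\Cobred_{d,\gt}$ be the full subcategory on the single object $\emptyset$; since $\hom_{\Cobred_{d,\gt}}(\emptyset,\emptyset)$ consists only of empty bordisms of positive length we have $\hom_{\mathbf{1}}(\emptyset,\emptyset)\cong\IR_{>0}$ and hence $B\mathbf{1}\simeq *$. An object of $\Cob_{d,\gt}\times_{\Cobred_{d,\gt}}\mathbf{1}$ must lie over $\emptyset$, and a morphism is a bordism $W$ with $r(W)=\emptyset$, i.e.\ a closed bordism; thus $\Cobcl_{d,\gt}=\Cob_{d,\gt}\times_{\Cobred_{d,\gt}}\mathbf{1}$, with $I$ the canonical projection. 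Applying $\pi_0$ to all $\hom$-spaces gives likewise $h\Cobcl_{d,\gt}=h\Cob_{d,\gt}\times_{h\Cobred_{d,\gt}}*$. So in both rows the left-hand term is an honest fibre of $R$, and it suffices to show that
\[
\begin{tikzcd}
\Cobcl_{d,\gt} \ar[r, "I"] \ar[d] & \Cob_{d,\gt} \ar[d, "R"] \\
\mathbf{1} \ar[r] & \Cobred_{d,\gt}
\end{tikzcd}
\]
(and its homotopy-category analogue) becomes a homotopy pullback after applying $B$.

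Next I would check the hypotheses of \ref{thm:local-add} for $R\colon\Cob_{d,\gt}\to\Cobred_{d,\gt}$. It is a level-wise fibration on nerves by lemma \ref{lem:r-Serre}; the categories in sight are fibrant and weakly unital in the sense of \cite{ERW19} — for $\Cob_{d,\gt}$ by the remark after definition \ref{defn:Cob}, for $\mathbf{1}$ trivially, and for $\Cobred_{d,\gt}$ by an analogous verification — and the functors in the square are unital. The substantive condition is that $R$ be Cartesian and coCartesian, and the essential input here is the local product decomposition $(c,r)\colon\hom_{\Cob_{d,\gt}}(M,N)\simeq\hom_{\Cobcl_{d,\gt}}(\emptyset,\emptyset)\times\hom_{\Cobred_{d,\gt}}(M,N)$ of lemma \ref{lem:c-r-decomposition}, which exhibits the homotopy fibres of $R$ on $\hom$-spaces as $\hom_{\Cobcl_{d,\gt}}(\emptyset,\emptyset)$, independently of $M$ and $N$. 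The subtlety is that composition in $\Cobred_{d,\gt}$ is ``glue, then delete closed components'', so gluing reduced bordisms can create closed components and the decomposition above is not strictly natural in the base variable; absorbing exactly this phenomenon is the point of the generalisation in \cite{Stb19}, the upshot being that on classifying spaces the monoid $\hom_{\Cobcl_{d,\gt}}(\emptyset,\emptyset)$ of spurious closed components is group-completed and the relevant comparison squares do become homotopy Cartesian. Granting \ref{thm:local-add}, the displayed square becomes a homotopy pullback on $B$, and since $B\mathbf{1}\simeq *$ this is the top fibre sequence, with $B\Cobcl_{d,\gt}\to B\Cob_{d,\gt}$ the map in the statement. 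The bottom row follows by the identical argument applied to $R\colon h\Cob_{d,\gt}\to h\Cobred_{d,\gt}$: any map into a discrete space is a Serre fibration, a discrete category is automatically fibrant, and lemma \ref{lem:c-r-decomposition} survives passage to $\pi_0$, so the hypotheses are only easier to verify there.

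Finally, the commutative ladder and the infinite loop structures. The ordinary and homotopy-categorical pullback squares are linked by the projection functors $\gi\colon\gd\Cob_{d,\gt}\to\Cob_{d,\gt}$ and $\pi\colon\gd\Cob_{d,\gt}\to h\Cob_{d,\gt}$ of definition \ref{defn:discrete-cat} (using lemma \ref{lem:gd-doesnt-change-B} to identify $B(\gd(-))$ with $B(-)$), and since \ref{thm:local-add} is natural in the input square this produces the commuting diagram of fibre sequences claimed in the statement. All six classifying spaces are very special $\gC$-spaces by lemma \ref{lem:allBCob-vspecial} (with $B\mathbf{1}\simeq *$ trivially), the maps $I$ and $R$ are morphisms of $\gC$-objects, and the homotopy fibre of a map of very special $\gC$-spaces is again very special, so all of this takes place in infinite loop spaces. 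I expect the main obstacle to be the verification that $R$ — and, separately, its discrete analogue — satisfies the Cartesian/coCartesian hypothesis of \ref{thm:local-add}: one must repackage the non-strict, group-complete-after-realisation content of lemma \ref{lem:c-r-decomposition} in precisely the form that the additivity theorem of \cite{Stb19} consumes.
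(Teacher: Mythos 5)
Your overall strategy is the paper's: exhibit $\Cobcl_{d,\gt}$ as a strict pullback of $R$ along a functor from a one-object category with contractible classifying space (your $\mathbf{1}\cong\mc{R}_+$ is exactly the paper's $\mc{R}_+\xrightarrow{\mrm{Cyl}_\emptyset}\Cobred_{d,\gt}$), and apply Theorem~\ref{thm:local-add}. But there is a genuine misunderstanding in the step you yourself flag as the substantive one.

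You write that ``the substantive condition is that $R$ be Cartesian and coCartesian.'' That is not the hypothesis of Theorem~\ref{thm:local-add}; it requires only that $R$ be \emph{locally} Cartesian and \emph{locally} coCartesian. The distinction is not pedantic here, because $R$ is in fact \emph{not} Cartesian or coCartesian — this is exactly Remark~\ref{rem:Stm-not-sufficient}: composites of reduced bordisms need not be reduced, so by \cite[Proposition 2.4.2.8]{LurHTT} the class of locally $R$-Cartesian morphisms is not closed under composition, and Steimle's original additivity theorem \cite{Stm18} would not apply. The point of \cite{Stb19} is precisely that the global (co)Cartesian hypothesis is replaced by the local one — not, as you suggest, that ``on classifying spaces the monoid of spurious closed components is group-completed.'' That group-completion story is not what makes the theorem go through.

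What your sketch therefore still needs is a verification that $R$ is \emph{locally} Cartesian (and, by reversing bordisms, coCartesian); this is Lemma~\ref{lem:reduced=locCart}: a morphism $W$ of $\Cob_{d,\gt}$ is locally $R$-Cartesian if and only if $W$ is reduced, and the proof uses exactly the decomposition of Lemma~\ref{lem:c-r-decomposition} to identify the vertical homotopy fibres with $\hom_{\Cobcl_{d,\gt}}(\emptyset,\emptyset)$ and check that postcomposition with $c(W)$ is a self-equivalence iff $c(W)=\emptyset$. Since every morphism of $\Cobred_{d,\gt}$ lifts to a reduced bordism in $\Cob_{d,\gt}$, $R$ has enough locally Cartesian lifts, and \ref{thm:local-add} applies. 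You should also note, when you say the homotopy-category case is ``only easier,'' that local (co)Cartesianness is \emph{not} automatically inherited by $hR$ (Warning~\ref{war:hCart}); one must rerun the argument with $\pi_0$-hom-sets, as in Lemma~\ref{lem:hreduced=locCart}, rather than deduce it formally. Once these points are repaired your proof matches the paper's.
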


This is similar to lemma \ref{lem:c-r-decomposition} where we observed 
a splitting of the morphism spaces.
Note, however, that because $c$ is not functorial 
there is no splitting as a product on the level of categories,
but only a homotopy fiber sequence.

\begin{warning}
    The vertical maps in the theorem are in fact zig-zags 
    $B\Cob_{d,\gt} \leftarrow B(\gd \Cob_{d,\gt}) \to B(h\Cob_{d,\gt})$
    as described in definition \ref{defn:discrete-cat}. 
    We will usually omit the left-ward arrow as it depends naturally on $\Cob$
    and is a weak equivalence by lemma \ref{lem:gd-doesnt-change-B}.
\end{warning}

The essential technical ingredient for the proof of theorem 
\ref{thm:reduction-fiber-sequence} is the Steimle's
``additivity theorem for bordism categories'' from \cite{Stm18}.
As we will see in remark \ref{rem:Stm-not-sufficient} the version
proved by Steimle is not sufficient for establishing the reduction 
fiber sequence and we will instead need the following generalised 
``local'' version, which we proved in \cite{Stb19} 
to study the category of cospans.

\begin{thm}[Local additivity theorem for bordism categories, 
{\cite[Theorem 5.8]{Stb19}}]\label{thm:local-add}
    Let $P: \mcE \to \mcB$ be a weakly unital functor of weakly unital topological categories
    such that $\mcB$ is fibrant, $P$ is a local fibration, $P$ is \emph{locally} Cartesian,
    and $P$ is \emph{locally} coCartesian. 

    Then, for every weakly unital functor $F:\mcC \to \mcB$,
    the following is a homotopy pullback square:
    \[
        \begin{tikzcd}
            B(\mcC \times_{\mcB} \mcE) \ar[r] \ar[d] & B\mcE \ar[d, "P"] \\
            B\mcC \ar[r, "F"] & B\mcB .
        \end{tikzcd}
    \]
\end{thm}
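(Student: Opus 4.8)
The plan is to reduce the statement, via a standard realisation lemma for semisimplicial spaces, to a purely ``levelwise'' claim about the nerve of $P$, and then to prove that claim by unwinding the meaning of the words \emph{locally Cartesian} and \emph{locally coCartesian}. Recall the realisation lemma in the following form: if $g_\bullet\colon A_\bullet\to B_\bullet$ is a map of semisimplicial spaces which is a levelwise Serre fibration and for which every face-map square
\[
\begin{tikzcd}
A_n \ar[r] \ar[d, "g_n"'] & A_m \ar[d, "g_m"] \\ B_n \ar[r] & B_m
\end{tikzcd}
\]
(for $[m]\hookrightarrow[n]$ in $\Delta_{\mathrm{inj}}$) is homotopy cartesian, then for every $n$ the square with vertices $A_n$, $\|A_\bullet\|$, $B_n$, $\|B_\bullet\|$ is homotopy cartesian; the weak unitality of the categories and the fibrancy of $\mcB$ are what keep all spaces and maps in sight regular enough for this to apply, and in particular they make $\mcE$ fibrant as well. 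The whole proof then amounts to verifying the face-square hypothesis for the relevant maps of nerves.

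Second, and this is the heart of the argument, I would show that $N_\bullet P\colon N_\bullet\mcE\to N_\bullet\mcB$ is a levelwise fibration --- this is exactly what ``$P$ is a local fibration'' says --- and that all of its face-map squares are homotopy cartesian. An $n$-simplex of $N_\bullet\mcE$ is a chain $e_0\to\cdots\to e_n$ of composable morphisms of $\mcE$ sitting over a chain $b_0\to\cdots\to b_n$ of $\mcB$. For the outer face $d_0$, which deletes $e_0$ and the morphism $e_0\to e_1$, being homotopy cartesian means: given the truncated $\mcE$-chain $e_1\to\cdots\to e_n$ together with the full $\mcB$-chain, the space of fillings $(e_0,\ e_0\to e_1,\ \text{higher composite data})$ is contractible --- and since one produces such a filling by taking a Cartesian lift of $b_0\to b_1$ with target $e_1$ and then composing, this contractibility is precisely the assertion that $P$ is locally Cartesian. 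Dually, the outer face $d_n$ uses that $P$ is locally coCartesian, and each inner face $d_i$ is treated by taking a coCartesian lift of $b_{i-1}\to b_i$ out of $e_{i-1}$ and invoking its universal property to factor the prescribed morphism $e_{i-1}\to e_{i+1}$ uniquely. The nontrivial point --- and the step I expect to be the main obstacle --- is that these ``essentially unique lift'' statements have to hold not one chain at a time but in families, i.e.\ as contractibility of the appropriate homotopy fibres of maps between the whole spaces $N_n\mcE$; making this precise is exactly what the prefix \emph{locally} is there to guarantee, and checking it is the real technical content (this is where \cite{Stm18} did bordism-specific surgery and where \cite{Stb19} abstracts the relevant homotopical input).

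Finally, with the face-square property for $N_\bullet P$ in hand, the pullback $N_\bullet(\mcC\times_\mcB\mcE)=N_\bullet\mcC\times_{N_\bullet\mcB}N_\bullet\mcE$ inherits it over $N_\bullet\mcC$: each levelwise square is a homotopy pullback because $N_nP$ is a fibration, and the face-map squares for $\mcC\times_\mcB\mcE\to\mcC$ have the same (strict, hence homotopy) fibres as those for $\mcE\to\mcB$, which are equivalences by the previous step. Applying the realisation lemma to both $N_\bullet P$ and $N_\bullet(\mcC\times_\mcB\mcE\to\mcC)$ yields, for every $n$, homotopy cartesian squares with vertices $(N_n\mcE,\,B\mcE,\,N_n\mcB,\,B\mcB)$ and $(N_n(\mcC\times_\mcB\mcE),\,B(\mcC\times_\mcB\mcE),\,N_n\mcC,\,B\mcC)$. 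Taking $n=1$ and a morphism $\gc\in N_1\mcC$ lying over $P(\gc)\in N_1\mcB$, the homotopy fibre of $B(\mcC\times_\mcB\mcE)\to B\mcC$ over the image of $\gc$ is the strict fibre of $N_1(\mcC\times_\mcB\mcE)\to N_1\mcC$ over $\gc$, which --- again because $N_1P$ is a fibration --- is canonically the fibre of $N_1\mcE\to N_1\mcB$ over $P(\gc)$, hence the homotopy fibre of $B\mcE\to B\mcB$ over the image of $P(\gc)$, all of this compatibly with the map $B(\mcC\times_\mcB\mcE)\to B\mcE$. Since $\mcC$ is weakly unital, $N_1\mcC\to B\mcC$ is surjective on path components, so this identifies the homotopy fibres over every component of $B\mcC$ and shows the square in the statement is homotopy cartesian. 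The remaining items --- bookkeeping the face conventions and confirming that ``$\mcB$ fibrant'' together with weak unitality are all that the realisation lemma requires --- are routine.
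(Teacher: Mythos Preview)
The paper does not contain a proof of this statement: it is quoted verbatim from \cite[Theorem~5.8]{Stb19} and used as a black-box tool (in the proof of Lemma~\ref{lem:base-change} and of Theorem~\ref{thm:reduction-fiber-sequence}). There is therefore no ``paper's own proof'' to compare your proposal against.

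That said, your outline is the correct general strategy and is essentially how the result is proved in \cite{Stb19}, building on \cite{Stm18}: one shows that $N_\bullet P$ is a levelwise fibration whose face squares are homotopy cartesian, and then invokes a realisation-fibration lemma. The one place where your sketch is a bit too quick is the treatment of inner faces. You write that $d_i$ is handled by ``taking a coCartesian lift of $b_{i-1}\to b_i$ out of $e_{i-1}$ and invoking its universal property to factor the prescribed morphism $e_{i-1}\to e_{i+1}$ uniquely''. In the \emph{globally} Cartesian setting of \cite{Stm18} this is fine, but in the merely \emph{locally} Cartesian setting the composite of two locally Cartesian morphisms need not be locally Cartesian (cf.\ Remark~\ref{rem:Stm-not-sufficient}), so the usual factorisation argument does not directly apply to longer chains. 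The actual argument in \cite{Stb19} has to show that the face-square condition nonetheless holds by a more careful analysis that only ever uses the local universal property one edge at a time; this is the genuine new content over \cite{Stm18}. You correctly flag this step as ``the main obstacle'', but your description of how to overcome it does not yet distinguish the local from the global case.
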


\begin{rem}
    All non-unital categories in this paper have weak units in the form 
    of cylinders and all functors considered will preserve these.
    We will recall the other technical terms as needed.
\end{rem}

The main step in verifying the conditions of the additivity theorem
will be to check that the functor $R:\Cob_{d,\gt} \to \Cobred_{d,\gt}$ 
is indeed locally (co)Cartesian.
This is a consequence of lemma \ref{lem:c-r-decomposition}.

\begin{lem}\label{lem:reduced=locCart}
    A morphism $W:M \to N$ in $\Cob_{d,\gt}$ is locally $R$-Cartesian
    in the sense of \cite[Definition 5.6]{Stb19}, if and only if it is reduced.
\end{lem}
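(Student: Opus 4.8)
The plan is to translate the condition ``locally $R$-Cartesian'' from \cite[Definition~5.6]{Stb19} into a statement about hom-spaces, and then evaluate it using the product decomposition of Lemma~\ref{lem:c-r-decomposition}.

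\textbf{Unwinding the definition.}
Since $R\colon\Cob_{d,\gt}\to\Cobred_{d,\gt}$ is the identity on object spaces, the fiber category $R^{-1}(M)$ over an object $M$ has $M$ as its only object, and --- using fibrancy of the cobordism categories together with Lemma~\ref{lem:r-Serre} --- its space of endomorphisms of $M$ is (weakly equivalent to) the homotopy fiber of $r\colon\hom_{\Cob_{d,\gt}}(M,M)\to\hom_{\Cobred_{d,\gt}}(M,M)$ over the weak unit $\mrm{cyl}_M$. Spelling out \cite[Definition~5.6]{Stb19} in this case, a morphism $W\colon M\to N$ is locally $R$-Cartesian if and only if post-composition with $W$ is a weak equivalence
\[
\hom_{R^{-1}(M)}(M,M)\ \xrightarrow{\ W\circ-\ }\ \mrm{hofib}_{r(W)}\Bigl(r\colon\hom_{\Cob_{d,\gt}}(M,N)\to\hom_{\Cobred_{d,\gt}}(M,N)\Bigr).
\]
So everything reduces to identifying this comparison map.

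\textbf{Evaluating the two sides.}
By Lemma~\ref{lem:c-r-decomposition} the map $(c,r)$ identifies $\hom_{\Cob_{d,\gt}}(M,N)\simeq\hom_{\Cobcl_{d,\gt}}(\emptyset,\emptyset)\times\hom_{\Cobred_{d,\gt}}(M,N)$ compatibly with $r$ being projection to the second factor (and likewise for $(M,M)$); moreover $r$ is a fibration by Lemma~\ref{lem:r-Serre}, so its fiber and homotopy fiber agree. Hence both source and target of the comparison map are identified via $c$ with $\hom_{\Cobcl_{d,\gt}}(\emptyset,\emptyset)$. A class in the source is represented by $\mrm{cyl}_M\amalg K$ for a closed $d$-manifold $K$, and post-composing with $W$ gives $W\cup_M(\mrm{cyl}_M\amalg K)\cong W\amalg K$, whose closed part is $c(W)\amalg K$. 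So under these identifications the comparison map becomes the ``disjoint union with $c(W)$'' map
\[
c(W)\amalg(-)\colon\hom_{\Cobcl_{d,\gt}}(\emptyset,\emptyset)\longrightarrow\hom_{\Cobcl_{d,\gt}}(\emptyset,\emptyset).
\]

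\textbf{Concluding.}
By Facts~\ref{fact:Psi=BDiffs} and \ref{fact:hom-in-Cob}, $\hom_{\Cobcl_{d,\gt}}(\emptyset,\emptyset)\simeq\coprod_{[V]}B\Diff^\gt(V)$ with the coproduct over diffeomorphism classes of closed $d$-manifolds, including $V=\emptyset$ (whose component is the unit). The map $c(W)\amalg(-)$ sends the component of $[V]$ to that of $[V\amalg c(W)]$; it is always injective on $\pi_0$ (a closed manifold is the disjoint union of its components, so $c(W)$ can be cancelled), but surjective on $\pi_0$ only when $c(W)=\emptyset$, since otherwise the component of $\emptyset$ is not in its image. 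Therefore the comparison map is a weak equivalence if and only if $c(W)=\emptyset$, i.e.\ if and only if $W$ has no closed components --- exactly the reducedness condition. The step I expect to be the main obstacle is the first one: matching the bookkeeping of \cite[Definition~5.6]{Stb19} with the hom-space formulation above, and in particular the soft identification of $\hom_{R^{-1}(M)}(M,M)$ with a homotopy fiber of $r$ inside the weakly unital, semisimplicial-space framework. Everything after that is the elementary remark that adding a fixed nonempty closed manifold never induces a bijection on bordism classes.
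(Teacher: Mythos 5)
Your proof is correct and follows essentially the same route as the paper's: both unwind the locally $R$-Cartesian condition (using Lemma~\ref{lem:r-Serre} and Lemma~\ref{lem:c-r-decomposition}) to the self-map of $\hom_{\Cobcl_{d,\gt}}(\emptyset,\emptyset)$ given by disjoint union with $c(W)$, and both detect failure on $\pi_0$ by observing that the component of the empty manifold is missed whenever $c(W)\neq\emptyset$.
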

\begin{proof}
    According to \cite[Definition 5.6]{Stb19} $W$ is locally $R$-Cartesian
    if and only if for any (and by \cite[Lemma 5.9]{Stb19} equivalently all) equivalences
    $V:M' \to M$ in $\Cobred_{d,\gt}$ the following diagram
    \[
        \begin{tikzcd}[column sep = 4pc]
            \hom_{\Cob_{d,\gt}}(M', M) \ar[r, "(\blank \cup_M W)"] \ar[d,"R"] &
            \hom_{\Cob_{d,\gt}}(M, N) \ar[d,"R"] \\
            \hom_{\Cobred_{d,\gt}}(M', M) \ar[r, "r(\blank \cup_M r(W))"] &
            \hom_{\Cobred_{d,\gt}}(M, N) 
        \end{tikzcd}
    \]
    induces a weak equivalence between the homotopy fiber of $R$ at $V$
    and the homotopy fiber of $R$ at $r(V \cup_M W) = r(V \cup_M r(W))$.
    The choice of the equivalence $V$ does not matter, but to be concrete
    we can take $M' = M$ and $V:M \to M$ to be the cylinder of length one.
    Using lemma \ref{lem:c-r-decomposition} we can identify the homotopy fibers with 
    $\hom_{\Cobcl_{d,\gt}}(\emptyset, \emptyset)$. 
    Note that in light of lemma \ref{lem:r-Serre} these homotopy fibers are equivalent
    to the genuine fibers. The induced map between the
    homotopy fibers is exactly composition with $c(V \cup_M W)$ in $\Cobcl_{d,\gt}$:
    \[
        (\blank \cup_\emptyset c(V \cup_M W)): \hom_{\Cobcl_{d,\gt}}(\emptyset, \emptyset) 
        \longrightarrow \hom_{\Cobcl_{d,\gt}}(\emptyset, \emptyset).
    \]
    This map is homotopic to the identity if $c(V\cup_M W)= (\emptyset, t)$ 
    is the empty cobordism of some length $t>0$, and since $V$ is a cylinder
    this is the case if and only if $W$ is reduced.
    Therefore $W$ is $R$-Cartesian if it is reduced.
    
    Conversely, if $W$ is not reduced, then $c(V\cup_M W)$ is non-empty and the map 
    $(\blank \cup_\emptyset c(V\cup_M W))$ cannot be an equivalence as it does not 
    hit the connected component of 
    $(\emptyset, t) \in \hom_{\Cobcl_{d,\gt}}(\emptyset, \emptyset)$.
\end{proof}

\begin{lem}\label{lem:hreduced=locCart}
    A morphism $[W]:M \to N$ in $h\Cob_{d,\gt}$ is locally $(hR)$-Cartesian
    in the sense of \cite[Definition 5.6]{Stb19}, if and only if it is reduced.
\end{lem}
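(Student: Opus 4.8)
The plan is to reduce Lemma~\ref{lem:hreduced=locCart} to its topological counterpart Lemma~\ref{lem:reduced=locCart} by exploiting the fact that both statements are really about $\pi_0$ of morphism spaces, and the local Cartesian condition of \cite[Definition 5.6]{Stb19} only depends on the hom-spaces up to weak equivalence. First I would unwind the definition: $[W]:M \to N$ in $h\Cob_{d,\gt}$ is locally $(hR)$-Cartesian if, for any (equivalently all) weak unit $[V]:M\to M$ in $h\Cobred_{d,\gt}$, the square
\[
    \begin{tikzcd}[column sep = 4pc]
        \hom_{h\Cob_{d,\gt}}(M, M) \ar[r, "(\blank \cup_M W)"] \ar[d,"hR"] &
        \hom_{h\Cob_{d,\gt}}(M, N) \ar[d,"hR"] \\
        \hom_{h\Cobred_{d,\gt}}(M, M) \ar[r] &
        \hom_{h\Cobred_{d,\gt}}(M, N)
    \end{tikzcd}
\]
induces an equivalence (i.e.\ here a bijection of sets, since these are discrete) between the homotopy fibers of the two vertical maps. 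Since the hom-sets of $h\mcC$ are by definition $\pi_0$ of the hom-spaces of $\mcC$, and $\pi_0$ of a Serre fibration is a surjection of sets whose ``fibers'' are the $\pi_0$ of the genuine fibers, I would first record that the homotopy fiber of $hR$ over $[V]$ is the set $\pi_0 \hom_{\Cobcl_{d,\gt}}(\emptyset,\emptyset)$, using Lemma~\ref{lem:c-r-decomposition} together with Lemma~\ref{lem:r-Serre} exactly as in the proof of Lemma~\ref{lem:reduced=locCart}.

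The second step is to identify the induced map on fibers. Because composition in $h\Cob_{d,\gt}$ is induced by composition in $\Cob_{d,\gt}$ followed by $\pi_0$, and the decomposition $(c,r)$ of Lemma~\ref{lem:c-r-decomposition} is compatible with composition of the closed parts, the induced map on fibers is $\pi_0$ of the map
\[
    (\blank \cup_\emptyset c(W)): \hom_{\Cobcl_{d,\gt}}(\emptyset,\emptyset) \longrightarrow \hom_{\Cobcl_{d,\gt}}(\emptyset,\emptyset),
\]
that is, post-composition with the class $[c(W)]$ in the monoid $\pi_0 \hom_{\Cobcl_{d,\gt}}(\emptyset,\emptyset)$ of closed $d$-manifolds under disjoint union. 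So $[W]$ is locally $(hR)$-Cartesian if and only if translation by $[c(W)]$ is a bijection of this monoid.

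The third step is the elementary monoid computation. If $W$ is reduced then $c(W)$ is the empty bordism, $[c(W)]$ is the identity element, and translation is the identity bijection — so $[W]$ is locally $(hR)$-Cartesian. Conversely, if $W$ is not reduced then $c(W)$ is a non-empty closed $d$-manifold; since the commutative monoid $\pi_0 \hom_{\Cobcl_{d,\gt}}(\emptyset,\emptyset)$ is free-like enough that adding a non-empty manifold never returns the empty manifold (the number of connected components strictly increases, or more crudely the empty manifold is not in the image), translation by $[c(W)]$ fails to be surjective and hence is not a bijection. Therefore $[W]$ is locally $(hR)$-Cartesian if and only if it is reduced. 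The only mild subtlety, and the step I would be most careful about, is the first one: making sure that ``homotopy fiber'' in the sense of \cite[Definition 5.6]{Stb19}, when applied to a functor of \emph{discrete} categories, is computed correctly as the set-level fiber and agrees with $\pi_0$ of the topological homotopy fiber — this is where one invokes Lemma~\ref{lem:gd-h-2con} or directly the fact that $\pi_0$ preserves the relevant pullbacks, and it is the reason the lemma is a genuine corollary of its topological analogue rather than needing an independent argument.
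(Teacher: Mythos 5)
Your proof is correct and takes essentially the same route as the paper: both re-run the argument of Lemma~\ref{lem:reduced=locCart} at the level of hom-\emph{sets}, using the $\pi_0$-bijection $\hom_{h\Cob_{d,\gt}}(M,N) \cong \hom_{h\Cobcl_{d,\gt}}(\emptyset,\emptyset) \times \hom_{h\Cobred_{d,\gt}}(M,N)$ supplied by Lemma~\ref{lem:c-r-decomposition}, and the final monoid observation (translation by a nonempty closed manifold misses $\emptyset$) matches the paper's.

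The framing of your last paragraph, however, is off in a way the paper is quite careful to flag. You describe the lemma as ``a genuine corollary of its topological analogue'' and invoke ``the fact that $\pi_0$ preserves the relevant pullbacks''. In general $\pi_0$ does \emph{not} preserve homotopy fibers --- the $\pi_1$-action in the long exact sequence of a fibration is precisely the obstruction --- and Warning~\ref{war:hCart} states explicitly that being locally (co)Cartesian is not preserved under passing to homotopy categories, so the $h$-version is \emph{not} a formal consequence of the topological one. What makes things work here is that Lemma~\ref{lem:c-r-decomposition} provides a \emph{product} decomposition of the hom-spaces, and products do pass to $\pi_0$; hence the set-level fiber of $hR$ is indeed $\pi_0$ of the space-level fiber of $R$ in this particular case, but that is a feature of the product structure, not a general commutation of $\pi_0$ with fibers. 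Your first three steps do carry out an independent re-run of the argument, which is the right thing; the ``corollary'' language at the end is exactly the reasoning that would lead one astray for, say, Genauer's sequence. Finally, the citation of Lemma~\ref{lem:gd-h-2con} is a red herring: that lemma compares classifying spaces, not hom-spaces, and plays no role in this argument.
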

\begin{proof}
    This is proved just like lemma \ref{lem:reduced=locCart},
    but using the bijection 
    \[
        \hom_{h\Cob_{d,\gt}}(M,N)
        \cong \hom_{h\Cobcl_{d,\gt}}(\emptyset,\emptyset) 
        \times \hom_{h\Cobred_{d,\gt}}(M,N).
    \]
    It is important to note that, while the proofs are completely 
    analogous, the statement of this lemma is not a formal consequence 
    of the statement of lemma \ref{lem:reduced=locCart}. 
    See warning \ref{war:hCart}.
\end{proof}

\begin{rem}\label{rem:Stm-not-sufficient}
    As we will see in the proof of theorem \ref{thm:reduction-fiber-sequence},
    this lemma implies that $R:\Cob_{d,\gt} \to \Cobred_{d,\gt}$ is locally Cartesian
    and, by reversing bordisms, also locally coCartesian.
    Note, however, that usually $R$ is neither Cartesian nor coCartesian.
    A morphism $W:M \to N$ in $\Cob_{d,\gt}$ is $R$-Cartesian if 
    the square in lemma \ref{lem:reduced=locCart} is a homotopy pullback for all $M'$.
    In particular, the vertical homotopy fibers at $V$ and $r(V \cup_M r(W))$ 
    have to agree for all reduced $V:M' \to M$ and not just the equivalences.
    It follows from the proof of lemma \ref{lem:reduced=locCart} that 
    for this to be the case $c(V\cup_M W)$ has to be empty.
    Therefore, if $W:M \to N$ and $V:M' \to M$ are reduced morphisms 
    such that $V \cup_M W$ is not reduced, then there cannot be an $R$-Cartesian 
    morphism $\wt{W}:M \to N$ with $r(\wt{W}) = W$.
    
    Note that reduced bordisms $W$, $V$ with $V \cup_M W$ not reduced 
    exist in all bordism categories with $d>0$ and $\gt \neq \emptyset$.%
    \footnote{
        It suffices to construct a counterexample for $\gt = \mi{GL}_d$ 
        and all $d >0$, then the others can be produced using functoriality
        of $\Cob_{d,\gt}$ in the $\mi{GL}_d$-space $\gt$.
        So, consider the bordism category of framed $1$-manifolds $\Cob_{1, \mi{GL}_1}$.
        Here a counterexample is given by $V = D^1: \emptyset \to \{+,-\}$
        and $W = D^1: \{+, -\} \to \emptyset$ with the obvious framing.
        The counterexample in $\Cob_{d, \mi{GL}_d}$ is now obtained by
        taking the product with $(S^1)^{d-1}$ with the Lie-group framing.
    }
    This shows that Steimle's original version of the additivity theorem for bordism 
    categories \cite{Stm18} would not be sufficient for our purposes.
\end{rem}

\begin{proof}[Proof of Theorem \ref{thm:reduction-fiber-sequence}]
    Let $\mc{R}_+$ denote the topological semigroup $(\IR_{>0},+)$ thought of as a 
    non-unital topological category with one object. For all variants of the bordism 
    category there is a canonical functor $\Cob \to \mc{R}_+$ that records the 
    length of a bordism. Picking any object $M \in \Cob$ we also have a functor
    $\mrm{Cyl}_M: \mc{R}_+ \to \Cob$ that sends $t$ to $(t,\IR \times M)$,
    the cylinder of length $t$ on $M$.
    Using these functors we construct a commutative diagram
    \[
        \begin{tikzcd}
            \Cobcl_{d,\gt} \ar[d] \ar[r] & \Cob_{d,\gt} \ar[d, "R"] \\
            \mc{R}_+ \ar[r, "\mrm{Cyl}_\emptyset"] & \Cobred_{d,\gt}.
        \end{tikzcd}
    \]
    This is trivially a pullback diagram on the spaces of objects.
    By the proof of lemma \ref{lem:c-r-decomposition} it also is a pullback diagram
    on the spaces of morphisms,
    and hence it is a pullback diagram of categories internal to topological spaces.
    
    We now wish to apply the local additivity theorem for bordism categories 
    of \cite{Stb19} to this, which we recalled as theorem \ref{thm:local-add}.
    The fibrancy condition on $\Cobred_{d,\gt}$ follows from 
    the fibrancy for $\Cob_{d,\gt}$, see \cite[Proposition 3.2.4(ii)]{ERW19b}
    and the fibrancy condition on $R$ follows from lemma \ref{lem:r-Serre}.
    The functor $R$ is locally Cartesian since any morphism $W:M \to N$ 
    in $\Cobred_{d,\gt}$ has a lift to $\Cob_{d,\gt}$ given by $W$ itself.
    This lift is reduced and hence by lemma \ref{lem:reduced=locCart} locally Cartesian.
    Turning around all bordisms involved we see by the same argument that $R$
    is locally coCartesian.
    Finally, we note that the categories involved are weakly unital with the 
    weak units given by the cylinders $(t, \IR \times M)$ 
    described above. 
    The functors in the diagram preserve the cylinders and are hence weakly unital.
    This checks all conditions of theorem \ref{thm:local-add}.
    
    As a result of the additivity theorem we see that 
    \[
        \begin{tikzcd}
            B\Cobcl_{d,\gt} \ar[d] \ar[r] & B\Cob_{d,\gt} \ar[d, "B(R)"] \\
            B\mc{R}_+ \ar[r] & B\Cobred_{d,\gt}
        \end{tikzcd}
    \]
    is a homotopy pullback diagram. The non-unital topological category 
    $\mc{R}_+$ is weakly equivalent to the terminal unital category 
    and hence $B\mc{R}_+$ is weakly contractible. 
    This shows that 
    \[
        B\Cobcl_{d,\gt} \longrightarrow B\Cob_{d,\gt} \longrightarrow B\Cobred_{d,\gt}
    \]
    is a homotopy fiber sequence.
    
    It remains to discuss the homotopy fiber sequence for the homotopy categories
    $h\Cob_{d,\gt}$. Basically all arguments can be copied from the topological case,
    (using lemma \ref{lem:hreduced=locCart} instead of lemma \ref{lem:reduced=locCart})
    with the additional simplification that all morphism spaces are discrete and
    hence all fibrancy conditions are trivially satisfied.
    In particular this shows that the second sequence in the theorem is a homotopy fiber sequence.
    
    The vertical maps in the theorem are induced by the canonical 
    zig-zag of functors $\mcD \leftarrow \gd\mcD \to h\mcD$ 
    that we have for any topologically category
    and it is clear that the diagram commutes.
    We know from lemma \ref{lem:BCob-vspecial}, \ref{lem:BhCob-vspecial}, 
    and \ref{lem:allBCob-vspecial} that all six spaces are infinite loop spaces 
    and that the maps respect this structure. Hence the diagram is a diagram of infinite 
    loop spaces and in particular the two fiber sequences are fiber sequences 
    of infinite loop spaces.
\end{proof}

\begin{warning}\label{war:hCart}
    There are topologically enriched functors $F:\mcE \to \mcB$ satisfying the 
    conditions of the (local) additivity theorem for bordism categories such that
    $hF:h\mcE \to h\mcB$ does not satisfy the conditions of the theorem.
    The reason for this is that the property of being of (locally) (co)Cartesian
    is not preserved under taking homotopy categories.
    This for example fails when considering Genauer's sequence: Steimle's proof 
    of the Genauer fiber sequence \cite[Theorem 1.1]{Stm18} does not imply that there
    is an analogous homotopy fiber sequence
    \[
        B(h\Cob_d) \longrightarrow B(h\Cob_d^\partial) 
        \xrightarrow{\ \partial\ } B(h\Cob_{d-1})
    \]
    for the classifying spaces of the homotopy categories.
    In fact, the author believes that this 
    cannot be a homotopy fiber sequence for any $d \ge 1$.
\end{warning}

\section[Computations for d=1]{Computations for $d=1$}\label{sec:Computations-d=1}

\subsection{The classifying space of the closed bordism category}
The closed bordism category is a topological monoid 
and we think of it as the coherently commutative monoid freely generated
by the connected closed manifolds. In lemma \ref{lem:BCobcl} we describe its 
classifying space as the free infinite loop space 
generated by $B\Diff^\gt(W)$ for all \emph{connected} 
closed diffeomorphism types $W$.
In dimension one, the only connected closed manifold is the circle
and we can describe $B\Cobcl_{1,\gt}$ more explicitly in terms of the 
homotopy orbits of the space of free loops, see corollary \ref{cor:Bcobcl1}.

\begin{lem}\label{lem:BCobcl}
    The maps
    $\ga_{W}: \gS B\Diff^\gt(W) \to B\Cobcl_{d,\gt}$
    from definition \ref{defn:MW-map} induce a weak equivalence of
    infinite loop spaces:
    \[
        Q\left( \gS \left(
        \coprod_{[W] \text{ connected closed}} B\Diff^\gt(W)
        \right)_+ \right) 
        \xrightarrow{\ \simeq\ } B\Cobcl_{d,\gt}.
    \]
\end{lem}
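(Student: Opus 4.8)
The plan is to exhibit $\Cobcl_{d,\gt}$ as a topological monoid freely generated by the connected closed $d$-manifolds, and then to apply a group-completion / May-recognition argument. First I would observe that $\Cobcl_{d,\gt}$ has a single object $\emptyset$, so its nerve is the nerve of the topological monoid $(\Phi_{d,\gt}^{\mrm{cl}}, \amalg)$, where composition is disjoint union (placing cobordisms side by side along the $x_0$-coordinate). By \autoref{fact:Psi=BDiffs} applied to closed manifolds, together with the observation that disjoint union makes $\coprod_{[W]} B\Diff^\gt(W)$ into the free $E_\infty$-monoid on the connected diffeomorphism types, I would identify $\Phi_{d,\gt}^{\mrm{cl}}$ up to weak equivalence with $\coprod_{n\ge 0}\big(\big(\coprod_{[W]\text{ conn}} B\Diff^\gt(W)\big)^{\times n}\big)_{h\Sigma_n}$, i.e.\ the free $E_\infty$-space (equivalently, by a well-known argument, the underlying $E_1$-space up to group completion) on $Y := \coprod_{[W]\text{ conn}} B\Diff^\gt(W)$. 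One must be slightly careful here: $\Phi_{d,\gt}^{\mrm{cl}}$ is literally a topological monoid (not just $E_\infty$), but the side-by-side concatenation is homotopy-commutative via the usual Eckmann–Hilton shuffle using the extra room in $\IR^\infty$; since we only care about $B(-)$ this is enough.

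Next I would recall that for a topological monoid $M$ the classifying space $BM$ is a delooping, $\gO BM \simeq M^{\mrm{gp}}$ (group completion), and that $\Cobcl_{d,\gt}$ already carries the very special $\gC$-structure of \autoref{lem:allBCob-vspecial}, so $B\Cobcl_{d,\gt}$ is an infinite loop space and $\gO B\Cobcl_{d,\gt} \simeq \Phi_{d,\gt}^{\mrm{cl}}$ after group completion. On the other hand, $Q(\gS Y_+) = Q(\gS(\coprod_{[W]\text{ conn}} B\Diff^\gt(W))_+)$ is by definition the free infinite loop space on $\gS Y_+$, and its zeroth space deloops the free $E_\infty$-space on $Y$ after group completion — this is exactly the Barratt–Priddy–Quillen–Segal statement $Q(\gS Y_+) \simeq B\big(\coprod_n Y^{\times n}_{h\Sigma_n}\big)$ as infinite loop spaces. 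So both sides are deloopings of group completions of the free $E_\infty$-space on $Y$, and it remains to produce an explicit infinite loop map realizing the equivalence and to check it induces an iso on homology (or homotopy) before group completion, hence after. The map is the one in the statement: the maps $\ga_W \colon \gS B\Diff^\gt(W)_+ \to B\Cobcl_{d,\gt}$ of \autoref{defn:MW-map} assemble (as $W$ ranges over connected types, and using that the target is an infinite loop space) into an infinite loop map out of $Q(\gS Y_+)$ by the universal property of $Q$ recalled just before \autoref{lem:ga=fiber}.

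To finish, I would check that this map is an equivalence by comparing the two $\gC$-space / monoid structures: on $\pi_0$ both sides give the free abelian group on the connected closed $d$-bordism types (for the source because $\pi_0 Q(\gS Y_+) = \IZ[\pi_0 Y]$; for the target because $\pi_0 \Phi_{d,\gt}^{\mrm{cl}}$ is the free commutative monoid on connected types, whose group completion is the free abelian group), and on each component one compares with the known homology of $\coprod_n Y^{\times n}_{h\Sigma_n}$ via a group-completion theorem (Segal/McDuff–Segal): $H_*(\gO B\Cobcl_{d,\gt}) \cong H_*(\coprod_n Y^{\times n}_{h\Sigma_n})[\pi_0^{-1}]$, and the map $\ga$ realizes the canonical map from the free $E_\infty$-space, which is a homology iso after inverting $\pi_0$ by Barratt–Priddy–Quillen. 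Since both sides are infinite loop spaces and the map is an infinite loop map inducing an iso on $H_*$, it is a weak equivalence.

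\textbf{The main obstacle} I anticipate is the bookkeeping in the first paragraph: making precise that $\Phi_{d,\gt}^{\mrm{cl}}$ with side-by-side concatenation is weakly equivalent, \emph{as an $E_\infty$-space}, to the free $E_\infty$-space on $\coprod_{[W]\text{ conn}} B\Diff^\gt(W)$ — i.e.\ that a generic closed embedded $d$-manifold in $\IR^\infty$ decomposes uniquely, up to coherent homotopy, into its connected components, with automorphisms permuting identical components. This is morally \autoref{fact:Psi=BDiffs} plus the scanning/parametrized-connected-components argument, but stating it at the level of $E_\infty$-structures (rather than just underlying spaces) requires care; I would likely route it through the $\gC$-space structure of \autoref{lem:allBCob-vspecial} and a Segal-space argument rather than constructing an operadic equivalence by hand. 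Everything after that — the universal property of $Q$, the group-completion theorem, and Barratt–Priddy–Quillen — is standard once the source and target are correctly set up.
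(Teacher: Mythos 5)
Your proposal is essentially the same approach as the paper: both recognize $\Cobcl_{d,\gt}$ as a one-object topological $\gC$-category (a commutative monoid up to coherent homotopy) freely generated by connected closed manifolds, and both apply a Barratt--Priddy--Quillen--Segal theorem to identify $B\Cobcl_{d,\gt}$ with $Q(\gS Y_+)$ for $Y = \coprod_{[W]\text{ conn}} B\Diff^\gt(W)$. The difference is in how the step you flag as the ``main obstacle'' is actually discharged. You propose to argue via the group-completion theorem plus a homology comparison, routing the identification of $\Phi_{d,\gt}^{\mrm{cl}}$ with the free $E_\infty$-space on $Y$ ``through the $\gC$-space structure and a Segal-space argument.'' The paper instead does something more hands-on: it first proves the $d=0$ case directly (where $\Phi^{\mrm{cl}}_{0,X}$ is literally the labelled configuration space $\Conf(\cube;X)$ and Segal's improved BPQ applies verbatim), and then reduces $d>0$ to $d=0$ by constructing an explicit zig-zag of $\gC$-categories $\Cobcl_{d,\gt} \leftarrow \mcC \rightarrow \Cobcl_{0,X}$, where $\mcC$ records a closed $d$-manifold together with \emph{both} an embedding of $W$ and an embedding of $\pi_0 W$. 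This intermediate category is precisely what makes ``a generic closed embedded manifold decomposes into its connected components, with automorphisms permuting identical components'' homotopy-coherent without any operadic bookkeeping, and it keeps everything compatible with the $\gC$-structure so that no separate group-completion/homology argument is needed. Your plan is correct in outline, but the part you defer is exactly the content, and the paper's device (the wreath-product computation $B\Diff^\gt(W) \simeq \prod_i \Conf_{m_i}(\cube;\Diff^\gt(W_{i,1}))$ packaged into a span of $\gC$-categories) is a cleaner route than the group-completion detour, since once the free generation is established BPQ already gives $B$ rather than just $\gO B$ up to homology.
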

\begin{proof}
    Let us first consider the case $d=0$ with $\gt = X$ some space.
    Then a morphism in $\Cobcl_{0,X}$ is a finite subset of 
    $(0,t) \times \cube$, equipped with a map to $X$. 
    In other words, the space of morphisms of $\Cobcl_{0,X}$ 
    is the $X$-labelled configuration space $\Conf(\cube; X)$ 
    with the usual composition defined by putting 
    configurations side by side.
    Segal's improved Barrat-Priddy-Quillen theorem 
    \cite[Proposition 3.6]{Seg74} states that the group-completion 
    of the special (but not very special) $\gC$-space $\Conf(\cube; X)$ is 
    \[ 
        \gO B( \Conf(\cube; X) ) \simeq Q(X_+).
    \]
    The inverse map comes from the map $X \to \Conf(\cube; X)$ 
    that sends $x$ to the point $0$ labelled by $x$.
    Since $\Cobcl_{0,X}$ is this monoid, thought of as a category with one object,
    we conclude that $B\Cobcl_{0,X}$ is the delooping
    $\gO^{-1}Q(X_+) \simeq Q(\gS X_+)$. 
    Seeing as $X \cong \Bun^\gt(*) \cong B\Diff^\gt(*)$ 
    we therefore have an equivalence 
    $Q(\gS X_+) \simeq B\Conf(\cube; X) \simeq B\Cobcl_{0,X}$.
    Restricted to $\gS X \subset Q(\gS X_+)$ this equivalence
    agrees with the map $\ga_{W=*}: \gS B\Diff^\gt(*) \to B\Cobcl_{0,X}$
    and therefore the equivalence we constructed agrees with 
    the one in the claim.
    The lemma follows in the case $d=0$.

    To reduce the case $d>0$ to the case $d=0$ we define $X$ as
    the space of closed connected $\gt$-structured submanifolds of $\cube$ 
    so that \( X \simeq \coprod_{[W] \text{ con.}} B\Diff^\gt(W) \).
    If we can show that $\Cobcl_{d,\gt}$ and $\Cobcl_{0, X}$ 
    are weakly equivalent as non-unital topological $\gC$-categories,
    then the claim for $d>0$ follows from the first part of the proof:
    \[
        Q\left( \gS \left(
        \coprod_{[W] \text{ connected}} B\Diff^\gt(W)
        \right)_+ \right) 
        \simeq Q(\gS X_+) 
        \simeq B\Cobcl_{0, X} 
        \stackrel{?}{\simeq} B\Cobcl_{d,\gt}.
    \]
    
    To obtain the weak equivalence 
    $\Cobcl_{d,\gt} \simeq \Cobcl_{0,X}$ we first convince ourselves
    that the morphism spaces are abstractly equivalent.
    To see this take any closed $d$-dimensional manifold $W$ and 
    decompose it as $W = \amalg_{i=1}^n \amalg_{j = 1}^{m_i} W_{i,j}$
    where each $W_{i,j}$ is connected and $W_{i,j} \cong W_{i',j'}$ 
    iff $i = i'$.
    Then 
    \begin{align*}
        B\Diff^\gt(W) & = \Bun^{\gt}(W) \doublebs \Diff(W)
        \cong \left(\prod_{i=1}^n (\Bun^\gt(W_{i,1}))^{m_i}\right)\doublebs
        \left(\prod_{i=1}^n \Diff(W_{i,1}) \wr \gS_{m_i}\right) \\
        & \simeq \prod_{i=1}^n \left( (\Diff^\gt(W_{i,1}))^{m_i})\doublebs
         \gS_{m_i}\right)
        \simeq \prod_{i=1}^n \left( \Conf_{m_i}(\cube; \Diff^\gt(W_{i,1}) \right).
    \end{align*}
    This shows that the respective connected components of 
    $\mrm{Mor}(\Cobcl_{d,\gt})$ and $\mrm{Mor}(\Cobcl_{0,X})$ 
    are abstractly equivalent.  
    
    To obtain the desired equivalence of infinite loop spaces
    $B\Cobcl_{d,\gt} \simeq B\Cobcl_{0,X}$ we will construct a 
    zigzag $\Cobcl_{d,\gt} \leftarrow \mcD \rightarrow \Cobcl_{0,X}$ 
    of $\gC$-categories inducing the equivalence on the morphisms spaces.
    The new $\gC$-category $\mcD$ also has one object.
    A morphism in $\mcD$ is a tuple $(W, i, j, l, t)$ 
    where $t>0$ is the length, $W$ is a closed $d$-dimensional manifold,
    $i:W \inj \cube \times (0,t)$ and $j:(\pi_0 W) \inj \cube \times (0,t)$
    are embeddings, and $l \in \Bun^\gt(W)$ is a $\gt$-structure on $W$.
    Such a tuple is identified with another tuple $(W', i', j', l', t')$
    if $t=t'$ and there is a diffeomorphism $\gp: W \cong W'$ such that
    $i = i' \circ \gp$, $j = j' \circ (\pi_0 \gp)$, and $l = \gp^*l'$.
    The $\gC$-structures are defined just like for $\Cobcl_{d,\gt}$.

    There is a projection map $p:\mcD \to \Cobcl_{d,\gt}$ that forgets $i$.
    There also is a projection map $q:\mcD \to \Cobcl_{0,X}$ that sends
    $[W,i,j,l]$ to the configuration $j(\pi_0 W) \subset \cube$,
    where each point $j([V])$ is labelled by the connected
    component $V\subset W$ with tangential structure $l_{|V}\in \Bun^\gt(V)$.
    Both maps $p$ and $q$ are functors of $\gC$-categories by construction.
    It is not hard to see that they induce the abstract equivalence 
    described above.
\end{proof}

\begin{defn}
    We will let $\IT$ denote the Lie group $SO(2)$.
    The free loop space $LX:= \Map(S^1, X)$ admits an action of $\IT$ 
    by precomposition. We will denote the homotopy orbits of this action by
    \[
       (LX)_{h\IT} := LX\doublebs \IT = (E\IT \times LX)/\IT.
    \]
\end{defn}

\begin{lem}\label{lem:LXhIT}
    Consider the tangential structure $\gt = X \times \gt^{or}$ where
    $\gt^{or} = \{-1,1\}$ with the non-trivial action of $\GL_1$.
    The moduli space of connected closed $1$-manifolds with $\gt$-structure is
    \[
        B\Diff^\gt(S^1) \simeq (LX)_{h\IT}.
    \]
\end{lem}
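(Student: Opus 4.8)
We must identify $B\Diff^\gt(S^1)$, the $\gt$-structured moduli space of the circle, which is the only connected closed $1$-manifold. The plan is to unwind \autoref{defn:BDiff-gt}. Because $X$ carries the trivial $\GL_1$-action and $\mrm{Fr}(TS^1)/\GL_1 \cong S^1$, a $\GL_1$-equivariant map $\mrm{Fr}(TS^1) \to X$ is the same datum as a map $S^1 \to X$; and a $\GL_1$-equivariant map $\mrm{Fr}(TS^1) \to \gt^{or}=\{\pm1\}$ is precisely an orientation of $S^1$, of which there are exactly two. Since a mapping space into a product is the product of mapping spaces, this yields a $\Diff(S^1)$-equivariant homeomorphism
\[
    \Bun^\gt(S^1) \;\cong\; LX \times \{\pm1\},
\]
where $\Diff(S^1)$ acts on $LX$ by reparametrisation and on the discrete two-point set $\{\pm1\}$ through its effect on orientations.

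Next I would feed this into the $\Diff(S^1)$-equivariant projection $\Bun^\gt(S^1) \to \{\pm1\}$. The action of $\Diff(S^1)$ on the set of orientations is transitive with stabiliser $\Diff^+(S^1)$, so $\{\pm1\} \cong \Diff(S^1)/\Diff^+(S^1)$ as $\Diff(S^1)$-sets and hence $\Bun^\gt(S^1) \cong \Diff(S^1)\times_{\Diff^+(S^1)} LX$, the copy of $LX$ over a fixed orientation carrying the residual $\Diff^+(S^1)$-action by reparametrisation. Passing to homotopy orbits and using the standard identity $E\Diff(S^1)\times_{\Diff(S^1)}\bigl(\Diff(S^1)\times_{\Diff^+(S^1)} LX\bigr) \simeq E\Diff(S^1)\times_{\Diff^+(S^1)} LX$, this gives
\[
    B\Diff^\gt(S^1) \;=\; \Bun^\gt(S^1)\doublebs\Diff(S^1) \;\simeq\; LX \doublebs \Diff^+(S^1).
\]

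Finally, Smale's classical theorem provides a homotopy equivalence of topological groups $\IT = SO(2)\hookrightarrow \Diff^+(S^1)$, and the $\IT$-action on $LX$ by rotations defined just before the lemma is exactly the restriction of the reparametrisation action. Modelling the homotopy orbits with $E\Diff^+(S^1)$, which restricted along $\IT\hookrightarrow\Diff^+(S^1)$ is a free contractible $\IT$-space, the canonical comparison map
\[
    (LX)_{h\IT} \;\simeq\; E\Diff^+(S^1)\times_{\IT} LX \longrightarrow E\Diff^+(S^1)\times_{\Diff^+(S^1)} LX = LX\doublebs\Diff^+(S^1)
\]
is a fibre bundle with fibre $\Diff^+(S^1)/\IT$, which is weakly contractible since $\IT\hookrightarrow\Diff^+(S^1)$ is a weak equivalence. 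Chaining the three equivalences gives $B\Diff^\gt(S^1)\simeq (LX)_{h\IT}$.

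The computation is entirely formal once the first step is in place, and that step is also the only one needing care: one must check that the $\Diff(S^1)$-action on $\Bun^\gt(S^1)$ decomposes as claimed and that the space of orientations of $S^1$ is genuinely discrete rather than merely homotopy discrete. Both follow immediately from the triviality of $TS^1$ and the connectedness of $S^1$, so I do not anticipate any real obstacle.
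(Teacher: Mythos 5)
Your proof is correct and follows the same overall strategy as the paper's: identify $\Bun^\gt(S^1) \cong LX \times \{\pm1\}$ as a $\Diff(S^1)$-space, then reduce the $\Diff(S^1)$-homotopy orbits down to $\IT$-homotopy orbits. The one genuine difference lies in how the reduction is organized. The paper replaces $\Diff(S^1)$ by the weakly equivalent subgroup $O(2) \cong \IZ/2 \ltimes \IT$ and then iterates homotopy orbits: first $(\blank)_{h\IT}$, using that it commutes with disjoint union to get $(LX)_{h\IT} \times \{\pm1\}$, and then $(\blank)_{h\IZ/2}$, using that the $\IZ/2$-action on $\{\pm1\}$ is free. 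You instead observe that $LX \times \{\pm1\}$ is the $\Diff(S^1)$-space induced up from the $\Diff^+(S^1)$-space $LX$, apply the Shapiro-type identity $\bigl(\Diff(S^1)\times_{\Diff^+(S^1)} LX\bigr)\doublebs\Diff(S^1) \simeq LX\doublebs\Diff^+(S^1)$, and only then pass from $\Diff^+(S^1)$ to $\IT$. Both routes are sound and use essentially the same ingredients; yours avoids having to invoke the semidirect-product structure of $O(2)$ and needs only that $SO(2)\hookrightarrow\Diff^+(S^1)$ is a weak equivalence, which is arguably the cleaner bookkeeping. One small attribution nit that does not affect the argument: the equivalence $SO(2)\simeq\Diff^+(S^1)$ is a much older, elementary fact; Smale's theorem is usually the statement $O(3)\simeq\Diff(S^2)$.
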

\begin{proof}
    We compute
    \begin{align*}
        B\Diff^{\gt}(S^1) 
        &= \Bun_{\gt}(S^1)\doublebs \Diff(S^1)
        \simeq \Map_{\GL_1}(S^1 \times \{\pm 1\}, X \times \{\pm 1\})\doublebs \Diff(S^1)\\
        &\simeq \left( \Map(S^1, X) \times \Map(S^1, \{\pm 1\}) \right) \doublebs \Diff(S^1).
    \end{align*}
    Inclusion of the subgroup $\IZ/2 \ltimes \IT \cong O(2) \subset \Diff(S^1)$ 
    is a weak equivalence and we may hence compute the homotopy orbits 
    by first taking $(\blank)_{h\IT}$ and then taking $(\blank)_{h\IZ/2}$.
    As $(\blank)_{h\IT}$ commutes with coproducts this results in
    \[  
        B\Diff^\gt(S^1) \simeq \left( (LX)_{h\IT} \times \{\pm 1\} \right)_{h\IZ/2} 
        \simeq (LX)_{h\IT}
    \]
    as claimed.
\end{proof}

Combining lemma \ref{lem:BCobcl} and \ref{lem:LXhIT} we recover a computation 
of $B\Cobcl_{1, X \times \gt^{\mrm{or}}}$ that was stated in 
\cite[Proposition 5.1]{Gia17} without proof:
\begin{cor}\label{cor:Bcobcl1}
    For all spaces $X$ there is a weak equivalence of infinite loop spaces
    \[
        Q(\gS_+ (LX)_{h\IT}) \simeq B\Cobcl_{1, X \times \gt^{\mrm{or}}}.
    \]
\end{cor}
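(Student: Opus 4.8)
The plan is to combine the two preceding lemmas directly. By Lemma \ref{lem:BCobcl}, applied in dimension $d=1$ with the tangential structure $\gt = X \times \gt^{\mrm{or}}$, we have a weak equivalence of infinite loop spaces
\[
    B\Cobcl_{1, X \times \gt^{\mrm{or}}}
    \simeq
    Q\!\left( \gS \Big( \coprod_{[W] \text{ connected}} B\Diff^{\gt}(W) \Big)_+ \right).
\]
The first step is to observe that the only connected closed $1$-manifold is the circle $S^1$, so the coproduct on the right consists of a single term $B\Diff^{\gt}(S^1)$. Hence the right-hand side simplifies to $Q\big(\gS (B\Diff^{\gt}(S^1))_+\big)$.

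The second step is to feed in Lemma \ref{lem:LXhIT}, which identifies $B\Diff^{\gt}(S^1) \simeq (LX)_{h\IT}$ for precisely this tangential structure $\gt = X \times \gt^{\mrm{or}}$. Substituting gives
\[
    B\Cobcl_{1, X \times \gt^{\mrm{or}}}
    \simeq
    Q\!\left( \gS \big( (LX)_{h\IT} \big)_+ \right),
\]
which is the claimed equivalence (up to the harmless reordering $\gS_+ Y \simeq \gS Y_+$ of the suspension and disjoint basepoint). One should remark that the equivalence in Lemma \ref{lem:BCobcl} is stated as an equivalence of infinite loop spaces and that $Q$ is functorial for weak equivalences of spaces, so applying $Q(\gS(\blank)_+)$ to the equivalence of Lemma \ref{lem:LXhIT} yields an equivalence of infinite loop spaces; thus the composite is an equivalence of infinite loop spaces as asserted.

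There is essentially no obstacle here: the corollary is a formal consequence of stringing together Lemma \ref{lem:BCobcl} and Lemma \ref{lem:LXhIT} with the trivial classification of connected closed $1$-manifolds. The only point requiring a sentence of care is checking that the equivalence of Lemma \ref{lem:LXhIT} is natural enough — or at least that it is an equivalence of spaces over which $Q(\gS(\blank)_+)$ can be applied — so that the resulting equivalence of classifying spaces respects the infinite loop structures coming from Lemma \ref{lem:allBCob-vspecial} and the $Q$-construction. Since both are simply weak equivalences of spaces and $Q(\gS(\blank)_+)$ sends weak equivalences to equivalences of infinite loop spaces, this is immediate, and the corollary follows.
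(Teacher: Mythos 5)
Your proposal is correct and follows exactly the route the paper intends: the paper states the corollary by simply citing the combination of Lemma \ref{lem:BCobcl} and Lemma \ref{lem:LXhIT} (plus the fact that $S^1$ is the only closed connected $1$-manifold), and you have filled in precisely those details, including the needed remark that applying $Q(\gS(\blank)_+)$ to the equivalence of Lemma \ref{lem:LXhIT} preserves infinite loop structures.
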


\subsection{The homotopy category of the $1$-dimensional bordism category}\label{subsec:1d-oriented}
We now have all the tools ready to prove \autoref{theorem:1}.
We will first compute the classifying space of the reduced bordism 
category $\Cobred_1$, then compare it to $h\Cobred_1$, and finally
compute $B(h\Cob_1)$.
\begin{thm}\label{thm:BCobred1}
    There is an equivalence of infinite loop spaces
    \[
        B\Cobred_{1} \simeq \gO^{\infty - 2} MTSO_2.
    \]
\end{thm}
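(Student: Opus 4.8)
The plan is to feed the reduction fiber sequence of Theorem~\ref{thm:reduction-fiber-sequence} into the Genauer fiber sequence of Lemma~\ref{lem:ga=fiber}. For $d=1$ and oriented manifolds, Theorem~\ref{thm:reduction-fiber-sequence} gives a homotopy fiber sequence of infinite loop spaces
\[
    B\Cobcl_{1}\xrightarrow{\ I\ }B\Cob_{1}\xrightarrow{\ R\ }B\Cobred_{1},
\]
so I only have to identify its first two terms and the map $I$ between them, and then read off $B\Cobred_{1}$ as the next term of the sequence. For the middle term, Theorem~\ref{thm:GMTW} gives $B\Cob_{1}\simeq\gO^{\infty-1}MTSO_{1}\simeq QS^{0}$. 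For the left-hand term, I would invoke Lemma~\ref{lem:BCobcl}: since the circle is the unique connected closed oriented $1$-manifold and $B\Diff^{\gt^{\mrm{or}}}(S^{1})\simeq B\IT\simeq BSO_{2}$ by Lemma~\ref{lem:LXhIT} with $X=*$ (equivalently Corollary~\ref{cor:Bcobcl1} at $X=*$), we get $B\Cobcl_{1}\simeq Q\bigl(\gS(BSO_{2})_{+}\bigr)$.

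The crux is the identification of the map $I$. By Lemma~\ref{lem:BCobcl} the equivalence $Q(\gS(BSO_{2})_{+})\xrightarrow{\simeq}B\Cobcl_{1}$ is induced by the maps $\ga_{S^{1}}$ of Definition~\ref{defn:MW-map}, and these are tautologically compatible with the inclusion $\Cobcl_{1}\hookrightarrow\Cob_{1}$, because the inclusion $\Psi_{1,\gt^{\mrm{or}}}\hookrightarrow\Phi_{1,\gt^{\mrm{or}}}$ defining $\ga_{S^{1}}$ factors through the closed bordisms. Hence $I$ gets identified with the infinite loop map $Q(\gS(BSO_{2})_{+})\to B\Cob_{1}\simeq\gO^{\infty-1}MTSO_{1}$ induced by $\ga_{S^{1}}$, and Lemma~\ref{lem:ga=fiber} applied with $d=2$ exhibits exactly this map as the second map of the Genauer fiber sequence — here one uses that the canonical inclusion $i\colon SO_{2}\to\Diff^{+}(S^{1})$ is a weak equivalence, so that $Q(\gS Bi)$ may be discarded. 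In particular $\mrm{hofib}(I)\simeq\gO^{\infty-1}MTSO_{2}$, matching $B\Cobcl_{1}$, and the reduction fiber sequence literally continues the Genauer fiber sequence one term to the right.

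It then remains to name that term. The Genauer fiber sequence is $\gO^{\infty}$ of the suspension of the Madsen--Tillmann cofiber sequence of spectra $MTSO_{2}\to\gS^{\infty}_{+}BSO_{2}\xrightarrow{\,b\,}MTSO_{1}$, whose suspension $\gS MTSO_{2}\to\gS\gS^{\infty}_{+}BSO_{2}\xrightarrow{\gS b}\gS MTSO_{1}=\IS$ has first two terms underlying $\gO^{\infty-1}MTSO_{2}$ and $B\Cobcl_{1}$ and last term underlying $B\Cob_{1}$. Rotating the cofiber sequence gives $\mrm{cofib}(b)=\gS MTSO_{2}$, hence $\mrm{cofib}(\gS b)=\gS^{2}MTSO_{2}$; since $\pi^{s}_{-2}(MTSO_{2})=\IZ$ and $\pi^{s}_{k}(MTSO_{2})=0$ for $k<-2$, the spectrum $\gS^{2}MTSO_{2}$ is connective, and as the third term of a fiber sequence of spectra is the cofiber of its first map, we conclude $B\Cobred_{1}\simeq\gO^{\infty}\gS^{2}MTSO_{2}=\gO^{\infty-2}MTSO_{2}$. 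Here it matters that $MTSO_{2}$ is not connective, so that what computes the answer is the cofiber, i.e.\ the twofold shift $\gS^{2}MTSO_{2}$ that restores connectivity, rather than a naive one-fold delooping of $\gO^{\infty-1}MTSO_{2}$.

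I expect the real work to be in the second paragraph: one must trace the equivalence of Lemma~\ref{lem:BCobcl} together with Definition~\ref{defn:MW-map} carefully enough to be certain that including the closed bordism category into $\Cob_{1}$ realises precisely the transfer-type map of Lemma~\ref{lem:ga=fiber}. The rest is formal given the results already established, apart from the small connectivity bookkeeping in the last paragraph.
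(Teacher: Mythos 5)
Your proposal matches the paper's proof essentially step for step: both proceed by feeding the $d=1$ reduction fiber sequence into the Genauer sequence, using Lemma~\ref{lem:BCobcl} and Theorem~\ref{thm:GMTW} to identify the first two terms, Lemma~\ref{lem:ga=fiber} to identify the map $I$ with the transfer map whose fiber is $\gO^{\infty-1}MTSO_2$, and then extending the fiber sequence one step to the right. You supply somewhat more detail than the paper at exactly the two points the paper leaves implicit (compatibility of $\ga_{S^1}$ with the inclusion $\Cobcl_1\hookrightarrow\Cob_1$, and the connectivity bookkeeping justifying $\gO^{\infty-2}MTSO_2$ rather than a naive delooping), and both arguments rely implicitly on the fact that $\pi_0 B\Cob_1 \to \pi_0 B\Cobred_1$ is surjective so that the fiber sequence of infinite loop spaces is also a cofiber sequence of spectra.
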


\begin{proof}
    We will use the reduction fiber sequence of theorem 
    \ref{thm:reduction-fiber-sequence} for $\Cob_{1}$:
    \[
        \begin{tikzcd}
            B \Cobcl_{1} \ar[r] 
            & B \Cob_{1} \ar[r, "R"] 
            & B \Cobred_{1}.
        \end{tikzcd}
    \]
    Since $R$ is surjective on connected components 
    (in fact $R:\pi_0 B \Cob_1 \to \pi_0 B\Cobred_1$ is a bijection)
    this remains a homotopy fiber sequence after we deloop 
    each of the infinite loop spaces once:%
    \footnote{
        To see that this is indeed a homotopy fiber sequence, 
        consider the canonical map 
        $B(B\Cobcl_{1}) \to \mi{hofib}(B(B\Cob_{1}) \to B(B\Cobred_{1})$.
        We know that it becomes an equivalence after applying $\gO$
        and we know that the left-hand space is connected.
        So all that is left to show is that the right-hand space 
        is also connected, but this is a consequence of $\pi_0R$ being surjective.
    }
    \[
        B(B\Cobcl_{1}) \longrightarrow
        B(B\Cob_{1}) \longrightarrow
        B(B\Cobred_{1}).
    \]
    We can therefore write $B\Cobred_1 \simeq \gO(B(B\Cobred_1))$ 
    as the homotopy fiber of the left-hand map.
    We have a homotopy commutative diagram of infinite loop spaces as follows:
    \[
    \begin{tikzcd}
        B\Cobred_1 \ar[r] & B(B\Cobcl_1) \ar[r] 
        & B(B\Cob_1) \ar[d, "\simeq"', "{\ref{thm:GMTW}}"] \\
        \gO^{\infty-2}MTSO_2 \ar[r] 
        & Q(\gS^2(BSO_2)_+) \ar[u, "\simeq"', "{\ref{lem:BCobcl}}"] \ar[r] \ar[ru, "\ga_{S^1}"] 
        & \gO^{\infty-2} MTSO_1
    \end{tikzcd}
    \]
    Here lemma \ref{lem:BCobcl} is our computation of $B\Cobcl_1$ and
    theorem \ref{thm:GMTW} is the main theorem of \cite{GMTW06}.
    The bottom row is the Genauer fiber sequence from lemma \ref{lem:ga=fiber}.
    Since the top sequence is also a homotopy fiber sequence, 
    we obtain a zig-zag of equivalences of infinite loop spaces:
    \[
        B\Cobred_1 \leftarrow 
        \mi{hofib}\left(Q(\gS^2(BSO_2)_+) \xrightarrow{\ga_{S^1}} B(B \Cob_1) \right) 
        \rightarrow \gO^{\infty-2}MTSO_2.
    \]
\end{proof}

To obtain the desired computation of $Bh\Cob_{1}$ we first 
need to show that in dimension $1$
the reduced bordism category is equivalent to its homotopy category.
\begin{lem}\label{lem:red1=hred1}
    The natural functors 
    $\Cobred_{1} \leftarrow \gd(\Cobred_{1}) \to h(\Cobred_{1})$ 
    induce equivalences on classifying spaces.
\end{lem}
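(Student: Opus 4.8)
The plan is to show that the two functors in the zig-zag each induce an equivalence on classifying spaces, which reduces to understanding the homotopy types of the morphism spaces of $\Cobred_1$. First I would recall from \autoref{lem:gd-doesnt-change-B} that $B(\gd\Cobred_1) \to B\Cobred_1$ is already a weak equivalence, since $\Cobred_1$ is fibrant (its fibrancy follows from that of $\Cob_1$, exactly as in the proof of \autoref{thm:reduction-fiber-sequence}) and has weak units given by cylinders. So the entire content of the lemma is that $B(\gd\Cobred_1) \to B(h\Cobred_1)$ is a weak equivalence, and for this it suffices by definition of $\gd$ and $h$ to show that for every pair of objects $M,N$ the map $\hom_{\Cobred_1}(M,N) \to \pi_0\hom_{\Cobred_1}(M,N)$ is a weak equivalence, i.e.\ that each morphism space is homotopy discrete.

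The key computation is therefore to identify the homotopy type of $\hom_{\Cobred_1}(M,N)$. By \autoref{fact:hom-in-Cob} the morphism space in $\Cob_1$ is a disjoint union of $B\Diff^{\mrm{or}}(W \text{ rel } M \amalg N)$ over diffeomorphism classes of compact oriented $1$-manifolds $W$ with $\partial W = M^- \amalg N$. Every such $W$ is a disjoint union of intervals and circles, and by \autoref{lem:c-r-decomposition} the reduced hom space splits off the closed (circle) part, so $\hom_{\Cobred_1}(M,N)$ is a disjoint union of $B\Diff^{\mrm{or}}(W \text{ rel }\partial)$ where now $W$ ranges only over \emph{reduced} oriented $1$-manifolds — that is, finite disjoint unions of intervals, each connecting two boundary points of $M \amalg N$. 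Since $\Diff^\partial(I)$ of a single interval (diffeomorphisms fixing a neighbourhood of the endpoints) is contractible — this is the classical fact that $\Diff^\partial([0,1]) \simeq *$ — and a reduced $1$-manifold is a finite union of intervals with no two components diffeomorphic rel boundary (each interval has a distinct pair of boundary points attached), we get $B\Diff^{\mrm{or}}(W \text{ rel }\partial) \simeq *$ for every reduced $W$. Hence $\hom_{\Cobred_1}(M,N)$ is a disjoint union of contractible spaces, i.e.\ homotopy discrete, which is exactly what we need.

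Putting this together: the map $\hom_{\Cobred_1}(M,N) \to \pi_0 \hom_{\Cobred_1}(M,N)$ is a levelwise weak equivalence of the morphism-space diagrams, so it induces a levelwise weak equivalence on nerves $N_\bullet(\gd\Cobred_1) \to N_\bullet(h\Cobred_1)$ and hence a weak equivalence $B(\gd\Cobred_1) \to B(h\Cobred_1)$ on fat geometric realisations. Combined with \autoref{lem:gd-doesnt-change-B} this gives the claimed zig-zag of equivalences. I expect the main obstacle to be purely bookkeeping: carefully checking that the tangential (orientation) structure does not introduce extra components or extra automorphisms on intervals — since an orientation of an interval is determined (up to the contractible choice) once its endpoints are identified with boundary points, this is harmless, but it must be spelled out so that the identification of reduced hom spaces with products of $B\Diff^\partial(I) \simeq *$ is clean.
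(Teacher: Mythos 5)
Your proof follows the paper's argument essentially step for step: handle the leftward arrow via Lemma \ref{lem:gd-doesnt-change-B} and the fibrancy of $\Cobred_1$, then reduce the rightward arrow to showing each $\hom_{\Cobred_1}(M,N)$ is homotopy discrete, identify the morphism space via Fact \ref{fact:hom-in-Cob} as a disjoint union of $B\Diff^+(W \text{ rel }\partial W)$ over reduced bordisms $W$, note that reduced $1$-bordisms are disjoint unions of intervals, and conclude using the contractibility of $\Diff([0,1]\text{ rel }\{0,1\})$. The only cosmetic differences are that you route through Lemma \ref{lem:c-r-decomposition} to split off the circle components (the paper applies Fact \ref{fact:hom-in-Cob} directly to $\Cobred_1$, whose morphism space is by definition the subspace of reduced bordisms) and that the paper phrases the product decomposition of the diffeomorphism group in terms of a boundary-fixing diffeomorphism being unable to permute intervals, whereas you phrase it via intervals having distinct boundary points; these are the same observation.
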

\begin{proof}
    For the left-ward pointing functor this is a consequence of 
    lemma \ref{lem:gd-doesnt-change-B} and the fibrancy of $\Cobred_{d,\gt}$,
    which in turn is a consequence of \cite[Proposition 3.2.4(ii)]{ERW19}.
    
    The right-ward pointing functor
    $
        \gd(\Cobred_{1}) \longrightarrow h\Cobred_{1}
    $
    is the identity on the (discrete) space of objects. 
    So to show that it is an equivalence, we will only have to 
    show that for any two objects $M, N \in \Cobred_{1}$ the projection 
    \[
        \hom_{\Cobred_{1}}(M, N) \longrightarrow 
        \pi_0 \hom_{\Cobred_{1}}(M, N) = \hom_{h\Cobred_{1}}(M,N)
    \]
    is a weak equivalence. To see this, recall from fact \ref{fact:hom-in-Cob}
    that the left-hand-side can be written as 
    \[
        \hom_{\Cobred_{1}}(M, N) 
        \simeq \coprod_{[W]} B\Diff^+(W \text{ rel } \partial W)
    \]
    where $[W]$ runs over diffeomorphism classes of reduced bordisms from $M$ to 
    $N$. By the classification of $1$-manifolds, every such reduced bordism
    is the disjoint union of intervals: $W \cong \amalg^k [0,1]$.
    A diffeomorphism of $W$ relative to its boundary cannot permute the 
    intervals and therefore the diffeomorphism group decomposes as a product. 
    Since the diffeomorphism group of the interval relative to its boundary 
    is contractible we have that
    \[
        B\Diff^+(W \text{ rel } \partial W) 
        \cong \prod_{i=1}^k B\Diff^+([0,1] \text{ rel } \{0,1\}) \simeq *.
    \]
    Therefore the connected components of the hom spaces of $\gd\Cobred_{1}$
    are contractible, and the category is equivalent to its homotopy category.
\end{proof}

\begin{thm}\label{thm:BhCob1} 
    There is a homotopy fiber sequence of infinite loop spaces
    \[
        S^1 \longrightarrow B(h\Cob_{1}) \longrightarrow \gO^{\infty-2} MTSO_2.
    \]
    The infinite loop space map $\gO^{\infty-2} MTSO_2 \to K(\IZ,2)$ 
    that continues this fiber sequence 
    corresponds to the generator $\gS^2\gk_0 \in H^2(\gS^2 MTSO_2) \cong \IZ$.
\end{thm}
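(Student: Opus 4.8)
The plan is to deduce this from the reduction fiber sequence of \autoref{thm:reduction-fiber-sequence} applied to $d=1$, combined with \autoref{thm:BCobred1} and \autoref{lem:red1=hred1}. First I would write down the bottom row of the reduction fiber sequence for $d=1$:
\[
    B(h\Cobcl_{1}) \longrightarrow B(h\Cob_{1}) \xrightarrow{\ R\ } B(h\Cobred_{1}).
\]
By \autoref{lem:red1=hred1} the base is $B(h\Cobred_{1}) \simeq B\Cobred_{1} \simeq \gO^{\infty-2}MTSO_2$. So the main task is to identify the fiber $B(h\Cobcl_{1})$. The category $h\Cobcl_1$ has one object and morphisms given by diffeomorphism classes of closed oriented $1$-manifolds, i.e.\ finite disjoint unions of circles; since circles carry no nontrivial oriented mapping class and there is a unique orientation up to sign but we are tracking it, $\mrm{Mor}(h\Cobcl_1) \cong \IN$ (counting circles), a free commutative monoid on one generator together with the length $\IR_{>0}$ which is contractible. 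Thus $h\Cobcl_1$ is (up to the contractible $\mc{R}_+$ factor) the free symmetric monoidal category on one object, whose classifying space is $\coprod_n B\gS_n$; group-completing, $\gO B(h\Cobcl_1) \simeq QS^0$ by Barratt--Priddy--Quillen, and hence $B(h\Cobcl_1) \simeq \gO^{-1}QS^0 \simeq QS^1$. Wait---that is not a circle. The resolution is that $QS^1$ is \emph{not} connected and we only want a fiber sequence of the basepoint components or, more precisely: the map $B(h\Cobcl_1) \to B(h\Cob_1)$ factors through the topological version $B\Cobcl_1 \simeq Q\gS(BSO_2)_+$, and the honest fiber of $B(h\Cob_1) \to B(h\Cobred_1)$ need only be the homotopy fiber, which by \autoref{lem:gd-h-2con} differs from $B\gd\Cobcl_1 \simeq B\Cobcl_1$ only in homotopy above degree $1$. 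I would instead argue directly that $\pi_0 B(h\Cobcl_1) = *$ and $\pi_1 B(h\Cobcl_1) \cong \IZ$ (generated by the loop given by the circle), so the fiber of the bottom sequence is, on $\pi_{\le 1}$, an Eilenberg--MacLane space $K(\IZ,1) = S^1$; since $B(h\Cobred_1)$ is simply connected (as $\gO^{\infty-2}MTSO_2$ with $MTSO_2$ $(-2)$-connected), the long exact sequence forces the fiber to \emph{be} $S^1$ after all---the higher homotopy of $B\gd\Cobcl_1$ is killed because it maps into the already-identified $B(h\Cob_1)$ through the $2$-connected map, so one must be careful. The cleanest route: combine \autoref{lem:gd-h-2con} with the topological reduction fiber sequence. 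Since $B(\gd\Cob_1)\to B(h\Cob_1)$ and $B(\gd\Cobred_1)\to B(h\Cobred_1)$ are $2$-connected and the latter target is already simply connected, comparing the two fiber sequences of \autoref{thm:reduction-fiber-sequence} shows that the fiber $F$ of $B(h\Cob_1)\to B(h\Cobred_1)$ receives a $2$-connected map from $B\Cobcl_1 \simeq Q\gS(BSO_2)_+$, hence $\pi_0 F = *$, $\pi_1 F = \pi_1 Q\gS(BSO_2)_+ = \IZ$, and the fiber sequence $F \to B(h\Cob_1) \to \gO^{\infty-2}MTSO_2$ with simply connected base forces $\pi_i F \to \pi_i B(h\Cob_1)$ to be iso for $i\ge 2$; but the same holds with $\gd$ in place of $h$ (where $F = B\Cobcl_1$), and the two agree via the $2$-connected comparison, so $\pi_i F = 0$ for $i \ge 2$. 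Therefore $F \simeq K(\IZ,1) = S^1$, giving the fiber sequence.

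For the second statement, I would identify the classifying map $k: \gO^{\infty-2}MTSO_2 \to K(\IZ,2)$ of the $S^1 = K(\IZ,1)$-fibration. Such a fibration over a simply connected base $Y = \gO^{\infty-2}MTSO_2$ is classified by an element of $H^2(Y;\IZ)$, namely the image of the generator of $H^1(S^1)$ under transgression, equivalently the obstruction to a section. I would compute $H^2(Y;\IZ)$: since $MTSO_2$ has $\pi_0 = \IZ$ and $\pi_1 = 0$ (it is $\CP^\infty_{-1}$, connective cover starting in degree $-2$ with $H_{-2} = \IZ$ after shifting), $\gO^{\infty-2}MTSO_2$ has $\pi_0 = \pi_0(\gS^2 MTSO_2\text{-infinite-loop})$... more precisely $\gO^{\infty-2}MTSO_2 = \gO^\infty(\gS^2 MTSO_2)$ and $\gS^2 MTSO_2$ is connective with $\pi_0 = \IZ$ (the generator of $H^0(MTSO_2)=\IZ$, i.e.\ $\gk_0$), $\pi_1 = 0$, $\pi_2 = \IZ$ (since $H^2(\gS^2 MTSO_2) = H^0(MTSO_2) = \IZ$, hmm that gives $H^2 = \IZ$ via $\gk_0$... actually the degree-$2$ cohomology of $\gS^2 MTSO_2$ is $H^0(MTSO_2)$). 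So $H^2(Y;\IZ) \cong H^2(\gS^2 MTSO_2;\IZ) \cong \IZ$ generated by (the image of) $\gk_0$, and the classifying map $k$ is some multiple of this generator. To pin down the multiple, I would use \autoref{lem:ga=fiber} (the Genauer sequence) to track the generator: the circle in the fiber comes from the circle-object in $\Cobcl_1$, which under $\alpha_{S^1}: B\Diff^+(S^1) = BSO_2 \to \gO B\Cob_1$ corresponds to the Euler class, and chasing through the identification $B\Cobred_1 \simeq \gO^{\infty-2}MTSO_2$ in the proof of \autoref{thm:BCobred1} (where the fiber sequence $Q\gS(BSO_2)_+ \to QS^0 \to B\Cobred_1$ deloops once) shows the transgression of the $S^1$-class hits exactly the generator $\gk_0$. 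I expect this identification---getting the integer coefficient exactly right, including sign---to be the main obstacle; the rest is bookkeeping with the fiber sequences already established. A clean way to nail the coefficient: the composite $S^1 \to B(h\Cob_1) \to B\Cob_1 \simeq QS^0$ of the fiber inclusion with the comparison map represents a known element of $\pi_1 QS^0 = \IZ/2$, so that is not enough; instead one should observe that the $S^1$-bundle is \emph{pulled back} from the universal $S^1$-bundle over $B\Cobcl_1 \simeq Q\gS(BSO_2)_+$ along the connecting map $\gO B\Cobred_1 \to B\Cobcl_1$, and on $\pi_1$ this connecting map sends the generator of $\pi_1 \gO B\Cobred_1 = \pi_2 B\Cobred_1 = \IZ$ to a generator of $\pi_1 B\Cobcl_1 = \IZ$; hence the classifying map is an isomorphism on $\pi_2$, i.e.\ $k$ is $\pm$ the generator $\gk_0$. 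This completes the proof.
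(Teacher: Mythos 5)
Your proof has a genuine gap in the identification of the fiber, and the workaround you devise contains a false step.

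The intended argument is much simpler than what you attempt: the category $h\Cobcl_1$ has a single object, and its endomorphism monoid is $\pi_0\hom_{\Cob_1}(\emptyset,\emptyset) \cong \IN$ (counting circles); the classifying space of a cancellative commutative monoid agrees with that of its group completion, so $B(h\Cobcl_1) = B\IN \simeq B\IZ = S^1$. You correctly compute $\mrm{Mor}(h\Cobcl_1)\cong\IN$, but then conflate the geometric realisation of the nerve with the group-completed classifying space of the symmetric monoidal ($\gC$-space) structure. The latter is $QS^1$; the former---which is what actually sits in the reduction fiber sequence---is $S^1$. These are different constructions on the same category, and only the plain nerve appears in \autoref{thm:reduction-fiber-sequence}. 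Once you realise this, all the subsequent manoeuvring is unnecessary.

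The workaround you propose does not close the gap. You correctly observe (via the five lemma applied to the map of long exact sequences) that $B\Cobcl_1 \to F$ is $2$-connected, which pins down $\pi_0 F$ and $\pi_1 F$. But the claim ``the fiber sequence $F\to B(h\Cob_1)\to\gO^{\infty-2}MTSO_2$ with simply connected base forces $\pi_i F\to\pi_i B(h\Cob_1)$ to be iso for $i\ge 2$'' is false: simple connectivity of the base only controls $\pi_0$ and $\pi_1$ of the fiber inclusion, and the base here has plenty of higher homotopy (e.g.\ $\pi_2 \gO^{\infty-2}MTSO_2 \cong \IZ$). The analogous statement fails already in the $\gd$-version (where the fiber is $Q\gS(BSO_2)_+$ and the total space is $QS^0$---these do not have matching higher homotopy groups). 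So you have no argument that $\pi_i F$ vanishes for $i\ge 2$, and the conclusion $F\simeq S^1$ is unsupported by the route you took.

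Your sketch of the second statement (identify $H^2(\gO^{\infty-2}MTSO_2;\IZ)\cong\IZ$ with $\gk_0$ and nail the coefficient by chasing the connecting map) is in the right spirit, and your final observation about pulling back the circle bundle along $\gO B\Cobred_1 \to B\Cobcl_1$ is close to what the paper does. The paper's actual argument is cleaner: it continues both reduction fiber sequences one step to the right, obtaining a square comparing $Q(\gS^2(BSO_2)_+)$ with $K(\IZ,2)$, and then uses that the right vertical map is $3$-connected (delooping of a $2$-connected map) while the left vertical is an equivalence to conclude directly that the bottom map pulls back the fundamental class to $\gk_0$.
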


\begin{proof}
    Consider the two compatible reduction fiber sequences of 
    Theorem \ref{thm:reduction-fiber-sequence}:
    \[
         \begin{tikzcd}
            B \Cobcl_{1} \ar[r] \ar[d] 
            & B \Cob_{1} \ar[r, "R"] \ar[d] 
            & B \Cobred_{1} \ar[d] \\
            B (h\Cobcl_{1}) \ar[r] 
            & B (h\Cob_{1}) \ar[r, "R"] 
            & B (h\Cobred_{1})
         \end{tikzcd}
    \]
    By lemma \ref{lem:red1=hred1} the right vertical map 
    is an equivalence and hence theorem \ref{thm:BCobred1} implies
    \[
        B(h\Cobred_{1}) \simeq B\Cobred_{1} 
        \simeq \gO^{\infty-2} MTSO_2.
    \]
    The category $h\Cobcl_{1}$ has one object,
    the endomorphisms of which are the natural numbers. 
    Therefore its classifying space is 
    $B(h\Cobcl_{1}) = B\IN \simeq S^1$. 
    Therefore the bottom fiber sequence of the diagram
    now reads as
    \[
        S^1 \longrightarrow B(h\Cob_{1}) \longrightarrow \gO^{\infty-2} MTSO_2,
    \]
    which proves the first claim of the theorem.
    
    Continuing the fiber sequences once to the right we have 
    \[
         \begin{tikzcd}
            B \Cob_{1} \ar[r, "R"] \ar[d] 
            & B \Cobred_{1} \ar[d, "p_1"] \ar[r, "f"]
            & Q(\gS^2 (BSO_2)_+) \ar[d, "p_2"] \\
            B (h\Cob_{1}) \ar[r, "R"] 
            & B (h\Cobred_{1}) \ar[r, "g"] 
            & K(\IZ, 2).
         \end{tikzcd}
    \]
    The map $f$ is an isomorphism for spectrum 
    cohomology in positive degree, because its fiber $QS^0$ has
    spectrum cohomology concentrated in degree $0$.
    The class $\gk_0 \in H^0(MTSO_2)$ is defined as the pullback
    of the basepoint class in $H^0(\gS^\infty (BSO_2)_+)$.
    The map $p_2$ is $3$-connected because it is the delooping
    of a $2$-connected map, and as noted above $p_1$ is an equivalence.
    Therefore $g$ pulls back the canonical class
    in $H^2(\gS^2 H\IZ)$ to $\gk_0$.
\end{proof}

As a consequence of the computation of 
$B(h\Cob_{1})$ we can compute its rational
cohomology using the following standard fact:

\begin{fact}\label{fact:H-of-gOinfty}
    Let $Y$ be a spectrum such that the spectrum cohomology 
    $H^i(Y; \IQ)$ is finite dimensional for all $i\ge 0$. 
    Then the rational cohomology 
    of its infinite loop space is 
    \[
        H^*(\gO^\infty Y; \IQ)
        \cong \prod_{\pi_0 Y} S[ H^{*>0}(Y; \IQ)] 
    \]
    where $S[V]$ denotes the free symmetric algebra on a graded vector space $V$.
    The Hopf-algebra structure on the cohomology of the identity component
    $H^*(\gO^\infty_0 Y; \IQ) \cong S[ H^{*>0}(Y; \IQ)]$ 
    is such that $H^{*>0}(Y;\IQ)$ consists of primitive elements.
\end{fact}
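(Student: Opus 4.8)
The plan is to reduce the statement to a computation of the rational cohomology of a product of Eilenberg--MacLane spaces. First I would observe that, being a grouplike $E_\infty$-space, $\gO^\infty Y$ splits as a space as a product $\pi_0 Y \times X_0$, where $X_0$ denotes the path component of the basepoint; hence $H^*(\gO^\infty Y;\IQ)\cong \IQ^{\pi_0 Y}\otimes H^*(X_0;\IQ)$, the first factor being the algebra of $\IQ$-valued functions on the set $\pi_0 Y$ (cohomology turns the coproduct of components into a product). It therefore suffices to identify $H^*(X_0;\IQ)$ with $S[H^{*>0}(Y;\IQ)]$. Now $X_0$ is a connected infinite loop space, in particular simple and nilpotent, with $\pi_n(X_0)\cong\pi_n(Y)$ for $n\ge 1$, so I would pass to its rationalisation, which is the product of Eilenberg--MacLane spaces $\prod_{n\ge 1}K(V_n,n)$ with $V_n:=\pi_n(Y)\otimes\IQ$; this does not change rational cohomology.

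Next I would record the standard identification of spectrum cohomology with the dual of rational stable homotopy: since rationally $Y$ is a product of shifted Eilenberg--MacLane spectra, $H^n(Y;\IQ)\cong(\pi_n(Y)\otimes\IQ)^*=V_n^*$, and the hypothesis that each $H^i(Y;\IQ)$ is finite-dimensional allows me to invert this to $V_n^*\cong H^n(Y;\IQ)$, naturally in each degree. Then for a single factor one has the classical computation $H^*(K(V_n,n);\IQ)\cong S[V_n^*]$ with $V_n^*$ placed in degree $n$ --- a polynomial algebra when $n$ is even, an exterior algebra when $n$ is odd --- and, assembling over $n\ge 1$, $H^*(X_0;\IQ)\cong\bigotimes_{n\ge 1}S[V_n^*]\cong S\bigl[\textstyle\bigoplus_{n\ge1}V_n^*\bigr]\cong S[H^{*>0}(Y;\IQ)]$. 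Combined with the first paragraph this gives the asserted isomorphism; one then checks that it is realised by the natural map $H^*(Y;\IQ)\to H^*(\gO^\infty Y;\IQ)$ coming from the counit $\gS^\infty\gO^\infty Y\to Y$, which carries the degree-$n$ part $H^n(Y;\IQ)$ onto the fundamental class of the $K(V_n,n)$-factor of $X_0$.

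For the Hopf-algebra assertion I would use that the comultiplication on $H^*(\gO^\infty Y;\IQ)$ is dual to the loop-sum $\gO^\infty Y\times\gO^\infty Y\to\gO^\infty Y$; restricted to $X_0$ and to a single rational factor this is the standard $H$-space structure on $K(V_n,n)$, under which the fundamental class $\iota$ is primitive, i.e.\ $\mu^*\iota=\iota\otimes 1+1\otimes\iota$. Hence the generating subspace $H^{*>0}(Y;\IQ)$ consists of primitive elements. The only genuinely delicate point is the passage to the infinite product $\prod_{n\ge1}K(V_n,n)$: one must know that its rational cohomology is the tensor product of the cohomologies of the factors, with no $\lim^1$ contribution, and this is exactly where the degreewise finiteness of $H^*(Y;\IQ)$ enters --- it ensures that through any fixed cohomological degree $X_0$ is rationally equivalent to a finite sub-product, legitimising the tensor-product computation. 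Everything else is a routine assembly of rational homotopy theory.
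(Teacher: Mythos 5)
Your argument is correct and follows essentially the same route as the paper: split off $\pi_0 Y$, rationalise to reduce the basepoint component to a product of Eilenberg--MacLane spaces, and quote $H^*(K(\IQ,n);\IQ)\cong S[\IQ\langle\beta\rangle]$ with $|\beta|=n$. You are somewhat more careful than the paper's sketch, in particular in spelling out the primitivity statement and in noting that the degreewise finiteness hypothesis is what makes the infinite product behave like a finite one through any fixed degree.
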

\begin{proof}
    This is well-known, but we provide a short sketch of proof.
    First, we have equivalences of spaces 
    $\gO^\infty Y \simeq \pi_0 Y \times \gO^\infty_0 Y
    \simeq \coprod_{\pi_0 Y} \gO^\infty_0 Y$
    and so it will suffice to show that
    $H^*(\gO^\infty_0 Y; \IQ) \cong S[H^{*>0}(Y;\IQ)]$.
    Because it has degree-wise finite dimensional homology
    $\tau_{\ge 1} Y$ is rationally equivalent to a direct sum 
    of Eilenberg-Mac Lane spectra $\gS^n H\IQ$, $n>0$.
    We may hence assume $Y=\gS^n H\IQ$, in which case we have 
    $\gO^{\infty-n}H\IQ = K(\IQ;n)$ and $H^*(K(\IQ;n);\IQ) = \IQ[\gb]$
    where $|\gb|=n$.
\end{proof}

\begin{cor}\label{cor:H(hCob1)}
    The rational cohomology rings of $h\Cob_1$ and $h\Cobred_1$ are 
    \[
        H^*(B(h\Cob_1)_0; \IQ) \cong 
        \IQ[\dlgk_1, \dlgk_2, \dots] 
        \qand
        H^*(B(h\Cobred_1)_0; \IQ) \cong 
        \IQ[\dlgk_0, \dlgk_1, \dlgk_2, \dots] 
    \]
    where $|\dlgk_i| = 2i+2$.
    Moreover, the $\dlgk_i$ are primitive with respect to 
    the Hopf-algebra structure.
\end{cor}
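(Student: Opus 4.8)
The plan is to read both cohomology rings off Fact~\ref{fact:H-of-gOinfty}, so all the work is in identifying the relevant spectra and computing their rational spectrum cohomology. Everything in sight is compatible with the infinite loop space structures by Lemmas~\ref{lem:BhCob-vspecial} and~\ref{lem:allBCob-vspecial} and Theorems~\ref{thm:reduction-fiber-sequence} and~\ref{thm:BhCob1}, so the Hopf-algebra claim will come for free.

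First I would treat $h\Cobred_1$. By Lemma~\ref{lem:red1=hred1} and Theorem~\ref{thm:BCobred1} we have $B(h\Cobred_1)\simeq\gO^{\infty-2}MTSO_2=\gO^\infty Y$ for the connective spectrum $Y:=\gS^2 MTSO_2$. Since $MTSO_2$ is a Thom spectrum over $BSO_2\simeq\CP^\infty$ of virtual dimension $-2$, the Thom isomorphism gives $\tilde H^n(MTSO_2;\IQ)\cong H^{n+2}(\CP^\infty;\IQ)$, which is one-dimensional for $n\in\{-2,0,2,4,\dots\}$ and zero otherwise; suspending twice, $\tilde H^n(Y;\IQ)$ is one-dimensional for $n\in\{0,2,4,\dots\}$ and zero otherwise. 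In particular $H^i(Y;\IQ)$ is finite-dimensional for all $i$, so Fact~\ref{fact:H-of-gOinfty} applies. Moreover $\pi_0 Y\cong\pi_0 B(h\Cobred_1)\cong\pi_0 B\Cob_1\cong\IZ$ (the first identity by the proof of Lemma~\ref{lem:allBCob-vspecial}, and $\pi_0 Y\cong\pi_{-2}MTSO_2\cong\IZ$ directly since the bottom cell of $MTSO_2$ lies in degree $-2$). Choosing a generator of $\pi_0 Y\cong\IZ$ identifies $\IQ^{\pi_0 Y}$ with $\IQ[\![\ga^{\pm1}]\!]$, and writing $\gk_i$ for a generator of $\tilde H^{2i+2}(Y;\IQ)$ for $i\ge 0$ (compatibly with Theorem~\ref{thm:BhCob1} when $i=0$), Fact~\ref{fact:H-of-gOinfty} yields
\[
    H^*(B(h\Cobred_1);\IQ)\cong\IQ[\![\ga^{\pm1}]\!]\otimes S\bigl[H^{*>0}(Y;\IQ)\bigr]=\IQ[\![\ga^{\pm1}]\!][\gk_0,\gk_1,\gk_2,\dots],
\]
with the $\gk_i$ primitive because $H^{*>0}(Y;\IQ)$ consists of primitives.

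For $h\Cob_1$ I would feed Theorem~\ref{thm:BhCob1} into the same machine. That theorem exhibits $B(h\Cob_1)$ as the homotopy fiber of an infinite loop space map $g\colon B(h\Cobred_1)\simeq\gO^\infty Y\to K(\IZ,2)=\gO^\infty(\gS^2 H\IZ)$, which is $\gO^\infty$ of a spectrum map $Y\to\gS^2 H\IZ$ representing the generator $\gk_0\in H^2(Y)\cong\IZ$; hence $B(h\Cob_1)\simeq\gO^\infty Y'$ with $Y':=\mrm{hofib}(Y\to\gS^2 H\IZ)$. Rationally $g$ induces an isomorphism $H^2(\gS^2 H\IQ;\IQ)\xrightarrow{\ \sim\ }H^2(Y;\IQ)$, so the long exact sequence of the fibration (using that $\tilde H^*(\gS^2 H\IQ;\IQ)$ is $\IQ$ in degree $2$ and zero elsewhere) gives $\tilde H^0(Y';\IQ)\cong\tilde H^0(Y;\IQ)=\IQ$, $\tilde H^1(Y';\IQ)=\tilde H^2(Y';\IQ)=0$, and $\tilde H^n(Y';\IQ)\cong\tilde H^n(Y;\IQ)$ in all other degrees; thus $H^{*>0}(Y';\IQ)$ is one-dimensional in degrees $4,6,8,\dots$ (spanned by the images of $\gk_1,\gk_2,\dots$) and zero otherwise. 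Since also $\pi_0 Y'\cong\pi_0 B(h\Cob_1)\cong\IZ$ (the fiber $S^1$ of $B(h\Cob_1)\to B(h\Cobred_1)$ being connected), a second application of Fact~\ref{fact:H-of-gOinfty} gives
\[
    H^*(B(h\Cob_1);\IQ)\cong\IQ[\![\ga^{\pm1}]\!][\gk_1,\gk_2,\dots],
\]
again with the $\gk_i$ primitive, and with $|\ga|=0$, $|\gk_i|=2i+2$ as asserted.

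I do not expect a serious obstacle: the statement is a formal consequence of Theorems~\ref{thm:BCobred1} and~\ref{thm:BhCob1} and Fact~\ref{fact:H-of-gOinfty}. The points that need care are purely bookkeeping — getting the degree shifts in the Thom isomorphism and the two suspensions right, checking the (trivially satisfied) finiteness hypothesis of Fact~\ref{fact:H-of-gOinfty}, pinning down $\pi_0$, and observing that rationally $g$ is an equivalence onto the bottom positive-degree rational cell of $Y$, so that passing to its fiber simply deletes the class $\gk_0$ and shifts no others. (That these $\gk_i$ coincide with the classical Miller--Morita--Mumford classes under $B\Cob_2\simeq\gO^{\infty-1}MTSO_2$ is not needed here; it is the content of the later cocycle theorem.)
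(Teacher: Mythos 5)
Your argument is correct and in all essentials matches the paper's. For $H^*(B(h\Cobred_1);\IQ)$ you reproduce the paper's computation: identify $B(h\Cobred_1)\simeq\gO^\infty\gS^2 MTSO_2$, compute the rational spectrum cohomology (your Thom-isomorphism bookkeeping is right: $\tilde H^n(\gS^2 MTSO_2;\IQ)$ is one-dimensional exactly in degrees $0,2,4,\dots$, with the degree-$0$ class absorbed into $\IQ^{\pi_0}$), and apply Fact~\ref{fact:H-of-gOinfty}. For $H^*(B(h\Cob_1);\IQ)$ the paper's main route is the rational Gysin sequence for the circle bundle $B(h\Cob_1)\to B(h\Cobred_1)$ with Euler class $\pm 2\gk_0$, giving $H^*(B(h\Cob_1);\IQ)\cong H^*(B(h\Cobred_1);\IQ)/\langle 2\gk_0\rangle$; you instead compute the rational spectrum cohomology of $Y'=\mrm{hofib}(\gS^2 MTSO_2\to\gS^2 H\IZ)$ and apply Fact~\ref{fact:H-of-gOinfty} a second time. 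This is precisely the alternative the paper flags in its closing sentence, so it is not a genuinely new route; a small advantage of your version is that the primitivity of the $\gk_i$ falls out of Fact~\ref{fact:H-of-gOinfty} directly rather than needing to be carried through the Gysin quotient (though that transport is also painless, since the ideal generated by the primitive $\gk_0$ is a Hopf ideal). Your LES computation for $Y'$, and the $\pi_0$ identifications, all check out.
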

\begin{proof}
    The spectrum cohomology of $\gS^2 MTSO_2$ is 
    \[
        H^*(\gS^2 MTSO_2; \IQ) \cong \IQ\gle{\dlgk_{-1}, \dlgk_0, \dlgk_1, \dlgk_2, \dots}
    \]
    where $|\dlgk_i| = 2i+2$. 
    By fact \ref{fact:H-of-gOinfty} we have
    \[
        H^*(\gO^\infty_0 \gS^2 MTSO_2; \IQ) 
        \cong S[\IQ\gle{\dlgk_0, \dlgk_1, \dlgk_2, \dots}].
    \]
    This implies the second claim seeing as 
    $B(h\Cobred_1) \simeq \gO^{\infty-2} MTSO_2$ by 
    theorem \ref{thm:BCobred1}.

    Theorem \ref{thm:BhCob1} states that $B(h\Cob_1)$ is a circle bundle
    over $B\Cobred_1$, so we can compute its cohomology using the Gysin sequence.
    The Euler class of the circle bundle is $\dlgk_0 \in H^2(B(h\Cobred_1))$,
    so by the rational Gysin sequence
    \[
        H^*(B(h\Cob_1)_0; \IQ) \cong H^*(B(h\Cobred_1)_0; \IQ)/\gle{\dlgk_0}
        \cong \IQ[\dlgk_1, \dlgk_2, \dots]. 
    \]
    Alternatively we could have computed 
    $
        H^*(\mrm{hofib}(\gS^2 MTSO_2 \to \gS^2 H\IZ); \IQ)
        \cong \IQ\gle{\ga, \dlgk_1, \dlgk_2, \dots} 
    $
    and applied fact \ref{fact:H-of-gOinfty} again.
\end{proof}

\subsection{The reduced $1$-dimensional bordism category 
and topological cyclic homology of a simply connected spaces}

We will now compute the homotopy type of $\Cobred_{1,\gt}$ 
for more general tangential structures in terms of the so-called 
\emph{circle transfer} map.
For any space $Y$ with $\IT$-action the \emph{circle transfer} 
is an infinite loop space map 
\[
    \mrm{trf}_{\IT}: Q(\gS (Y_{h\IT})_+) \longrightarrow Q(Y_+)
\]
natural with respect to $\IT$-equivariant maps.

We will treat the circle transfer as a black box and 
refer the reader to \cite{Gia17} for a more detailed discussion
and pointers to the literature.
The circle transfer for the free loop space $Y = LX$ turns up naturally 
as a map between classifying spaces of $1$-dimensional bordism categories:
\begin{thm}[{\cite[Proposition 6.1]{Gia17}}]\label{thm:incl=trf}
    Let $\gt = X \times \gt^{\mrm{or}}$ be the tangential structure as before,
    then the following diagram of infinite loop spaces commutes up to homotopy:
    \footnote{
        Note that there is a small misprint in \cite[Proposition 6.1]{Gia17}: 
        The diagram should say $\op{ev} \circ \op{trf}$, not $\op{trf} \circ \op{ev}$,
        as is evident from the proof.
    }
    \[
        \begin{tikzcd}
            B\Cobcl_{1,\gt} \ar[rr] && B\Cob_{1,\gt} \ar[d, "\simeq"]\\
            Q(\gS_+ (L X)_{h\IT}) \ar[r, "\mrm{trf}_\IT"] \ar[u, "\simeq"] 
            & Q(LX_+) \ar[r, "Q(\op{ev})"] 
            & Q(X_+). 
        \end{tikzcd}
    \]
\end{thm}

\begin{cor}\label{cor:Cobredgt}
    For the tangential structure $\gt = X \times \gt^{or}$ there is 
    a homotopy fiber sequence of infinite loop spaces:
    \[
        Q(\gS (LX)_{h\IT}) 
        \xrightarrow{\ Q(\op{ev}) \circ \mrm{trf}_\IT \ } Q(X)
        \longrightarrow B\Cobred_{1,\gt} 
    \]
\end{cor}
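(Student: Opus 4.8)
The plan is to obtain this as a direct corollary of the reduction fiber sequence (Theorem~\ref{thm:reduction-fiber-sequence}) together with the identification of the inclusion of the closed bordism category recorded in Theorem~\ref{thm:incl=trf}. First I would specialise Theorem~\ref{thm:reduction-fiber-sequence} to $d=1$ and to the tangential structure $\gt = X\times\gt^{\mrm{or}}$; this produces a homotopy fiber sequence of infinite loop spaces
\[
    B\Cobcl_{1,\gt} \xrightarrow{\ I\ } B\Cob_{1,\gt} \xrightarrow{\ R\ } B\Cobred_{1,\gt},
\]
in which $I$ is the map induced by the inclusion of the full subcategory $\Cobcl_{1,\gt}\subset\Cob_{1,\gt}$ on the object $\emptyset$.

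Next I would feed in the identifications packaged by Theorem~\ref{thm:incl=trf}. Its commuting square supplies, all as equivalences of infinite loop spaces, an equivalence $B\Cobcl_{1,\gt}\simeq Q(\gS_+ (LX)_{h\IT})$ (this is Corollary~\ref{cor:Bcobcl1}) and an equivalence $B\Cob_{1,\gt}\simeq Q(X_+)$, together with a homotopy identifying $I$ with the composite $Q(\op{ev})\circ\mrm{trf}_\IT$. Substituting these into the fiber sequence above rewrites it as
\[
    Q(\gS_+ (LX)_{h\IT}) \xrightarrow{\ Q(\op{ev})\circ\mrm{trf}_\IT\ } Q(X_+) \longrightarrow B\Cobred_{1,\gt},
\]
which is exactly the asserted homotopy fiber sequence.

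The main point to be careful about is bookkeeping rather than mathematics: one must check that the map $I$ appearing in the reduction fiber sequence genuinely agrees, up to homotopy and compatibly with the infinite loop structures, with the top horizontal map of the square in Theorem~\ref{thm:incl=trf}. This is immediate, since both are induced by one and the same functor, the inclusion $\Cobcl_{1,\gt}\hookrightarrow\Cob_{1,\gt}$, and since Corollary~\ref{cor:Bcobcl1} and Theorem~\ref{thm:incl=trf} realise their equivalences through the same scanning and group-completion constructions. Hence no genuine obstacle arises, and the corollary follows formally from the two cited theorems.
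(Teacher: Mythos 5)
Your proof is correct and takes essentially the same route as the paper: specialise the reduction fiber sequence to $d=1$ with $\gt = X\times\gt^{\mrm{or}}$, substitute the identifications $B\Cobcl_{1,\gt}\simeq Q(\gS_+(LX)_{h\IT})$ and $B\Cob_{1,\gt}\simeq Q(X_+)$, and use Theorem~\ref{thm:incl=trf} to recognise the map as $Q(\op{ev})\circ\mrm{trf}_\IT$. The only cosmetic difference is that the paper cites \cite{GMTW06} directly for the equivalence $B\Cob_{1,\gt}\simeq Q(X_+)$ whereas you read it off the right vertical arrow of the square in Theorem~\ref{thm:incl=trf}; these are the same fact.
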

\begin{proof}
    Theorem \ref{thm:reduction-fiber-sequence} gives us the reduction fiber
    sequence 
    \[
        B\Cobcl_{1,\gt} \longrightarrow B\Cob_{1,\gt} 
        \longrightarrow B\Cobred_{1,\gt} .
    \]
    By corollary \ref{cor:Bcobcl1} 
    $
        B\Cobcl_{1,\gt} 
        \simeq Q(\gS_+ (LX)_{h\IT})
    $
    and the main theorem of \cite{GMTW06} implies $B\Cob_{1,\gt} \simeq Q(X_+)$.
    Inserting these into the reduction fiber sequence we obtain a homotopy
    fiber sequence with the desired terms, and theorem \ref{thm:incl=trf}
    identifies the relevant map.
\end{proof}

\begin{cor}\label{cor:TC=Cobred}
    For any simply connected space $X$ there is an equivalence
    \[
        \mi{TC}(\IS[\gO X]; p) \simeq Q(X_+)_p^\wedge 
        \times \left( \gO B\Cobred_{1,X \times \gt^\mrm{or}} \right)_p^\wedge.
    \]
    The left-hand-side denotes the topological cyclic homology of the 
    ring spectrum $\IS[\gO X] := \gS^\infty (\gO X)_+$.
\end{cor}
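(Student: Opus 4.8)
The plan is to match the reduction fibre sequence of corollary \ref{cor:Cobredgt} against the classical computation of $\TC$ of a spherical group ring from \cite{BCCGHM96}. Put $\gt = X\times\gt^{\mrm{or}}$. Corollary \ref{cor:Cobredgt} provides the homotopy fibre sequence of infinite loop spaces
\[
    B\Cobcl_{1,\gt} \;\simeq\; Q\big(\gS ((LX)_{h\IT})_+\big) \;\xrightarrow{\ Q(\op{ev})\circ\mrm{trf}_\IT\ }\; Q(X_+) \;\simeq\; B\Cob_{1,\gt} \;\longrightarrow\; B\Cobred_{1,\gt},
\]
using corollary \ref{cor:Bcobcl1}, theorem \ref{thm:GMTW}, and the identification of the first map in theorem \ref{thm:incl=trf}. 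Since $\gO^\infty$ preserves homotopy fibre sequences and all three terms are connective infinite loop spaces, passing to spectra presents $B\Cobred_{1,\gt}$, as an infinite loop space, as the cofibre of the spectrum map
\[
    c\colon\ \gS\,\gS^\infty ((LX)_{h\IT})_+ \;\xrightarrow{\ \mrm{trf}_\IT\ }\; \gS^\infty (LX)_+ \;\xrightarrow{\ \gS^\infty(\op{ev})\ }\; \gS^\infty X_+
\]
that represents $Q(\op{ev})\circ\mrm{trf}_\IT$; equivalently $\gO B\Cobred_{1,\gt}\simeq\gO^\infty\mrm{hofib}(c)$.

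On the other side I would use that $\THH(\IS[\gO X])\simeq \gS^\infty (LX)_+$ as a cyclotomic $\IT$-spectrum and quote the main theorem of \cite{BCCGHM96}: for $X$ simply connected of finite type it shows, after $p$-completion, that $\TC(\IS[\gO X];p)$ splits as $\gS^\infty X_+$ wedge a reduced summand — the summand $\gS^\infty X_+$ splitting off via $\TC\to\THH(\IS[\gO X])\xrightarrow{\op{ev}_*}\gS^\infty X_+$ — and identifies the reduced summand with the homotopy fibre of the $\IT$-transfer composed with evaluation, i.e.\ with $\mrm{hofib}(c)$ above. (For $X=\ast$ one has $(L\ast)_{h\IT}\simeq\CP^\infty$, and $\mrm{hofib}(c)\simeq\gS\CP^\infty_{-1}$ by the Genauer sequence \ref{lem:ga=fiber} exactly as in the proof of theorem \ref{thm:BCobred1}, recovering $\TC(\IS;p)\simeq\IS\vee\gS\CP^\infty_{-1}$ from \cite{BHM93}.) Applying $\gO^\infty$ and feeding in the first paragraph then gives
\[
    \gO^\infty\TC(\IS[\gO X];p) \;\simeq\; \big(Q(X_+)\times\gO^\infty\mrm{hofib}(c)\big)_p^\wedge \;\simeq\; \big(Q(X_+)\times\gO B\Cobred_{1,X\times\gt^{\mrm{or}}}\big)_p^\wedge,
\]
which is the assertion.

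The main obstacle, and the one place where \cite{BCCGHM96} genuinely enters, is to identify the ``$\IT$-transfer'' appearing in the $\TC$ computation — where it arises from the Adams isomorphism / norm cofibre sequence for the cyclotomic $\IT$-spectrum $\gS^\infty (LX)_+$ — with the transfer $\mrm{trf}_\IT$ occurring in theorem \ref{thm:incl=trf} in the description of the inclusion $B\Cobcl_{1,\gt}\to B\Cob_{1,\gt}$. Both descend from the free $\IT$-bundle $LX\to(LX)_{h\IT}$ (on the bordism side through $B\Diff^+(S^1)\simeq(LX)_{h\IT}$, lemma \ref{lem:LXhIT}), so I would make the comparison by exhibiting a map of homotopy fibre sequences from the one in corollary \ref{cor:Cobredgt} to the $\TC$-fibre sequence of \cite{BCCGHM96} that is the identity on $Q(X_+)$ and on $Q(\gS((LX)_{h\IT})_+)$, forcing an equivalence of the third terms. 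The remaining work — the suspension shift concealed in $\gO^{\infty-2}$, the role of $\op{ev}\colon LX\to X$ in $c$, the splitting of the $Q(X_+)$-factor, and the fact that the finite-type hypothesis is used only through \cite{BCCGHM96} — is then bookkeeping.
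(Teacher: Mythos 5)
Your proposal matches the paper's argument: both cite \cite[Theorem 1.2]{BCCGHM96} for the $p$-complete splitting of $\TC(\IS[\gO X];p)$ into $Q(X_+)_p^\wedge$ times the homotopy fibre of $Q(\op{ev})\circ\mrm{trf}_\IT$, and both use corollary \ref{cor:Cobredgt} (built from the reduction fibre sequence, GMTW, and Giansiracusa's identification of the circle transfer) to recognise that fibre as $\gO B\Cobred_{1,X\times\gt^{\mrm{or}}}$. The ``obstacle'' you flag — matching the Adams-isomorphism transfer in the $\TC$ computation with the $\mrm{trf}_\IT$ of theorem \ref{thm:incl=trf} — is already dispatched by the fact that \cite[Theorem 1.2]{BCCGHM96} states the splitting directly in terms of $\op{ev}\circ\mrm{trf}_\IT$, so the paper treats the corollary as essentially formal once \cite{Gia17} is in place (this compatibility is revisited separately in corollary \ref{cor:map-to-THH}).
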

\begin{proof}
    According to \cite[Proposition 3.9]{BCCGHM96} there is a splitting
    \[
        \mi{TC}(\IS[\gO X]; p) \simeq 
        Q(X_+)_p^\wedge \times \op{hofib}\left(Q(\gS (LX)_{h\IT}) 
        \xrightarrow{\ \mi{ev} \circ \mrm{trf}_\IT \ } Q(X)
        \right)_p^\wedge.
    \]
    Using corollary \ref{cor:Cobredgt} we can rewrite the second term as
    \[
        \gO \op{hofib}\left(Q(\gS^2 (LX)_{h\IT}) \xrightarrow{\ \mi{ev} \circ \gO^{-1}\mrm{trf}_\IT \ } Q(\gS X) \right)
        \simeq \gO B \Cobred_{d,X \times \gt^\mrm{or}}
    \]
    and the claim follows.
\end{proof}

\subsection{The unoriented bordism category}
In this section we consider the trivial tangential structure 
$\mrm{unor} := \{*\}$. We have results similar to the 
oriented case: 
\begin{thm}\label{thm:BhCob1-unor} 
    There is an equivalence of infinite loop spaces
    $ B\Cobred_{1, \mrm{unor}} \simeq \gO^{\infty - 2} MTO_2 $ 
    and there is a homotopy fiber sequence of infinite loop spaces
    \[
        S^1 \longrightarrow B(h\Cob_{1, \mrm{unor}}) 
        \longrightarrow \gO^{\infty-2} MTO_2.
    \]
    The rational cohomology ring of $h\Cob_{1, \mrm{unor}}$ is:
    \[
        H^*(B(h\Cob_{1,\mrm{unor}})_0; \IQ) 
        \cong \IQ[\dlgk_2, \dlgk_4, \dots] 
    \]
    where $|\dlgk_i| = 2i+2$, and $\pi_0 B(h\Cob_{1,\mrm{unor}}) \cong \IZ/2$.
\end{thm}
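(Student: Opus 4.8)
The plan is to re-run the proofs of Theorems~\ref{thm:BCobred1}, \ref{thm:BhCob1} and of Corollary~\ref{cor:H(hCob1)} with $SO$ replaced by $O$ throughout; the only genuinely new input is a twisted Thom isomorphism for $MTO_2$. \textbf{Step 1 ($B\Cobred_{1,\mrm{unor}}\simeq\gO^{\infty-2}MTO_2$).} I would specialise the reduction fiber sequence of Theorem~\ref{thm:reduction-fiber-sequence} to $d=1$, $\gt=\mrm{unor}$. Lemma~\ref{lem:BCobcl} gives $B\Cobcl_{1,\mrm{unor}}\simeq Q(\gS(BO_2)_+)$, since the only connected closed $1$-manifold is $S^1$ and $B\Diff^{\mrm{unor}}(S^1)=B\Diff(S^1)\simeq BO_2$; Theorem~\ref{thm:GMTW} gives $B\Cob_{1,\mrm{unor}}\simeq\gO^{\infty-1}MTO_1$; and under these identifications the inclusion $B\Cobcl_{1,\mrm{unor}}\to B\Cob_{1,\mrm{unor}}$ is the first map of the unoriented Genauer fiber sequence $\gO^{\infty-1}MTO_2\to Q(\gS(BO_2)_+)\to\gO^{\infty-1}MTO_1$. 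This sequence is proved just like Lemma~\ref{lem:ga=fiber}, from the cofibre sequence of spectra $MTO_2\to\gS^\infty(BO_2)_+\to MTO_1$ (the sphere-bundle cofibration of $\gc_2\to BO_2$ twisted by $-\gc_2$) and the fact that $O_2\hookrightarrow\Diff(S^1)$ is an equivalence. Exactly as in the proof of Theorem~\ref{thm:BCobred1}, the homotopy fibre of $I$ is $\gO^{\infty-1}MTO_2$, so extending this fiber sequence one step to the right the reduction fiber sequence identifies $B\Cobred_{1,\mrm{unor}}$ with the next term $\gO^{\infty-2}MTO_2$.

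\textbf{Step 2 (the circle bundle).} The proof of Lemma~\ref{lem:red1=hred1} carries over verbatim: a reduced compact $1$-manifold is a disjoint union of intervals, a diffeomorphism rel boundary cannot permute them, and $B\Diff(I\text{ rel }\{0,1\})\simeq *$; hence $\gd(\Cobred_{1,\mrm{unor}})\to h(\Cobred_{1,\mrm{unor}})$ is an equivalence and $B\Cobred_{1,\mrm{unor}}\simeq B(h\Cobred_{1,\mrm{unor}})$. Also $h\Cobcl_{1,\mrm{unor}}$ has one object with endomorphism monoid $(\IN,+)$, so $B(h\Cobcl_{1,\mrm{unor}})=B\IN\simeq S^1$. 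Substituting these into the bottom row of the two compatible reduction fiber sequences of Theorem~\ref{thm:reduction-fiber-sequence} gives the homotopy fiber sequence of infinite loop spaces $S^1\to B(h\Cob_{1,\mrm{unor}})\to\gO^{\infty-2}MTO_2$.

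\textbf{Step 3 (rational cohomology).} Here is the new computation. Since $-\gc_2$ is non-orientable over $BO_2$, the rational Thom isomorphism reads $H^*(MTO_2;\IQ)\cong H^{*+2}(BO_2;\IQ^{w_1})$ with coefficients twisted by $w_1(\gc_2)$; pulling back along the orientation double cover $BSO_2\to BO_2$ identifies this with $\bigl(\IQ[e]\otimes\IQ^{\mrm{sgn}}\bigr)^{\IZ/2}$, the deck group acting on $H^*(BSO_2;\IQ)=\IQ[e]$ by $e\mapsto -e$. (Equivalently, in the presentation $MTO_2\simeq(MTSO_2)_{h\IZ/2}$ the generator dual to $e^{\,j}$ acquires a sign $(-1)^{j+1}$, the extra $(-1)$ coming from reversing the orientation of $\gc_2$.) The invariants are spanned by $e,e^3,e^5,\dots$, so $H^*(\gS^2MTO_2;\IQ)$ is one-dimensional in degrees $2,6,10,\dots$, spanned by classes we name $\gk_0,\gk_2,\gk_4,\dots$ (so $|\gk_i|=2i+2$). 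Moreover $\pi_0(\gS^2MTO_2)=\pi_0 B\Cob_{1,\mrm{unor}}$ (Step 1 and Lemma~\ref{lem:allBCob-vspecial}) is the unoriented bordism group of closed $0$-manifolds, namely $\IZ/2$, so $\IQ^{\pi_0(\gS^2MTO_2)}$ is the $2$-dimensional algebra $\IQ[\ga]/\gle{\ga^2-1}$. Fact~\ref{fact:H-of-gOinfty} then gives
\[
    H^*(B(h\Cobred_{1,\mrm{unor}});\IQ)\cong\IQ[\ga]/\gle{\ga^2-1}\otimes\IQ[\gk_0,\gk_2,\gk_4,\dots].
\]
Finally, arguing exactly as in the proof of Theorem~\ref{thm:BhCob1} — the key point being that $MTO_1$ is rationally null, since $H^*(\RP^\infty;\IQ^{w_1})=0$, so the map continuing the reduction fiber sequence is a rational cohomology equivalence of spectra — one finds that the connecting map $\gO^{\infty-2}MTO_2\to K(\IZ,2)$ of Step 2 pulls the fundamental class back to a nonzero rational multiple of the primitive class $\gk_0$. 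Hence the Euler class of the circle bundle $B(h\Cob_{1,\mrm{unor}})\to B(h\Cobred_{1,\mrm{unor}})$ is a nonzero multiple of $\gk_0$, and the rational Gysin sequence gives
\[
    H^*(B(h\Cob_{1,\mrm{unor}});\IQ)\cong\IQ[\ga]/\gle{\ga^2-1}\otimes\IQ[\gk_2,\gk_4,\dots]\cong\IQ[\ga,\gk_2,\gk_4,\dots]/\gle{\ga^2-1}.
\]

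\textbf{Main obstacle.} The homotopy-theoretic scaffolding is a routine transcription of the oriented proofs, so I expect no real difficulty there. The one point requiring care is the twisted Thom isomorphism in Step 3: one must correctly combine the two sources of signs — the deck action $e\mapsto -e$ and the orientation reversal of $\gc_2$ — to see that precisely the odd powers of $e$ survive (hence only the even-indexed $\gk_i$ appear in the end), and one must separately note that the residual $\IZ/2$ in $\pi_0$ is what turns the oriented $\IQ[\![\ga^{\pm1}]\!]$ into $\IQ[\ga]/\gle{\ga^2-1}$. Once that is in place, the vanishing of $\gk_0$ follows from the circle bundle exactly as in Corollary~\ref{cor:H(hCob1)}.
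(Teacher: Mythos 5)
Your proposal is correct and follows the same overall strategy as the paper (which treats this theorem in a few lines, deferring to the oriented proofs of Theorems~\ref{thm:BCobred1}, \ref{thm:BhCob1} and Corollary~\ref{cor:H(hCob1)}). You supply two details the paper leaves implicit and which are genuinely the crux of the unoriented computation: the twisted Thom isomorphism calculation showing $H^*(MTO_2;\IQ)=\IQ\gle{\gk_0,\gk_2,\gk_4,\dots}$ with only the odd powers of the Euler class surviving the combined deck-and-orientation sign, and the identification $\pi_0 B(h\Cob_{1,\mrm{unor}})=\IZ/2$ giving the factor $\IQ[\ga]/\gle{\ga^2-1}$. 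The one place your argument diverges from the paper's is the nontriviality of the Euler class of the circle bundle: the paper asserts $\pi_1 B(h\Cob_{1,\mrm{unor}})=0$ and deduces surjectivity of the connecting map on $\pi_2$, whereas you observe that $MTO_1$ is rationally null (since $H^*(\RP^\infty;\IQ^{w_1})=0$), so the map $f$ in the continued fiber sequence is a rational equivalence and pulls the fundamental class back to a nonzero multiple of $\gk_0$ exactly as in the oriented proof of Theorem~\ref{thm:BhCob1}. These two arguments are equivalent in content ($\pi_1 B(h\Cob_{1,\mrm{unor}})\cong\pi_0 MTO_1$ by Lemmas~\ref{lem:gd-doesnt-change-B} and \ref{lem:gd-h-2con}, which vanishes rationally), but yours is the more self-contained since the paper's $\pi_1$-claim is stated without justification; for the rational Gysin sequence either suffices, as any nonzero multiple of $\gk_0$ generates the same ideal over $\IQ$.
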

\begin{proof}
    The first part is proved just like theorem \ref{thm:BCobred1}
    and theorem \ref{thm:BhCob1}. We only need to observe that 
    the equivalence of lemma \ref{lem:ga=fiber}
    still holds in the unoriented case.
    
    To compute the rational cohomology, we need to 
    understand the map $f:\gO^{\infty-2}MTO_2 \to K(\IZ,2)$
    that continues the fiber sequence. 
    Since $\pi_1 B(h\Cob_{1,\mrm{unor}}) = 0$ the map
    is necessarily surjective on $\pi_2$.
    So, by the same arguments as in corollary \ref{cor:H(hCob1)}
    we compute the cohomology. Here we use that
    \[
        H^*(MTO_2;\IQ) = \IQ\gle{\gk_0, \gk_2, \gk_4, \dots }.
    \]
    To determine $\pi_0 B(h\Cob_{1,\mrm{unor}})$ recall that the unoriented $0$-dimensional 
    bordism group is $\IZ/2$.
\end{proof}

\part{Cocycles on the cobordism category}

In the first part of the paper we computed the homotopy types of $B(h\Cob_1)$
and $B(h\Cobred_1)$ and as a result showed that their rational cohomology rings
are polynomial algebras on the generators $\dlgk_i \in H^{2i+2}(B(h\Cob_1); \IQ)$.
This computation, however, was achieved abstractly 
and is perhaps unsatisfying in that it does not give us 
a concrete understanding of what the classes $\dlgk_i$ actually are.
The purpose of this second part is to gain a more concrete understanding
of $B(h\Cob_1)$ and $B(h\Cob_1^{\mrm{red}})$, and in particular 
the $\dlgk$-classes on them.
All the $\dlgk_i$ can be obtained as a pullback along
the connecting homomorphism of the reduction fiber sequence:
\[
    B(h\Cobred_1) \xleftarrow{\simeq} B\Cobred_1 
    \xrightarrow{\ f\ } Q(\gS^2 (BSO_2)_+).
\]
In section \ref{sec:2-cocycle} give a hands-on construction of 
the $2$-cocycle representing
the class $\dlgk_0 \in H^2(B(h\Cobred_1))$ coming from $H^2(f)$.
In subsection \ref{subsec:continued-fib} we construct a simplicial space
of ``cuts'' $\Cut$ 
and show that the canonical quotient map $N\Cobred_1 \to \Cut$ 
is a geometric model for $f$. 
This is then used in the final section to give the cocycle formulas.

\section[The 2-cocycle on the reduced bordism category]%
{The $2$-cocycle on the reduced bordism category}
\label{sec:2-cocycle}
We can think of $h\Cob_{d,\gt}$ as a central extension of $h\Cobred_{d,\gt}$
by the abelian monoid $h\Cobcl_{d,\gt}$.
In this section we construct the $2$-cocycle $\ga$ on $h\Cobred_{d,\gt}$
corresponding to the central extension. 
This is the first step in understanding the cocycles representing the 
cohomology classes on $h\Cob_{1}$.

\begin{defn}
    Let $\mcA_{d,\gt}$ denote the abelian group $\pi_1 B\Cobcl_{d,\gt}$.
\end{defn}
    
\begin{rem}
    As a consequence of lemma \ref{lem:BCobcl} 
    $\mcA_{d,\gt}$ is 
    the free abelian group on the set 
    of diffeomorphism classes of closed connected $\gt$-structured $d$-manifolds.
    Recall that $\pi_0(\Phi_{d,\gt})$ is the commutative monoid 
    of diffeomorphism classes of closed $\gt$-structured $d$-manifolds 
    under disjoint union. This monoid injects into $\mcA_{d,\gt}$,
    and $\mcA_{d,\gt}$ is the group completion $\pi_0(\Phi_{d,\gt})$.
    When $d=1$ and $\gt=\{\pm1\}$ is the tangential structure for orientation, then 
    $\mcA_{1,\mrm{or}} \cong \IZ$ with generator $[S^1]$. Similarly,
    $
        \mcA_{2,\mrm{or}} \cong \bigoplus_{g \ge 0} \IZ
    $
    where there is one summand for every genus $g \ge 0$.
\end{rem}

\begin{defn}\label{defn:ga-2-cochain}
    We define a $2$-chain $\ga: N_2(h\Cobred_{d,\gt}) \to \mcA_{d,\gt}$ by
    \[
        \ga: (M_0 \xrightarrow{W_0} M_1 \xrightarrow{W_1}  M_2)
        \mapsto
        [c(W_0 \cup_{M_1} W_1)] \in \pi_0\Phi_{d,\gt} \subset \mcA_{d,\gt}.
    \]
\end{defn}

\begin{figure}[h]
    \centering
    \def\svgwidth{0.6\linewidth}
\begingroup%
  \makeatletter%
  \providecommand\color[2][]{%
    \errmessage{(Inkscape) Color is used for the text in Inkscape, but the package 'color.sty' is not loaded}%
    \renewcommand\color[2][]{}%
  }%
  \providecommand\transparent[1]{%
    \errmessage{(Inkscape) Transparency is used (non-zero) for the text in Inkscape, but the package 'transparent.sty' is not loaded}%
    \renewcommand\transparent[1]{}%
  }%
  \providecommand\rotatebox[2]{#2}%
  \newcommand*\fsize{\dimexpr\f@size pt\relax}%
  \newcommand*\lineheight[1]{\fontsize{\fsize}{#1\fsize}\selectfont}%
  \ifx\svgwidth\undefined%
    \setlength{\unitlength}{541.18215486bp}%
    \ifx\svgscale\undefined%
      \relax%
    \else%
      \setlength{\unitlength}{\unitlength * \real{\svgscale}}%
    \fi%
  \else%
    \setlength{\unitlength}{\svgwidth}%
  \fi%
  \global\let\svgwidth\undefined%
  \global\let\svgscale\undefined%
  \makeatother%
  \begin{picture}(1,0.27799059)%
    \lineheight{1}%
    \setlength\tabcolsep{0pt}%
    \put(0,0){\includegraphics[width=\unitlength,page=1]{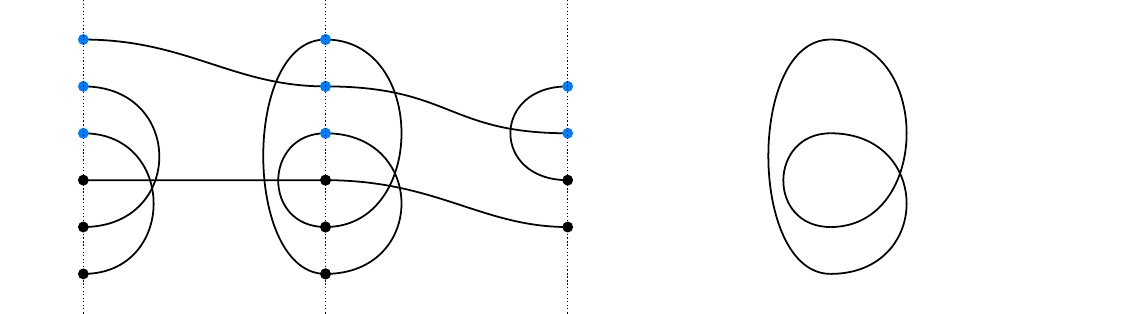}}%
    \put(0.58932336,0.13094491){\color[rgb]{0,0,0}\makebox(0,0)[lt]{\lineheight{1.25}\smash{\begin{tabular}[t]{l}$\mapsto$\end{tabular}}}}%
    \put(-0.00083909,0.13094491){\color[rgb]{0,0,0}\makebox(0,0)[lt]{\lineheight{1.25}\smash{\begin{tabular}[t]{l}$\alpha:$\end{tabular}}}}%
    \put(0.83211667,0.13094491){\color[rgb]{0,0,0}\makebox(0,0)[lt]{\lineheight{1.25}\smash{\begin{tabular}[t]{l}$\hat{=}\ 1 \in \mathbb{Z}$\end{tabular}}}}%
  \end{picture}%
\endgroup%

    \caption{The $2$-cochain $\ga$ evaluated on a $2$-simplex
    in $\Cut_{d,\gt}$ for $d=1$ and $\gt=\{\pm1\}$.}
    \label{fig:kappa0}
\end{figure}

Letting $d=1$ and $\gt=\{\pm1\}$ the $2$-chain is $\IZ$-valued 
and assigns to two reduced one-bordisms $W:M \to N$ and $V:N \to L$ 
the number of circles in the glued bordism $W \cup_N V$.
See figure \ref{fig:kappa0} for an illustration of this case.
This measures the failure of the canonical section 
$h\Cobred_1 \to h\Cob_1$ to be functorial. In this sense it
is reminiscent of the group $2$-cocycle one usually assigns 
to a central extension of groups. 
We make this precise:

\begin{lem}\label{lem:ga-is-kappa0}
    The cochain $\ga$ is a $2$-cocycle and the cohomology class 
    $[-\ga] \in H^2(B(h\Cobred_{d,\gt}); \mcA_{d,\gt})$ corresponds to
    the map 
    \[
        a: B(h\Cobred_{d,\gt}) \longrightarrow K(\mcA_{d,\gt}, 2)
    \]
    that continues the fiber sequence for homotopy categories of 
    theorem \ref{thm:reduction-fiber-sequence}.
    For $d=1$ and $\gt = \mrm{or}$ we have $[-\ga] = \dlgk_0$.
\end{lem}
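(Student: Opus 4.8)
The plan is to recognise $R\colon h\Cob_{d,\gt}\to h\Cobred_{d,\gt}$ as a central extension of categories by the free commutative monoid $\IM:=\hom_{h\Cobcl_{d,\gt}}(\emptyset,\emptyset)=\pi_0\Phi^{\mrm{cl}}_{d,\gt}$, with $\ga$ as its defining $2$-cocycle, and then to read off the connecting map of the homotopy-category reduction fibre sequence from this structure. The first step is to verify that $\ga$ is a normalised $2$-cocycle. Normalisation is immediate: gluing a cylinder onto a reduced bordism creates no closed components, so $\ga$ vanishes on every $2$-simplex one of whose edges is an identity. For the cocycle identity on a $3$-simplex $M_0\xrightarrow{W_0}M_1\xrightarrow{W_1}M_2\xrightarrow{W_2}M_3$ of $h\Cobred_{d,\gt}$ I would form the triple composite $W:=W_0\cup_{M_1}W_1\cup_{M_2}W_2$ and use that a connected component of $W$ is closed if and only if it is already closed in $W_0\cup_{M_1}W_1$ or else becomes closed upon attaching $W_2$, and symmetrically for the other bracketing, to obtain the two disjoint-union decompositions
\[
    c(W)=c(W_0\cup_{M_1}W_1)\amalg c\big(r(W_0\cup_{M_1}W_1)\cup_{M_2}W_2\big)=c(W_1\cup_{M_2}W_2)\amalg c\big(W_0\cup_{M_1}r(W_1\cup_{M_2}W_2)\big),
\]
using that a component which is already closed stays closed under further gluing and that components disjoint from the gluing locus are unaffected. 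Applying $[-]\in\mcA_{d,\gt}$, which is additive under $\amalg$, and comparing the two expressions is precisely $\delta\ga=0$ on this simplex.

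Next I would set up the extension structure. The category $h\Cobcl_{d,\gt}$ has a single object with endomorphism monoid $\IM$, and $\IM$ is the free commutative monoid on the connected closed $\gt$-structured $d$-manifolds; hence $B(h\Cobcl_{d,\gt})=B\IM\simeq K(\IM^{\mrm{gp}},1)=K(\mcA_{d,\gt},1)$, as in the remark after Definition~\ref{defn:ga-2-cochain}. By the bijection used in the proof of Lemma~\ref{lem:hreduced=locCart} (the homotopy-category counterpart of Lemma~\ref{lem:c-r-decomposition}), $R$ induces, for all $M,N$, a bijection $\hom_{h\Cob_{d,\gt}}(M,N)\cong\hom_{h\Cobred_{d,\gt}}(M,N)\times\IM$, $[W]\mapsto([r(W)],[c(W)])$, under which composition in $h\Cob_{d,\gt}$ reads $([V'],c')\circ([V],c)=([V']\circ[V],\,c+c'+\ga([V],[V']))$ — the closed summand being manipulated by disjoint union alone and hence acting centrally and with trivial local coefficients. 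Thus $h\Cob_{d,\gt}$ is the category-theoretic central extension of $h\Cobred_{d,\gt}$ by $\IM$ classified by $\ga$, and the bottom row of Theorem~\ref{thm:reduction-fiber-sequence} is, after realisation, the principal $K(\mcA_{d,\gt},1)$-fibration $B(h\Cobcl_{d,\gt})\to B(h\Cob_{d,\gt})\xrightarrow{R}B(h\Cobred_{d,\gt})$ associated to it, with connecting map $a\colon B(h\Cobred_{d,\gt})\to K(\mcA_{d,\gt},2)$.

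It remains to identify $[a]\in H^2(h\Cobred_{d,\gt};\mcA_{d,\gt})$ with $[-\ga]$ and to treat the oriented case. The levelwise bijection above exhibits $N_\bullet(h\Cob_{d,\gt})$ as a twisted cartesian product of $N_\bullet(h\Cobred_{d,\gt})$ with the fibre $N_\bullet(h\Cobcl_{d,\gt})$, twisted by $\ga$; replacing $\IM$ by its group completion $\mcA_{d,\gt}$ (which does not change classifying spaces, the fibrewise comparison being the group completion of a free commutative monoid) turns this into the standard twisted bar construction whose $k$-invariant is the class of $\ga$, and a sign-chase with our conventions for the connecting map of a fibre sequence pins it down to $[-\ga]$; hence $[a]=[-\ga]$. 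Finally, for $(d,\gt)=(1,\mrm{or})$ one has $\mcA_{1,\mrm{or}}\cong\IZ$ with generator $[S^1]$, and $a$ is then precisely the infinite loop map $B(h\Cobred_1)\to K(\IZ,2)$ continuing the fibre sequence of Theorem~\ref{thm:BhCob1} (the equivalence $B(h\Cobcl_1)=B\IN\simeq S^1$ matching the two generators of $\pi_1$), which that theorem identifies with $\gk_0$; therefore $[-\ga]=[a]=\gk_0$. I expect the last step to be the main obstacle: making the twisted-bar-construction identification precise and keeping careful track of the sign.
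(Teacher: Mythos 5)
Your verification that $\ga$ is a cocycle is the same as the paper's: the two disjoint-union decompositions of $c(W_0\cup W_1\cup W_2)$ (into pieces that do/don't meet the middle object $M_1$, resp.\ $M_2$) are exactly what the paper uses, and applying $[-]$ and comparing gives $\gd\ga=0$. For the identification of $[-\ga]$ with the connecting map $a$, however, you take a genuinely different route. The paper argues via the Serre spectral sequence: it maps the reduction fiber sequence to $K(\mcA,1)\to * \to K(\mcA,2)$, observes that the tautological $1$-cocycle $\gb$ on $h\Cobcl_{d,\gt}$ corresponds to the left vertical map, and computes the transgression $d_2[\gb]$ by an explicit relative-coboundary calculation (choosing the extension $\wh\gb(W)=[c(W)]$) to land on $-[\ga]$; naturality then forces $[a]=d_2[\gb]=-[\ga]$. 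You instead exhibit $h\Cob_{d,\gt}$ as a central extension of $h\Cobred_{d,\gt}$ by $\IM$ via the levelwise splitting $[W]\mapsto([r(W)],[c(W)])$, and want to read off the $k$-invariant from the resulting twisted cartesian product. Your composition formula is correct (it follows from $V\cup_N W=c(V)\amalg c(W)\amalg(r(V)\cup_N r(W))$), but two steps are genuinely missing and not merely technical: (i) the twisted cartesian product you obtain has fiber $N(h\Cobcl)=B\IM$, not the standard simplicial model $\cl{W}\mcA$ of a principal $K(\mcA,1)$-fibration, so you need to construct a map of twisted products over $N(h\Cobred)$ realizing the inclusion $\IM\inj\mcA$ and show it is a fibrewise equivalence before you can invoke the classification of principal $K(\mcA,1)$-fibrations by $H^2(-;\mcA)$; and (ii) the sign $[a]=[-\ga]$ (rather than $[+\ga]$) is asserted via a ``sign-chase with our conventions,'' whereas the paper pins it down by the explicit transgression computation $\gd(\gb)=-R^*\ga$. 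You acknowledge both gaps. Neither invalidates the approach --- it is a standard alternative way to organize such extension arguments, and it has the pedagogical advantage of making the central-extension structure explicit --- but as written the paper's spectral-sequence computation is the one that actually closes the loop, and in particular is where the sign is determined. The final reduction to $\gk_0$ in the oriented case is the same in both.
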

\begin{proof}
    We first verify that $\ga$ is indeed a cocycle. Consider a $3$-simplex in 
    the nerve of $h\Cobred_{d,\gt}$: 
    $x= (M_0 \xrightarrow{W_1} M_1 \xrightarrow{W_2} M_2 \xrightarrow{W_3} M_3)$.
    Evaluating $\ga$ on $\partial x = \sum_i (-1)^i d_i x$ gives
    \[
        [c(W_2 \cup_{M_2} W_3)] 
        - [c(r(W_1\cup_{M_1} W_2) \cup_{M_2} W_3)] 
        + [c(W_1 \cup_{M_1} r(W_2 \cup_{M_2} W_3))] 
        - [c(W_1 \cup_{M_1} W_2)] .
    \]
    To understand this, first consider $c(W_1 \cup_{M_1} W_2 \cup_{M_2} W_3)$.
    This is the submanifold of $W_1 \cup_{M_1} W_2 \cup_{M_2} W_3$ given 
    by those components that do not intersect $M_0$ or $M_3$.
    We can further decompose it as
    \[
        c(W_1 \cup_{M_1} W_2 \cup_{M_2} W_3) 
        = 
        c(r(W_1\cup_{M_1} W_2) \cup_{M_2} W_3) \amalg c(W_1 \cup_{M_1} W_2)
    \]
    into those components that do, or do not, intersect $M_2$.
    Similarly we have a decomposition
    \[
        c(W_1 \cup_{M_1} W_2 \cup_{M_2} W_3) 
        = 
        c(W_1\cup_{M_1} r(W_2 \cup_{M_2} W_3)) \amalg c(W_2 \cup_{M_2} W_3).
    \]
    This shows that the two terms in $\ga(\partial x)$ with a minus sign 
    cancel the two terms with a plus sign.
    Hence $\ga$ indeed is a cocycle.

    For the second claim consider the map of homotopy fiber sequences 
    \[
        \begin{tikzcd}
            B(h\Cobcl_{d,\gt}) \ar[r] \ar[d, "\simeq"] & B(h\Cob_{d,\gt}) 
        \ar[r] \ar[d] & B(h\Cobred_{d,\gt}) \ar[d, "a"] \\
            K(\mcA_{d,\gt},1) \ar[r] & * \ar[r] & K(\mcA_{d,\gt},2).
        \end{tikzcd}
    \]
    where the top row is the reduction fiber sequence
    from Theorem \ref{thm:reduction-fiber-sequence}.
    The category $h\Cobcl_{d,\gt}$ has one object and the morphisms 
    form the monoid of diffeomorphism classes of closed manifolds
    under disjoint union. There is a canonical $1$-cocycle
    $\gb \in H^1(h\Cobcl_{d,\gt}; \mcA_{d,\gt})$, which sends 
    a morphism $[W: \emptyset \to \emptyset]$ to $[W] \in \mcA_{d,\gt}$.
    This cocycle corresponds to the left vertical map in the above diagram.
    In the Serre spectral sequence for the bottom fiber sequence the canonical 
    element $\gb \in H^1(K(\mcA_{d,\gt}, 1); \mcA_{d,\gt})$ transgresses to the canonical
    element $\gb' \in H^2(K(\mcA_{d,\gt}, 2); \mcA_{d,\gt})$. 
    By naturality of the Serre spectral sequence this implies 
    that $d_2 [\gb] = a^* d_2[\gb] = a^* \gb'$.
    We will prove the lemma by showing that the $d_2$-differential 
    in the cohomological Serre spectral sequence
    \[
        d_2: H^1(h\Cobcl_{d,\gt}; \mcA_{d,\gt}) 
        \longrightarrow H^2(h\Cobred_{d,\gt}; \mcA_{d,\gt})
    \]
    sends $[\gb]$ to $-[\ga]$.
    
    Recall 
    that the transgression 
    $d_2[\gb]$ of $[\gb]$ can be uniquely characterised by requiring that in
    the following diagram we have $R^*(d_2[\gb]) = \gd([\gb])$.
    \[
        \begin{tikzcd}
            & H^2(h\Cobred_{d,\gt}, [\emptyset]; \mcA_{d,\gt}) \ar[r, "\cong"] \ar[d, "R^*"]
            & H^2(h\Cobred_{d,\gt}; \mcA_{d,\gt}) \\
            H^1(h\Cobcl_{d,\gt}; \mcA_{d,\gt}) \ar[r, "\gd"] 
            & H^2(h\Cob_{d,\gt}, h\Cobcl_{d,\gt}; \mcA_{d,\gt}) &
        \end{tikzcd}
    \]
    The coboundary operator $\gd$ is defined on $\gb$ by choosing any
    extension $\wh{\gb}: N_1(h\Cob_{d,\gt}) \to \mcA_{d,\gt}$ 
    and setting $\gd(\gb) = \wh{\gb} \circ \partial$.
    It will be convenient to choose $\wh{\gb}(W) := [c(W)]$.
    The $2$-cocycle $\gd(\gb):N_2(h\Cob_{d,\gt}) \to \mcA_{d,\gt}$ is then
    \[
       \gd(\gb)(M_0 \xrightarrow{W_1} M_1 \xrightarrow{W_2} M_2) 
       = \wh{\gb} ( \partial (M_0 \xrightarrow{W_1} M_1 \xrightarrow{W_2} M_2))
       = [c(W_2)] - [c(W_1\cup_{M_1}W_2)] + [c(W_1)].
    \]
    We can now compute $\gd(\gb) + R^*\ga$
    on some $2$-simplex $x=(M_0 \xrightarrow{W_1} M_1 \xrightarrow{W_2} M_2)$:
    \begin{align*}
        (\gd(\gb) + R^*\ga)(x) &= \wh{\gb}(\partial x) + \ga(R(x)) \\
        &= \gb([W_2] - [W_1\cup_M W_2] + [W_1]) 
        + \ga(M_0 \xrightarrow{r(W_1)} M_1 \xrightarrow{r(W_2)} M_2) \\
        &= [c(W_2)] - [c(W_1\cup_M W_2)] + [c(W_1)]
        + [c(r(W_1) \cup_{M_1} r(W_2))].
    \end{align*}
    To see that this vanishes observe that the closed components of 
    $(W_1 \cup_{M_1} W_2)$ can be decomposed as
    \[
        c(W_1 \cup_{M_1} W_2) = c(r(W_1) \cup_{M_1} r(W_2)) 
        \amalg  c(W_1) \amalg c(W_2).
    \]
    This shows that $\gd(\gb) = -R^*\ga$ and hence $d_2[\gb] = -\ga$.
    As we argued previously, this implies the second claim.
\end{proof}

\section{The continued reduction sequence and the space of cuts}
\label{sec:continued-and-cuts}

\subsection{The continued fiber sequence}
\label{subsec:continued-fib}
In the case of homotopy categories the reduction fiber sequence can be thought
of as a central extension that is classified by the $2$-cocycle $\ga$ 
constructed in the previous section. Similarly, the topologically enriched
version of the reduction fiber sequence leads to a ``central extension of 
infinite loop spaces'' classified an infinite loop space map 
$B\Cobred_{d,\gt} \to Q(\gS^2 (\coprod_{[W]\text{ con.}} B\Diff^\gt(W))_+)$.
In the remainder of this section we will explicitly describe this map,
so that we can later use it to understand the higher cocycles on $h\Cob_1$.

It will be convenient to consider the following subspace,
which in some sense freely generates $\Psi_{d,\gt}$:
\begin{defn}\label{defn:Psi-con}
    Let $\Psi_{d,\gt}^{con} \subset \Psi_{d,\gt}$ be the subspace
    of those $(W,l)$ where $W \subset \cube$ is connected or empty.
    We let $W = \emptyset$ be the base-point of this space.
    By fact \ref{fact:Psi=BDiffs} there is an equivalence:
    \[
        \Psi_{d,\gt}^{con} \simeq
        \{\emptyset\} \amalg 
        \coprod_{[W] \text{ connected}} B\Diff^\gt(W).
    \] 
\end{defn}

To construct the map $B\Cobred_{d,\gt} \to Q(\gS^2 \Psi_{d,\gt}^{con})$ 
we will need a new semi-simplicial space:
\begin{defn}
    For any $n \ge 0$ we define a subspace
    \[
        (\Cut_{d,\gt})_n \subset N_n \Cob_{d,\gt} 
        = \{ (M_0 \xrightarrow{W_1} M_1 \to \dots \to M_{n-1} \xrightarrow{W_n} M_n) \text{ in } \Cob_{d,\gt}\}
    \]
    to contain those $n$-tuples where $M_0 = \emptyset$, $M_n = \emptyset$,
    and all the $W_i$ are reduced.
    
    There is a retraction $S_n: N_n \Cob_{d,\gt} \to (\Cut_{d,\gt})_n$ for this inclusion,
    defined by deleting all connected components of 
    $(W_1 \circ \dots \circ W_n)$ that intersect $M_0 \amalg M_n$,
    or lie in $c(W_i)$ for some $i$.
    We define face 
    operators for $\Cut_{d,\gt}$ by 
    $d_i^{\Cut} := S_n \circ d_i^{N\Cob}$.
\end{defn}

\begin{rem}
    One needs to check that the $d_i$ in fact satisfy 
    all the simplicial relations. This follows immediately once one checks
    $S_n \circ d_i^{N\Cob} = S_n \circ d_i^{N\Cob} \circ S_n$
    and $S_n \circ s_i^{N\Cob} = s_i^{N\Cob} \circ S_n$.
    See lemma \ref{lem:C'-simplicial} for more details.
    Conceptually, $d_i^\Cut$ acts by taking the face map 
    as usual in the nerve and then deleting all connected components $V$
    that violate the condition $V \cap M_0 \amalg M_n = \emptyset$
    or the condition $V \not\subset \coprod_i c(W_i)$.
    See figure \ref{fig:Cut-faces} for an illustration.
    
    Note that it follows from the definition that 
    $S_\cd:N\Cob_{d,\gt} \to \Cut_{d,\gt}$ is a map of semi-simplicial spaces
    and that it identifies $\Cut_{d,\gt}$ as a quotient semi-simplicial space 
    of $N\Cob_{d,\gt}$.
\end{rem}

\begin{figure}[ht]
    \centering
    \def\svgwidth{.9\linewidth}
\begingroup%
  \makeatletter%
  \providecommand\color[2][]{%
    \errmessage{(Inkscape) Color is used for the text in Inkscape, but the package 'color.sty' is not loaded}%
    \renewcommand\color[2][]{}%
  }%
  \providecommand\transparent[1]{%
    \errmessage{(Inkscape) Transparency is used (non-zero) for the text in Inkscape, but the package 'transparent.sty' is not loaded}%
    \renewcommand\transparent[1]{}%
  }%
  \providecommand\rotatebox[2]{#2}%
  \newcommand*\fsize{\dimexpr\f@size pt\relax}%
  \newcommand*\lineheight[1]{\fontsize{\fsize}{#1\fsize}\selectfont}%
  \ifx\svgwidth\undefined%
    \setlength{\unitlength}{819.37324103bp}%
    \ifx\svgscale\undefined%
      \relax%
    \else%
      \setlength{\unitlength}{\unitlength * \real{\svgscale}}%
    \fi%
  \else%
    \setlength{\unitlength}{\svgwidth}%
  \fi%
  \global\let\svgwidth\undefined%
  \global\let\svgscale\undefined%
  \makeatother%
  \begin{picture}(1,0.40670914)%
    \lineheight{1}%
    \setlength\tabcolsep{0pt}%
    \put(0,0){\includegraphics[width=\unitlength,page=1]{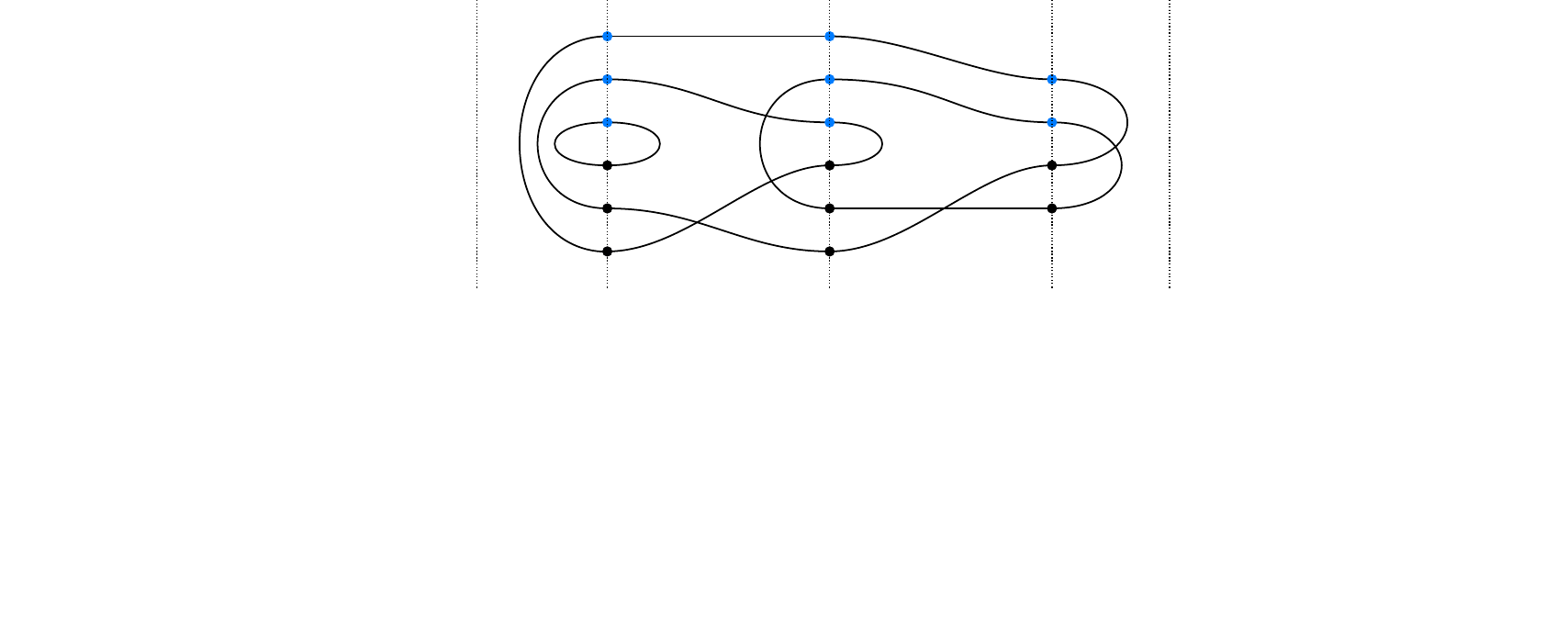}}%
    \put(0.23947931,0.30851099){\color[rgb]{0,0,0}\makebox(0,0)[lt]{\lineheight{1.25}\smash{\begin{tabular}[t]{l}$x\ =$\end{tabular}}}}%
    \put(0,0){\includegraphics[width=\unitlength,page=2]{Cut.pdf}}%
    \put(-0.02126546,0.08818384){\color[rgb]{0,0,0}\makebox(0,0)[lt]{\lineheight{1.25}\smash{\begin{tabular}[t]{l}$d_0x\ =$\end{tabular}}}}%
    \put(0,0){\includegraphics[width=\unitlength,page=3]{Cut.pdf}}%
    \put(0.46878676,0.08818384){\color[rgb]{0,0,0}\makebox(0,0)[lt]{\lineheight{1.25}\smash{\begin{tabular}[t]{l}$d_1x\ =$\end{tabular}}}}%
  \end{picture}%
\endgroup%

    \caption{
        A $4$-simplex in $\Cut_{d,\gt}$ for $d=1$ and $\gt=\{\pm1\}$
        and two of its simplicial faces.
    }
    \label{fig:Cut-faces}
\end{figure}

\begin{thm}\label{thm:continue-fib-seq}
    The natural quotient map $N \Cobred_{d,\gt} \to \Cut_{d,\gt}$ induces,
    after geometric realisation, a continuation of the reduction fiber sequence 
    from Theorem \ref{thm:reduction-fiber-sequence}.
    More precisely, there is a map of homotopy fiber sequences of 
    infinite loop spaces:
    \[
        \begin{tikzcd}
            B \Cobcl_{d,\gt} \ar[r] \ar[d, equal] 
            & B \Cob_{d,\gt} \ar[d] \ar[r, "R"] 
            & B \Cobred_{d,\gt} \ar[d, "S"] \\
            B \Cobcl_{d,\gt} \ar[r] 
            & * \ar[r] 
            & \| \Cut_{d,\gt} \|.
        \end{tikzcd}
    \]
    In particular, there is an equivalence of infinite loop spaces:
    \[
        \| \Cut_{d,\gt} \| \simeq B(B\Cobcl_{d,\gt}) 
        \simeq Q(\gS^2 \Psi_{d,\gt}^{con}).
    \]
\end{thm}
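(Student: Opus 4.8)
The plan is to deduce the statement from the reduction fiber sequence (Theorem~\ref{thm:reduction-fiber-sequence}) by recognising $\|\Cut_{d,\gt}\|$ as a classifying space of its fiber $B\Cobcl_{d,\gt}$. Two inputs are immediate. First: for any string $(M_0\xrightarrow{W_1}\cdots\xrightarrow{W_n}M_n)$ in $\Cob_{d,\gt}$, applying $R$ to it and then the quotient map $N\Cobred_{d,\gt}\to\Cut_{d,\gt}$ deletes every connected component of the composite bordism $r(W_1)\cup_{M_1}\cdots\cup_{M_{n-1}}r(W_n)$, since each such component is closed or meets $M_0\amalg M_n$ (that union being its whole boundary), and $S_n$ removes exactly those. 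So the composite $N\Cob_{d,\gt}\xrightarrow{R}N\Cobred_{d,\gt}\to\Cut_{d,\gt}$ lands in the subspace of empty bordisms, which has contractible fat realisation; hence $B\Cob_{d,\gt}\xrightarrow{R}B\Cobred_{d,\gt}\xrightarrow{S}\|\Cut_{d,\gt}\|$ is canonically null, supplying the commuting squares of the statement. Second: the final displayed equivalence follows from Lemma~\ref{lem:BCobcl}, which gives $B\Cobcl_{d,\gt}\simeq Q(\gS\Psi_{d,\gt}^{con})$, together with the standard identity $B(QY)\simeq Q(\gS Y)$.

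Granting these, and since the reduction fiber sequence is already a fiber sequence of infinite loop spaces, the theorem reduces to a single point: the map induced on vertical homotopy fibers by the right-hand square — that is, the map from $\mathrm{hofib}(R)\simeq B\Cobcl_{d,\gt}$ (by Theorem~\ref{thm:reduction-fiber-sequence}) to $\mathrm{hofib}(*\to\|\Cut_{d,\gt}\|)=\gO\|\Cut_{d,\gt}\|$ (using that $(\Cut_{d,\gt})_0$ is a point, so $\|\Cut_{d,\gt}\|$ is connected) — is a weak equivalence; equivalently, $S$ realises a delooping equivalence $\|\Cut_{d,\gt}\|\simeq B(B\Cobcl_{d,\gt})$. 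All the objects and maps here are $\gC$-objects and $\gC$-maps, obtained by replacing $\gt$ with $n\gt$ exactly as in Lemma~\ref{lem:BCob-vspecial}, so the resulting identifications are of infinite loop spaces; that $\|\Cut_{d,\gt}\|$ is a very special $\gC$-space follows as in Lemmas~\ref{lem:BhCob-vspecial} and~\ref{lem:allBCob-vspecial}, since its $\pi_0$ is trivial.

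To prove the reduced statement, I would use the alternative description of $\Cut_{d,\gt}$ from Definition~\ref{defn:C'}, which presents it in a form amenable to comparison with a standard iterated bar construction, the target being the classifying space $B(B\Cobcl_{d,\gt})$ of the infinite loop space $B\Cobcl_{d,\gt}$ (cf.\ Lemma~\ref{lem:allBCob-vspecial}); a bisimplicial space interpolating between the two realisations then yields $\|\Cut_{d,\gt}\|\simeq B(B\Cobcl_{d,\gt})$. One then verifies that under this identification $S$ is the canonical map $B\Cobred_{d,\gt}\to B(B\Cobcl_{d,\gt})$ continuing the reduction fiber sequence — equivalently, that $S$ exhibits $\|\Cut_{d,\gt}\|$ as the homotopy cofiber of $B(R)$ — which, together with the first paragraph, completes the map of fiber sequences.

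The main obstacle is this last argument. The face maps of $\Cut_{d,\gt}$ are \emph{not} those of a nerve: they delete connected components and re-slice the remaining bordism, so neither the comparison with the bisimplicial model nor the recognition of an iterated bar construction is formal. One must track carefully how the various deletions interact with all face maps, with the bisimplicial structure maps, and with the $\gC$-maps $\gl_*$. The component count used above for the null-homotopy is exactly the prototype of the bookkeeping that pervades this step; making it systematic — in effect, establishing and then exploiting the alternative description of Definition~\ref{defn:C'} — is the technical heart of the proof.
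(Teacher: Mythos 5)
Your first two paragraphs are sound and match the paper in spirit: you correctly observe that the composite $N\Cob\to N\Cobred\to\Cut$ hits the (contractible) simplicial subspace of empty bordisms (every component of the total bordism is closed or meets $M_0\amalg M_n$, exactly what $S_n$ deletes), you correctly reduce to showing that $S$ exhibits $\|\Cut_{d,\gt}\|$ as a delooping of $B\Cobcl_{d,\gt}$, and the final displayed identification via $B\Cobcl\simeq Q(\gS\Psi^{con})$ and $B(QY)\simeq Q(\gS Y)$ is what the paper derives as well. However, your third paragraph — the ``bisimplicial space interpolating between the two realisations'' — is where the proof must actually happen, and your own last paragraph concedes that this is left as a plan rather than an argument. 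That concession is accurate: the step is a genuine gap, and it is not formal.

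What the paper supplies here, and what your proposal is missing, is a concrete apparatus to carry out the delooping claim. First, one does \emph{not} compare $\Cut$ directly with an iterated bar construction; instead one passes to poset models $C = N(P\Cob_{d,\gt})$ with subquotients $C^{\mathrm{cl}}, C^{\mathrm{red}}, \Cut'$ — and, crucially, an \emph{extra} auxiliary simplicial $\gC$-space $D$ (Definition~\ref{defn:C'}: the quotient of $C$ that retains exactly those components lying strictly between $t_0$ and $t_n$). This $D$ is what mediates: there is a map of rows
$(C^{\mathrm{cl}}\to C\to C^{\mathrm{red}}) \longrightarrow (C^{\mathrm{cl}}\to D\to\Cut')$
and the theorem reduces to showing $\|D\|$ is contractible and that the bottom row realises to a fiber sequence. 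Second, one cannot analyse $D$ or $\Cut'$ directly as $\gC$-spaces; the paper instead invokes the free-generation lemma (Lemma~\ref{lem:free-on-con}, itself resting on a Barratt--Priddy--Quillen argument, Lemma~\ref{lem:free-gamma}) to replace each of $C^{\mathrm{cl}}, D, \Cut'$ by its sub-simplicial space of connected manifolds, after which one has an honest cofiber sequence of pointed spaces rather than a fiber sequence of infinite loop spaces. Third, the level-wise spaces in these connected models decompose explicitly as wedges indexed by the interval $[a,b]$ carrying the component, and the resulting simplicial objects are identified with simplicial (reduced) cones on the nerve of the factorization poset $F_{d,\gt}$ (Lemma~\ref{lem:identify-cofib-seq}, using Definition~\ref{defn:simplicial-cone} and the extra-degeneracy Lemma~\ref{lem:extra-deg}). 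The contractibility of $\|D^{con}\|$ then hinges on the computation $BF_{d,\gt}\simeq\Psi_{d,\gt}^{con}$ (Proposition~\ref{prop:BF}), which itself needs the Galatius--Randal-Williams poset lemma plus a nontrivial argument that the subspace of embeddings with nonconstant first coordinate is contractible. None of these ingredients — $D$, the freeness reduction, the simplicial cone identification, or the factorization poset — appear in your sketch, and there is no evident way to make your bisimplicial comparison work without reconstructing them.
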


In dimension $d=1$ with $\gt = \{\pm 1\}$ we see that 
$\|\Cut_{1,\mrm{or}}\| \simeq Q(\gS^2 (\CP^\infty)_+)$
and the map $S$ comes from the cofiber sequence of spectra
\[
    \IS \longrightarrow \gS^2\CP^\infty_{-1} 
    \xrightarrow{\ s\ } \gS^{\infty+2}(\CP^\infty)_+.
\]
In particular, it is a rational equivalence on connective covers.
This will be particularly useful
in section \ref{sec:identifying-cocycles} where we want 
to write the $\dlgk_i$-classes as pullbacks along $S$.
We record:
\begin{cor}\label{cor:rational-equiv}
    For $d = 1$ and $\gt = \{\pm1\}$ the map $S$ induces a rational equivalence
    \[
        B\Cobred_1 \xrightarrow{(\pi_0,S)} \IZ \times \|\Cut_1\|.
    \]
\end{cor}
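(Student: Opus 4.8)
The plan is to deduce this from Theorem~\ref{thm:continue-fib-seq} together with the Galatius--Madsen--Tillmann--Weiss identification of $B\Cob_1$. Comparing the vertical homotopy fibers in the map of homotopy fiber sequences of Theorem~\ref{thm:continue-fib-seq}: the left-hand vertical map is the identity and the total space of the middle column of the bottom row is a point, so the homotopy fiber of $S\colon B\Cobred_1\to\|\Cut_1\|$ is identified with $B\Cob_1$ as infinite loop spaces. Now $B\Cob_1\simeq\gO^{\infty-1}MTSO_1\simeq QS^0$ by Theorem~\ref{thm:GMTW}, $B\Cobred_1\simeq\gO^{\infty-2}MTSO_2=\gO^\infty\gS^2 MTSO_2$ by Theorem~\ref{thm:BCobred1}, and $\|\Cut_1\|\simeq Q(\gS^2(\CP^\infty)_+)=\gO^\infty\gS^{\infty+2}(\CP^\infty)_+$ by Theorem~\ref{thm:continue-fib-seq}. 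Since $S$ is a map of infinite loop spaces it deloops, so this fiber computation yields a homotopy fiber sequence of spectra
\[
    \IS\longrightarrow\gS^2 MTSO_2\xrightarrow{\ \tilde S\ }\gS^{\infty+2}(\CP^\infty)_+
\]
with $\gO^\infty\tilde S=S$; this is the cofiber sequence recorded above, although the precise identification of $\tilde S$ with the standard map $s$ will not be needed.

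Next I would rationalize this fiber sequence. Since $\IS_\IQ\simeq H\IQ$ is concentrated in degree $0$ and $\gS^{\infty+2}(\CP^\infty)_+$ is $1$-connected, the connecting map of the rationalized sequence lives in $H^1\bigl(\gS^{\infty+2}(\CP^\infty)_+;\IQ\bigr)=0$ and is hence null, so the rationalized fiber sequence splits:
\[
    (\gS^2 MTSO_2)_\IQ\ \simeq\ H\IQ\vee(\gS^{\infty+2}(\CP^\infty)_+)_\IQ .
\]
Moreover $\gS^2 MTSO_2$ is connective with $\pi_0=\IZ$ (immediate from the fiber sequence, or from $\pi_0 B\Cobred_1=\IZ$), so its $0$-truncation is $H\IZ$; and under the splitting above the rationalization of the truncation $\gS^2 MTSO_2\to H\IZ$ is the projection onto the $H\IQ$ summand while $\tilde S$ is the projection onto the other summand. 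It follows that the map of spectra
\[
    (\tau_{\le 0},\,\tilde S)\colon\ \gS^2 MTSO_2\ \longrightarrow\ H\IZ\times\gS^{\infty+2}(\CP^\infty)_+
\]
is a rational equivalence. (Alternatively, one can argue on homotopy groups: the long exact sequence shows $\tilde S$ is a rational isomorphism on $\pi_i$ for $i\ge 2$, and $\pi_1\gS^2 MTSO_2$ is finite, so $(\tau_{\le 0},\tilde S)$ is a rational isomorphism in every degree.)

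Finally I would apply $\gO^\infty$. One has $\gO^\infty\bigl(H\IZ\times\gS^{\infty+2}(\CP^\infty)_+\bigr)=\IZ\times\|\Cut_1\|$, and $\gO^\infty$ of the truncation $\gS^2 MTSO_2\to H\IZ$ is the map to path components $B\Cobred_1\to\pi_0 B\Cobred_1=\IZ$, so $\gO^\infty(\tau_{\le 0},\tilde S)=(\pi_0,S)$. A map of infinite loop spaces that induces a rational equivalence of the underlying spectra is a rational homotopy equivalence of the underlying spaces: here it is a bijection on $\pi_0$ (both sides are $\IZ$, identified by the map, since $\|\Cut_1\|$ is connected) and a rational isomorphism on all higher homotopy groups of each component. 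This gives the claim.

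The one step that needs real care is the first one: extracting a \emph{delooped}, spectrum-level fiber sequence with fiber $\IS$ out of Theorem~\ref{thm:continue-fib-seq}, i.e.\ checking that the homotopy fiber of $S$ is $B\Cob_1$ compatibly with all the infinite loop structures and then matching this against the GMTW equivalence $B\Cob_1\simeq QS^0$. Once this is in place, the remainder is a routine rational splitting/homotopy-group computation.
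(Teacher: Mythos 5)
Your proposal is correct and follows the same route as the paper: identify the fiber sequence of Theorem~\ref{thm:continue-fib-seq} (with the identifications from Theorem~\ref{thm:GMTW} and Theorem~\ref{thm:BCobred1}) with the delooped sequence of spectra $\IS \to \gS^2 MTSO_2 \to \gS^{\infty+2}(\CP^\infty)_+$, and exploit that $\IS$ is rationally $H\IQ$ concentrated in degree $0$. The paper records this corollary with only the informal remark that $S$ "is a rational equivalence on connective covers," whereas you spell out the rational splitting (or the equivalent long-exact-sequence argument) in full; the substance is the same.
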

If we introduce a background space $X$, then it follows from 
lemma \ref{lem:BCobcl} that $\Psi_{1,\{\pm1\}\times X}^{\mrm{con}} \simeq ((LX)_{h\IT})_+$.
Let us denote $\Cut_{1,\{\pm1\}\times X}$ by $\Cut_1(X)$.
By theorem \ref{thm:continue-fib-seq} we have 
$\|\Cut_1(X)\| \simeq Q(\gS^2 (LX_{h\IT})_+)$.
Recall that $\gS^{\infty}_+ LX$ is also the topological Hochschild homology 
(THH) of the spherical group ring $\IS[\gO X]$.
Indeed, we will see in section \ref{subsec:fact-vs-Connes} that $\Cut_1$ 
is closely related to Connes' category $\gL$ and in the presence of a background space
$\Cut_{1}(X)$ should be thought of as a variant 
of the cyclic bar construction.
This identification of $\Cut$ with THH is compatible with the relation
between $\Cobred_1$ and TC:
\begin{cor}\label{cor:map-to-THH}
    There is a homotopy commutative diagram of $p$-complete infinite loop spaces:
    \[
    \begin{tikzcd}
        \gO^\infty \TC(\IS[\gO X]; p) \ar[d,"b"] \ar[r, "\ga"] 
        & Q(\gS_+ (LX)_{h\IT})_p^\wedge \ar[d, "\simeq"] \\
        \gO B\Cobred_1(X)_p^\wedge \ar[r, "\gO S"] 
        & \gO \|\Cut_1(X)\|_p^\wedge
    \end{tikzcd}
    \]
    where map $\ga$ is the top map in \cite[Diagram (0.3)]{BHM93},
    see also \cite[Theorem IV.3.6]{NS18}, and the map $b$ is obtained
    by composing the equivalence from corollary \ref{cor:TC=Cobred}
    with the projection to the second factor.
\end{cor}
\begin{proof}
    As a consequence of the identifications made in corollary \ref{cor:Cobredgt}
    and the continued fiber sequence in theorem \ref{thm:continue-fib-seq} 
    we have a homotopy commutative square:
    \[
    \begin{tikzcd}
        \op{hofib}\left(Q(\gS_+ (LX)_{h\IT}) 
        \xrightarrow{\ \mi{ev} \circ \mrm{trf}_\IT \ } Q(X) \right)
        \ar[d, "\simeq"] \ar[r] 
        & Q\left(\gS_+ (LX)_{h\IT}\right)\ar[d, "\simeq"] \\
        \gO B\Cobred_1(X) \ar[r, "\gO S"] 
        & \gO \|\Cut_1(X)\| .
    \end{tikzcd}
    \]
    Now we can, just like in corollary \ref{cor:TC=Cobred},
    use \cite[Proposition 3.9]{BCCGHM96} to obtain an equivalence 
    \[
        \mi{TC}(\IS[\gO X]; p) \simeq 
        Q(X_+)_p^\wedge \times \op{hofib}\left(Q(\gS (LX)_{h\IT}) 
        \xrightarrow{\ \mi{ev} \circ \mrm{trf}_\IT \ } Q(X)
        \right)_p^\wedge.
    \]
    The square in the claim is now obtained by $p$-completing the above square
    and composing with the projection of $\mi{TC}$ onto the second factor.
\end{proof}

The remainder of this section will be concerned with proving 
theorem \ref{thm:continue-fib-seq}.

\subsection{The category of factorizations}
Before we can prove theorem \ref{thm:continue-fib-seq} we need 
a good understanding of how the space $\Psi_{d,\gt}^{con}$ relates
to $\|\Cut_{d,\gt}\|$ and $B(\Cobcl_{d,\gt})$.
This will make use of the following poset:
\begin{defn}\label{defn:F}
    The topological poset $F_{d,\gt}$ consists of tuples $((W,l), t)$
    where $(W,l) \in \Psi_{d,\gt}^{con}$ is a closed $d$-dimensional submanifold
    of $\cube$ with $W$ empty or connected 
    and $t \in (-1,1)$ is a regular value of $\pr_1:W \to (-1,1)$ 
    with $t \in \pr_\IR(W)$ or $W$ empty.
    The relation is such that $((W,l),t) \le ((W',l'),s)$ iff 
    $(W,l)=(W',l')$ and $t \le s$.
\end{defn}
One should think of $F_{d,\gt}$ as the category of possible 
factorizations of connected manifolds in $\Cob_{d,\gt}$.
An object is a tuple of bordisms $W_0:\emptyset \to M$
and $W_1:M \to \emptyset$ such that $W_0 \cup_M W_1$ is connected
and a morphism $(M,W_0,W_1) \to (N,V_0,V_1)$ is a bordism 
$X:M \to N$ such that $V_0 = W_0 \cup_M X$ and $W_1 = X\cup_N V_1$.
In this sense $F_{d,\gt}$ is a full subcategory of the 
``over-and-under-category''; we will explore this perspective in 
section \ref{subsec:fact-vs-Connes}.

There is an augmentation map $F_{d,\gt} \to \Psi_{d,\gt}^{con}$ defined
by forgetting the regular value $t$.
\begin{proposition}\label{prop:BF}
    The augmentation induces an equivalence
    $
        B(F_{d,\gt}) \simeq 
        \Psi_{d,\gt}^{con} 
    $.
\end{proposition}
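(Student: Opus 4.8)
The plan is to exhibit the augmentation $\pi \colon F_{d,\gt} \to \Psi_{d,\gt}^{con}$ as having contractible fibers in a way that is uniform enough to conclude an equivalence on classifying spaces. The empty component is handled trivially (both $F_{d,\gt}$ and $\Psi_{d,\gt}^{con}$ contain an isolated empty object/point, and $\pi$ matches them), so we restrict attention to a fixed connected diffeomorphism type $[W]$. Over the component $B\Diff^\gt(W) \subset \Psi_{d,\gt}^{con}$ corresponding to such a $W$, I would first observe that $\pi$ is a Serre fibration: this follows by the same kind of argument as in Lemma \ref{lem:r-Serre}, using that regular values can be chosen continuously in families (for a compact parameter space one finds a single $t$ that is a regular value for all manifolds in the family, after a small perturbation), so the topological poset $F_{d,\gt}$ over this component has the path-lifting property. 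It then suffices to show that the fiber of $B(F_{d,\gt}) \to \Psi_{d,\gt}^{con}$ over a point $(W,l)$ is contractible.

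The fiber over $(W,l)$ is the classifying space of the poset $P_{W,l}$ of regular values $t \in \pr_\IR(W)$ of $\pr_1 \colon W \to (-1,1)$, ordered by the usual order on $\IR$. This is a sub-poset of the interval $\pr_\IR(W) \subset \IR$; explicitly it is the (open, dense, co-finite-complement) set of regular values, linearly ordered. The key point is that $B$ of any non-empty linearly ordered set is contractible: a linear order is a filtered (in fact directed) poset, so its nerve is contractible, and this is compatible with the topology since the space of regular values is an open subset of an interval and the order relation is closed. Concretely one can pick any regular value $t_0$ and use the fact that $t_0$ is comparable to every element, giving a cone on $BP_{W,l}$; more carefully, $BP_{W,l}$ deformation retracts onto a point because $P_{W,l}$ admits (locally, in families) a common upper or lower bound. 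So every fiber is contractible, and since $\pi$ is a fibration over each component, $B\pi$ is a weak equivalence.

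The main obstacle I expect is the topological bookkeeping rather than the homotopy-theoretic idea: one must be careful that the "space of regular values", as $(W,l)$ varies over $B\Diff^\gt(W)$, assembles into a genuine topological poset whose realization is well-behaved, and that $\pi$ is a fibration in the relevant (semisimplicial / fat realization) sense used throughout the paper. Sard's theorem gives that regular values are dense, and an Ehresmann/transversality argument gives that over a compact family one can choose a uniform regular value; making the fiberwise contraction continuous in the family is where the real work lies. An alternative, perhaps cleaner, route is to produce an explicit semisimplicial homotopy: the extra degeneracy / cone argument used for $B(T(X))$ in the proof of Lemma \ref{lem:base-change} can be adapted here by choosing, continuously in $(W,l)$, a preferred regular value (e.g.\ the smallest regular value above a fixed threshold, suitably smoothed), which then defines an extra degeneracy on the nerve of $F_{d,\gt}$ over each component and exhibits $B(F_{d,\gt})|_{[W]} \to B\Diff^\gt(W)$ as having a section that is a deformation retract. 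Either way, the heart of the matter is the contractibility of the poset of regular values and its continuity in families; the rest is formal.
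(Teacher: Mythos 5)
The central idea --- contractible fibers, filtered posets, a fibration-style argument --- is reasonable in outline, and it is close in spirit to the GRW lemma (Lemma~\ref{lem:B-of-poset}) that the paper actually invokes. But your proof has a genuine gap, and it is precisely the point where the paper spends most of its effort.

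You assert that the fiber over $(W,l)$ is the poset of regular values of $\pr_1\colon W \to (-1,1)$ lying in $\pr_\IR(W)$, and that this is a ``non-empty'' linearly ordered set, hence contractible. This is false in an essential case: if the embedding has \emph{constant} first coordinate, so that $\pr_\IR(W)=\{t_0\}$ is a single point, then $t_0$ is a critical value (for $d\geq 1$) and the fiber is \emph{empty}. Your parenthetical ``(open, dense, co-finite-complement)'' conflates the set of regular values in $\IR$ (which Sard does make dense) with the set of regular values inside the image $\pr_\IR(W)$, which can be empty. So the projection $F_{d,\gt}\to\Psi^{con}_{d,\gt}$ is not surjective, and no fibration-with-contractible-fibers argument can conclude directly that it is a weak equivalence. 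The paper's proof acknowledges exactly this: Lemma~\ref{lem:B-of-poset} only identifies $BF_{d,\gt}$ with the \emph{image} $X\subset\Psi^{con}_{d,\gt}$, and then the bulk of the proof shows $X\hookrightarrow\Psi^{con}_{d,\gt}$ is a weak equivalence --- by reducing to the statement that the space of embeddings with non-constant first coordinate is contractible, which in turn comes down to the contractibility of a Banach space minus a finite-dimensional linear subspace. Your proposal has no analogue of this step.

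Your suggested alternative --- choosing, continuously in $(W,l)$, a preferred regular value so as to produce an extra degeneracy --- also cannot work as stated, for the same reason that makes the GRW lemma valuable: there is no continuous global section of regular values. The ``smallest regular value above a threshold'' is not continuous in the family (it jumps when a new regular value appears below), and it may not exist at all when $\pr_\IR(W)$ lies below the threshold or is a point. The GRW machinery is designed specifically to avoid having to make such a choice; replacing it with an extra-degeneracy construction requires a cone-point that does not exist.
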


We will prove this using the following technical lemma about
classifying space of certain posets.
\begin{lem}[\cite{GRW14}]\label{lem:B-of-poset}
    Let $X$ be a space and $P \subset X \times \IR$ an open subset such that 
    $\pr_{X}: P \to X$ is surjective. We equip $P$ with the 
    poset structure defined by 
    \[
        (x,t) \le (x', s) \Leftrightarrow (x = x' \text{ and } t \le s).
    \]
    Then the canonical augmentation $N_kP \to X$ induces a weak equivalence
    \[
        B(P, \le) \simeq X.
    \]
\end{lem}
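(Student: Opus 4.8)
The plan is to study the map $\pi\colon B(P,\le)=\|N_\bullet(P,\le)\|\to X$ induced by the augmentation (regarding the target $X$ as the fat realisation of the constant semisimplicial space, which is weakly equivalent to $X$, cf.\ \cite{ERW19}), and to prove it is a weak homotopy equivalence by exhibiting it as a microfibration with weakly contractible fibres.

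First I would compute the fibre of $\pi$ over a point $x\in X$. Since the augmentation is, in each simplicial degree, the projection $N_n P\to X$ and therefore does not involve the barycentric coordinates, fat realisation commutes with pulling back along $\{x\}\hookrightarrow X$, so this fibre is $\|N_\bullet(P_x,\le)\|$ where $P_x:=\{t\in\IR : (x,t)\in P\}$. This is an open subset of $\IR$, and it is nonempty because $\pr_X$ is surjective. To see its classifying space is contractible, choose any $s_0\in P_x$ and consider the order-preserving map $\varphi\colon P_x\to P_x$, $\varphi(t)=\max(t,s_0)$; it is well defined since $\max(t,s_0)$ equals either $t$ or $s_0$, both lying in $P_x$, there is a canonical natural transformation $\mathrm{id}_{P_x}\le\varphi$, and $\varphi$ restricts to the identity on the full subposet $P_x^{\ge s_0}=\{t\in P_x : t\ge s_0\}$. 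Hence $P_x^{\ge s_0}$ is a (categorical) deformation retract of $P_x$, and since $P_x^{\ge s_0}$ has least element $s_0$ its classifying space is contractible; therefore $\|N_\bullet(P_x,\le)\|\simeq\ast$.

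Next I would verify that $\pi$ is a microfibration: given $k\ge 0$, a map $g\colon[0,1]\times D^k\to X$, and a lift of $g|_{\{0\}\times D^k}$ along $\pi$, one must produce a lift over $[0,\varepsilon]\times D^k$ for some $\varepsilon>0$. A point of $B(P,\le)$ is represented by a tuple $(x;t_0\le\dots\le t_n)$ together with barycentric coordinates $u\in\Delta^n$, and the obvious candidate lift moves only the $X$-coordinate along $g$, keeping $t_0,\dots,t_n$ and $u$ fixed. This representative stays in $B(P,\le)$ exactly as long as $(g(\tau,\cdot),t_i)\in P$ for all $i$; since $P\subset X\times\IR$ is open this holds for small $\tau$, and compactness of $D^k$ upgrades this to a uniform $\varepsilon$ (with a routine check that the representatives, and hence the lift, can be chosen continuously and compatibly with the identifications along faces of the simplices). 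Given both ingredients, the lemma follows from the standard principle that a microfibration with weakly contractible fibres is a weak homotopy equivalence.

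I expect the microfibration step to be the main obstacle — not for any conceptual reason, but because of the bookkeeping needed to make the short-time lift continuous in the $D^k$-parameter and well defined across the face identifications of $\|N_\bullet P\|$; the contractibility of the fibres and the microfibration-implies-equivalence principle are both clean. (One could instead avoid microfibrations altogether and argue directly that $\pi$ is a quasifibration over a distinguished open cover of $X$, but the microfibration route seems shortest given the hypothesis that $P$ is open in $X\times\IR$.)
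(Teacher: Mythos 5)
The paper's own proof is a one-line citation: it observes the lemma is a special case of \cite[Theorem 6.2]{GRW14} (an augmented topological flag complex over $X$ where the augmentation $P\to X$ has local sections since $P\subset X\times\IR$ is open, is surjective by hypothesis, and any two vertices over the same point are comparable since $P_x\subset\IR$ is totally ordered). Your proposal is a correct direct proof, and in fact it reconstructs precisely the argument that underlies the cited theorem: (a) identify the strict fiber of $\pi:\|N_\cd P\|\to X$ over $x$ with $\|N_\cd P_x\|$; (b) prove it is weakly contractible, which you do cleanly via the order-preserving endofunctor $\varphi=\max(\argument,s_0)$ together with the natural transformations $\id\le\varphi$ and $\mathrm{const}_{s_0}\le\id$ (note your observation that $\max(t,s_0)\in\{t,s_0\}\subset P_x$ is exactly what makes this work even though $P_x$ need not be an interval); (c) show $\pi$ is a Serre microfibration using the openness of $P$ and compactness of $D^k$; and (d) invoke Weiss's lemma. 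So there is no gap in the idea, and it is arguably more informative than the citation, since it shows explicitly where both hypotheses (openness and surjectivity) are used. Two small points worth being explicit about when you write this up: first, the identification $\pi^{-1}(x)\cong\|N_\cd P_x\|$ is a continuous bijection a priori, and one should check (via the skeletal filtration by closed cofibrations, which pulls back well along $\{x\}\hookrightarrow X$) that it is a homeomorphism so that "weakly contractible strict fibers" really is what Weiss's lemma asks for; second, the microfibration bookkeeping you defer is genuinely the bulk of the work in the GRW14 proof — the clean way to organise it is to observe that each $N_nP\to X$ is a Serre microfibration (being the projection of an open subset of $X\times\IR^{n+1}$), and then to propagate the short-time lift inductively up the skeletal filtration of $\|N_\cd P\|$, using that $\mathrm{sk}_n$ is a pushout of $\mathrm{sk}_{n-1}$ along $N_nP\times\partial\Delta^n\hookrightarrow N_nP\times\Delta^n$, rather than trying to pick continuous families of simplex representatives directly.
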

\begin{proof}
    This is easily seen to be a special case of \cite[Theorem 6.2]{GRW14}.
    %
\end{proof}

\begin{proof}[Proof of proposition \ref{prop:BF}]
    We want to apply the proceeding lemma \ref{lem:B-of-poset} to 
    the poset $F_{d,\gt}$. The subset 
    $F_{d,\gt} \subset \Psi_{d,\gt}^{con} \times \IR$ is open because
    for $t$ to be a regular value for $\pr_1:W \to \IR$ is an open 
    condition that continuously depends on $W$.
    The problem is that we also require $t \in \pr_1(W)$; 
    this leads to the projection map $F_{d,\gt} \to \Psi_{d,\gt}^{con}$
    not being surjective.
    Lemma \ref{lem:B-of-poset} therefore only shows that $BF_{d,\gt}$
    is equivalent to the subspace $X \subset \Psi_{d,\gt}^{con}$ 
    of those manifolds $(W,l) \in \Psi_{d,\gt}^{con}$ such that 
    there is a $t \in \pr_1(W)$ that is a regular value for 
    $\pr_1:W \to \IR$.
    
    Since $W$ is connected and the regular values of $\pr_1$ 
    are dense in $\IR$, the only way for $(W,l)$ not to lie in $X$
    is if $\pr_1(W) = \{t\}$ is a single point.
    To prove the proposition we have to show that the inclusion 
    $X \subset \Psi_{d,\gt}^{con}$ is a weak equivalence.
    We can write this inclusion as:
    {
    \scriptsize
    \[
        X \cong 
        \coprod_{[W] \text{ con.}} \left(\Emb(W, \cube)' \times \Bun^\gt(W)\right)/\Diff(W)
        \inj
        \coprod_{[W] \text{ con.}} \left(\Emb(W, \cube) \times \Bun^\gt(W)\right)/\Diff(W)
        \cong \Psi_{d,\gt}^{con}
    \]
    }%
    Here $\Emb(W, \cube)'$ is the subspace of those embeddings 
    $\gi:W \inj \cube$ where the first coordinate $\gi_0:W \to (-1,1)$
    is not a constant map. 
    Since each $\Emb(W,\cube) \times \Bun^\gt(W)$ is a $\Diff(W)$-principal
    bundle over the respective connected component of $\Psi_{d,\gt}$ 
    it will suffice to check that each of the inclusions 
    \[
        \Emb(W, \cube)' \times \Bun^\gt(W)
        \inj
        \Emb(W, \cube) \times \Bun^\gt(W)
    \]
    is a weak equivalence. 
    We will do so by showing that $\Emb(W, \cube)'$ is contractible. 
    
    Let $\mcC^\infty(W, (-1,1))$ be the space of smooth functions 
    with the Whitney $\mcC^\infty$-topology and write $\mcC^\infty(W, (-1,1))'$
    for the subspace of non-constant smooth functions.
    Evaluating the first coordinate yields a map 
    \[
        E : \Emb(W, \cube)' \longrightarrow \mcC^\infty(W, (-1,1))'.
    \]
    Pick a preferred embedding $j:W \inj \cube$ and define an right-inverse $J$
    to $E$ by setting $J(f)(w) = (f(w), j_1(w), j_2(w),\dots)$.
    It is clear that $E \circ J$ is the identity on $\mcC^\infty(W, (-1,1))'$
    and $J \circ E$ is homotopic to the identity by the same standard argument
    that shows that $\Emb(W,\cube)$ is homotopy equivalent to 
    $\mcC^\infty(W, (-1,1))'$.
    
    We still need to check that $\mcC^\infty(W, (-1,1))'$ weakly contractible.
    Since $(-1,1)$ is diffeomorphic to $\IR$, the space $\mcC^\infty(W, (-1,1))$ 
    is homeomorphic to the vector space of smooth functions $\mcC^\infty(W, \IR)$.
    $\mcC^\infty(W,\IR)$ is a separable infinite dimensional Fr\'echet space
    and subspace of constant functions $\IR \subset \mcC^\infty(W,\IR)$
    is the union of countably many compacts.
    It therefore follows from \cite[Theorem VI.5.2, Corollary V.6.2, and Theorem V.6.3]{BP75}
    that $\mcC^\infty(W,(-1,1))' \cong \mcC^\infty(W, \IR) \setminus \IR$
    is homeomorphic to $\mcC^\infty(W,\IR)$ and in particular contractible.
\end{proof}

\subsection{Poset models}
\label{subsec:poset-models}

When working with bordism categories it is often convenient 
to replace the topological category $\Cob_d$ by a topological 
poset $P\Cob_d$ with an equivalent classifying space. 
We learnt this trick from the work of Galatius--Randal-Williams 
(see e.g.\ \cite{GRW10}).
We recall $P\Cob_d$ and introduce four simplicial spaces,
related to $\Cobcl$, $\Cobred$, $\Cut$, and a new simplicial space $D$. 
Note that these will in fact be simplicial and not just semi-simplicial 
spaces since the poset $P\Cob_d$ (unlike $\Cobcl_d, \Cob_d, \Cobred_d$) 
does have identity morphisms.
In what follows we will often suppress the data $(d,\gt)$ from the notation
to make space for simplicial indices.

\begin{defn}
    The topological poset $P\Cob_{d,\gt}$ has underlying space the space
    of tuples $((W,l),t)$ where $(W,l) \in \Psi_{d,\gt}(\IR \times \cube)$ 
    is a submanifold of the ``tube'' $\IR \times \cube$ and 
    $t \in \IR$ is a regular value of the projection $W \to \IR$.
    The poset structure is defined via
    \[
        ((W,l), t) \le ((W',l'), s) \Leftrightarrow
        (W,l) = (W',l') \text{ and } t \le s.
    \]
\end{defn}

\begin{rem}\label{rem:Pcyl}
To compare this with $\Cob_{d,\gt}$ one defines another 
(non-unital) topological poset $P_{\mrm{cyl}}\Cob_{d,\gt}$
where for every object $((W,l),t)$ there is an $\eps>0$ 
such that $(W,l)$ is cylindrical over $(t-\eps,t+\eps)$.
Then there are functors
    \[
        \Cob_{d,\gt} \leftarrow P_{\mrm{cyl}}\Cob_{d,\gt} \to P\Cob_{d,\gt}
    \]
and by \cite[Theorem 3.9]{GRW10} they both induce equivalences
on the classifying spaces. All of these constructions are
compatible with the $\gC$-space structures we described in
section \ref{subsec:infinite-loop}.
\end{rem}

Let $C$ denote the nerve of the poset $P\Cob_{d,\gt}$.
An $n$-simplex in $C$ is a tuple $((W,l), t_0 \le \dots \le t_n)$ where 
$(W,l) \in \Psi_{d,\gt}(\IR\times \cube)$ is a $d$-dimensional 
$\gt$-structured manifold in the tube $\IR \times \cube$ 
and the $t_i \in \IR$ are regular values of $\pr_\IR: W \to \IR$.

\begin{defn}\label{defn:C'}
    For all $n$ we define subspaces
    $C^{\mrm{cl}}_n, C^{\mrm{red}}_n, \Cut_n', D_n \subset C_n$
    to contain those $n$-simplices $w = ((W,l), t_0\le \dots \le t_n) \in C_n$ 
    satisfying certain conditions:
    \begin{align*}
        w \in C^{\mrm{cl}}_n & \quad \Leftrightarrow \quad
        \forall V \subset W: \pr_\IR(V) \cap \{t_0, \dots, t_n\} = \emptyset, \\
        w \in C^{\mrm{red}}_n  & \quad \Leftrightarrow \quad
        \forall V \subset W: \pr_\IR(V) \cap \{t_0, \dots, t_n\} \neq \emptyset, \\
        w \in \Cut_n'  & \quad \Leftrightarrow \quad
        \forall V \subset W: \pr_\IR(V) \subset (t_0,t_n) \text{ and } 
        \pr_\IR(V) \cap \{t_1, \dots, t_{n-1}\} \neq \emptyset, \\
        w \in D_n & \quad \Leftrightarrow \quad
        \forall V \subset W: \pr_\IR(V) \subset (t_0,t_n).
    \end{align*}
    Here the $\forall$ quantifier runs over all connected components
    $V \subset W$. When $n=0$ or, more generally, when $t_0=t_n$,
    the third and the fourth condition are interpreted as $W=\emptyset$.
    
    Each of the subspaces $C^{\mrm{cl}}_n, C^{\mrm{red}}_n, \Cut_n', D_n \subset C_n$
    is a union of connected components. 
    The inclusion $D_n \subset C_n$ admits a canonical retraction $r:C_n \to D_n$
    defined by sending $w = ((W,l), t_0\le \dots \le t_n)$ to 
    $((W',l_{|W'}), t_0\le \dots \le t_n)$ where $W' \subset W$ is the union
    of those connected components $V \subset W$ satisfying $\pr_\IR \subset (t_0,t_n)$. 
    Similarly, define  $r:C_n \to C^{\mrm{cl}}_n$, $r:C_n \to C^{\mrm{red}}_n$, 
    and $r:C_n \to \Cut_n'$ by deleting the connected components
    that violate the relevant condition.
    
    Define face operators $d_i$ on 
    $C^{\mrm{cl}}_\cd$, $C^{\mrm{red}}_\cd$, $\Cut_\cd'$, and $D_\cd$ 
    by including to $C_\cd$, applying the face operator of $C_\cd$,
    and then applying the retraction.
    Degeneracy operators are defined similarly.
\end{defn}

See figure \ref{fig:Cut-faces} for an illustration of how face maps
work in $\Cut_\cd'$ and figure \ref{fig:D} for an example of a 
$4$-simplex in $D_\cd$.
\begin{figure}[h]
    \centering
    \includegraphics[width=0.5\textwidth]{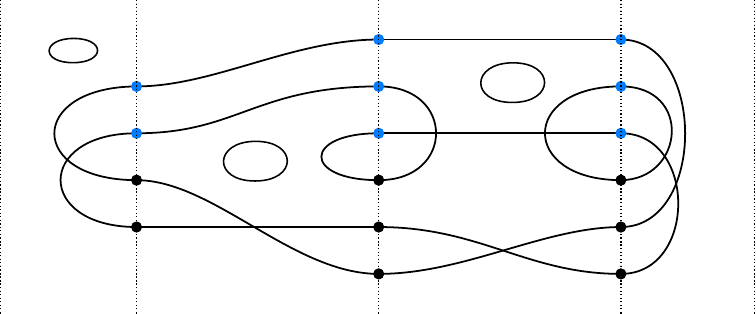}
    \caption{
        A $4$-simplex in $D$ for $d=1$ and $\gt=\{\pm1\}$.
    }
    \label{fig:D}
\end{figure}

Note that while the inclusion $C_\cd^{\mrm{red}} \subset C_\cd$ 
is not a simplicial map, the retraction $C_\cd \to C_\cd^{\mrm{red}}$ is:
\begin{lem}\label{lem:C'-simplicial}
    With the above definition the inclusion makes 
    $C^{\mrm{cl}}_\cd \subset C_\cd$ a simplicial subspace
    and the retractions make $C^{\mrm{red}}_\cd$, $\Cut_\cd'$,
    and $D_\cd$ into simplicial quotient spaces of $C_\cd$.
    In each of the four cases the $i$th degeneracy operator
    simply repeats $t_i$.
\end{lem}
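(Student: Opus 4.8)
The plan is to reduce everything to a single combinatorial observation about how the four conditions of \autoref{defn:C'} transform under the face and degeneracy operators of $C_\cd$. For a simplex $w = ((W,l), t_0 \le \dots \le t_n) \in C_n$ the face map $d_i$ of $C_\cd$ forgets $t_i$ and the degeneracy $s_i$ repeats $t_i$, and neither changes the underlying manifold $W$; so in each of the four cases the components of $W$ that survive the retraction $r$ are cut out by a predicate on components of $W$ that depends only on $W$, on the endpoints $t_0$ and $t_n$, and on the interior values $t_1, \dots, t_{n-1}$. First I would dispatch the subspace case: the condition defining $C^{\mrm{cl}}_\cd$ only involves the set $\{t_0,\dots,t_n\}$, which shrinks under $d_i$ and is unchanged under $s_i$, so the operators of $C_\cd$ restrict to $C^{\mrm{cl}}_\cd$, making it a simplicial subspace whose simplicial identities are inherited verbatim from $C_\cd$.

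For the three quotient cases $C^{\mrm{red}}_\cd$, $\Cut_\cd'$ and $D_\cd$ I would establish two facts about the ``survivor'' predicate: (a) applying $d_i$ can only shrink the survivor set, i.e.\ every component surviving in $d_i(w)$ already survives in $w$, and which components survive is determined by $r(w)$ (the survivors of $w$ together with $\vec t$); and (b) applying $s_i$ leaves the survivor set unchanged, and in particular $s_i$ carries $C_n'$ into $C_{n+1}'$. Fact (a) says $r \circ d_i \colon C_n \to C_{n-1}'$ factors through $r \colon C_n \to C_n'$ and fact (b) says $r\circ s_i$ does too, so the operators $d_i'$ and $s_i'$ on $C_\cd'$ of \autoref{defn:C'} satisfy $r d_i = d_i' r$ and $r s_i = s_i' r$; since $r$ is a continuous surjection restricting to the identity on $C_\cd'$, it then suffices to check the simplicial identities for $d_i', s_i'$ after precomposing with $r$, whereupon they reduce to the corresponding identities in $C_\cd$. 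This realizes $C_\cd'$ as a simplicial space with $r$ a simplicial quotient map, and by (b) the operator $s_i'$ is literally ``repeat $t_i$'', which is the final assertion; continuity of $d_i', s_i'$ is automatic since they are built from the (co)restrictions to unions of connected components of continuous maps.

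The one genuinely delicate point, and where the index bookkeeping is easiest to botch, is verifying (a) and (b) for $\Cut_\cd'$ and $D_\cd$ when $i \in \{0,n\}$: these operators move the first or last entry of the chain, hence the interval $(t_0,t_n)$ and, for $\Cut'$, the interior set $\{t_1,\dots,t_{n-1}\}$. Here one uses crucially that $\pr_\IR(V) \subset (t_0,t_n)$ is demanded with an \emph{open} interval. For instance, $s_0$ on $\Cut_\cd'$ enlarges the interior set by the value $t_0$, but a surviving component $V$ cannot meet $\{t_0\}$ because $\pr_\IR(V) \subset (t_0,t_n)$, so the survivor set is unchanged; and $d_0$ on $\Cut_\cd'$ or $D_\cd$ shrinks the interval to $(t_1,t_n) \subseteq (t_0,t_n)$ and (for $\Cut'$) drops $t_1$ from the interior set, so the predicate only becomes stronger and (a) follows. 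For all interior operators, and for the face and degeneracy operators in the $C^{\mrm{red}}_\cd$ case, the endpoints and the interior set change only by removing or repeating an interior $t_i$, so (a) and (b) are immediate; once the four border cases are handled the rest is formal.
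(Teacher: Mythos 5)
Your proof is correct and follows essentially the same strategy as the paper: show that the defining predicate for the retraction only becomes stronger as the tuple $(t_0,\dots,t_n)$ loses entries, so that $r\circ d_i = r\circ d_i\circ r$, and then the simplicial structure on $C'_\cd$ is inherited from $C_\cd$. You supply some details the paper leaves implicit, notably the treatment of degeneracies and the explicit case distinction $i\in\{0,n\}$ where the open interval $(t_0,t_n)$ is used, but the mathematical content is the same.
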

\begin{proof}
    Note that for any retraction $(i:A \to B, r:B \to A)$ of topological
    spaces the space $A$ carries the subspace topology with respect 
    to $i$ and the quotient topology with respect to $r$.
    Therefore each of $C^{\mrm{red}}_n$, $\Cut_n$, and $D_n$ is indeed
    a quotient space of $C_n$.
    
    To check the first claim, we need to show that 
    $C^{\mrm{cl}}_\cd \subset C_\cd$ is closed under face maps.
    But this is clear because $\pr_\IR(V) \cap \{t_0, \dots, t_n\} = \emptyset$
    implies $\pr_\IR(V) \cap \{t_0, \dots \wh{t_i} \dots, t_n\} = \emptyset$
    for any $i$.
    
    For the second claim we need to check that $r \circ d_i = r \circ d_i \circ r$ 
    holds for $r:C_\cd \to C^{\mrm{red}}_\cd$ the retraction 
    and $d_i$ a face operator on $C_\cd$.
    Once this is shown, it follows that the induced face operators 
    $d_i^{\mrm{red}} := r \circ d_i$ satisfy the simplicial identities
    and that $r: C_\cd \to C^{\mrm{red}}_\cd$ is a simplicial quotient map.
    The cases of $\Cut_\cd'$ and $D_\cd$ are then shown similarly.
    
    By definition the retraction deletes
    all connected components $V \subset W$ such that 
    $V \cap \{t_0, \dots, t_n\} = \emptyset$.
    We need to make sure that if a component is deleted by $r$,
    then it is also deleted by $r \circ d_i$.
    But if $V \cap \{t_0, \dots, t_n\} = \emptyset$,
    then $V \cap \{t_0, \dots, \wh{t_i}, \dots, t_n\} = \emptyset$,
    so the claim follows.
    As similar argument works in the other two cases
    because for each of them the conditions on $V \subset W$
    get stricter as the set $\{t_0,\dots,t_n\}$ gets smaller.
\end{proof}

\begin{cor}\label{cor:poset=cat}
    There are zig-zags of level-wise weak equivalences
    of semi-simplicial spaces:
    \[
        N(\Cobcl_{d,\gt}) \simeq C^{\mrm{cl}}, \quad
        N(\Cob_{d,\gt}) \simeq C, \quad
        N(\Cobred_{d,\gt}) \simeq C^{\mrm{red}}, \qand
        \Cut_{d,\gt} \simeq \Cut'.
    \]
    These are compatible with the canonical maps 
    $C^{\mrm{cl}} \inj C \surj C^{\mrm{red}} \surj \Cut'$.
\end{cor}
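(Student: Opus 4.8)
The plan is to establish the four level-wise weak equivalences in a compatible way, by exhibiting each pair (category-nerve versus poset-type simplicial space) as the two ends of a zig-zag through the cylinder-poset model $P_{\mrm{cyl}}\Cob_{d,\gt}$ recalled in remark \ref{rem:Pcyl}. First I would recall that by \cite[Theorem 3.9]{GRW10} (cited in remark \ref{rem:Pcyl}) the two functors $\Cob_{d,\gt} \leftarrow P_{\mrm{cyl}}\Cob_{d,\gt} \to P\Cob_{d,\gt}$ induce level-wise weak equivalences on nerves, giving $N(\Cob_{d,\gt}) \simeq C$. The key point is that each of the defining conditions for $\Cobcl$, $\Cobred$, and $\Cut$ has a completely parallel avatar on each of the three models: on $\Cob_{d,\gt}$ it is the condition that the composite bordism $W_1 \circ \dots \circ W_n$ has only closed components / only components meeting some $M_i$ / etc., and on $P\Cob_{d,\gt}$ it is exactly the condition spelled out in definition \ref{defn:C'} in terms of which connected components $V \subset W$ meet the chosen regular values $\{t_0, \dots, t_n\}$. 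So the strategy is: intersect (or take the corresponding union of connected components of) each term of the zig-zag with the relevant sub/quotient condition, and check that the resulting zig-zag still consists of level-wise weak equivalences.

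Concretely, I would proceed in four steps, one per claim, but all by the same mechanism. (1) For $C^{\mrm{cl}}$: the subspace of $C$ on those $((W,l),t_0 \le \dots \le t_n)$ with $W$ disjoint from all the slices $\{t_i\} \times \cube$ is, under the equivalence $N(\Cob) \simeq C$, the subspace corresponding to $N(\Cobcl)$ — indeed both are unions of connected components, and on $P_{\mrm{cyl}}\Cob$ the two conditions literally coincide; since passing to a union of connected components on both sides of a level-wise weak equivalence that respects these components is still a level-wise weak equivalence, we get $N(\Cobcl) \simeq C^{\mrm{cl}}$. (2) For $C^{\mrm{red}}$: here one works with the retractions rather than inclusions. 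The retraction $r: C_\cd \to C^{\mrm{red}}_\cd$ matches the retraction $R: N(\Cob)_\cd \to N(\Cobred)_\cd$ under the zig-zag, because deleting closed components and deleting components disjoint from $M_0 \amalg M_n$ correspond to deleting exactly the components $V$ with $\pr_\IR(V) \cap \{t_0,\dots,t_n\} = \emptyset$; a retraction of a level-wise weak equivalence onto matching quotients is again a level-wise weak equivalence (using that $\pi_*$ sees the retract as a summand). (3) For $\Cut'$: same as (2), now comparing the retraction $S: N(\Cob)_\cd \to \Cut_\cd'$ defined in definition \ref{defn:C'} with the retraction $S_n: N_n\Cob_{d,\gt} \to (\Cut_{d,\gt})_n$ defining $\Cut_{d,\gt}$ — both delete the components that are closed or meet $M_0 \amalg M_n = \{t_0\text{-slice}\} \amalg \{t_n\text{-slice}\}$; this gives $\Cut_{d,\gt} \simeq \Cut'$. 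The compatibility assertion then says these four zig-zags fit into commuting diagrams over $N(\Cob) \simeq C$, which is automatic since every map in sight is either an inclusion of, or a retraction onto, a union of connected components, and these commute strictly.

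The main obstacle I expect is the careful bookkeeping in the middle of step (2)/(3): one needs to know that the equivalence $N_n(\Cob_{d,\gt}) \simeq C_n$ coming from $P_{\mrm{cyl}}\Cob_{d,\gt}$ actually respects the decomposition into the relevant connected components — i.e.\ that a point of $P_{\mrm{cyl}}\Cob$ over a bordism $W_1 \circ \dots \circ W_n$ with a chosen regular value $t_i$ near each object $M_i$ has the property that a component of the underlying manifold meets the slice $\{t_i\}$ precisely when the corresponding component of $W_1 \circ \dots \circ W_n$ meets $M_i$. This is geometrically obvious from the cylindrical structure near the $t_i$, but spelling it out so that the comparison of retractions is literally correct (and not just correct up to homotopy) is the delicate part; once that dictionary is in place, the rest is the formal observation that a retract of a weak equivalence onto a matching retract is a weak equivalence, applied level-wise. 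I would therefore phrase the proof as: recall the zig-zag from remark \ref{rem:Pcyl}, observe that each of the four conditions is a condition on connected components that is preserved strictly along the zig-zag, and conclude by \ref{lem:gd-h-2con}-style elementary $\pi_*$-arguments (or simply by noting unions of/retractions onto matching components).
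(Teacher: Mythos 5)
Your proposal follows the same route as the paper: recall the zig-zag $\Cob_{d,\gt} \leftarrow P_{\mrm{cyl}}\Cob_{d,\gt} \to P\Cob_{d,\gt}$ from remark \ref{rem:Pcyl} to get $N(\Cob_{d,\gt}) \simeq C$ via \cite[Theorem 3.9]{GRW10}, and then deduce the other three cases by observing that each $C_n'$ is a union of connected components of $C_n$ matching, under the zig-zag, the corresponding union of connected components of $N_n(\Cob_{d,\gt})$ (resp.\ $N_n(P_{\mrm{cyl}}\Cob_{d,\gt})$), so that the level-wise weak equivalences restrict. Your elaboration of the ``dictionary'' between conditions on components meeting $M_i$ versus components meeting the slice $\{t_i\}$, and the remark that for $C^{\mrm{red}}$ and $\Cut'$ the face maps are retraction-twisted and hence the zig-zag must be seen to commute with these retractions, is exactly the bookkeeping the paper compresses into its one-line appeal to ``the weak equivalence restricts to a weak equivalence between these connected components.''
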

\begin{proof}
    This is proved exactly as in \cite[Theorem 3.9]{GRW10}.
    Recall from remark \ref{rem:Pcyl} that we can define a (non-unital)
    topological poset $P_{\mrm{cyl}}\Cob$ that fits in a zig-zag
    \[
        \Cob \leftarrow P_{\mrm{cyl}}\Cob \to P\Cob.
    \]
    In \cite[Theorem 3.9]{GRW10} the authors show that both functors
    induce level-wise weak equivalences on the nerves.
    This is exactly the second case.
    The other three cases follow from this case because level-wise
    each of $C^{\mrm{cl}}$, $C^{\mrm{red}}$, and $\Cut_n$
    is a union of connected components of $C_n$
    and the weak equivalence restricts to a weak equivalence between
    these connected components.
\end{proof}

\subsection{Proof of theorem \ref{thm:continue-fib-seq}}
\label{subsec:proof-of-continued}
Each of the simplicial spaces $C_\cd'$ has a $\gC$-structure 
defined using the tangential structure just like we did for $\Cob_{d,\gt}$
in section \ref{subsec:infinite-loop}.
We will continue to suppress $d$ and $\gt$ from the notation
and we will write $\cd$ as a placeholder for simplicial indices.
By construction there is a commutative diagram of simplicial 
$\gC$-spaces
    \[
        \begin{tikzcd}
            C^{\mrm{cl}}_\cd \ar[r] \ar[d, equal] 
            & C_\cd \ar[d] \ar[r] 
            & C^{\mrm{red}}_\cd \ar[d] \\
            C^{\mrm{cl}}_\cd \ar[r] 
            & D_\cd \ar[r] 
            & \Cut_\cd'
        \end{tikzcd}
    \]
where the composition of the two horizontal arrows in each row
is canonically null-homotopic.
By corollary \ref{cor:poset=cat} this diagram yields, after geometric
realisation, a diagram of infinite loop spaces where the top row
is the reduction fiber sequence of theorem \ref{thm:reduction-fiber-sequence}.
To prove theorem \ref{thm:continue-fib-seq} it will therefore suffice to 
show that the bottom row is a homotopy fiber sequence after 
geometric realisation and that $\|D_\cd\|$ is contractible.

\bigskip
We begin by simplifying the problem from understanding a homotopy
fiber sequence of infinite loop spaces to understanding a 
homotopy cofiber sequence of spaces. To do so, we show that each
of the three simplicial $\gC$-spaces $\Ccl_\cd$, $D_\cd$,
and $\Cut_\cd'$ is freely generated by its subspace of connected
manifolds, which we now define.
\begin{defn}
    For $X=\Ccl_\cd$, $D_\cd$, or $\Cut_\cd'$, let 
    $X_\cd^{con} \subset X_\cd$ denote the sub-simplicial space 
    of those tuples $((W,l),t_0 \le \dots \le t_n)$ where 
    $W$ is either connected or empty.
\end{defn}

Observe that $X_\cd^{con}$ defines a subsimplicial space
because face operators can only delete, but not create connected
components. However, $X_\cd^{con}$ is not a special 
$\gC$-space anymore. 
This makes sense as the $\gC$-space structure is supposed to capture
the operation defined by disjoint union of manifolds.
The subspace $X_\cd^{con} \subset X_\cd$ should be thought of
as the space of ``indecomposables'' for this operation and
we will see that $X_\cd$ is in fact freely generated by them.
This is a generalisation of lemma \ref{lem:BCobcl}.

\begin{lem}\label{lem:free-on-con}
    For $X$ any of the three spaces as above the inclusion 
    $\|X_\cd^{con}\| \to \|X_\cd\|$ induces an equivalence 
    of infinite loop spaces
    $
        Q(\|X_\cd^{con}\|) \simeq \|X_\cd\|
    $.
    These equivalences are compatible in the sense that 
    there is a homotopy commutative diagram of infinite loop spaces
    \[
    \begin{tikzcd}
        Q(\|(\Ccl_\cd)^{con}\|) \ar[r] \ar[d, "\simeq"]
        & Q(\|D^{con}\|) \ar[r] \ar[d, "\simeq"]
        & Q(\|(\Cut_\cd')^{con}\|) \ar[d, "\simeq"] \\
        \|\Ccl_\cd\| \ar[r]
        & \|D\| \ar[r]
        & \|\Cut_\cd'\|.
    \end{tikzcd}
    \]
\end{lem}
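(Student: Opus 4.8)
The plan is to run the proof of Lemma \ref{lem:BCobcl} ``in families'' over the simplicial (cut) direction: I will exhibit each of $\Ccl_\cd$, $D_\cd$, $\Cut_\cd'$ as a labelled configuration space on its connected part $X_\cd^{con}$, commute the labelled-configuration-space construction past fat geometric realisation, and finish with Segal's improved Barratt--Priddy--Quillen theorem. The guiding observation is that a point of $X_n$ is a $\gt$-structured manifold $(W,l)$ in the tube $\IR\times\cube$ together with regular values $t_0\le\cdots\le t_n$, and that $W$ decomposes canonically into its connected components; each component, with the restricted structure and the same regular values, is again a point of $X_n^{con}$ (one checks that each of the conditions of Definition \ref{defn:C'} is inherited by components), and distinct components are disjoint submanifolds of $\cube$. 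So $X_n$ ``is'' the $X_n^{con}$-labelled configuration space of $\cube$, and the claim is that after realisation this becomes free as a grouplike infinite loop space.

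First I would, for each of the three cases, introduce an auxiliary simplicial $\gC$-space $\mc{Z}_\cd^{X}$ analogous to the category $\mcC$ in the proof of Lemma \ref{lem:BCobcl}: an $n$-simplex of $\mc{Z}_n^{X}$ is an $n$-simplex of $X_n$ together with a reference embedding $j\colon\pi_0(W)\inj\cube$ of the set of components, two such being identified under the evident diffeomorphisms. Forgetting $j$ gives a map $p\colon\mc{Z}_\cd^{X}\to X_\cd$, and replacing $W$ by the finite set $j(\pi_0 W)\subset\cube$ with each point labelled by the corresponding component --- a point of $X_n^{con}$ --- gives a map $q\colon\mc{Z}_\cd^{X}\to\Conf(\cube;X_\cd^{con})$ into the level-wise labelled configuration space. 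Both $p$ and $q$ are maps of simplicial $\gC$-spaces, and both are level-wise weak equivalences: $p$ because $\Emb(W,\cube)$, and hence the space of reference embeddings, is contractible for $\cube=(-1,1)^\infty$; $q$ by the same bookkeeping (splitting $B\Diff^\gt$ into components and wreath products) as in Lemma \ref{lem:BCobcl}. Since the maps $\Ccl_\cd\to D_\cd\to\Cut_\cd'$ only forget or rearrange cut data and never touch components, these zig-zags are compatible with the maps in the statement, so after fat realisation we obtain a compatible diagram $\|X_\cd\|\xleftarrow{\ \simeq\ }\|\mc{Z}_\cd^{X}\|\xrightarrow{\ \simeq\ }\|\Conf(\cube;X_\cd^{con})\|$.

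Next I would commute $\Conf(\cube;-)$ past $\|-\|$: the labelled configuration space is filtered by cardinality with subquotients $F(\cube,k)_+\wedge_{\gS_k}(-)^{\wedge k}$, and $F(\cube,k)=F((-1,1)^\infty,k)$ is a contractible free $\gS_k$-space, so $\Conf(\cube;-)$ is homotopy invariant and preserves fat realisations, giving $\|\Conf(\cube;X_\cd^{con})\|\simeq\Conf(\cube;\|X_\cd^{con}\|)$, again compatibly in the three cases. Finally, $\Conf(\cube;Y)$ is Segal's special (not very special) $\gC$-space with $\gO B\Conf(\cube;Y)\simeq Q(Y_+)$ by \cite[Proposition 3.6]{Seg74}, and the role of the simplicial (nerve/cut) direction of $X_\cd$ is precisely to perform this group completion, so the grouplike $\gC$-space $\|X_\cd\|$ is identified with the free grouplike $\gC$-space on the based space $\|X_\cd^{con}\|$, namely $Q(\|X_\cd^{con}\|)$. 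For $X=\Ccl_\cd$ this is transparent, since $\|\Ccl_\cd\|=B\Cobcl_{d,\gt}$ by Corollary \ref{cor:poset=cat} is the bar construction of the monoid $\Conf(\cube;\Psi_{d,\gt}^{con})$, so $\gO\|\Ccl_\cd\|\simeq Q(\Psi_{d,\gt}^{con})$ by the group-completion theorem; the cases $D_\cd$ and $\Cut_\cd'$ follow by the same device applied to their larger nerve directions. Under these identifications the map induced by the inclusion $\|X_\cd^{con}\|\inj\|X_\cd\|$ is the unit $Q(\|X_\cd^{con}\|)\to\|X_\cd\|$, hence an equivalence, and naturality of every step in the decoration produces the homotopy-commutative diagram.

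The step I expect to be the main obstacle is this last one together with the compatibility across the three cases: one has to make precise how the simplicial cut-direction implements the group completion and check that the three resulting equivalences assemble correctly --- equivalently, that $\|X_\cd^{con}\|$ is the correct suspension of $\Psi_{d,\gt}^{con}$ ($\gS$ for $\Ccl_\cd$, contractible for $D_\cd$, $\gS^2$ for $\Cut_\cd'$) needed for the three $Q$'s to form the fibre sequence $Q(\gS\Psi_{d,\gt}^{con})\to\ast\to Q(\gS^2\Psi_{d,\gt}^{con})$. This requires care about fat versus thin realisation and about the fact that passage to connected parts and the configuration-space functor do not commute with realisation strictly; everything else is a direct adaptation of Lemma \ref{lem:BCobcl}.
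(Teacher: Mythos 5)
Your overall route is the paper's: run the $\mcC$-type zig-zag of Lemma~\ref{lem:BCobcl} ``in families'' to obtain level-wise equivalences $X_\cd \leftarrow \mc{Z}_\cd^X \rightarrow \Conf_*(\cube; X_\cd^{con})$, then identify the realisation of the right-hand side with the free infinite loop space. The paper packages that last identification into the separate technical Lemma~\ref{lem:free-gamma}, and it is there that the implementation diverges from yours in a way worth noting. You propose to commute the configuration-space functor directly past fat realisation, $\|\Conf_*(\cube; X_\cd^{con})\| \simeq \Conf_*(\cube; \|X_\cd^{con}\|)$, and then conclude by the approximation theorem since $\|X_\cd^{con}\|$ is connected; this is fine but, as you flag yourself, the commutation of $\Conf_*$ with $\|-\|$ (built from quotients and wreath-product orbits) needs to be justified. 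Lemma~\ref{lem:free-gamma} sidesteps that entirely: it commutes $Q$ and the group-completion functor $\mc{G} = \gO B$ past realisation, both of which are covered directly by May's results, applies Segal's Proposition~3.6 level-wise to get $\mc{G}\Conf_*(\cube; Y_n) \simeq Q(Y_n)$, and then uses specialness plus connectedness of $\|Y\|$ to conclude that $\|\Conf_*(\cube; Y_\cd)\|$ is already grouplike, so level-wise group completion does not change the realisation. One small imprecision in your write-up: once you have commuted $\Conf_*$ past $\|-\|$ to land on $\Conf_*(\cube; \|X_\cd^{con}\|)$ with $\|X_\cd^{con}\|$ connected, you are already done by May--Milgram and no further group completion is performed, so the closing phrase about the ``cut direction performing the group completion'' is redundant on your route (that mechanism is, however, exactly what Lemma~\ref{lem:free-gamma} makes precise, via the connectedness hypothesis, on the paper's route). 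You also implicitly use that $\|X_\cd^{con}\|$ is connected in all three cases; this is true and follows from Lemma~\ref{lem:identify-cofib-seq}, but deserves a sentence.
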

\begin{proof}
    For $Y_\cd$ a pointed simplicial space, let $\Conf_*(\cube; Y_\cd)$ 
    be the simplicial space whose $n$-th level is the space of unordered configurations in 
    $\cube$ with labels in $Y_n$, modulo the equivalence relation that deletes
    configuration points labelled by the basepoint. 
    This is a simplicial $\gC$-space with the $\gC$-structure defined 
    as usual for configuration spaces.
    There is a canonical simplicial inclusion $Y_\cd \to \Conf_*(\cube; Y_\cd)$ 
    defined by sending $y \in Y_n$ to the configuration $\{0\} \subset \cube$
    labelled by $y$.
    
    We construct for each $X$ a zig-zag of equivalences of simplicial $\gC$-spaces:
    \[
        X_\cd \longleftarrow Z_\cd \longrightarrow \Conf_*(\cube, X_\cd^{con}) .
    \]
    Then the technical lemma \ref{lem:free-gamma} implies that 
    $Q(\|X_\cd^{con}\|)\simeq \|X_\cd\|$ seeing as $\|X_\cd^{con}\|$
    is always connected.
    If we construct the zig-zags compatibly with the maps 
    $C^{\mrm{cl}}_\cd \to D_\cd \to \Cut_\cd'$, then it also follows
    that the diagram of infinite loop spaces in the lemma is homotopy commutative.
   
    In the case of $X = C^{\mrm{cl}}$ the relevant zig-zag 
    was already constructed in lemma \ref{lem:BCobcl}, 
    though under a different name.
    We now recall the construction to see that it works in all three cases.
    A point in $Z_n$ is represented by a tuple $(W, i, j, l, \ul{t})$ 
    where $W$ is a $d$-manifold with $l \in \Bun^\gt(W)$ a $\gt$-structure,
    $\ul{t} = (t_0 \le \dots \le t_n)$, 
    $i:W \inj \IR \times \cube$ is an embedding and
    $j:\pi_0 W \inj \cube$ is a configuration.
    Moreover, we require that $(i(W),i_*l, t_0 \le \dots \le t_n)$ 
    is a well-defined point in $X_n$. 
    (In particular, $i$ has to be cylindrical in the appropriate places.)
    $Z_n$ is the space of equivalence classes, where we identify
    $(W, i, j, l, \ul{t}) \sim (W', i \circ \gp, j \circ (\pi_0 \gp), \gp^*(l), \ul{t})$
    for any diffeomorphism $\gp:W' \cong W$.
    The $\gC$-space is, as always, defined via the tangential structure $\gt$.
    
    The projection maps in the zig-zag are defined by
    \[
        X_n \ni (i(W), i_*, t_0 \le \dots \le t_n) 
        \longmapsfrom [W, i, j, l, \ul{t}] \longmapsto 
        (j(\pi_0 W), q) \in \Conf_*(\cube; X_n^{con})
    \]
    where the labelling $q: j(\pi_0 W) \to X_n^{con}$ labels 
    the configuration point $j([V])$ by $(i(V), i_*(l_{|V}), t_0 \le \dots \le t_n)$
    for any connected component $V \subset W$.
    
    It is not hard to see that $Z$ is a simplicial $\gC$-space and 
    that the two maps are compatible with this structure. 
    The same arguments as in lemma \ref{lem:free-gamma}
    now show that they both are equivalences.
\end{proof}

\begin{lem}\label{lem:free-gamma}
    For $Y_\cd$ a pointed simplicial space, let $\Conf_*(\cube; Y_\cd)$ 
    be the simplicial $\gC$-space from the proof of lemma \ref{lem:free-on-con}.
    If $\|Y\|$ is connected,
    then there is an equivalence of infinite loop spaces
    \[
        Q(\|Y\|) \xrightarrow{\ \simeq\ } \|\Conf_*(\cube; Y_\cd)\|
    \]
    compatible with the inclusion of $\|Y\|$ on both sides.
\end{lem}
\begin{proof}
    Consider the operad $\mcO$ on the category of spaces where 
    the space of $n$-ary operations is the set of $n$-tuples
    $(i_1, \dots, i_n)$ of embeddings $i_j: \cube \to \cube$ with disjoint images,
    such that there are $r_j \in (0,1)$ and $a_j \in \cube$
    with $i_j(x) = r_j(x + a_j)$ for all $x \in \cube$.
    This is the variant of the \emph{little $\infty$-cubes} operad,
    and as all of the spaces $\mcO(n)$ are contractible, it is an $E_\infty$-operad.
    The associated monad $T_\mcO$ on the category of based spaces
    has a canonical map to the labelled configuration space:
    \[
        T_\mcO(X) = \bigvee_{n \ge 1} (X^n \times \mcO(n))_{/\gS_n}
        \to \Conf_*(\cube; X), 
        \quad
        [(x_1,\dots,x_n), (i_1,\dots,i_n)] \mapsto (\amalg_j i_j(0), l)
    \]
    where the labelling $l$ is $l(i_j(0)) = x_j$.
    This map is clearly an equivalence for all based spaces $X$, 
    as it only forgets the information of the $r_j \in (0,1)$.
    
    By \cite[Theorem 12.2]{May72} any monad associated to an operad
    commutes with geometric realisation up to homeomorphism,
    and so we have an equivalence
    \[
        T_\mcO(\|Y_\cd\|) \cong \|T_\mcO(Y_\cd)\| 
        \xrightarrow{\simeq} \Conf_*(\cube; \|Y_\cd\|).
    \]
    But $\mcO$ is an $E_\infty$-operad and $\|Y_\cd\|$ is connected,
    so by \cite[Corollary 6.3]{May72} $T_\mcO(\|Y_\cd\|)$
    is equivalent to $Q(\|Y_\cd\|)$ and the claim follows.
\end{proof}

In light of the previous lemma we can prove theorem 
\ref{thm:continue-fib-seq} by showing that
\[
    \|(\Ccl_\cd)^{con}\| \longrightarrow \|D_\cd^{con}\|
    \longrightarrow \|(\Cut_\cd')^{con}\|
\]
is a homotopy cofiber sequence and that $\|D^{con}\|$ is contractible. 
To see that this is a homotopy cofiber sequence%
\footnote{
    For this to be a homotopy cofiber sequence we need to specify 
    a nullhomotopy of the composite. The simplicial map
    $(\Ccl_\cd)^{con} \to (\Cut_\cd')^{con}$ 
    sends $((W,l), t_0 \le \dots \le t_n)$ to 
    $((\emptyset, \emptyset), t_0 \le \dots \le t_n)$,
    so there is a canonical nullhomotopy given by contracting the $t_i$ to $0$.
}
is not difficult:
for each $n$ we actually have 
$D^{con}_n \simeq (\Ccl_n)^{con} \vee (\Cut_n')^{con}$.
The crucial step, however, is to check that $\|D^{con}\|$ is contractible,
indeed we will show the following identification of homotopy cofiber sequences:
\begin{lem}\label{lem:identify-cofib-seq}
    Let $F_{d,\gt} \to \Psi_{d,\gt}^{con}$ be the augmented topological poset
    from definition \ref{defn:F}. Then there is an equivalence of homotopy cofiber sequences:
    \[
    \begin{tikzcd}
        \|(\Ccl_\cd)^{con}\| \ar[r] \ar[d, "\simeq"]
        & \|D_\cd^{con}\| \ar[r] \ar[d, "\simeq"]
        & \|(\Cut_\cd')^{con}\| \ar[d, "\simeq"] \\
        \gS(\Psi_{d,\gt}^{con}) \ar[r]
        & \gS(C(BF_{d,\gt} \to \Psi_{d,\gt}^{con})) \ar[r]
        & \gS^2(BF_{d,\gt})
    \end{tikzcd}
    \]
    where $C(BF_{d,\gt} \to \Psi_{d,\gt}^{con})$ denotes the cone of 
    the augmentation map $BF_{d,\gt} \to \Psi_{d,\gt}^{con}$.
    In particular, since this map is a weak equivalence 
    by proposition \ref{prop:BF}, the space $\|D^{con}\|$
    is contractible.
\end{lem}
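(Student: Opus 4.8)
The plan is to establish the displayed diagram of equivalences \emph{level-wise} in the simplicial direction, and then deduce the statement about geometric realisations; once that is done the contractibility of $\|D_\cd^{con}\|$ is immediate, since under these identifications the left vertical map is the suspension of the cone on the augmentation $BF_{d,\gt} \to \Psi_{d,\gt}^{con}$, which is a weak equivalence by Proposition \ref{prop:BF}, so its cone $C(BF_{d,\gt} \to \Psi_{d,\gt}^{con})$ is contractible and hence so is its suspension.

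The bookkeeping tool is the position of the ordered markers $t_0 \le \dots \le t_n$ relative to a connected $W$. For nonempty connected $W$ the projection $\pr_\IR(W)$ is a bounded interval, and in each of $\Ccl_\cd^{con}$, $D_\cd^{con}$, $(\Cut_\cd')^{con}$ the markers split into three ordered blocks: those strictly below $\inf\pr_\IR(W)$, those lying on $\pr_\IR(W)$ --- necessarily regular values of $\pr_\IR|_W$ --- and those strictly above $\sup\pr_\IR(W)$. The middle block is a (possibly empty) chain of regular values, i.e.\ a simplex of the nerve of the poset $F_{d,\gt}$ of Definition \ref{defn:F} (after the rescaling $(-1,1)\cong\IR$), while the two outer blocks only record how many markers sit in the gap above, resp.\ below, $W$. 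In $D_\cd^{con}$ one has the containment $\pr_\IR(W)\subset(t_0,t_n)$ with no constraint on the interior markers: this exhibits $D_\cd^{con}$, as a simplicial space, as $N_\cd F_{d,\gt}$ with two families of ``external'' markers freely adjoined, one on each side, whose realisation is exactly $\gS\big(C(BF_{d,\gt}\to\Psi_{d,\gt}^{con})\big)$ --- the mapping-cylinder direction for the augmentation comes from letting interior markers migrate between ``lying on $W$'' and ``lying in an adjacent gap'', and the outer suspension coordinate from the two extreme markers. Imposing that the middle block be nonempty cuts out $(\Cut_\cd')^{con}$ and amounts to collapsing the base $\Psi_{d,\gt}^{con}$ of the cone, giving $\gS^2(BF_{d,\gt})$; conversely forcing the middle block empty (so that $W$ is avoided by all markers and concentrated in a single gap) gives $\Ccl_\cd^{con}$ and the reduced suspension $\gS(\Psi_{d,\gt}^{con})$. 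Level-wise one moreover checks $D_n^{con}\simeq \Ccl_n^{con}\vee(\Cut_n')^{con}$, so these assemble into the compatible cofibre sequence claimed in the lemma; realising and invoking Proposition \ref{prop:BF} finishes the proof.

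The main obstacle --- and essentially all of the work --- is to check that this combinatorial reformulation is compatible with the \emph{simplicial operators}, which on $\Ccl_\cd^{con}$, $D_\cd^{con}$, $(\Cut_\cd')^{con}$ are given (Definition \ref{defn:C'}) by a face or degeneracy operator of $C_\cd$ followed by the retraction deleting the components that violate the relevant condition. One has to verify that this deletion is precisely the collapse built into the cone/suspension simplicial models, and treat the degenerate configurations in which a marker is pushed onto $\inf\pr_\IR(W)$ or $\sup\pr_\IR(W)$; as in the proof of Proposition \ref{prop:BF} and in \cite[Theorem 3.9]{GRW10}, these are harmless because the regular-value and containment conditions are open. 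After the identification of simplicial objects, passing to geometric realisations and reading off contractibility of $\|D_\cd^{con}\|$ from Proposition \ref{prop:BF} is formal.
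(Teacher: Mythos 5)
Your overall strategy is the same as the paper's: decompose each of $(\Ccl_\cd)^{con}$, $D_\cd^{con}$, $(\Cut_\cd')^{con}$ level-wise as a wedge indexed by the positions of the markers $t_0 \le \cdots \le t_n$ relative to the connected manifold $W$, recognise the middle block as a simplex of $NF_{d,\gt}$ and the outer blocks as suspension/cone coordinates, and realise. This matches the paper's ``part 1'' exactly (where the level-wise identifications $D^{con}_n \simeq \bigvee_{1\le a \le b \le n} N_{b-a-1}F_{d,\gt}$ etc.\ are written out).

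However, there is a genuine gap in the second half. You correctly flag that the real work is checking compatibility with the simplicial operators, but then you dismiss it with the wrong argument. You worry about ``degenerate configurations in which a marker is pushed onto $\inf\pr_\IR(W)$ or $\sup\pr_\IR(W)$'' and invoke openness of the regular-value condition. This is a non-issue: by construction the $t_i$ are always regular values of $\pr_\IR|_W$, and the endpoints of $\pr_\IR(W)$ are critical values, so a marker can never land there. Openness is relevant to Proposition \ref{prop:BF} and to the poset-model comparison in \cite[Theorem 3.9]{GRW10}, but it plays no role here. What is actually needed --- and what the paper supplies --- is a precise simplicial model for the cone and suspension (the relative cone $C(X,X_{-1})$ and its opposite $C^{op}(X,X_{-1})$ of Definition \ref{defn:simplicial-cone}, with the extra-degeneracy argument of Lemma \ref{lem:extra-deg}), together with explicit formulas for simplicial maps $f$, $g$, $h$ out of $(\Ccl_\cd)^{con}$, $D_\cd^{con}$, $(\Cut_\cd')^{con}$ and a check, against the face-operator formulas of the cone model, that these maps are simplicial. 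Without that construction, the level-wise wedge decomposition $D_n^{con}\simeq \Ccl_n^{con}\vee(\Cut_n')^{con}$ by itself does not determine a simplicial structure, and one cannot conclude anything about geometric realisations. You need to actually write down the simplicial cone/double-cone model and verify that your bookkeeping map respects $d_i$ and $s_i$; that verification is the substance of the lemma, and it is not an instance of openness.
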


The rest of this section will be concerned with proving this lemma.

\begin{proof}[Part 1 of the proof of lemma \ref{lem:identify-cofib-seq}]
\label{proof:D-con}
    The crucial observation for the proof of the lemma is 
    that each of the simplicial spaces $(\Ccl_\cd)^{con}$,
    $D_\cd^{con}$, and $(\Cut_\cd')^{con}$ decomposes level-wise
    as a wedge of simpler spaces. For example in the first case we
    have a canonical equivalence:
    \[
        (\Ccl_\cd)^{con}_n 
        \simeq \bigvee_{k=1}^n \Psi_{d,\gt}^{con} \times \{k\}
    \]
    defined by sending an $n$-simplex $((W,l),t_0\le \dots \le t_n)$ 
    to the tuple $((W,l),k)$ where $k \in \{1,\dots,n\}$ is the unique
    number such that $\pr_1(W) \subset (t_{k-1},t_k)$.
    This makes sense since $W$ is assumed to be connected
    and $\pr_1(W)$ cannot contain any of the $t_i$.
    If $W = \emptyset$ we send $((\emptyset,\emptyset),t_0\le \dots \le t_n)$
    to the basepoint.
    This map is a fiber bundle with fiber 
    the convex subset of $\IR^{n+1}$ containing the possible choices of $t_i$ such that 
    $t_0 \le \dots \le t_{k-1} \le \min(\pr_1(W))$
    and $\max(\pr_1(W)) \le t_k \le \dots \le t_n$.
    
    There are similar decompositions for $D_n^{con}$ and 
    $(\Cut_n')^{con}$, with the additional complication 
    that we need to keep track of a tuple $(a \le b)$ such 
    that $\pr_1(W) \subset (t_{a-1}, t_{b})$.
    Indeed, using the nerve of the augmented topological poset%
    \footnote{
        An augmented topological poset is a topological poset $(P, \le)$
        together with a map $a:P \to X$ satisfying $a(x)= a(y)$
        for all $x,y \in P$ with $x \le y$.
        Its nerve is the augmented simplicial space $NP$ with 
        $N_nP = \{p \in P^{n+1}\;|\; p_0 \le \dots \le p_n\}$
        for $n\ge 0$ and $N_{-1}P = X$. The map $a$ is used as the 
        face operator $f_0:N_0P = P \to X = N_{-1}P$.
    }
    $F_{d,\gt} \to \Psi_{d,\gt}^{con}$ we can write 
    \[
        D^{con}_n \simeq \bigvee_{1\le a \le b \le n} N_{a-b-1}F_{d,\gt} 
        \qand
        (\Cut_{d,\gt}^{con})_n \simeq \bigvee_{1\le a < b \le n} N_{a-b-1}F_{d,\gt} 
    \]
    where $N_{-1}F_{d,\gt} = \Psi_{d,\gt}^{con}$.
    We complete this proof below.
\end{proof}

To complete the proof we need to properly understand how these wedges
of space fit together to form a simplicial space. 
For this we introduce the notion of a simplicial (relative) cone:

\begin{defn}\label{defn:simplicial-cone}
    Let $(X_\cd, X_{-1}, x_0)$ be a pointed augmented simplicial 
    space. Then the \emph{relative cone} and the 
    \emph{opposite relative cone} are the pointed augmented
    simplicial spaces $C(X,X_{-1})$ and $C^{op}(X,X_{-1})$ defined 
    as follows. In both case the space of $n$-simplices is
    \[
        C(X,X_{-1})_n := \bigvee_{k=-1}^n (X_k \times \{k\})
        =: C^{op}(X,X_{-1})_n 
    \]
    The face and degeneracy operators are defined for the cone as:
    \[
        d_i (x,k) = \begin{cases}
            (d_i x, k-1) & \text{ if } i \le k \\
            (x, k) & \text{ if } i > k 
        \end{cases}
        \qand
        s_i (x,k) = \begin{cases}
            (s_i x, k+1) & \text{ if } i \le k \\
            (x, k) & \text{ if } i > k 
        \end{cases}
    \]
    and for the opposite cone as:
    \[
        d_i^{op} (x,k) = \begin{cases}
            (d_{i-(n-k)} x, k-1) & \text{ if } i \ge n-k \\
            (x, k) & \text{ if } i < n-k 
        \end{cases}
        \qand
        s_i^{op} (x,k) = \begin{cases}
            (s_{i-(n-k)} x, k+1) & \text{ if } i \ge n-k \\
            (x, k) & \text{ if } i < n-k .
        \end{cases}
    \]
    These definitions are chosen such that 
    $C^{op}(X,X_{-1}) = (C(X^{op},X_{-1}))^{op}$.
    
    We define the reduced cone $C_{red}^{(op)}(X,X_{-1})$ as the quotient
    of $C^{(op)}(X,X_{-1})$ by $X$, included in the obvious way.
    Moreover, we write $\gS^{(op)}X := C_{red}^{(op)}(X,*)$ when
    $X$ has the trivial augmentation.
\end{defn}

\begin{lem}\label{lem:extra-deg}
    For any augmented pointed simplicial space $(X, X_{-1}, x_0)$ 
    there is a cofiber sequence of simplicial spaces
    \[
        \op{const}(X_{-1}) \longrightarrow C_{red}(X,X_{-1})
        \longrightarrow C_{red}(X,*)
    \]
    that realises to the cofiber sequence
    \[
        X_{-1} \longrightarrow \mrm{Cone}(\|X\| \to X_{-1})
        \longrightarrow \gS \|X\|.
    \]
\end{lem}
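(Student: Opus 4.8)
The plan is to verify the statement in two stages: first exhibit the short sequence of simplicial spaces at the point-set level, then pass to geometric realizations. For the first stage, recall from Definition \ref{defn:simplicial-cone} that $C_{red}(X,X_{-1})_n = \bigvee_{k=-1}^{n}(X_k \times \{k\})$ quotiented by the copy of $X$ included via $k\neq -1$; after this quotient the remaining wedge summand with $k=-1$ is $X_{-1}$ itself, so $C_{red}(X,X_{-1})_n \cong X_{-1} \vee (X_n\text{-cone part})$ — more precisely the bottom summand $X_{-1}\times\{-1\}$ sits inside as a constant simplicial subspace, and the quotient by it replaces the augmentation $X_{-1}$ by the basepoint, giving exactly $C_{red}(X,*)_n = C_{red}^{(op)}(X,*)$-free case, i.e.\ $\gS\|X\|$ after realization. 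So I would first check that the inclusion $\op{const}(X_{-1}) \to C_{red}(X,X_{-1})$ and the quotient map $C_{red}(X,X_{-1}) \to C_{red}(X,*)$ are simplicial maps: for the inclusion one checks that the face and degeneracy formulas of Definition \ref{defn:simplicial-cone} restricted to the $k=-1$ summand reduce to the identity on $X_{-1}$ (since $i > k = -1$ always, the formula gives $d_i(x,-1)=(x,-1)$ and similarly for $s_i$), hence it is the constant simplicial space on $X_{-1}$; for the quotient it is automatic since we are quotienting by a simplicial subspace. Then $C_{red}(X,*)$ is by definition the quotient of $C_{red}(X,X_{-1})$ by this constant subspace, so the sequence is a (degreewise) cofiber sequence of simplicial spaces by construction.

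For the second stage I would apply the fact that the fat geometric realization $\|-\|$ preserves degreewise cofiber sequences (it commutes with pushouts along cofibrations and with the terminal-to-point map), so $\|\op{const}(X_{-1})\| \to \|C_{red}(X,X_{-1})\| \to \|C_{red}(X,*)\|$ is again a cofiber sequence. It remains to identify the three terms. First, $\|\op{const}(X_{-1})\| \simeq X_{-1}$ since the realization of a constant simplicial space is that space (the simplex coordinates are contracted away; with the fat realization this is standard). Second, $\|C_{red}(X,*)\| \simeq \gS\|X\|$: here one uses that the unreduced simplicial cone $C(X,*)$ has an extra degeneracy (the augmentation-to-basepoint, i.e.\ $s_{-1}$ inserting $(x_0,-1)$) and hence is contractible after realization, so modding out $\|X\|$ gives the reduced suspension — this is exactly the observation that $\gS X := C_{red}(X,*)$ realizes to $\gS\|X\|$, which I would either cite as the well-known ``simplicial suspension'' fact or prove directly using the extra-degeneracy argument. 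Third, and this is the term requiring actual work, $\|C_{red}(X,X_{-1})\| \simeq \mrm{Cone}(\|X\| \to X_{-1})$: I would argue that $C_{red}(X,X_{-1})$ fits into a pushout of simplicial spaces
\[
    \begin{tikzcd}
        X \ar[r] \ar[d] & C_{red}(X,*) \ar[d] \\
        \op{const}(X_{-1}) \ar[r] & C_{red}(X,X_{-1}),
    \end{tikzcd}
\]
where the top map $X \to C_{red}(X,*)$ is the canonical inclusion of $X$ at level $n$ as the $k=n$ summand — wait, more carefully, one uses that the augmentation $X \to \op{const}(X_{-1})$ realizes to the augmentation map $\|X\| \to X_{-1}$, and that $\|C_{red}(X,*)\|$ is a cone on $\|X\|$; applying $\|-\|$ to the pushout square (again using that $\|-\|$ preserves homotopy pushouts, at least when the maps are cofibrations, which holds here since all the maps are inclusions of simplicial subspaces) yields the mapping cone $\mrm{Cone}(\|X\|\to X_{-1})$ as the realization of the bottom-right corner.

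The main obstacle I expect is the bookkeeping needed to show the pushout-square identification in the last step and to confirm that the inclusion $X \to C_{red}(X,X_{-1})$ really does exhibit the realization as a mapping cone with the correct attaching map; this requires being careful that the two ``ends'' of the cone construction (the augmentation direction, which becomes $X_{-1}$, and the basepoint direction, which becomes the cone point of the suspension) are matched up with the geometric mapping-cone coordinates in the right way. One clean way to avoid fiddly explicit homotopies is to observe that $C_{red}(X,X_{-1})$ is, levelwise, the relative cone, and that the relative cone of a map of spaces is literally realized by the bar-type construction whose realization is the mapping cone — i.e.\ to recognize the simplicial relative cone as (a model for) the double mapping cylinder $\op{const}(X_{-1}) \cup_{X\times\gD^{\ge 1}} \gS X$, and then cite the standard fact that this realizes to $\mrm{Cone}(\|X\|\to X_{-1})$. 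The remaining verifications — that the face/degeneracy formulas are well-defined on the quotients, and that all the relevant inclusions are cofibrations so that the realization computes homotopy pushouts — are routine and I would leave them to the reader.
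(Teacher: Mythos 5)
Your overall strategy matches the paper's: the extra-degeneracy argument on the unreduced cone is the key tool, and the rest is bookkeeping with cofibrations. However, the pushout square you write down is not correct, and this is where the actual content of the lemma lives. In your square you put $C_{red}(X,*)$ in the top-right corner and claim the top map $X \to C_{red}(X,*)$ is "the inclusion of $X$ at level $n$ as the $k=n$ summand" — but in $C_{red}(X,*) = C(X,*)/X$ precisely that summand has been collapsed to the basepoint, so no such inclusion exists. You then say $\|C_{red}(X,*)\|$ "is a cone on $\|X\|$", which is also wrong: it is the \emph{suspension} $\gS\|X\|$ (as you correctly state two sentences earlier); the cone is $\|C(X,*)\|$, the unreduced version. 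You clearly sense the confusion ("wait, more carefully \dots"), but the fix is not cosmetic: even with $C(X,*)$ in place of $C_{red}(X,*)$, the square is not a strict pushout levelwise, because the pushout of $X_{-1} \leftarrow X_n \to \bigvee_{k=0}^n X_k$ glues $X_n$ onto $X_{-1}$ rather than leaving the wedge summands disjoint.

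The paper avoids the pushout-square formulation entirely. It shows $C^{op}(X,X_{-1})$ admits an extra degeneracy $s_{-1}(x,k) = (x,k)$ (one must check $d_0 s_{-1} = \id$, which works because of the index shift $i \ge n-k$ in the opposite cone), so $\|C(X,X_{-1})\| \simeq X_{-1}$ via the augmentation. Then $X \hookrightarrow C(X,X_{-1})$, $(x) \mapsto (x,n)$, is a levelwise cofibration, so $\|C_{red}(X,X_{-1})\| = \|C(X,X_{-1})\|/\|X\|$ is a model for the mapping cone of the realized augmentation $\|X\| \to X_{-1}$ — no pushout square needed, just the fact that quotienting a space contractible onto $X_{-1}$ by a cofibered subspace $\|X\|$ along the augmentation gives $\mrm{Cone}(\|X\| \to X_{-1})$. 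The final cofiber identification $C_{red}(X,X_{-1})/\op{const}(X_{-1}) = C_{red}(X,*) = \gS X$ is as you describe. You should either reproduce this argument or, if you want to keep a square, make it the cofiber sequence $X \to C(X,X_{-1}) \to C_{red}(X,X_{-1})$ plus the contractibility of the middle term onto $X_{-1}$, rather than a pushout.
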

\begin{proof}
    We begin by showing that the augmented simplicial space
    $C^{op}(X, X_{-1})$ admits an extra degeneracy 
    $s_{-1}: C^{op}(X,X_{-1})_n \to C^{op}(X,X_{-1})_{n+1}$ by 
    $s_{-1}(x,k) = (x,k)$.
    This satisfies $d_0s_{-1}(x,k) = (x,k)$ because 
    $s_{-1}(x,k) = (x,k)\in C^{op}(X,X_{-1})_{n+1}$ falls under
    the second case of the definition of $d_0$. 
    (Since $0 < (n+1)-k$.)
    We also observe that $d_{i+1} s_{-1} = s_{-1} d_i$
    and $s_{j+1} s_{-1} = s_{-1} s_j$ hold by construction.
    Hence $s_{-1}$ is indeed an extra degeneracy and so the augmentation
    map $C^{op}(X,X_{-1}) \to X_{-1}$ induces a homotopy equivalence:
    $\|C^{op}(X,X_{-1})\| \simeq X_{-1}$.
    Taking opposites appropriately, we obtain the same result
    for $C(X,X_{-1})$.
    This also implies that $\|C(X,*)\|$ is contractible.
    
    Next, we observe that the canonical inclusion $X \to C(X,X_{-1})$ 
    defined by $(x\in X_n) \mapsto (x,n)$ is a level-wise cofibration
    and hence induces a cofibration $\|X\| \to \|C(X,X_{-1})\|$.
    Here we write $X$ for the non-augmented simplicial space.
    By the first part of the proof $\|C(X,X_{-1})\|$ is equivalent
    to $X_{-1}$ via the augmentation map and hence 
    $\|C(X,X_{-1})/X\|$ is cone for the augmentation map.
    The inclusion $\op{const}(X_{-1}) \to C(X,X_{-1})/X$ is 
    in each level the inclusion of a wedge summand and hence
    a cofibration. The quotient of this map is 
    $C(X,X_{-1})/(X \vee \op{const}(X_{-1})) = C(X,*)/X = \gS X$.
\end{proof}

\begin{proof}[Part 2 of the proof of lemma \ref{lem:identify-cofib-seq}]
    We will use $NF$ as a short-hand for the simplicial nerve
    $N(F_{d,\gt})$ and we let $\psi_\cd$ denote the constant simplicial 
    space $\psi_n :=\Psi_{d,\gt}^{con}$.
    We will construct simplicial maps 
    \[
    \begin{tikzcd}
        (\Ccl_\cd)^{con} \ar[r] \ar[d, "f"]
        & D_\cd^{con} \ar[r] \ar[d, "g"]
        & (\Cut_\cd')^{con} \ar[d, "h"] \\
        \gS(\psi_\cd) \ar[r]
        & \gS(C_{red}(NF, \psi)) \ar[r]
        & \gS(C_{red}(NF, *))
    \end{tikzcd}
    \]
    such that each of $f$, $g$, and $h$ is a level-wise equivalence.
    The first map 
    $f:(N\Cobcl_{d,\gt})^{con} \to \gS(\psi_\cd)$ 
    is defined by 
    \[
        f:((W,l), t_0\le \dots \le t_n)  \mapsto ((W,l), k) 
        \quad
        \in (\psi \times \{k\}) \subset (\gS\psi_\cd)_n
    \]
    where $k \in \{0,\dots, n-1\}$ is the unique number such 
    that $\pr_1(W) \subset (t_k, t_{k+1})$.
    
    Before we define the second map $g:D^{con} \to \gS(C^{op}(NF,\psi)/NF)$,
    recall that an $n$-simplex in $\gS(C_{red}^{op}(NF,\psi))$ can be written
    as $(x,l,k)$ where $k\in \{0,\dots,n-1\}$, $l\in \{-1,\dots,k-1\}$,
    and $x \in NF_l$.
    We can hence define the map as
    \[
        g: ((W,l),t_0\le \dots \le t_n) 
        \mapsto \left(((W,l), t_{k-l} \le \dots \le t_k), l, k\right)
        \quad
        \in (NF_l \times \{l\} \times \{k\}) 
    \]
    where $(k,l)$ are the smallest numbers such that
    $\pr_1(W) \subset (t_{k-l-1}, t_{k+1})$.
    The face maps for $\gS(C_{red}^{op}(NF,\psi))$ can be derived
    from the formulas in definition \ref{defn:simplicial-cone} as
    \[
        d_i (x,l,k) = \begin{cases}
            (x,l, k) 
            &\text{ for }  i > k\\
            (x,l, k-1) 
            &\text{ for }  i \le k \text{ and } i \le k-l\\
            (d_{i-(k-l)} x,l-1, k-1) 
            &\text{ for }  k-l < i \le k.
        \end{cases}
    \]
    Here we implicitly identify an $n$-simplex $(x,l,k)$ with 
    the base-point $*$ if $l=k$ or $k=n$.
    Using these formulas it it not hard to check that $g$ is indeed 
    simplicial.
    The map $h$ is defined by the same formula as $g$, which makes
    sense because $(\Cut_\cd')^{con}$ is a quotient of $D_\cd^{con}$.
    In other words $h$ is induced by the fact that both rows in 
    the diagram are level-wise cofiber sequences.
    
    This concludes the construction of the diagram.
    The maps $f$, $g$, and $h$ are level-wise homotopy equivalences
    by the observations in the first part of the proof.
\end{proof}

\section{Identifying cocycles}
\label{sec:identifying-cocycles}

By our main theorem the rational cohomology of $h\Cob_1$ is a polynomial
algebra on a generator $\ga$ in degree $0$ and generators $\dlgk_i$ in 
degree $(2i+2)$. We wish to give combinatorial formulas for these cocycles 
representing $\dlgk_i$.

Our strategy is as follows:
The computations of section \ref{sec:Computations-d=1} imply that all the $\dlgk_i$
are pulled back from the space of cuts $\|\Cut_1\|$ and in the previous section
we gave an equivalence $\|\Cut_1\| \simeq Q(\gS^2 BF_1)$. 
We will now show that the topological poset $F_1$ of `factorizations of circles'
has an equivalent classifying space to Connes' category $\gL$ of cyclicly ordered sets.
It is well-known that $B\gL \simeq \CP^\infty$ and Igusa has 
constructed cocycles $\gc_k \in C^{2k}(\gL; \IQ)$ on $\gL$ that represent
$c^k \in \IQ[c] = H^*(\CP^\infty;\IQ)$.
Once the necessary identifications are made it is only a matter of correctly
pulling and pushing the cocycles through our equivalences
in order to obtain the desired formulas for $\dlgk_k$.

\subsection[The category of factorizations and Connes' category Lambda]%
{The category of factorizations and Connes' category $\gL$}
\label{subsec:fact-vs-Connes}

In this section we compare the category of factorizations $F_{1,\{\pm1\}}$
in dimension $d=1$ with tangential structure $\gt^{\mrm{or}} =\{\pm1\}$
to Connes' category $\gL$ of cyclic sets.
Concretely, we will construct a zig-zag of topological functors 
\[
    F_1 \xleftarrow{J} F_1^\gd \xrightarrow{D} \mcF_1 
    \xleftarrow{H} \gL
\]
such that $J$ is (almost) a continuous bijection 
that induces an equivalence on classifying spaces, 
$D$ is a level-wise equivalence on nerves,
and $H$ is an equivalence of ordinary categories.
One can show that after taking nerves each of these functor 
induces a weak equivalence in the complete Segal space model structure, 
so that $F_1$ and $\gL$ represent equivalent $(\infty,1)$-categories.
However, we will only show that they all induce equivalences 
on classifying spaces, as that is all we need.

\begin{defn}
    Connes' category $\gL$ has as objects natural numbers $n \ge 1$.
    A morphism $[f]:n \to m$ is represented by a weakly monotone map 
    $[f]:\IZ \to \IZ$
    satisfying $f(x+n) = f(x)+m$ for all $x \in \IZ$.
    Two such maps $f,f':\IZ \to \IZ$ represent the same morphism 
    if and only if there is a $k \in \IZ$ such that $f(x)= f'(x) + km$ 
    for all $x \in \IZ$.
\end{defn}

\begin{rem}\label{rem:gL-big}
    There also a non-skeletal version $\gL^{big}$ of $\gL$ 
    such that $\gL \inj \gL^{big}$ is an equivalence of categorie.
    A \emph{cyclic ordering} on a finite set $A$ is an equivalence class 
    of bijections $\gc:A \cong \{1, \dots, |A|\}$, where two such bijections
    are identified whenever they differ by a cyclic permutation of $\{1,\dots,|A|\}$.
    However, a cyclic morphism $(A,[\gc]) \to (B,[\gd])$ has slightly
    more data than just cyclic order preserving maps $f:A \to B$.
    Namely, when $|f(A)|=1$ one also has to keep track of a ``minimal preimage"
    $a_f \in A$. We will not go into detail here, but the reader may
    check themselves that the forgetful functor $\gL \to \mi{Sets}$ 
    is indeed not faithful.
\end{rem}

The poset $F_{d,\gt}$ is difficult to work with because it is not fibrant.
There is, however, a map $F_{d,\gt} \to \Psi_{d-1,\gt}$
defined by sending $((W,l),t)$ to the preimage $\pr_1^{-1}(t) \cap W$ 
equipped with the induced tangential structure. 
We can use this map to define a version of $F_{d,\gt}$
that is better behaved:
\begin{defn}
    The topological poset $F_{d,\gt}^\gd$ has as underlying space
    $F_{d,\gt} \times_{\Psi_{d-1,\gt}} \gd(\Psi_{d-1,\gt})$ where
    the latter denotes $\Psi_{d-1,\gt}$ with the discrete topology.
    The partial ordering on $F_{d,\gt}^\gd$ is the same as on $F_{d,\gt}$,
    except that we remove the identity morphisms.
\end{defn}

\begin{defn}
    For $W \in \Psi_{d,\gt}$ and $s < t \in \IR$ regular values
    we write:
    \[
        W_{|t} := W \cap (\{t\}\times \cube) 
        \qand
        W_{|[s,t]} := W \cap ([s,t]\times \cube) .
    \]
\end{defn}

\begin{lem}
    The canonical map $J:F_{d,\gt}^\gd \to F_{d,\gt}$ induces 
    a weak equivalence on classifying spaces. 
\end{lem}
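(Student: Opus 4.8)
The map $J\colon F_{d,\gt}^\gd \to F_{d,\gt}$ is the base-change of $F_{d,\gt}$ along the canonical map $\gd(\Psi_{d-1,\gt}) \to \Psi_{d-1,\gt}$ (with the identity morphisms removed), where the reference map $F_{d,\gt} \to \Psi_{d-1,\gt}$ sends $((W,l),t)$ to the slice $\pr_1^{-1}(t) \cap W$. So the natural strategy is to invoke the generalised change-of-base theorem, Lemma \ref{lem:base-change}, with $\mcC = F_{d,\gt}$ viewed as a topological category, $Z = \Psi_{d-1,\gt}$, and the zig-zag $g\colon \gd(\Psi_{d-1,\gt}) \xrightarrow{\id} \Psi_{d-1,\gt} \xleftarrow{p} \Obj(F_{d,\gt})$ where $p$ is the slice map. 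Since $g$ is (a set bijection, hence in particular) surjective on connected components, the only nontrivial hypothesis is that for every $n$ the composite $N_n F_{d,\gt} \to \Obj(F_{d,\gt})^{n+1} \xrightarrow{p^{n+1}} \Psi_{d-1,\gt}^{n+1}$ is a fibration. First I would verify this fibrancy condition.

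The key geometric step is therefore the following: given a compact parameter space $K$, a path of tuples $((W_s,l_s), t_{0,s} \le \dots \le t_{n,s})$ in $N_n F_{d,\gt}$, and a lift over $\partial_0 K$ (or rather a homotopy of the induced slices $\pr_1^{-1}(t_{i,s}) \cap W_s$ in $\Psi_{d-1,\gt}$), one must extend the lift. The point is that because each $t_{i,s}$ is a \emph{regular} value of $\pr_1\colon W_s \to \IR$, there is a collar neighbourhood of the slice inside $W_s$, and a family of such collars can be chosen to depend continuously on $s$; pushing the prescribed deformation of the slice through the collar and then extending by an isotopy supported near the slices produces the required lift. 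This is the same type of collar/isotopy-extension argument that underlies the fibrancy of $\Cob_{d,\gt}$ itself (\cite[Proposition 3.2.4(ii)]{ERW19b}), and I would either cite that machinery or spell out the collaring for the slices explicitly. One subtlety is that we have $n+1$ regular values $t_{0,s} \le \dots \le t_{n,s}$ which may collide; but since the $t_{i,s}$ vary continuously and each remains a regular value, one can work with disjoint collars around the distinct values and treat coincident values as a single slice, so the argument goes through. The \emph{main obstacle} is precisely making this family-of-collars construction precise and checking continuity of the resulting lift in the colimit topology on $\Psi_{d,\gt}$ (over all ambient dimensions $N$), using compactness of $K$ to reduce to a finite stage as in the proof of Lemma \ref{lem:r-Serre}.

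Once the fibrancy of $p^{n+1}\circ N_n$ is established, Lemma \ref{lem:base-change} applies verbatim (noting $F_{d,\gt}$ is weakly unital — the weak units being the ``stay-put'' factorizations, or after passing to a fibrant replacement, since only the classifying space matters), and yields that the canonical map $B(F_{d,\gt}^\gd) \to B(F_{d,\gt})$ is a weak equivalence. Removing the identity morphisms from $F_{d,\gt}^\gd$ does not change the classifying space, since $B$ is the fat realisation and is insensitive to degeneracies; alternatively one observes that $F_{d,\gt}^\gd$ with identities is the genuine base-change and a cofinality/retraction argument identifies its nerve's realisation with that of the non-unital version. I would conclude by remarking that this is the analogue for $F_{d,\gt}$ of Lemma \ref{lem:gd-doesnt-change-B} for $\Cob_{d,\gt}$.
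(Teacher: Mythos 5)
Your strategy agrees with the paper in spirit — both reduce to the generalised base-change lemma (Lemma \ref{lem:base-change}) — but you apply it to $F_{d,\gt}$ directly, whereas the paper first passes to an intermediate cylindrical sub-poset. Concretely, the paper introduces the non-unital sub-posets $F'_{d,\gt} \subset F_{d,\gt}$ and $F^{\gd\prime}_{d,\gt} \subset F^{\gd}_{d,\gt}$, consisting of objects $(W,t)$ where $W$ is cylindrical over $(t-\eps,t+\eps)$ and morphisms with strict $t<s$; a rescaling argument (as in \cite[Proof of 3.9]{GRW10}) shows the inclusions are level-wise weak equivalences on nerves, and \emph{then} the base-change lemma is applied to $F^{\gd\prime}_{d,\gt}\to F'_{d,\gt}$. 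In the cylindrical setting the fibrancy hypothesis — that $N_n F' \to \Psi_{d-1,\gt}^{\,n+1}$ is a Serre fibration — follows by the same argument as \cite[Prop.\ 3.2.4(ii)]{ERW19b}, precisely because the manifold is a product near each slice, so the slice determines the bordism locally and the lift is essentially canonical.

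The gap in your proof is that neither of your two potential workarounds for skipping this reduction actually closes the argument. First, $N_n F_{d,\gt}$ (with identities, since $F_{d,\gt}$ is a genuine poset) contains degenerate chains with $t_i = t_{i+1}$. There the $i$-th and $(i+1)$-st slices are \emph{forced} to coincide, yet a path in $\Psi_{d-1,\gt}^{\,n+1}$ may separate those two coordinates. Lifting such a path requires continuously separating $t_{i,s} < t_{i+1,s}$ while deforming $W_s$ so that its slices track two diverging paths; saying one can ``treat coincident values as a single slice'' does not explain why this branching lift exists, let alone continuously in families. Second, even for strictly increasing $t_i$, without cylindricality the collar of a regular slice is a choice, not a structure; the continuity of a chosen family of collars over a compact parameter space, compatibly with the $\Psi$-topology and the colimit over ambient dimension, is exactly the technical content that ERW19b packages via the cylindricality built into $\Phi_{d,\gt}$. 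Appealing to ``the same type of collar/isotopy-extension argument'' therefore does not discharge the hypothesis of Lemma \ref{lem:base-change}; it restates it. The paper's intermediate step exists precisely to put the problem into a form where the already-proved fibrancy result applies, and your proposal needs either that reduction or a self-contained proof of the fibration property for the non-cylindrical poset (including the degenerate-chain case), which you have not supplied.
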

\begin{proof}
    Let $F_{d,\gt}'$ denote the non-unital subcategory of 
    $F_{d,\gt}$ containing those objects $(W,t)$ where
    $W$ is cylindrical over $(t-\eps,t+\eps)$ for some $\eps>0$
    and those morphisms $(W,t \le s)$ where $t < s$.
    Similarly, let $F_{d,\gt}^{\gd\prime} \subset F_{d,\gt}^\gd$
    be the non-unital subcategory defined by the same conditions.
    
    It follows from standard rescaling arguments 
    (e.g. \cite[Proof of 3.9]{GRW10}) that the two inclusions
    $F_{d,\gt}' \inj F_{d,\gt}$ and 
    $F_{d,\gt}^{\gd\prime} \inj F_{d,\gt}^\gd$
    induce level-wise weak equivalences on nerves
    and hence weak equivalences on geometric realisations.
    
    The functor $F_{d,\gt}^{\gd\prime} \to F_{d,\gt}'$ is a
    base-change in the sense of lemma \ref{lem:base-change}.
    The relevant map 
    \[
        N(F_{d,\gt}')_n \longrightarrow (\Psi_{d,\gt})^n,
        \quad
        (W, t_0 < \dots < t_n) \mapsto (W_{|t_0}, \dots, W_{|t_n})
    \]
    is a fibration by the same arguments as in the proof of 
    \cite[Proposition 3.2.4(ii)]{ERW19b}.
    Therefore, by the base-change lemma \ref{lem:base-change},
    $F_{d,\gt}^{\gd\prime} \to F_{d,\gt}'$ is a weak
    equivalence on classifying spaces.
    By $2$-out-of-$3$ this implies that
    $F_{d,\gt}^{\gd} \to F_{d,\gt}$ 
    is also a weak equivalence on classifying spaces.
\end{proof}

Specializing to dimension $1$ we now introduce an ordinary category
$\mcF_1$ that will interpolate between $F_1^\gd$ and Connes' $\gL$.

\begin{defn}
    The category $\mcF_1$ has as objects triples $(M,W_0,W_1)$ 
    where $(W_0:\emptyset \to M,W_1: M \to \emptyset) \in N_2(h\Cob_1)$
    is a composable tuple of morphisms in $h\Cob_1$ such that $M$ is non-empty 
    and the composite $W_0 \cup_M W_1$ is a circle.
    A morphism $(M, W_0, W_1) \to (N, V_0, V_1)$
    is a morphism $X:M \to N$ in $h\Cob_1$ such that 
    $W_0 \cup_M X = V_0$ and $W_1 = X \cup_N V_1$.
\end{defn}

\begin{defn}
    Define a functor $P:F_1^\gd \to \mcF_1$ by sending
    $((W,l),t) \in F_{1}^\gd$ to 
    \[
        ([W_{|t_0},l], [W_{|[-1,t_0]},l], [W_{|[t_0,1]},l]),
    \]
    and on morphisms by sending $((W,l),t_0 < t_1)$ 
    to $[W_{[t_0,t_1]},l]$.
\end{defn}        

\begin{lem}
    The canonical functor $F_1^\gd \to \mcF_1$ induces a level-wise 
    equivalence on nerves.
\end{lem}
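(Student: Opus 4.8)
The plan is to unpack both sides explicitly in each simplicial degree and produce a compatible system of weak equivalences. An $n$-simplex of $N(F_1^\gd)$ is a tuple $((W,l), t_0 < \dots < t_n)$ where $W \subset \cube$ is a connected (or empty) $1$-manifold with orientation $l$, each $t_i$ is a regular value of $\pr_1\colon W \to (-1,1)$ lying in $\pr_1(W)$, and consecutive $t_i$ are distinct. Applying the functor $P$ sends this to the sequence of composable bordisms $W_{|[t_{i-1},t_i]}\colon W_{|t_{i-1}} \to W_{|t_i}$ in $h\Cob_1$, together with the two ``caps'' $W_{|[-1,t_0]}\colon \emptyset \to W_{|t_0}$ and $W_{|[t_n,1]}\colon W_{|t_n} \to \emptyset$; so an $n$-simplex of $N(\mcF_1)$ is a chain of objects $(M_i, A_i, B_i)$ of $\mcF_1$ connected by morphisms $[X_i]\colon M_{i-1} \to M_i$. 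First I would observe that the morphism space of $F_1^\gd$ fibers over $\gd(\Psi_{0})$, so that $N_n(F_1^\gd)$ decomposes as a disjoint union over the discrete data of the cross-sections $(W_{|t_0}, \dots, W_{|t_n})$ — finite sets with signs — and over this same discrete data $N_n(\mcF_1)$ likewise decomposes. On each such component, the fiber on the source side is a space of embedded connected $1$-manifolds in $\cube$ with prescribed regular cross-sections at $t_0 < \dots < t_n$ (i.e.\ a union of components of $\Psi_{1,\gt^{or}}(\IR\times\cube$, suitably cut), while the fiber on the target side is the corresponding product of $\hom$-spaces $\hom_{h\Cob_1}$, which by Fact~\ref{fact:hom-in-Cob} is a disjoint union of $B\Diff^+$'s of the relevant compact $1$-manifolds rel boundary.

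The key step is then to check that on each matching component the map is a weak equivalence. Here I would use that a connected compact $1$-manifold-with-boundary cut into pieces between successive levels is, by the classification of $1$-manifolds, a disjoint union of intervals (with at most one circle-free configuration forced by connectivity of the total $W$), and that $\Diff^+(\text{interval rel }\partial) \simeq *$ — this is exactly the fact already used in Lemma~\ref{lem:red1=hred1}. Concretely: the space of embeddings $\Emb(W,\cube)$ realizing a prescribed combinatorial ``interval-diagram'' with fixed regular values is contractible by the standard straightening/rescaling argument (as in the proof of \cite[Theorem 3.9]{GRW10}), and the target $\hom$-space component is $B(\Diff^+(\text{disjoint intervals rel }\partial)) \simeq *$ as well. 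So the map is, component by component, a map between weakly contractible spaces over a common discrete index set, hence a weak equivalence. The only subtlety is bookkeeping the orientation data ($\gt^{or}$-structure $=\{\pm1\}$-labelling): a $\gt^{or}$-structure on a disjoint union of intervals is just a choice of sign on each interval, locally constant, and the equivalence of mapping spaces must respect these; I would handle this by noting that the decompositions on both sides carry the same $\{\pm1\}$-labels attached to the same combinatorial pieces, so the bijection on $\pi_0$ matches and the identification on components is $\gt$-equivariant by construction.

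Finally I would check the simplicial structure maps are respected: face maps on $N(F_1^\gd)$ either drop a regular value $t_i$ (composing the two adjacent bordism pieces $W_{|[t_{i-1},t_i]}$ and $W_{|[t_i,t_{i+1}]}$, which on the $\mcF_1$-side is exactly composition of the morphisms $[X_i],[X_{i+1}]$) or drop an extreme value $t_0$ or $t_n$ (absorbing a piece into a cap, which is exactly the face map in $N(\mcF_1)$ that merges $W_{|[-1,t_0]}$ with $W_{|[t_0,t_1]}$); degeneracies repeat a regular value, matching the identity morphism in $\mcF_1$. These compatibilities are immediate from the definitions of $P$ and of $\mcF_1$. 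The main obstacle I expect is not any single step but the careful verification that the ``space of embedded connected $1$-manifolds with prescribed regular cross-sections'' really is a disjoint union of contractible pieces indexed exactly as the target product of $B\Diff^+(\text{rel }\partial)$'s — i.e.\ matching up the combinatorics of which interval connects which boundary points — so that the two decompositions are literally identified; once that matching is set up, contractibility of both sides makes the equivalence formal.
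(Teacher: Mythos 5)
Your proposal is correct and rests on the same key ingredient as the paper's proof, namely the contractibility of $\Diff^+([0,1]\text{ rel }\partial)$ together with the standard straightening arguments for embedded $1$-manifolds (as in \cite[Theorem 3.9]{GRW10}). The route is somewhat different in packaging, though: where you decompose $N_n(F_1^\gd)$ and $N_n(\mcF_1)$ over the discrete cross-section data and verify contractibility of the matching fibers directly, the paper embeds $N_k(F_1^\gd)$ into $N_{k+2}(\gd\Cobred_1)$ and $N_k(\mcF_1)$ into $N_{k+2}(h\Cobred_1)$ as equivalences onto the same set of connected components, then concludes immediately from the already-established level-wise equivalence $N(\gd\Cobred_1)\simeq N(h\Cobred_1)$ of lemma \ref{lem:red1=hred1}. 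The paper's factoring trick is more economical because it does not re-derive contractibility: that work was already done once in \ref{lem:red1=hred1}. Your direct approach is logically fine but recapitulates that work. One small slip to fix: you write that the fiber of $N_n(\mcF_1)$ over fixed cross-section data is a product of hom-spaces $\hom_{h\Cob_1}$ ``which by Fact \ref{fact:hom-in-Cob} is a disjoint union of $B\Diff^+$'s'' --- but Fact \ref{fact:hom-in-Cob} is about $\hom_{\Cob}$, and $\hom_{h\Cob_1}$ is the $\pi_0$, hence already a discrete set. So the target-side fiber is just a set of points; you do not need to invoke contractibility of $B\Diff^+$ there, only on the source side. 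The conclusion is unaffected, since you then match components with the discrete index set and show the source fiber over each point is weakly contractible.
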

\begin{proof}
    For every $k$ there is a (non-simplicial) map
    \begin{align*}
        N_k(F_1) &\longrightarrow N_{k+2}(\Cobred_{1}), \\
        ((W,l), t_0 < \dots <t_n) &\mapsto 
        (\emptyset \xrightarrow{W_{|[-1,t_0]}} W_{|t_0} 
        \xrightarrow{W_{|[t_0,t_1]}} W_{|t_1}
        \to \dots 
        \to W_{|t_n} \xrightarrow{W_{|[t_n,1]}} \emptyset).
    \end{align*}
    which is an equivalence onto the connected components it hits.
    This also defines a map for $F_1^\gd$:
    \begin{align*}
        N_k(F_1^\gd) &\longrightarrow N_{k+2}(\gd\Cobred_{1}),
    \end{align*}
    which again is an equivalence onto the connected components it hits.
    
    To prove the comparison with $\mcF_1$ note that the nerve
    of $\mcF_1$ embeds level-wise a subset 
    $N_k(\mcF_1) \inj N_{k+2}(h\Cobred_1)$ in the same way that 
    $N_k(F_1^\gd)$ embeds into $N_{k+2}(\gd\Cobred_1)$.
    Both maps in fact hit the same connected components;
    namely those $((W,l),t_0\le \dots \le t_{k+2})$
    where $W \cong S^1$ and $W_{|t_i} =\emptyset$ iff $i\in \{0,k+2\}$.
    But we already observed in lemma \ref{lem:red1=hred1} that in dimension $1$
    with tangential structure $\gt = \{\pm1\}$ the simplicial space
    $N(\gd\Cobred_1)$ is level-wise equivalent to the simplicial set
    $N(h\Cobred_1)$. This shows that $N_k(F_1^\gd)$ is indeed 
    equivalent to the discrete space $N_k(\mcF_1)$.
\end{proof}

\begin{defn}\label{defn:functor-G}
    For every oriented $0$-manifold $(M,l:M \to \{\pm1\})$ let 
    $M^\pm := \{p \in M \;|\; l(p)= \pm1\}$ denote the set
    of positively or negatively oriented points, respectively. 
\end{defn}

\begin{rem}
    In the following lemma we will construct an equivalence of categories
    $H: \mcF_1 \to \gL$. If we allow ourselves to use the non-skeletal
    $\gL^{big} \supset \gL$ from remark \ref{rem:gL-big},
    the inverse equivalence $\wt{H}:\mcF_1 \to \gL^{big}$ admits 
    a conceptually easier description:
    it sends an object $(M, W_0, W_1) \in \mcF_1$ to the finite 
    set $\pi_0(W_0)$, equipped with the cyclic ordering coming from 
    the fact that $W_0$ is a disjoint union of intervals in an oriented circle 
    $W_0 \subset W_0 \cup_M W_1 \cong S^1$.
    To a morphism $X: (M, W_0, W_1) \to (N, V_0, V_1)$ the functor $\wt{H}$
    assigns the cyclic morphism $\pi_0(W_0) \to \pi_0(W_0 \cup_M X) \cong \pi_0(V_0)$.
    The problem with this description is that, as pointed out in remark \ref{rem:gL-big},
    cyclic morphism have a subtle bit of extra data, 
    which makes it hard to verify functoriality.
    Instead, we will explicitly construct an equivalence between the skeleta.
\end{rem}

\begin{lem}\label{lem:Lambda-F1}
    There is an equivalence of ordinary categories $H:\gL \to \mcF_1$.
\end{lem}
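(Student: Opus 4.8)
The plan is to construct the functor $H\colon \gL \to \mcF_1$ explicitly and then check it is essentially surjective and fully faithful. The geometric intuition is that an object $n \in \gL$, a cyclically ordered set with $n$ elements, should be sent to the factorization of the circle $S^1$ obtained by choosing $n$ cut points on it. More precisely, I would fix once and for all a standard oriented circle, say $S^1 = \IR/\IZ$, and for each $n \ge 1$ let the chosen $0$-manifold be the set of $n$-th roots of unity $\mu_n = \{0, \tfrac1n, \dots, \tfrac{n-1}n\} \subset \IR/\IZ$. The object $H(n) := (M_n, W_0^n, W_1^n)$ should have $M_n = \mu_n$ with a fixed orientation (say all points positively oriented, using \autoref{defn:functor-G}), $W_0^n$ the union of half-open-ish arcs realizing a bordism $\emptyset \to \mu_n$, and $W_1^n$ the complementary arcs realizing $\mu_n \to \emptyset$, chosen so that $W_0^n \cup_{\mu_n} W_1^n \cong S^1$. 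Concretely one can take $W_1^n$ to be $n$ disjoint intervals (one ``to the right'' of each cut point, going around) and $W_0^n$ to be a single interval containing the rest — or, symmetrically, split each adjacent pair of cut points so the union is the circle; the precise bookkeeping of which arcs go up and which go down is the routine part.

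On morphisms, a morphism $[f]\colon n \to m$ in $\gL$ is a cyclic monotone map $f\colon \IZ \to \IZ$ with $f(x+n) = f(x)+m$, up to adding multiples of $m$. I would interpret such an $f$ as the ``collapse'' data telling us how the $n$ cut points on the source circle map to the $m$ cut points on the target circle; equivalently, $f$ determines a bordism $[X]\colon \mu_n \to \mu_m$ in $h\Cob_1$ (a disjoint union of intervals, one for each element of $\mu_m$, with the intervals recording which source points get identified to which target point and which arcs get collapsed), and one checks that $W_0^n \cup_{\mu_n} X \cong W_0^m$ and $W_1^n \cong X \cup_{\mu_m} W_1^m$. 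The functoriality $H([g]\circ[f]) = H([g])\circ H([f])$ then follows because composing the cyclic maps corresponds to gluing the interval-bordisms, and both sides are computed in the ordinary category $h\Cob_1$ where diffeomorphism-class bookkeeping is straightforward. One has to be a little careful that the equivalence relation on the $f$'s (shifting by $m\IZ$) matches the fact that a morphism of $\mcF_1$ remembers only the diffeomorphism class of $X$ rel boundary — this is exactly the point where the cyclic symmetry of the circle is used.

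For essential surjectivity: an arbitrary object $(M, W_0, W_1)$ of $\mcF_1$ has $W_0 \cup_M W_1 \cong S^1$ with $M$ a nonempty finite oriented $0$-manifold, say $|M| = n$. Choosing an orientation-preserving identification of this circle with $\IR/\IZ$ carrying $M$ to $\mu_n$ (possible since $\mathrm{Diff}^+(S^1)$ acts transitively enough, and any two embeddings of an $n$-point set into $S^1$ respecting cyclic order are isotopic), and then matching the decomposition into arcs, shows $(M,W_0,W_1) \cong H(n)$; the orientation labels on $M$ are forced by the requirement that $W_0$ and $W_1$ induce compatible orientations (each arc has one incoming and one outgoing end), so in fact all objects of $\mcF_1$ have $M$ in a single ``alternating'' orientation pattern matched by our choice. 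For fullness and faithfulness: a morphism $(M,W_0,W_1) \to (N,V_0,V_1)$ in $\mcF_1$ is a class $[X]\colon M \to N$ of interval-bordisms with $W_0 \cup X \cong V_0$ and $W_1 \cong X \cup V_1$; unwinding, $X$ must be a disjoint union of intervals whose combinatorics — recorded by the induced weakly monotone, cyclically equivariant map on the cut points $\mu_n \to \mu_m$ — is precisely a morphism of $\gL$, and two such $X$'s are diffeomorphic rel boundary iff the combinatorial data agree up to the $\gL$-relation. I expect the main obstacle to be the careful verification that the equivalence relations match on both sides: that diffeomorphism-class-rel-boundary of the interval bordism $X$ corresponds exactly to the $\mathbb{Z}/m$-ambiguity in the defining map $f$ of a $\gL$-morphism, and that the orientation/arc bookkeeping is consistent throughout (which arcs point ``up'' into $W_0$ versus ``down'' into $W_1$, and how a bordism $X$ in the middle rearranges this). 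Once that combinatorial dictionary is set up precisely, fully faithfulness is a direct count and essential surjectivity is immediate.
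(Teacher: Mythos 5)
The high-level strategy (construct $H$ explicitly on a skeleton, then verify essential surjectivity and full faithfulness) is the same as the paper's, but your description of the objects $H(n)$ contains a genuine error that the rest of your argument inherits. You set $M_n = \mu_n$, an $n$-point $0$-manifold with \emph{all} points positively oriented, and suggest (one variant of) $W_0^n$ a single interval with $W_1^n$ the other $n$ arcs. Neither choice can produce a valid object of $\mcF_1$. Since $W_0$ is an oriented bordism $\emptyset \to M$, it is a disjoint union of oriented arcs, and each arc contributes exactly one positively and one negatively oriented boundary point; so $\partial W_0 = M$ forces $M$ to have equal numbers of $+$ and $-$ points, ruling out ``all positive''. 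Moreover, because $W_0 \cup_M W_1 \cong S^1$ and each point of $M$ must lie in $\partial W_0$ \emph{and} in $\partial W_1$, the arcs of $W_0$ and $W_1$ must alternate around the circle; hence $|M|$ is forced to be even, say $2n$, with $W_0$ and $W_1$ each consisting of $n$ arcs. Your ``single interval'' $W_0^n$ contributes only two boundary points, so $\partial W_0^n = M$ fails whenever $n > 2$.

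You in fact notice the constraint yourself in the essential-surjectivity paragraph (``each arc has one incoming and one outgoing end, so all objects of $\mcF_1$ have $M$ in a single alternating orientation pattern''), but you never go back and revise the construction of $H(n)$ to be consistent with it. The paper's proof resolves exactly this point: the object $H(n)$ is built on a $0$-manifold $M(n)$ with $2n$ points $\{[i]_n^+, [i]_n^-\}_{i \in \IZ/n}$ carrying alternating orientation, $W(n)$ being $n$ arcs with boundary $\{[i]_n^+, [i+1]_n^-\}$ and $V(n)$ being $n$ arcs with boundary $\{[i]_n^-, [i]_n^+\}$. The morphism formula then needs to track both the $+$- and $-$-labelled points, and the $\min(f^{-1}(j+1))$ clause in the paper's definition of $\gs_X$ is precisely the bookkeeping that handles fibers of $f$ with more than one element — this is the part your sketch calls ``routine'' but is where the care is actually needed, since it is what makes $H$ functorial and encodes the difference between a morphism in $\gL$ and a mere map of underlying cyclic sets. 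Until the objects are fixed to the even-cardinality alternating pattern and the morphism formula is made explicit, the fully-faithfulness argument cannot be carried out as you describe.
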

\begin{proof}
    We begin by fixing some notation for this proof.
    For all $n \ge 1$ and $[k] \in \IZ/n$ we choose two points in 
    $\cube$, denoted by $[k]_n^+$ and $[k]_n^-$ such that all of these
    points are disjoint.
    Using these we define oriented $0$-manifolds for all $n \ge 1$ by 
    $M(n) :=\{ [1]_n^+, [1]_n^-, \dots, [n]_n^+, [n]_n^-\} \subset \cube$
    with orientation $l([i]_n^\pm) := \pm 1$.
    Next, define diffeomorphism classes of bordisms 
    $W(n): \emptyset \to M(n)$ and $V(n): M(n) \to \emptyset$ in 
    $h\Cob_1$ by requiring that $W(n)$ is a disjoint union of $n$ intervals
    with boundary $\{[i]_n^+,[i+1]_n^-\}$ and $V(n)$ is a disjoint union
    of $n$ intervals with boundary $\{[i]_n^-,[i]_n^+\}$.
    By construction the glued manifold $W(n) \cup_{M(n)} V(n)$ 
    is a circle and therefore $(M(n),W(n),V(n))$ defines 
    an object of $\mcF_1$. In fact, these
    objects define a skeleton for $\mcF_1$.
    
    Before we begin with the actual proof, we need to understand morphisms
    in the category $h\Cobred_1$. For any $n,m \ge 0$ there is
    a canonical bijection
    \[
        \gs: \Hom_{h\Cobred_1}(M(n), M(m)) 
        \cong \Hom_{\Fin^{bij}}(M(n)^+ \amalg M(m)^-, 
        M(n)^- \amalg M(m)^+).
    \]
    This map sends a bordism $X:M(n) \to M(m)$ to the bijection
    $\gs_X: M(n)^+ \amalg M(m)^- \to  M(n)^- \amalg M(m)^+$
    with $\gs_X(a) = b$ whenever there is an edge in $X$ 
    connecting $a$ and $b$.
    It is possible to implicitly describe $\gs_{Y \cup_{M(n)} X}$ in terms 
    of $\gs_X$ and $\gs_Y$, but we leave this to the reader.

    We want to define a functor $H:\gL \to \mcF_1$ that sends the object
    $n$ to the object $(M(n),W(n),V(n)) \in \mcF_1$.
    To do so, we need to give for every $[f]:n \to m$ in $\gL$
    a bordism $X:M(n) \to M(m)$ such that $W(n)\cup_{M(n)} X = W(m)$
    and $V(n) = X \cup_{M(m)} V(m)$.
    Equivalently, we need to give a bijection 
    $\gs:M(n)^+ \amalg M(m)^- \cong M(n)^- \amalg M(m)^+$
    satisfying certain conditions.

    Fix a representative $f: \IZ \to \IZ$. 
    We define $X$ via $\gs_X$ as
    \begin{align*}
        \gs_X([i]_n^+) &= \begin{cases}
            [i]_n^- & \text{ if } f(i) = f(i+1) \\
            [f(i)]_m^+ & \text{ if } f(i) \neq f(i+1)
        \end{cases}\\
        \gs_X([j]_m^-) &= \begin{cases}
            [j+1]_m^+ & \text{ if } j+1 \not\in f(\IZ) \\
            [k]_n^+ & \text{ if } k+1 = \min(f^{-1}(j+1)).
        \end{cases}
    \end{align*}
    Since $f(x+n) = f(x)+m$ this is well defined on 
    $[i] \in \IZ/n$ and $[j]\in \IZ/m$.
    Moreover, $\gs_X$ does not change if we replace $f$ by 
    $f+m$ and therefore $X$ only depends on the equivalence
    class $[f] \in \Hom_\gL(m,n)$.
    This construction is illustrated in 
    figure \ref{fig:3-easy-examples}.
    One checks by hand that $X$ defines a morphism in $\mcF_1$
    and that the construction is functorial.
    
    As noted before, every object of $\mcF_1$ is isomorphic
    to one of the form $(M(n),W(n),V(n))$,
    and hence the functor $H$ is essentially surjective.
    
\begin{figure}[h]
    \centering
    \tiny
    \def\svgwidth{\linewidth}
    \import{figures/}{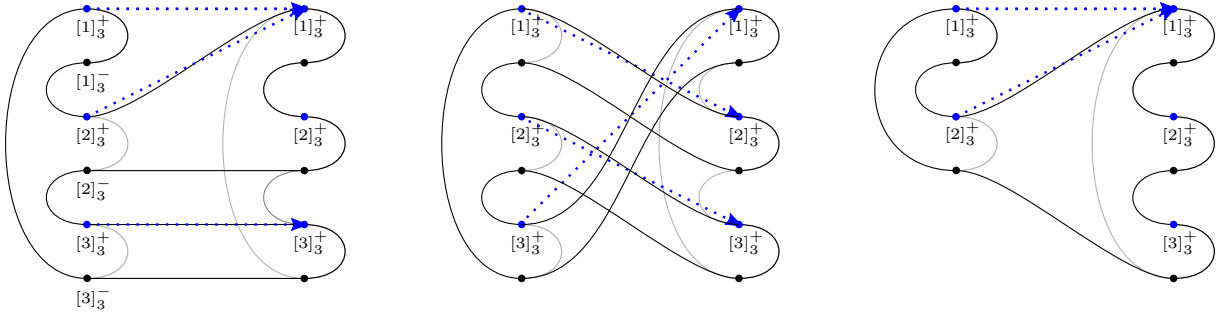}
    \caption{Three morphisms 
    $X_i:(M(n_i),W(n_i),V(n_i)) \to (M(m_i),W(m_i),V(m_i))$ in $\mcF_1$ 
    and their associated morphisms $f_i:n_i \to m_i$ in $\gL$.
    The bordisms $W(n_i), X_i, V(m_i)$ are drawn in black,
    the bordisms $V(n_i)$ and $W(m_i)$ are indicated in grey,
    and the maps $f_i:\IZ/m_i \to \IZ/n_i$ are shown in blue.}
    \label{fig:3-easy-examples}
\end{figure}
    
    We still need to show that $H$ is fully faithful.
    To do so, we will construct from a given 
    $X:(M(n),W(n),V(n)) \to (M(m),W(m),V(m))$ a morphism $[f]:n \to m$
    and show that this construction is inverse to the definition of $H$.
    
    Let $A \subset \{1,\dots,n\} \cong M(n)^+$ be the (non-empty!) subset of 
    those points where the relevant edge of $X$ ends in $M(n)^+$.
    For the minimal element $a_{\min}$ of $A$ we pick any value 
    $f(a_{\min}) = j \in \IZ$ such that 
    $\gs_X[a_{\min}]_n^+ = [j]_m^+$.
    The set of such $j$ is of the form $j + m\IZ$.
    For all other $a \in A + n\IZ$ we set $f(a) = j_a$ 
    for the unique $j_a \in \IZ$ with 
    $\gs_X([a]_n^+) = [j_a]_m^+$ and $f(a_{\min}) < j_a < f(a_{\min})+m$.
    Then, for all $i\in \IZ \setminus (A + n\IZ)$ we define 
    $f(i) := f(i+1)$, which makes sense recursively.
    
    The map $f:\IZ \to \IZ$ we constructed satisfies $f(x+n) = f(x)+m$
    by construction. That $f$ is weakly monotone is enforced by the 
    condition that the manifold $W(n) \cup_{M(n)} X \cup_{M(m)} W(m)$
    is a circle. (See figure \ref{fig:3-easy-examples}.)
    We have therefore constructed mutually inverse bijections 
    between the relevant morphisms in $\mcF$ and $\gL$.
\end{proof}

\subsection{Cocycles on the cyclic category}

In this section we recall the description of Igusa's rational $2k$-cocycles 
$\gb_k$ on $N\gL$ that represent the powers of the first Chern class
$c_1 \in H^*(B\gL; \IQ) \cong \IQ[c_1]$.
It will be useful to first define the cocylces on a certain 
simplicial set $\mc{U}$, which admits compatible maps 
\[
    \begin{tikzcd}
        N(F_1) \ar[drr] & N(F_1^\gd) \ar[l, "J"'] \ar[r, "D"]\ar[dr]  &
        N(\mcF_1) \ar[d, "s"] & N\gL \ar[l, "H"'] \ar[dl]  \\
        & & \mc{U}. & 
    \end{tikzcd}
\]
By the previous section the realisations of the top row 
are all equivalent to $\CP^\infty$ and the horizontal maps are equivalences.
We think of $\mc{U}$ as a rational model for $\CP^\infty$ similar
to Kontsevich's combinatorial $BU(1)^{\mrm{comb}}$, 
see \cite[section 2.2]{Kon92}.
However, we will not actually show $\mc{U}$ is rationally
equivalent to $\CP^\infty$. Instead we will only show that 
the maps $B\gL \to \|\mc{U}\|$ are rationally surjective.
Concretely, we define cocycles $\cl{\sign}_{2k}$ on $\mc{U}$
and show that their pullback to the top row is a certain
multiple of $(c_1)^k$.
The advantage of working with $\mc{U}$ is that it is much easier 
to construct the diagram as described above than it is 
to lift Igusa's cocycle against the maps $H$ and $J$, see remark \ref{rem:U-useful}.

\begin{defn}\label{defn:U}
    The $n$-simplices in $\mc{U}$ are represented by $n$-tuples 
    $(A_0, \dots, A_n)$ of finite non-empty disjoint subsets $A_i \subset S^1$.
    We identify two such $n$-tuples $(A_0,\dots,A_n)$ and
    $(B_0,\dots,B_n)$ if there is an orientation 
    preserving diffeomorphism $\gp:S^1 \cong S^1$
    with $\gp(A_i) = B_i$ for all $i$.
    The $i$th face map forgets $A_i$ and the $i$th degeneracy map sends 
    $[A_0, \dots, A_n]$ to $[A_0, \dots, A_i, A_i^\eps, \dots, A_n]$
    where $A_i^\eps$ is obtained from $A_i$ by rotating by a sufficiently
    small angle $\eps>0$.
\end{defn}

\begin{defn}
    We define a simplicial map $s:N\mcF_1 \to \mc{U}$ as follows.
    A $k$-simplex of $N\mcF_1$ can be represented by a $(k+2)$-simplex
    $(M_0 \xrightarrow{[W_1]} M_1 \to \dots \to M_{k+2}) \in N_{k+2}h\Cob_1$
    where $W_1 \cup_{M_1} \dots \cup_{M_{k+1}} W_{k+2}$ is diffeomorphic
    to $S^1$. (In particular $M_0 = \emptyset = M_{k+2}$.)
    To define $s$, pick an orientation preserving diffeomorphism 
    $\gp: W_1 \cup_{M_1} \dots \cup_{M_{k+1}} W_{k+2} \cong S^1$
    and set
    \[
        s(M_0 \xrightarrow{[W_1]} M_1 \to \dots \to M_{k+2})
        := [\gp(M_1^+), \dots, \gp(M_{k+1}^+)]
    \]
    where $M_i^+ \subset M_i$ denotes the subset of 
    positively oriented points.
    The resulting $k$-simplex in $\mc{U}_k$ does not depend on the choice
    of $\gp$ nor on the choice of representatives for morphisms
    $W_i:M_{i-1} \to M_i$. 
\end{defn}

\begin{rem}\label{rem:U-useful}
    Note that given $s:N(\mcF_1) \to \mc{U}$ there is a unique map 
    $s': N(F_1) \to \mc{U}$ such that the diagram above commutes.
    Uniqueness holds since $J$ is surjective except for identity morphisms,
    and to check existence we just observe that the same formula
    \[
        s'(M_0 \xrightarrow{W_1} M_1 \to \dots \to M_{k+2})
        := [\gp(M_1^+), \dots, \gp(M_{k+1}^+)]
    \]
    for some orientation preserving diffeomorphism 
    $\gp:W_1 \cup_{M_1} \dots \cup_{M_{k+1}} W_{k+1}$ 
    is still well-defined and independent of $\gp$.
    In fact, one can check that $\pi_0 N_n(F_1) \to \mc{U}_n$ is a bijection for all $n$,
    and so all discrete cocylces on $N(F_1)$ have to come from $\mc{U}$.
    This is the crucial advantage of using $\mc{U}$:
    it is much easier to construct a simplicial map to $\mc{U}$,
    then a simplicial map $N(F_1) \to N\gL$.
    In fact, any such map has to factor through $\mc{U}$
    because $N\gL$ is level-wise discrete and there seem to be 
    no interesting maps $\mc{U} \to N\gL$.
\end{rem}

We now define the cocycles on $\mc{U}$.
\begin{defn}\label{defn:sign}
    The sign of a $(2k+1)$-tuple of disjoint points 
    $a_0, \dots, a_{2k} \in S^1$ is defined as
    the sign of any permutation $\gs$ of $\{0,\dots,2k\}$ such that the sequence
    $(a_{\gs(0)}, \dots, a_{\gs(2k)})$ is in cyclic order.
    This is well-defined because cyclic permutations on an set with
    $(2k+1)$ elements have sign $+1$.
    
    We extend this definition to disjoint finite subsets 
    $A_0, \dots, A_{2k} \subset S^1$ by averaging:
    \[
        \op{sign}_{2k}(A_0, \dots, A_{2k}) := 
        \frac{1}{\prod_{i=0}^{2k} |A_i|} \sum_{a_0 \in A_0} \dots \sum_{a_{2k} \in A_{2k}}
        \op{sign}(a_0, \dots, a_{2k})
        \in \IQ.
    \]
\end{defn} 

We will also need a reduced version of the averaged sign,
where some summands are omitted:
\begin{defn}\label{defn:reduced-sign}
    Given $(A_0, \dots, A_l)$ as above and $a_i \in A_i$ 
    and $a_j \in A_j$, let $I \subset S^1$ be the positively 
    oriented arc that starts at $a_{\min\{i,j\}}$ and ends 
    at $a_{\max\{i,j\}}$.
    We say $a_i$ and $a_j$ are neighbours if
    $A_k \cap I$ has at most one element for all $k$.
    
    We say an $(l+1)$-tuple 
    $(a_0,\dots,a_l)$ contains neighbours if there are $i\neq j$
    such that $a_i$ and $a_j$ are neighbours.
    Using this the reduced sign is defined as:
    \[
        \cl{\op{sign}}_{2k}(A_0, \dots, A_{2k}) := 
        \frac{1}{\prod_{i=0}^{2k} |A_i|} 
        \sum_{{(a_0, \dots, a_{2k}) \in \prod_i A_i}
        \atop {\text{contains no neighbours}}} 
        \op{sign}(a_0, \dots, a_{2k})
        \in \IQ.
    \]
\end{defn}

\begin{rem}
    Note that, given $(A_0, \dots, A_n)$ as above the notion 
    of being neighbours induces an equivalence relation on the disjoint
    union $\coprod_{i=0}^n A_i$. 
\end{rem}

\begin{lem}\label{lem:sign-cocyc}
    For all $k\ge 1$ the maps
    \[
        \op{sign}_{2k}: \mc{U}_{2k} \to \IQ 
        \qand 
        \cl{\op{sign}}_{2k}: \mc{U}_{2k} \to \IQ
    \]
    are well-defined $2k$-cocycles.
\end{lem}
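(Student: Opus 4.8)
The plan is to handle the two cochains separately: the unreduced $\op{sign}_{2k}$ reduces to a short algebraic identity, while $\cl{\op{sign}}_{2k}$ is the genuinely combinatorial part.

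\emph{Well-definedness.} For $2k+1$ pairwise distinct points $a_0,\dots,a_{2k}\in S^1$ the sign $\op{sign}(a_0,\dots,a_{2k})$ is well defined because two sequences differing by a cyclic rotation of an odd number of entries differ by an even permutation, so the choice of permutation putting the points in cyclic order does not affect its sign. Since each $A_i\subset S^1$ in a simplex of $\mc{U}$ is finite and nonempty the averages defining $\op{sign}_{2k}$ and $\cl{\op{sign}}_{2k}$ are finite rational numbers, and every notion entering the formulas (cyclic order, positively oriented arcs, and the ``no neighbours'' condition) is invariant under orientation preserving diffeomorphisms of $S^1$, so both descend to well-defined functions $\mc{U}_{2k}\to\IQ$.

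\emph{The unreduced cocycle.} First I would prove the pointwise identity
\[
    \sum_{j=0}^{2k+1}(-1)^j\,\op{sign}(a_0,\dots,\widehat{a_j},\dots,a_{2k+1})=0
\]
for any $2k+2$ pairwise distinct points on $S^1$. To do so, cut $S^1$ at a point $p$ avoiding all $a_i$ and record each $a_i$ by its arclength coordinate $x_i\in(0,1)$; then $S_j:=\op{sign}(a_0,\dots,\widehat{a_j},\dots,a_{2k+1})=\prod_{i<l,\ i,l\neq j}\op{sgn}(x_l-x_i)$, since for an odd number of points the place of the cut is irrelevant to the sign. Writing $P:=\prod_{i<l}\op{sgn}(x_l-x_i)$ and letting $r_j$ be the rank of $x_j$ among $x_0,\dots,x_{2k+1}$, a count of the pairs involving the index $j$ gives $P=S_j\cdot(-1)^{\,r_j+j}$, hence $(-1)^jS_j=P\,(-1)^{r_j}$ and
\[
    \sum_{j=0}^{2k+1}(-1)^jS_j=P\sum_{j=0}^{2k+1}(-1)^{r_j}=P\sum_{r=0}^{2k+1}(-1)^r=0,
\]
because $j\mapsto r_j$ is a bijection onto the set $\{0,\dots,2k+1\}$, which has even cardinality; note that although $P$ and $r_j$ depend on $p$, the conclusion does not. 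Summing this identity over all tuples $(a_0,\dots,a_{2k+1})\in\prod_lA_l$ and dividing by $\prod_l|A_l|$ — using that the $j$-th summand is independent of $a_j$, so summing over $a_j$ produces a factor $|A_j|$ — yields exactly $(\delta\,\op{sign}_{2k})(A_0,\dots,A_{2k+1})=0$.

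\emph{The reduced cocycle.} This is the main obstacle: the averaging argument breaks down because the ``no neighbours'' condition couples the choices $a_l\in A_l$, so $\delta\,\cl{\op{sign}}_{2k}$ is not a multilinear combination of the pointwise identity (indeed $\cl{\op{sign}}_{2k}$ vanishes on all one-point tuples). The plan, following Igusa \cite{Igu04}, is to show instead that $\op{sign}_{2k}-\cl{\op{sign}}_{2k}$ is a coboundary: one constructs an explicit $(2k-1)$-cochain $\eta$ on $\mc{U}$, given by an averaged-sign formula in which a neighbour class of $\coprod_lA_l$ is contracted to a single insertion, and checks $\delta\eta=\op{sign}_{2k}-\cl{\op{sign}}_{2k}$; combined with the previous step this forces $\cl{\op{sign}}_{2k}$ to be a cocycle. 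Verifying $\delta\eta=\op{sign}_{2k}-\cl{\op{sign}}_{2k}$ is the combinatorial heart of the argument, where one uses that ``being neighbours'' generates an equivalence relation on $\coprod_lA_l$ and analyses how this relation behaves under the face maps $A_j\mapsto\widehat{A_j}$; alternatively the statement follows by matching $\cl{\op{sign}}_{2k}$ directly with Igusa's cocycle representing $(c_1)^k$ on $N\gL$.
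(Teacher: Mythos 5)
Your treatment of well-definedness and of the unreduced cocycle $\op{sign}_{2k}$ is correct. For the pointwise identity $\sum_j (-1)^j\op{sign}(a_0,\dots,\widehat{a_j},\dots,a_{2k+1})=0$ you use arclength coordinates and a rank count, whereas the paper argues directly: exchanging two consecutive $a_m$ flips the overall sign, so one may sort $(a_0,\dots,a_{2k+1})$ into cyclic order, after which all $2k+2$ summands equal $1$ and the alternating sum is trivially zero. Both are short; yours is slightly more computational but perfectly valid. The averaging step that deduces $\delta\,\op{sign}_{2k}=0$ from the pointwise identity matches the paper.

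The reduced case is where your proposal has a real gap. You propose to exhibit a $(2k-1)$-cochain $\eta$ with $\delta\eta=\op{sign}_{2k}-\cl{\op{sign}}_{2k}$, but you never construct $\eta$ nor verify the identity; you acknowledge this is the ``combinatorial heart'' and leave it unproven. The paper in fact remarks, right after Proposition \ref{prop:sign=Chern}, that finding such a cochain by hand is hard and is an open question in the text --- so this is not a step one can simply defer. The paper's actual argument avoids $\eta$ entirely: it reruns the averaging computation for $\cl{\op{sign}}_{2k}$, observes that the cocycle identity for $\op{sign}_{2k}$ kills all terms except those indexed by tuples $(a_0,\dots,a_{2k+1})$ containing exactly one neighbouring pair $(a_i,a_j)$, and then cancels these residual terms in pairs using the relation
\[
\op{sign}(a_0,\dots\widehat{a_i}\dots,a_{2k+1}) = (-1)^{\,j-i-1}\,\op{sign}(a_0,\dots\widehat{a_j}\dots,a_{2k+1}),
\]
valid when $(a_i,a_j)$ is the unique neighbouring pair. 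You would need to supply something like this pairwise cancellation (or a genuine construction of $\eta$) to close the argument.

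Your fallback --- deduce the cocycle condition by matching $\cl{\op{sign}}_{2k}$ with Igusa's cocycle on $N\gL^{inj}$ --- also does not go through as stated. The comparison in Proposition \ref{prop:sign=Chern} identifies the \emph{pullback} $q^*\cl{\op{sign}}_{2k}$ on $N\gL^{inj}$ with Igusa's cocycle; to conclude that $\cl{\op{sign}}_{2k}$ is a cocycle on $\mc{U}$ itself you would additionally need $q:N\gL^{inj}\to\mc{U}$ (or at least $s:N\mcF_1\to\mc{U}$) to be a level-wise surjection, which you do not verify and which is not obvious for $\gL^{inj}$. Moreover that proposition is stated after and logically depends on Lemma \ref{lem:sign-cocyc}, so invoking it here would risk circularity. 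The direct pairing argument is the cleaner route.
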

\begin{proof}
    Observe that $\sign_{2k}(A_0, \dots, A_{2k})$ and
    $\cl{\sign}_{2k}(A_0, \dots, A_{2k})$ do not change if we act
    on $(A_0,\dots,A_{2k})$ by an orientation preserving diffeomorphism,
    as they only depend on the relative cyclic ordering of the elements
    of $\coprod_{i=0}^{2k} A_i$. Therefore $\sign_{2k}$ and 
    $\cl{\sign}_{2k}$ are well-defined cochains on $\mc{U}$.
    
    The rest of the proof will be concerned with proving 
    that they satisfy the cocycle condition. First we will show that 
    for any $(2k+2)$-tuple $(a_0, \dots, a_{2k+1})$ of disjoint points
    in $S^1$ we have:
    \[
        \sum_{i=0}^{2k+1} (-1)^i \op{sign}(a_0, \dots \wh{a_i} \dots, a_{2k+1}) = 0.
    \]
    Indeed, if we exchange two consecutive $a_j$ on the left-hand side,
    then this changes the sign of the overall sum. We therefore have, 
    for any permutation $\gs \in \gS_{2k+2}$:
    \[
        \sum_{i=0}^{2k+1} (-1)^i \op{sign}(a_0, \dots \wh{a_i} \dots, a_{2k+1}) 
        = \op{sign}(\gs) \sum_{i=0}^{2k+1} (-1)^i \op{sign}(a_{\gs(0)}, \dots \wh{a_{\gs(i)}} \dots, a_{\gs(2k+1)}).
    \]
    We can choose $\gs$ such that the resulting tuple 
    $(a_{\gs(0)}, \dots, a_{\gs(2k+1)})$ is in cyclic order,
    in which case it follows that 
    \[
        \op{sign}(\gs) \sum_{i=0}^{2k+1} (-1)^i \op{sign}(a_{\gs(0)}, \dots \wh{a_{\gs(i)}} \dots, a_{\gs(2k+1)})
        = \op{sign}(\gs) \sum_{i=0}^{2k+1} (-1)^i 1 = 0.
    \]
    Let now $A = (A_0, \dots, A_{2k+1})$ represent a $(2k+1)$-simplex in $\mc{U}$.
    We need to show that $\sign_{2k}(\partial A) = 0$.
    Spelling out the definition we have
    \begin{align*}
        \sign_{2k}(\partial A) 
        &= \sum_{i=0}^{2k+1} (-1)^i \sign_{2k}(A_0, \dots \wh{A_i} \dots, A_{2k+1}) \\
        &= \sum_{i=0}^{2k+1} (-1)^i
        \frac{1}{\prod_{j=0, j \neq i}^{2k+1} |A_j|} 
        \sum_{a_0 \in A_0} \dots \wh{\sum_{a_{i} \in A_{i}}}
        \dots \sum_{a_{2k} \in A_{2k}}
        \op{sign}(a_0, \dots, a_{2k}) \\
        &= \sum_{i=0}^{2k+1} (-1)^i
        \frac{1}{\prod_{j=0}^{2k+1} |A_j|} 
        \sum_{a_0 \in A_0} \dots \sum_{a_{2k+1} \in A_{2k+1}}
        \op{sign}(a_0, \dots \wh{a_i} \dots, a_{2k+1}).
    \end{align*}
    In the last step we introduced a sum $\sum_{a_i \in A_i}$ even
    though the variable $a_i$ is not used in the summand.
    This amounts to multiplication by $|A_i|$, cancelling the 
    $\frac{1}{|A_i|}$ that was introduced before the sum.
    We can now rearrange the sum to
    \begin{align*}
        \sign_{2k}(\partial A) 
        &= \frac{1}{\prod_{j=0}^{2k+1} |A_j|} 
        \sum_{a_0 \in A_0} \dots \sum_{a_{2k+1} \in A_{2k+1}}
        \sum_{i=0}^{2k+1} (-1)^i
        \op{sign}(a_0, \dots \wh{a_i} \dots a_{2k+1}).
    \end{align*}
    By the first part of the proof the innermost sum is $0$
    and hence $\sign_{2k}(\partial A) = 0$ as claimed.
    
    It remains to check that the reduced sign $\cl{\sign}_{2k}$
    is a cocycle, too. We can attempt to run the same argument,
    leading us to the expression
    \begin{align*}
        \cl{\sign}_{2k}(\partial A) 
        &= \sum_{i=0}^{2k+1} (-1)^i
        \frac{1}{\prod_{j=0}^{2k+1} |A_j|} 
        \sum_{{(a_0, \dots, a_{2k+1}) \in \prod_j A_j}
        \atop {\text{no neighbours except for }a_i}} 
        \op{sign}(a_0, \dots \wh{a_i} \dots, a_{2k+1}).
    \end{align*}
    The problem with this is that we are also summing over 
    the $(2k+2)$-tuples $(a_0, \dots, a_{2k+1})$
    where $a_i$ is a neighbour of one of the other $a_j$.
    By the argument for $\sign_{2k}$ all other terms sum up to $0$
    and so we are left with:
    \begin{align}\label{align:sign-bar}
        \cl{\sign}_{2k}(\partial A) 
        &= \frac{1}{\prod_{j=0}^{2k+1} |A_j|} 
        \sum_{i=0}^{2k+1} (-1)^i
        \sum_{j\neq i} \sum_{{(a_0, \dots, a_{2k+1}) \in \prod_j A_j}
        \atop {a_i \text{ and }a_j \text{ neighbours}}} 
        \op{sign}(a_0, \dots \wh{a_i} \dots, a_{2k+1}).
    \end{align}
    Note that the $a_j$ is uniquely determined since the tuple
    $(a_0, \dots \wh{a_i} \dots, a_{2k+1})$ is not allowed to 
    contain neighbours and being neighboured is an equivalence
    relation.
    
    The lemma will now follow from the observation that whenever
    $(a_i,a_j)$ is the unique tuple of neighbours in 
    $(a_0,\dots,a_{2k+1})$ then 
    \begin{equation}\label{eqn:unique-neighbours}
        \sign(a_0, \dots \wh{a_i} \dots, a_{2k+1}) 
        = (-1)^{j-i-1} \sign(a_0, \dots \wh{a_j} \dots, a_{2k+1}).
    \end{equation}
    Indeed, once we establish this, one can see that every term 
    in equation \ref{align:sign-bar} appears exactly twice, 
    with opposite sign.
    We only have to check equation \ref{eqn:unique-neighbours}
    in the case $i < j$. Both expressions only depend on the cyclic
    ordering of the set $\{a_0,\dots,a_{2k+1}\} \subset S^1$.
    If we, in the left-hand side of the equation, move the entry $a_j$ 
    to the left by $(j-i-1)$, then this cancels the sign $(-1)^{j-i-1}$
    on the right-hand side.
    Since we assumed that $a_i$ and $a_j$ are neighbours and 
    that they are the only neighbours, there cannot be any $a_l$
    between $a_i$ and $a_j$ on $S^1$. 
    Therefore we have
    \[
        \sign(a_0, \dots, a_{i-1}, a_j, a_{i+1}, \dots \wh{a_j} \dots a_{2k+1})
        =
        \sign(a_0, \dots, a_{i-1}, a_i, a_{i+1}, \dots \wh{a_j} \dots a_{2k+1}).
    \]
    This proves equation \ref{eqn:unique-neighbours} and hence
    completes the proof.
\end{proof}

\begin{proposition}\label{prop:sign=Chern}
    For all $k \ge 1$ the cocycle
    \[
        \gb_k := \frac{(-1)^k k!}{(2k)!} \cdot s^*\cl{\op{sign}}_{2k}
    \]
    represents, possibly up to a factor of $(-1)^k$,
    the cohomology class of $(c_1)^k \in H^*(B\mcF_1; \IQ) \cong \IQ[c_1]$.
\end{proposition}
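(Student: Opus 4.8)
The strategy is to reduce the statement to Igusa's theorem that certain explicit rational cocycles on $N\gL$ represent the powers of the first Chern class, and then to transport that statement across the equivalences $N\gL \xrightarrow{H} N\mcF_1$ and the comparison with the simplicial set $\mc{U}$. Concretely, I would first recall (or cite \cite{Igu04}) that there is a $2k$-cocycle on $N\gL$ — built out of the cyclic orderings of the sets $f_i^{-1}(\text{marked points})$ — whose class is $(c_1)^k$ up to a known normalizing constant. The point of introducing $\mc{U}$ and the map $s:N\mcF_1 \to \mc{U}$ is that Igusa's cocycle factors through $\mc{U}$: a morphism chain in $\mcF_1$ (equivalently in $\gL$ via \autoref{lem:Lambda-F1}) determines, after choosing a diffeomorphism of the total circle $W_1 \cup \dots \cup W_{k+2}$ with $S^1$, the tuple of finite subsets $\gp(M_1^+), \dots, \gp(M_{k+1}^+) \subset S^1$, and this is exactly a $k$-simplex of $\mc{U}$. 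So the first step is to check that $s^*\cl{\sign}_{2k}$ literally agrees (up to the combinatorial normalization) with the pullback along $H$ of Igusa's cocycle on $\gL$.

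The bulk of the work is then a purely combinatorial comparison on $\mc{U}$, where one must reconcile two averaging conventions. Igusa's formula averages over \emph{all} choices of representative points $a_i \in A_i$; the reduced sign $\cl{\sign}_{2k}$ omits the tuples that contain neighbours. The claim is that these two cochains are cohomologous on $\mc{U}$ (indeed they differ by a coboundary of an explicit $(2k-1)$-cochain obtained by subtracting off the ``neighbour'' contributions level by level). I would make this precise by writing $\sign_{2k} - \cl{\sign}_{2k}$ as a sum over tuples that \emph{do} contain a neighbouring pair, and showing that this correction term is a coboundary — the natural candidate for the primitive is a $(2k-1)$-cochain that, given $(A_0,\dots,A_{2k-1})$, sums $\sign$ over tuples with exactly one collapsed pair, weighted appropriately. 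A cleaner route may be to observe that the degeneracy map in $\mc{U}$ sends $[A_0,\dots,A_n]$ to $[\dots,A_i,A_i^\eps,\dots]$ with $A_i^\eps$ a tiny rotation, so that pairs of points forced to be neighbours correspond exactly to degenerate simplices; since cohomology is computed by the normalized complex, the two cocycles agree on cohomology. I would then fix the constant: the factor $\frac{(-1)^k k!}{(2k)!}$ is exactly what is needed to match Igusa's normalization so that $\gb_k \mapsto (c_1)^k$, and the residual ambiguity $(-1)^k$ comes from an orientation/sign convention in identifying $H^2(B\gL;\IQ)$ with $\IQ\{c_1\}$ that I would not attempt to pin down beyond what the statement claims.

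Finally I would assemble the chain of equivalences. By \autoref{lem:Lambda-F1} the functor $H:\gL \to \mcF_1$ is an equivalence of categories, hence $\|N\gL\| \simeq \|N\mcF_1\| = B\mcF_1$; by the preceding lemmas $B\mcF_1 \simeq BF_1^\gd \simeq BF_1 \simeq \Psi_{1,\{\pm1\}}^{con} \simeq \CP^\infty$, and all these maps are compatible with the maps into $\|\mc{U}\|$ as indicated in the commuting diagram preceding \autoref{defn:U}. Since $H^*(B\mcF_1;\IQ) \cong \IQ[c_1]$ is a polynomial ring and $\gb_k = \frac{(-1)^k k!}{(2k)!} s^*\cl{\sign}_{2k}$ pulls back along $H$ to (a sign times) Igusa's representative of $(c_1)^k$, we conclude $[\gb_k] = \pm (c_1)^k$ in $H^{2k}(B\mcF_1;\IQ)$. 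The main obstacle is the combinatorial lemma identifying $\cl{\sign}_{2k}$ with $\sign_{2k}$ on cohomology and matching it to Igusa's unreduced formula together with the precise rational constant; once that is in hand, the topological part is bookkeeping with the equivalences already established in Sections \ref{sec:continued-and-cuts} and \ref{subsec:fact-vs-Connes}.
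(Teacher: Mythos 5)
Your outline correctly identifies that everything hinges on comparing $\cl{\sign}_{2k}$ with Igusa's cocycle, but the mechanism you propose for that comparison does not work, and is in fact the one route the paper explicitly declines to take.

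First, you characterize Igusa's cocycle $c_\mcZ^k$ as averaging over \emph{all} choices of representatives $a_i \in A_i$ — i.e., as essentially $\sign_{2k}$. That is not what Igusa's formula does: it averages only over tuples with $a_i \neq a_j$ for all $i \neq j$, so it already excludes coincidences. It is therefore a third cochain, distinct from both $\sign_{2k}$ and $\cl{\sign}_{2k}$ when read naively on $\mc{U}$. Second, and more seriously, the combinatorial lemma you rest on — that $\sign_{2k}$ and $\cl{\sign}_{2k}$ are cohomologous on $\mc{U}$ — is precisely the statement that the paper's remark following this proposition flags as plausible but \emph{unproved}: the author records that he was unable to find a primitive for $\sign_{2k} - \cl{\sign}_{2k}$ or even a single cycle distinguishing them. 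Your ``cleaner route'' via degeneracies is fallacious: a tuple $(a_0,\dots,a_{2k}) \in \prod_i A_i$ containing a neighbouring pair does not force the simplex $[A_0,\dots,A_{2k}]$ to be degenerate. Degenerate simplices of $\mc{U}$ are those where some $A_{i+1}$ is an $\eps$-rotation of $A_i$; a perfectly nondegenerate simplex can have many tuples containing neighbours, so normalization gives you nothing here.

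The paper avoids this issue entirely by restricting to Igusa's subcategory $\gL^{inj} \subset \gL$ of morphisms represented by injective maps (Igusa's $\mcZ$). On a chain of \emph{injective} cyclic maps, two representative points $a \in \gp^{-1}(M(n_i)^+)$ and $a' \in \gp^{-1}(M(n_j)^+)$ are neighbours in the sense of Definition~\ref{defn:reduced-sign} if and only if they map to the same element of $M(n_{2k})^+$ — this is what injectivity buys you. Consequently, on $N\gL^{inj}$, the pullback $q^*\cl{\sign}_{2k}$ coincides \emph{on the nose} with Igusa's formula $c_\mcZ^k$, with no passage to cohomology needed. One then checks separately that $B\gL^{inj} \to B\gL$ is a weak equivalence (via the paracyclic category and \cite[Theorem B.3]{NS18}), and the bookkeeping of equivalences you describe finishes the job. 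To repair your proposal you would need either to prove the cohomologous-ness of $\sign_{2k}$ and $\cl{\sign}_{2k}$ (an open problem in the paper), or to adopt the restriction to $\gL^{inj}$ and make the on-the-nose identification.
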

\begin{proof}
    Consider the subcategory $\gL^{inj} \subset \gL$ where we only allow
    morphisms represented by maps $\IZ \to \IZ$ that are injective.
    In \cite{Igu04} Igusa provides combinatorial formulas for 
    rational cocycles $c_\mcZ^k$ that represent the powers 
    of the first Chern class on $B(\gL^{inj}) \simeq \CP^\infty$.
    He denotes $\gL^{inj}$ by $\mcZ$.
    We will consider the composite
    \[
        q: N(\gL^{inj}) \inj N\gL \xrightarrow{\ N(H)\ } N\mcF_1 
        \xrightarrow{\ s\ } \mc{U}
    \]
    and we will show that the pullback cocycles
    \[
        \left(H^*\gb_k\right)_{|\gL^{inj}} 
        = \frac{(-1)^k k!}{(2k)!} \cdot q^* \cl{\op{sign}}_{2k}
    \]
    agree with the cocycle $c_{\mcZ}^k$ of 
    in \cite[(1) on page 478]{Igu04}.
    Then the claim will follow if we can show that the composite 
    $\gL^{inj} \to \gL \to \mcF_1$ induces an equivalence on 
    classifying spaces.
    This works because every self-equivalence of $\CP^\infty$ induces
    either the identity or $(c_1)^k \mapsto (-c_1)^k$ on cohomology.
    
    We saw in lemma \ref{lem:Lambda-F1} that $\gL \to \mcF_1$ is an
    equivalence of categories and so we only need to 
    show that $\gL^{inj} \inj \gL$ is an
    equivalence on classifying spaces. 
    For this, let $\gL_\infty$ be the paracyclic category,
    which is defined just like $\gL$ except that we 
    do not quotient by the equivalence relation on hom sets.
    There is a free action of the simplicial abelian group
    $N\IZ$ on $N\gL_\infty$ with quotient $N\gL$.
    In \cite[Theorem B.3]{NS18} the authors show that
    $\gL_\infty$ has a contractible classifying space 
    and conclude that $B\gL \simeq \CP^\infty$. 
    The same proof applies to show that $\gL_\infty^{inj}$
    has a contractible classifying space and that hence
    $B\gL^{inj} \simeq \CP^\infty$. Moreover, since the map
    $B\gL_\infty^{inj} \to B\gL_\infty$ is $B\IZ$-equivariant,
    we see that $B\gL^{inj} \to B\gL$ is indeed an equivalence.
    
    Note that the category $\gL^{inj}$ is easier to work with 
    than $\gL$ because the functor $F:\gL^{inj} \to \Sets$ 
    that sends $n$ to $\IZ/n$ and $[f]:n \to m$ to 
    the induced map $[f]:\IZ/n \to \IZ/m$ is faithful. 
    In other words, for an injective map between two cyclic
    sets being cyclic is a \emph{property}, whereas in general
    it is a structure.\footnote{%
        The reason that a general morphism in $\gL$ has more 
        structure than just a map $f:\IZ/n \to \IZ/m$ is because
        it comes with total orderings of the fibers $f^{-1}([i])$
        for all $[i] \in \IZ/m$.
        These total orderings can be recovered from the cyclic 
        ordering if $f(\IZ/n) \subset \IZ/m$ has more then one
        element. If, on the other hand, we fix some 
        $f:\IZ/n \to \IZ/m$ with $f([x]) = [0]$ for all $x$,
        then there are $n$ different cyclic maps $f_{(i)}: n \to m$
        that induce $f$.
    }
    
    The remainder of the proof is concerned with showing that
    the pullback cocycle $(H^*\gb_k)_{|\gL^{inj}}$ agrees
    with Igusa's cocycle $c_\mcZ^k$.
    Using our definition of the sign cocycle (definition \ref{defn:sign})
    we can rewrite $c_\mcZ^k$ as
    \[
        c_{\mcZ}^k(n_0 \xrightarrow{f_1} n_1 \to \dots 
        \xrightarrow{f_{2k}} n_{2k})
        := \frac{(-1)^k k!}{(2k)!} \cdot 
        \frac{1}{\prod_{i=0}^{2k} |A_i|}
        \sum_{{(a_0, \dots, a_{2k}) \in \prod_{i=0}^{2k} A_i}
        \atop {a_i \neq a_j \text{ for all } i \neq j}}
        \sign_{2k}(a_0, \dots, a_{2k})
    \]
    where $A_l$ is the image of the composite map 
    $(f_n \circ \dots \circ f_l): \IZ/n_l \to \IZ/n_{2k}$.
    Here we think of $\IZ/n_{2k}$ as a subset $S^1$ 
    in the usual way. 
    
    We would like to show that this is equal to (a multiple of) the pullback 
    cocycle $q^*\cl{\sign}_{2k}$. Indeed, spelling out the definition
    of the map $q = s \circ N(H):N\gL^{inj} \to \mc{U}$, we see that
    in the notation of lemma \ref{lem:Lambda-F1}
    \begin{align*}
        q^*\cl{\sign}_{2k}(n_0 \xrightarrow{f_1} n_1 \to \dots 
        \xrightarrow{f_{2k}} n_{2k})
        & = \frac{1}{\prod_{i=0}^{2k} n_i}
        \cl{\sign}_{2k}(\gp^{-1}(M(n_0)^+), \dots, 
        \gp^{-1}(M(n_{2k})^+))
    \end{align*}
    Here the $M(n_i)^+$ are subsets of the $1$-manifold
    \[
        X := W(n_0) \cup_{M(n_0)} X_1 \cup_{M(n_1)} \dots 
        \cup_{M(n_{2k})} X_{2k} \cup_{M(n_{2k})} V(n_{2k})
        \stackrel{\gp}{\cong} S^1
    \]
    where $X_i:M(n_{i-1}) \to M(n_i)$ is the image of 
    the morphism $f_i:n_{i-1} \to n_i$ under $H$.
    The map from $M(n_i)^+ \cong \IZ/n_i$ to 
    $M(n_{2k})^+ \cong \IZ/n_{2k}$ sends a point 
    $x \in M(n_i)^+$ to the unique 
    $y \in M(n_{2k})^+$ such that there is a positively oriented
    arc $\gi: [0,1] \inj X$ with $\gi(0) = x$, $\gi(1) = y$,
    and $\gi^{-1}(M(n_{2k})^+) = \{y\}$.
    Since the maps $f_l:n_{l-1} \to n_l$ are all injective,
    this arc automatically satisfies $\gi^{-1}(M(n_{i})^+) = \{x\}$
    as well. (Every element of $\gi^{-1}(M(n_{i})^+)$ would be
    mapped to $y$ by the map 
    $[f_{2k} \circ \dots \circ f_{i+1}]: \IZ/n_i \to \IZ/n_{2k}$,
    but by injectivity $x$ is the only such element.)
   
    Let $x' \in M(n_j)^+$ with $i<j$ then $x$ and $x'$ 
    are neighboured in the sense of definition \ref{defn:reduced-sign}
    with respect to the tuple
    $(\gp^{-1}(M(n_0)^+), \dots, \gp^{-1}(M(n_{2k})^+))$,
    if and only if $x'$ lies on the arc $\gi:[0,1] \inj X$
    that we considered above. In other words, $x$ and $x'$
    are neighboured if and only if their images in 
    $M(n_{2k})^+$ agree. 
    In formulas this means
    \[
        \sum_{{(a_0, \dots, a_{2k}) \in \prod_{i=0}^{2k} A_i}
        \atop {a_i \neq a_j \text{ for all } i \neq j}}
        \sign_{2k}(a_0, \dots, a_{2k})
        = 
        \sum_{{(a_0, \dots, a_{2k}) \in \prod_{i=0}^{2k}
        \gp^{-1}(M(n_i)^+)}
        \atop \text{ no neighbours } } 
        \sign_{2k}(a_0, \dots, a_{2k}) 
    \]
    where we let $A_i$ denote the image of the map 
    $M(n_i)^+ \to M(n_{2k})^+ \inj S^1$ described above. 
    
    After multiplying with the correct coefficient this is implies
    \[
        c_\mcZ^k(n_0\xrightarrow{f_1}\dots\xrightarrow{f_{2k}}n_{2k})
        =  q^*\cl{\sign}_{2k}(n_0 \xrightarrow{f_1} n_1 \to \dots 
        \xrightarrow{f_{2k}} n_{2k})
    \]
    just claimed.
\end{proof}

\begin{rem}
    The proposition proves that
    $\gb_k$ in fact represents the integral
    cohomology class $(c_1)^k \in H^{2k}(B\mcF_1;\IZ)$,
    at least up to a sign. This implies that there is 
    an integral cocycle that is rationally cohomologous to $\gb_k$.
    The reason we prefer to work with the rational cocycle 
    $\gb_k$ instead is that it has the useful property of 
    being \emph{conjugation invariant}: it satisfies for 
    all $i$ and isomorphisms $\gs:n_i \cong n_i$ that 
    \[
        \gb(n_0\xrightarrow{f_1}\dots\xrightarrow{f_{2k}}n_{2k})
        = \gb(n_0\xrightarrow{f_1}\dots
        n_{i-1} \xrightarrow{\gs \circ f_i} n_i 
        \xrightarrow{f_i \circ \gs^{-1}} n_{i+1} \dots
        \xrightarrow{f_{2k}}n_{2k}).
    \]
    This property is extremely convenient when extending $\gb$
    along the zig-zag $N(F_1) \leftarrow N(F_1^\gd) \to N(\mcF_1)$,
    and one should not expect to be able find an integral cocycle with this property.
    Indeed, as noted in remark \ref{rem:U-useful} any cocycle 
    on $N(F_1)$ has to be come from a cocycle on $\mc{U}$ 
    and hence is necessarily conjugation invariant.
\end{rem}

\begin{rem}
    It seems likely that the two cocycles $\sign_{2k}$ 
    and $\cl{\sign}_{2k}$ are in fact cohomologous for all $k \ge 1$.
    If this were true we could use $\sign_{2k}$ in what follows,
    yielding a description of the cocycles representing 
    $\dlgk_k$ on $h\Cob_1$ without having to introduce the notion
    of a neighbour.
    It is however hard to `by hand' guess a $(2k+1)$-cochain
    on $\mc{U}$ whose boundary is $\sign_{2k} - \cl{\sign}_{2k}$.
    Alternatively, one could try to find a $2k$-cycle on $\gL$
    on which both $\sign_{2k}$ and $\cl{\sign}_{2k}$ evaluate
    to the same non-zero number.
\end{rem}

\subsection{Cocycles on the cobordism category}

\begin{defn}\label{defn:kappa}
    For all $k \ge 0$ define a $(2k+2)$-cochain on the 
    simplicial space $\Cut_1$ by the formula
    \[
        \gc_k(M_0 \xrightarrow{W_1}
        M_1 \xrightarrow{W_2} \dots
        \xrightarrow{W_{2k+2}} M_{2k+2})
        := 
        \frac{(-1)^k k!}{(2k)!}
        \sum_{[\gi:S^1 \inj W]}
        \cl{\op{sign}}_{2k}(\gi^{-1}(M_1^+), \dots, 
        \gi^{-1}(M_{2k+2}^+)).
    \]
    Here we write $W$ for the composition 
    $W_1 \cup_{M_1} \dots \cup_{M_{2k+1}} W_{2k+2}$,
    and the sum runs over isotopy classes of oriented
    embeddings $\gi:S^1 \inj W$ such that\footnote{
        In principle we could also omit the condition that 
        $\gi(S^1) \cap M_i \neq \emptyset$, and just use 
        the convention that $\cl{\sign}_{2k}(A_1,\dots,A_{2k+1}) = 0$
        whenever any of the $A_i$ is empty.
    }
    $\gi(S^1)$ intersects $M_i$ for all $1 \le i \le 2k+1$.

    The map $\gc_k:(\Cut_1)_k \to \IQ$ is continuous 
    and hence also well-defined on $\pi_0(\Cut_1)_k$.
    We may therefore pull it back along the compatible maps
    \[
        \begin{tikzcd}
            N(h\Cob_1) \ar[r]\ar[rrrrd, bend right=6] 
            & N(h\Cobred_1) \ar[rrrd, bend right=4] 
            & N(\gd\Cobred_1) \ar[l] \ar[r] \ar[rrd, bend right=2]
            & N(\Cobred_1) \ar[r] \ar[rd] & \Cut_1 \ar[d]\\
            &&&&\pi_0\Cut_1 
        \end{tikzcd}
    \]
    to obtain cocycles on $\Cobred_1$, $h\Cobred_1$, and $h\Cob_1$,
    which we will also denote by $\gc_k$.
\end{defn}
\begin{figure}[ht]
    \centering
    \def\svgwidth{.9\linewidth}
    \import{figures/}{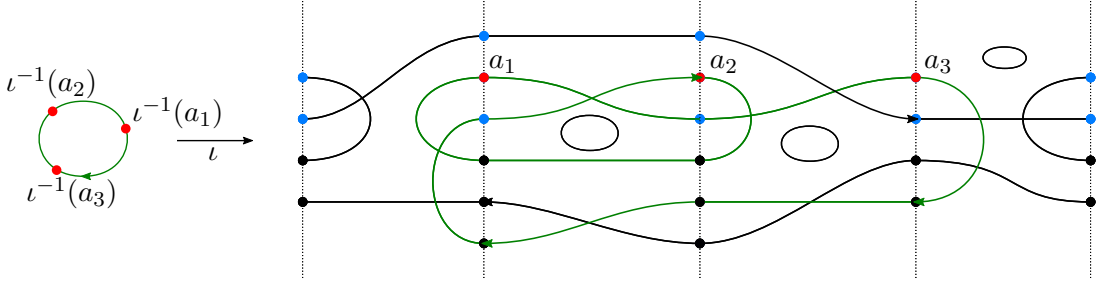}
    \caption{One of the summands in the definition of the $4$-cocycle $\gc_1$.
    The figure depicts a choice of oriented embedding 
    $\gi:S^1 \inj W = W_1 \cup W_2 \cup W_3 \cup W_4$
    and positively oriented points $a_i \in M_i^+ \cap \gi(S^1)$.
    In the case shown $\sign(\gi^{-1}(a_1),\gi^{-1}(a_2),\gi^{-1}(a_2)) = -1$,
    because the three points are not in cyclic order on $S^1$.
    In fact, the triple $(a_1,a_2,a_3)$ depicted 
    is the only one with no neighbours, and hence 
    $\cl{sign}_2(\gi^{-1}(M_1^+),\gi^{-1}(M_2^+),\gi^{-1}(M_3^+)) = \frac{-1}{4}$.
    Moreover, the embedding shown is the only one that is allowed,
    and hence $\gc_1$ evaluates on this $4$-simplex as 
    $\frac{(-1)^11!}{2!}\frac{-1}{4} = \frac{1}{4}$.}
    \label{fig:kappa-1}
\end{figure}
When $k=0$ the reduced sign $\cl{\sign}_0(\gi^{-1}(M_1^+))$ 
is always equal to $1$ and therefore $\dlgk_0$ simply counts 
the number of isotopy classes of oriented embeddings 
$\gi:S^1 \inj W_0 \cup_{M_1} W_1$.
Hence $\gc_0$ is equal, as a cocycle on $h\Cobred_1$,
to the $2$-cocycle $\ga$ that we constructed in section \ref{sec:2-cocycle}.

We will now prove our second main theorem, which states 
that the cocycles $\gc_k$ indeed represent the $\dlgk$-classes
on $B(h\Cob_1)$.
\begin{thm}\label{thm:cocycles}
    For $k\ge 0$ the $(2k+2)$-cochains $\gc_k$ 
    defined above are in fact cocycles and,
    the cohomology class $[-\gc_k]$ is,
    possibly up to a sign $(-1)^k$,
    the generator $\dlgk_k$ in 
    \[
        H^*(\|\Cut_1\|; \IQ) \cong \IQ[\dlgk_0, \dlgk_1, \dlgk_2, \dots].
    \]
    As a consequence the same formula also yields 
    well defined cocycles on $B(h\Cobred_1)$ and $B(h\Cob_1)$
    where they still represent the classes $\dlgk_k$ in:
    \[
        H^*(B(h\Cobred_1)_0; \IQ) \cong 
        \IQ[\dlgk_0, \dlgk_1, \dlgk_2, \dots], 
        \qand
        H^*(B(h\Cob_1)_0; \IQ) \cong 
        \IQ[\dlgk_1, \dlgk_2, \dots]. 
    \]
\end{thm}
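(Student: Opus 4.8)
The plan is to reduce everything to two already-established facts: the identification $\|\Cut_1\| \simeq Q(\gS^2 BF_1)$ from \autoref{thm:continue-fib-seq} together with the explicit double-suspension model $\|\Cut_1^{con}\|\simeq\gS^2 BF_1$ of \autoref{lem:identify-cofib-seq}, and Igusa's cocycle computation recorded in \autoref{prop:sign=Chern}. Since $\gc_k$ is locally constant, it is a cochain on the simplicial set $\pi_0\Cut_1$, whose realisation receives a canonical map from $\|\Cut_1\|$, so $[\gc_k]$ defines a class in $H^{2k+2}(\|\Cut_1\|;\IQ)$; by \autoref{fact:H-of-gOinfty} and the computations behind \autoref{cor:H(hCob1)} this group is the degree-$(2k+2)$ part of the polynomial algebra $\IQ[\gk_0,\gk_1,\dots]$ with the $\gk_j$ primitive, and $\gk_k$ is the only primitive class in that degree. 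So it suffices to show: \textbf{(i)} $\gc_k$ is a cocycle; \textbf{(ii)} $[\gc_k]$ is primitive, so that $[-\gc_k]=c_k\gk_k$ for some $c_k\in\IQ$; and \textbf{(iii)} $c_k=\pm1$. The statements on $B(h\Cobred_1)$ and $B(h\Cob_1)$ then follow formally.

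For \textbf{(i)} and \textbf{(ii)} I would use that the total manifold $W=W_1\cup_{M_1}\dots\cup_{M_n}W_{n+1}$ of an $n$-simplex of $\Cut_1$ is a disjoint union of circles, and that $\gc_k$ is by construction the sum, over those circle components meeting every intermediate object $M_1,\dots,M_{2k+1}$, of a $\cl{\sign}_{2k}$-term depending only on that component. The simplicial faces of $\Cut_1$ act component-wise — a component that acquires no intermediate object after gluing is deleted — so $\gd\gc_k$ splits as a sum over embedded circles, and for a fixed circle the alternating sum of face-terms is exactly $\gd\cl{\sign}_{2k}$ evaluated on the associated simplex of $N\mcF_1$, which vanishes by \autoref{lem:sign-cocyc}; the only care needed is the bookkeeping for circles meeting a proper sub-range of the objects. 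For primitivity, the $\gC$-space structure on $\Cut_1$ is disjoint union of bordism data, so a circle in $W\amalg W'$ lies in exactly one summand; hence $\gc_k$ is additive for the multiplication and its class is primitive, which (since a primitive decomposable element of a polynomial algebra over $\IQ$ vanishes) yields $[-\gc_k]=c_k\gk_k$.

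For \textbf{(iii)} I would restrict $\gc_k$ to the simplicial subspace $\Cut_1^{con}\subset\Cut_1$ of simplices with connected total manifold. By the wedge decomposition $(\Cut_1^{con})_{2k+2}\simeq\bigvee_{1\le a<b\le 2k+2}N_{b-a-1}F_1$ from the proof of \autoref{lem:identify-cofib-seq}, a single circle meeting all $2k+1$ intermediate objects forces the summand $(a,b)=(1,2k+2)$, so $\gc_k|_{\Cut_1^{con}}$ is concentrated on the top summand $N_{2k}F_1$, where — unwinding the functors $F_1\xleftarrow{J}F_1^\gd\xrightarrow{D}\mcF_1$, the equivalence $\mcF_1\simeq\gL$ of \autoref{lem:Lambda-F1}, and the definition of the map $s$ — it agrees with the cocycle $\gb_k=\frac{(-1)^k k!}{(2k)!}\,s^*\cl{\sign}_{2k}$ of \autoref{prop:sign=Chern}, normalising constant included. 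Since this top summand is the top cell of the double suspension $\|\Cut_1^{con}\|\simeq\gS^2 BF_1$, the restricted class $[\gc_k|_{\Cut_1^{con}}]$ is the double suspension of $[\gb_k]=\pm(c_1)^k$, i.e. $\pm$ the canonical generator of $H^{2k+2}(\gS^2 BF_1;\IQ)$. As the restriction map $H^*(\|\Cut_1\|;\IQ)\to H^*(\gS^2 BF_1;\IQ)$ annihilates decomposables (cup products vanish on a double suspension) and sends $\gk_k$ to that same generator, comparison with \textbf{(ii)} gives $c_k=\pm1$, with the ambiguity $(-1)^k$ inherited from \autoref{prop:sign=Chern}.

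Finally, \autoref{cor:rational-equiv} makes $(\pi_0,S)\colon B\Cobred_1\to\IZ\times\|\Cut_1\|$ a rational equivalence, so the pullback of $\gc_k$ along $N(\Cobred_1)\to\Cut_1$ is a cocycle representing $\pm\gk_k$ in $H^*(B\Cobred_1;\IQ)\cong H^*(B(h\Cobred_1);\IQ)$, using $\Cobred_1\simeq h\Cobred_1$ from \autoref{lem:red1=hred1}. For $B(h\Cob_1)$, \autoref{thm:BhCob1} presents $B(h\Cob_1)\to B(h\Cobred_1)$ as a circle bundle, and for $k\ge1$ the class $\gk_k$ pulls back to $\gk_k$ through the rational Gysin sequence of \autoref{cor:H(hCob1)}; the case $k=0$ is \autoref{lem:ga-is-kappa0}, where $\gc_0$ was already identified with the central-extension cocycle $\ga$. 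I expect the main obstacle to be step \textbf{(iii)}: transporting the combinatorial formula for $\gc_k$ on $\Cut_1$ to Igusa's formula on $N\gL$ through the chain of equivalences of Sections~\ref{sec:continued-and-cuts} and~\ref{subsec:fact-vs-Connes} while keeping exact track of the double-suspension degree shift and the normalising constant; by comparison, the face-map bookkeeping in step \textbf{(i)} is fiddly but straightforward.
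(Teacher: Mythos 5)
Your proposal is correct and follows essentially the same route as the paper: both use primitivity (via additivity under $\amalg$ and lemma \ref{lem:primitive-cocycle}) together with the free infinite loop space structure of lemma \ref{lem:free-on-con} to reduce to $\Cut_1^{con}$, then identify the restricted cocycle with Igusa's $\frac{(-1)^kk!}{(2k)!}\cl{\sign}_{2k}$ via the double-suspension model and the chain of equivalences through $F_1$, $\mcF_1$, and $\gL$, invoking proposition \ref{prop:sign=Chern}. The only point where you are slightly less explicit than the paper is in normalising the global sign: the paper argues that the relative sign among the $\gk_k$ is fixed and then pins the overall sign by checking $k=0$ against lemma \ref{lem:ga-is-kappa0}, whereas you mention lemma \ref{lem:ga-is-kappa0} only in passing in your final paragraph; worth making that step explicit so that the conclusion is $[-\gc_k]=(\pm1)^k\gk_k$ with a single $\pm$, not an independent $\pm$ for each $k$.
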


Before we prove this theorem, we first recall a simple
lemma about primitive cocycles on $H$-spaces.
\begin{lem}\label{lem:primitive-cocycle}
    Let $X$ be a simplicial set and $\mu: X \times X \to X$ a simplicial map 
    that induces an associative H-space structure on $\|X\|$.
    Then every cocycle $\gb \in C^n(X)$ satisifying 
    $\gb(s_i x) = 0$ and $\gb(\mu(x,y)) = \gb(x) + \gb(y)$ 
    for all $x,y \in X_n$ and $i=0,\dots, n$
    represents a primitive element of $H^*(\|X\|)$.
\end{lem}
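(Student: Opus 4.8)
The plan is to check that $[\gb]$ satisfies the defining relation of a primitive element of the Hopf algebra $H^*(\|X\|;\IQ)$, i.e.\ that its image under the coproduct $\Delta$ induced by the $H$-space multiplication is $[\gb]\otimes 1+1\otimes[\gb]$; since the degeneracy hypothesis on $\gb$ is vacuous in degree $0$, we may assume $n\ge 1$. First I would reduce to simplicial cochains. As $X$ is a simplicial set, the natural map $\|X\|\to|X|$ from the fat to the ordinary realization is a homotopy equivalence, so it suffices to treat the class $\gb$ defines on $|X|$; and since $|X\times X|=|X|\times|X|$ and realization commutes with this product, the $H$-space multiplication on $|X|$ is $|\mu|\colon|X|\times|X|\to|X|$. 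Under the identification of $H^*(|X|;\IQ)$ with the cohomology of the simplicial cochain complex $C^*(X;\IQ)$, the coproduct $\Delta$ is induced by the cochain map $\mu^*\colon C^*(X;\IQ)\to C^*(X\times X;\IQ)$ followed by the (inverse of the) cross-product isomorphism, and the two projections $\pr_1,\pr_2\colon X\times X\to X$ induce $\pr_i^*\colon C^*(X;\IQ)\to C^*(X\times X;\IQ)$.

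The key step is then an immediate cochain computation: writing an $n$-simplex of $X\times X$ as a pair $(x,y)\in X_n\times X_n$, the additivity hypothesis says exactly that
\[
    (\mu^*\gb)(x,y)=\gb(\mu(x,y))=\gb(x)+\gb(y)=(\pr_1^*\gb)(x,y)+(\pr_2^*\gb)(x,y),
\]
so that $\mu^*\gb=\pr_1^*\gb+\pr_2^*\gb$ already on the level of cochains in $C^n(X\times X;\IQ)$, hence $\mu^*[\gb]=\pr_1^*[\gb]+\pr_2^*[\gb]$ in $H^n(|X|\times|X|;\IQ)$. Now the rational K\"unneth isomorphism $H^*(|X|;\IQ)\otimes H^*(|X|;\IQ)\xrightarrow{\ \simeq\ }H^*(|X|\times|X|;\IQ)$, $\alpha\otimes\beta\mapsto\pr_1^*\alpha\cup\pr_2^*\beta$, carries $[\gb]\otimes 1$ to $\pr_1^*[\gb]$ and $1\otimes[\gb]$ to $\pr_2^*[\gb]$, by the standard compatibility of the K\"unneth map with the projections. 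Combining these two observations gives $\Delta[\gb]=[\gb]\otimes 1+1\otimes[\gb]$, which is what we want. (The normalization condition $\gb(s_ix)=0$ plays no role in this chain of equalities; it merely records that $\gb$ lies in the normalized subcomplex $N^*(X;\IQ)$, the form in which the lemma is applied to the cocycles $\gc_k$ of \ref{defn:kappa}.)

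I do not expect a genuine obstacle here: the argument is essentially the observation that the additivity hypothesis on $\gb$ \emph{is} the cochain-level statement of primitivity. The two points that want a little care are that the rational K\"unneth theorem applies — which needs $H^*(\|X\|;\IQ)$ to be of finite type in each degree, as holds for all the classifying spaces appearing in this paper — and that (fat) realization is sufficiently compatible with finite products for the abstract $H$-space structure on $\|X\|$ to be carried to $|\mu|$ under $\|X\|\simeq|X|$; both are standard facts about realizations of (good) simplicial sets.
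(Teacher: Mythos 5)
Your proof is correct and is essentially the argument the paper has in mind, spelled out in more detail: the paper's one-line proof just points to the chain-level definition of the coproduct as $\mu^*$ composed with (the dual of) the Eilenberg--Zilber map, and your computation $\mu^*\gb = \pr_1^*\gb + \pr_2^*\gb$ at the cochain level is precisely what makes that observation bite. Your parenthetical observation that the normalization hypothesis $\gb(s_i x)=0$ is not actually used in the argument is accurate; it is carried along in the statement because it holds for the cocycles $\gc_k$ to which the lemma is applied, but it plays no role in establishing primitivity.
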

\begin{proof}
    This is a direct consequence of the definition of the coproduct
    as the composite of the diagonal $\gD^*:C^*(X) \to C^*(X \times X)$
    with the dual of the Eilenberg-Zilber map 
    $C_*(X) \ot C_*(X) \to C_*(X \times X)$.
\end{proof}

\begin{proof}[Proof of theorem \ref{thm:cocycles}]
    In lemma \ref{lem:sign-cocyc} we saw that $\cl{\sign}_{2k}$
    is a cocycle on $\mc{U}$ and using this one can check 
    the cochains $\gc_k$ are cocycles on $\Cut_1$.
    
    We begin by showing that $[\gc_k]$ is a rational multiple 
    of $\dlgk_k$. To do so it will suffice to show that it is
    primitive with respect to Hopf algebra structure.
    By construction the cocycles $\gc_k$ are additive under disjoint
    union and hence satisfy the conditions of lemma 
    \ref{lem:primitive-cocycle} with respect to the multiplication
    $\amalg: \Cut_1 \times \Cut_1 \to \Cut_1$.
    Therefore the classes $[\gc_k] \in H^{2k+2}(\Cut_1)$ 
    are indeed primitive.
    By lemma \ref{lem:free-on-con} the infinite loop space $\|\Cut_1\|$ is 
    freely generated by $\|\Cut_1^{con}\|$ and therefore the pullback 
    along the inclusion induces an isomorphism
    \[
        \mrm{Prim}(H^*(\|\Cut_1\|; \IQ)) 
        \cong H^*(\|\Cut_1^{con}\|; \IQ)
        = \IQ\gle{\dlgk_1, \dlgk_2, \dots}
    \]
    In order to prove the theorem it will hence suffice to show
    that the restriction of $[\gc_k]$ to 
    $\Cut_1^{con} \subset \Cut_1$ represents the class $-\dlgk_k$.
    
    In lemma \ref{lem:identify-cofib-seq} and 
    section \ref{subsec:fact-vs-Connes}
    we established weak equivalences
    \[
        \|\Cut_1^{con}\| \simeq \gS^2 B(F_1)
        \qand
        B(F_1) \simeq B(F_1^\gd) \simeq B(\mcF_1) \simeq B(\gL)
        \simeq \CP^\infty.
    \] 
    Each of the spaces on the right admits a map to the simplicial
    set $\mc{U}$ from definition \ref{defn:U} and by proposition 
    \ref{prop:sign=Chern} the Chern class $(c_1)^k$ is
    represented by the pullback of the cocycle
    $\frac{(\pm1)^k k!}{(2k)!}\cl{\sign}_{2k}$.
    In fact, using the simplicial double suspension 
    of definition \ref{defn:simplicial-cone}, we have a simplicial map 
    \[
        q: \Cut_1^{con} \to \gS^2 \mc{U}, \quad 
        (M_0 \xrightarrow{W_1}
        M_1 \xrightarrow{W_2} \dots
        \xrightarrow{W_n} M_n)
        \mapsto
        \left((\gp^{-1}(M_{b-a}^+), \dots, 
        \gp^{-1}(M_b^+)), a, b\right)
    \]
    where $\gp:S^1 \cong W_0 \cup_{M_1} 
    \dots \cup_{M_{2k+1}} W_{2k}$
    is any orientation preserving diffeomorphism
    and $0\le a \le b\le n$ are maximal such that 
    $\gp(S^1)$ intersects $M_{b-a}$ and $M_a$ non-trivially.
    The geometric realisation of $q$ is the double suspension 
    of the map $B(F_1) \to \|\mc{U}\|$.
    
    The pullback of (the double suspension of)
    $\frac{(\pm1)^k k!}{(2k)!}\cl{\sign}_{2k}$ from 
    $\gS^2 \mc{U}$ to $\Cut_1^{con}$ is exactly $(\gc_k)_{|\Cut_1^{con}}$.
    Therefore $\gc_k$ indeed represents the kappa class
    $\dlgk_k \in H^{2k+2}(\Cut_1)$, possibly up to a sign.
    
    We can't really hope to keep track of all the 
    signs that were introduced by the equivalences we used.
    Instead, we observe that the relative sign of the $\dlgk_k$
    is always the same (up to the $(-1)^k$ ambiguity from proposition
    \ref{prop:sign=Chern}).
    It therefore suffices to check the sign for one of the $\dlgk_k$.
    In lemma \ref{lem:ga-is-kappa0} we showed $-[\ga] = \dlgk_0$,
    and since the $2$-cocycles $\ga$ and $\gc_0$ are equal
    on $h\Cobred_1$ this implies  $-[\gc_0] = \dlgk_0$.
    In summary, we have $-[\gc_k] = (\pm1)^k \dlgk$ for a
    global choice of $\pm1$.
\end{proof}

\printbibliography[heading=bibintoc]

\end{document}